\newtheorem{theorem}{Theorem}[section]
\newtheorem{lemma}[theorem]{Lemma}
\newtheorem{proposition}[theorem]{Proposition}
\newtheorem{definition}[theorem]{Definition}
\theoremstyle{remark}
\newtheorem{remark}{Remark}[section]
\numberwithin{equation}{section}
\def\section{\@startsection{section}{1}%
  \z@{1.5\linespacing\@plus\linespacing}{.5\linespacing}%
  {\normalfont\bfseries\large\centering}}
\title[]{Quantized slow blow-up dynamics for the energy-critical corotational wave maps problem}
\author[U. Jeong]{Uihyeon Jeong}
\address{Department of Mathematical Sciences, Korea Advanced Institute of Science and Technology}
\email{juih26@kaist.ac.kr}
\begin{document}

\begin{abstract}
  We study the blow-up dynamics for the energy-critical 1-corotational wave maps problem with 2-sphere target. In \cite{RaphaelRodnianski2012IHES}, Rapha\"el and Rodnianski exhibited a stable finite time blow-up dynamics arising from smooth initial data. In this paper, we exhibit a sequence of new finite-time blow-up rates (quantized rates), which can still arise from well-localized smooth initial data. We closely follow the strategy of the paper \cite{RaphaelSchweyer2014Anal.PDE} by Rapha\"el and Schweyer, who exhibited a similar construction of the quantized blow-up rates for the harmonic map heat flow. The main difficulty in our wave maps setting stems from the lack of dissipation and its critical nature, which we overcome by a systematic identification of correction terms in higher-order energy estimates.
\end{abstract}

\subjclass[2020]{35B44 (primary), 35L71, 37K40, 58E20}

\maketitle


\section{Introduction}
\subsection{Wave map problem} For a map $\Phi: \mathbb{R}^{n+1} \to \mathbb{S}^n$, the wave maps problem is given by
\begin{equation}\label{eq:wm2}
  \partial_{tt} \Phi - \Delta \Phi = \Phi (|\nabla \Phi|^2 - |\partial_t \Phi|^2),\quad \vec{\Phi}(t):=(\Phi,\partial_t \Phi)(t) \in \mathbb{S}^n \times T_{\Phi}\mathbb{S}^n.
\end{equation}
\eqref{eq:wm2} has an intrinsic derivation from the following Lagrangian action
\begin{equation}\label{action}
  \frac{1}{2} \int_{\mathbb{R}^{n+1}} (|\nabla \Phi(x,t)|^2 - |\partial_t \Phi (x,t)|^2) dx dt,
\end{equation}
which yields the energy conservation
\begin{equation}\label{energy}
  E(\vec{\Phi}(t))=\frac{1}{2} \int_{\mathbb{R}^n} |\nabla \Phi|^2 + |\partial_t \Phi|^2 dx = E(\vec{\Phi}(0)).
\end{equation}
In particular for the case $n=2$, \eqref{eq:wm2} is called  \emph{energy-critical} since the conserved energy is invariant under the scaling symmetry: if $\vec{\Phi}(t,x)$ is a solution to \eqref{eq:wm2}, then $\vec{\Phi}_{\lambda}(t,x)$ is also a solution to \eqref{eq:wm2} where
\[\vec{\Phi}_{\lambda}(t,x):=\left(\Phi\left(\frac{t}{\lambda}, \frac{x}{\lambda}\right),\frac{1}{\lambda}\partial_t\Phi\left(\frac{t}{\lambda}, \frac{x}{\lambda}\right)\right)\]
and satisfies $E(\vec{\Phi}_{\lambda})=E(\vec{\Phi})$.

When observing a complicated model, it makes sense from a physics perspective to extract the essential dynamics of the problem by reducing the degrees of freedom. Especially for field theories such as \eqref{eq:wm2}, the \emph{geodesic approximation}, that is, a method of approximating the dynamics of the full problem as a geodesic motion over a space of static solutions, is prevalent (see \cite{manton_sutcliffe_2004}). 

To talk about static solutions in more detail, we focus on the solutions that have finite energy. This assumption extends the spatial domain of $\Phi$ to $\mathbb{S}^2$ and allows the \emph{topological degree} of $\Phi$ to be well-defined:
\[k= \frac{1}{|\mathbb{S}^2|} \int_{\mathbb{R}^2} \Phi^* (dw) = \frac{1}{4\pi} \int_{\mathbb{R}^2} \Phi \cdot (\partial_x\Phi \times \partial_y\Phi) dxdy.\] 
Here, $dw$ is the area form on $\mathbb{S}^2$ and $k$ is given only as an integer. We also remark that $k$ is conserved over time. 

We now consider static solutions to \eqref{eq:wm2}:
\begin{equation}\label{eq:static eq}
  \Delta \Phi + \Phi |\nabla \Phi|^2 =0,
\end{equation}
so-called \emph{harmonic maps}. Recall our Lagrangian action \eqref{action}, harmonic maps are characterized as the (local) minimizer of the Dirichlet energy:
\[\frac{1}{2}\int_{\mathbb{R}^2} |\nabla \Phi|^2 dxdy.\] 
Assume the topological degree of a harmonic map $\Phi$ is $k\in \mathbb{Z}$. Then we have the following inequality:
    \begin{align*}
        \frac{1}{2}\int_{\mathbb{R}^2} |\nabla \Phi|^2 dxdy & =\frac{1}{2} \int_{\mathbb{R}^2} | \partial_x\Phi |^2 + |\partial_y\Phi |^2  dxdy \\
        &= \frac{1}{2}\int_{\mathbb{R}^2} | \partial_x\Phi \pm \Phi \times \partial_y \Phi |^2 dxdy \mp  \int_{\mathbb{R}^2} \partial_x \Phi \cdot (\Phi \times \partial_y \Phi) dx dy\\
        & \ge  \pm \int_{\mathbb{R}^2}  \Phi \cdot (\partial_x\Phi \times \partial_y \Phi) dx dy =  4\pi |k|.
    \end{align*}
    Hence in a given topological sector $k$, $\Phi$ satisfies the \emph{Bogomol'ny\u{\i} equation} \cite{Bogomolnyi1976SJNP}
    \begin{equation}\label{eq:Bogomolnyi}
      \partial_x \Phi \pm \Phi \times \partial_y \Phi =0 \quad \textnormal{ for }\pm k \ge 0.
    \end{equation}
    That is, the field equation \eqref{eq:static eq} can be reduced from a second order PDE to a first order PDE. From the stereographic projection, we can see that the equation \eqref{eq:Bogomolnyi} is equivalent to the Cauchy-Riemann equation\footnote{If $k$ is negative, we adopt the conjugate Cauchy-Riemann equation instead of the Cauchy-Riemann equation. Thence, harmonic maps can be represented as rational maps with $\bar{z}$ as a complex variable.}, which clearly identifies the space of harmonic maps as the space of rational maps of degree $k$.

    Under the $L^2$ metric induced naturally from the kinetic energy formula, it is well known that the space of static solutions is \emph{geodesically incomplete}, which leads us to expect a blow-up scenario of low energy problem.
\subsection{Corotational symmetry}
 We consider an ansatz of solutions to \eqref{eq:wm2} with $k$-corotational symmetry: 
\begin{equation}\label{eq:corotational sym}
  \Phi(t,r,\theta) = \begin{pmatrix}
     \sin (u(t,r)) \cos k\theta \\
     \sin (u(t,r)) \sin k\theta \\
     \cos (u(t,r))
  \end{pmatrix} 
\end{equation}
where $(r,\theta)$ are polar coordinates on $\mathbb{R}^2$.

Under $k$-corotational symmetry assumption, $u(t,r)$ satisfies 
\begin{equation}\label{eq:wm1}
  \begin{cases}
    \partial_{tt} u - \partial_{rr}u - \frac{1}{r} \partial_r u   + k^2\frac{f(u)}{r^2} =0, \\
    u_{| t=0} = u_0,\quad \partial_t u_{|t=0}=\dot{u}_0, 
  \end{cases}
  \quad f(u) = \frac{\sin 2u}{2}.
\end{equation}
It is known that the flow \eqref{eq:wm2} preserves such corotational symmetry \eqref{eq:corotational sym} with smooth initial data at least local-in-time, see \cite{RaphaelRodnianski2012IHES}.  

Also, the energy functional \eqref{energy} can be rewritten as
\begin{equation}\label{def:energy functional}
  E(u,\dot{u}) := \pi \int_{0}^{\infty} \left( |\dot{u}|^2 + |\partial_r u|^2 + k^2\frac{\sin^2 u}{r^2} \right) rdr = E(u_0,\dot{u}_0)
\end{equation}
From the above expression, we can observe that a solution to \eqref{eq:wm1} with finite energy must satisfy the following boundary conditions:
\begin{equation}\label{eq:bd condi}
  \lim_{r\to 0} u(r) = m\pi \textnormal{ and } \lim_{r\to \infty} u(r) = n\pi, \quad m,n \in \mathbb{Z}. 
\end{equation}
We have additional symmetries from the geometry of target domain $\mathbb{S}^2$,
\begin{equation}
  -u(t,r),\quad u(t,r) + \pi 
\end{equation}
are also solutions to \eqref{eq:wm1}. Thus, we restrict our solution space to a set of functions $(u,v)$ that have finite energy and satisfy the boundary conditions \eqref{eq:bd condi} with $m=0$ and $n=1$, which provides the local well-posedness of \eqref{eq:wm1} (see also \cite{KlainermanMachedon1993CPAM, KlainermanMachedon1995Duke, Krieger2004CMP, Tao2001CMP,Tataru2005Amer.J.Math.}).
\subsection{Harmonic map}
In this restriction, the harmonic map is uniquely determined (up to scaling) and can be written explicitly as 
\begin{equation}\label{def:Q}
  Q(r)=2 \tan^{-1} r^k.
\end{equation}
Based on the geodesic approximation, it can be said that observing the vicinity of $Q$ under the corotational symmetry assumption facilitates the analysis of blow-up dynamics. This has been proven as a rigorous statement in several past global regularity works (see \cite{ChristodoulouTahvildar-Zadeh1993Comm.PureAppl.Math., ShatahTahvildar-Zadeh1992CPAM, ShatahTahvildar-Zadeh1994Comm.PureAppl.Math., Struwe2003Comm.PureAppl.Math.}).

The above results proved that if a wave map blows up in finite time, such singularity should be created by bubbling off of a non-trivial harmonic map (strictly) inside the backward light cone. 

This statement has inspired other researches studying global behaviors of solutions, and many of the results have been developed based on the existence of nontrivial harmonic map.

Firstly, there is global existence, which is a consequence of the preceding blow-up criteria. If the initial data cannot form a nontrivial harmonic map, that is, if the energy is less than the ground state energy, it can be naturally predicted that the solution exists global in time, and mathematical proof is also contained in the previously mentioned global regularity results.

This study also allows us to consider the problems of energy threshold (see \cite{CoteKenigMerle2008CMP} for the symmetric case and \cite{Tao2009arxiv, SterbenzTataru2010CMP1, SterbenzTataru2010CMP2, KriegerSchlag2012EMS} for the general case). In this case, it is also important to set an appropriate threshold value and the ground state energy is suitable for our problem setting. However for other boundary conditions or other topological degrees, it is often given as an integer multiple of $E(Q,0)$. The heuristic reason is that the degree condition cannot be satisfied with just one bubble (see \cite{CoteKenigLawrieSchlag2015AJM1, LawrieOh2016CMP}). This goes beyond suggesting the existence of a multi-bubbles solution \cite{JendrejLawire2018invent, JendrejLawire2022Anal.PDE, JendrejLawire2023CPAM, JendrejLawire2022MRL, Rodriguez2021Anal.PDE} and serves as an opportunity to soliton resolution conjecture \cite{JendrejLawrie2022, DuyckaertsKenigMartelMerle2022CMP} (see also \cite{Cote2015CPAM, CoteKenigLawrieSchlag2015AJM1, CoteKenigLawrieSchlag2015AJM2, JiaKenig2017AJM}). 

The most recent soliton resolution result \cite{JendrejLawrie2022} fully characterizes the profile decomposition of the solution in all equivariant classes. Thus, our interest is to observe how the scale of the profile given by the harmonic map changes over time within the lifespan of the solution. In particular for the case of low energy, that is, when the energy is slightly greater than the ground state energy, the geodesic approximation discussed earlier leads us to focus on the situation of having only one harmonic map as the blow-up profile.
\subsection{Blow-up near $Q$}
From a methodological perspective, studies investigating the blow-up of a single bubble can be broadly divided into the backward construction starting from Krieger--Schlag--Tataru \cite{KriegerSchlagTataru2008Invent.Math.} and the forward construction inspired from Rodnianski--Sterbenz \cite{RodnianskiSterbenz2010Ann.Math.} and Rapha\"el--Rodnianski \cite{RaphaelRodnianski2012IHES}.

The former work obtained a continuum of blow-up rates for the case $k=1$ via the iteration method and inspired other extended results such as stability under regular perturbations \cite{KriegerMiao2020Duke, KriegerMiaoSchlag2020arxiv} and the construction of more exotic solutions \cite{Pillai2023MAMS, Pillai2023CMP}. Beyond direct extensions of this approach, there is a classification result \cite{JendrejLawrieRodriguez2022Ann.Sci.Ec.Norm.Super.} via configuring radiations appropriately at the blow-up time. These constructions inevitably involves some constraints on regularity and degeneracy of the initial data. 

The latter case adopts a method that accurately describes the initial data set that drives blow-up. Although it is difficult (probably ruled out) to form a family of blow-up rates as in the previous results, the emphasis is on being able to observe the construction of blow-up solutions with smooth initial data. Especially in \cite{RaphaelRodnianski2012IHES}, the authors explicitly describe an initial data set that is open under $H^2$ topology around $Q$ and prove the so-called stable blow-up, in which the solutions starting from that set blow up with a universal rate that slightly misses the self-similar one for all $k\ge 1$.

We note that the initial data set in the above result does not imply a universal blow-up of all well-localized smooth data.
Our main theorem says that there exist other smooth solutions that blow up in finite-time with quantized rates corresponding to the excited regime.



\subsection{Main theorem}
We focus on the solution to \eqref{eq:wm1} with 1-corotational initial data, i.e. $k=1$. Let us restate the stable blow-up result.
\begin{theorem}[Stable blow-up for 1-corotational wave maps \cite{RaphaelRodnianski2012IHES, Kim2023CMP}]\label{thm:stable results}
  There exists a constant $\varepsilon_0>0$ such that for all 1-corotational initial data $(u_0,\dot{u}_0)$ with
  \begin{equation}
    {\left\lVert u_0-Q, \dot{u}_0 \right\rVert}_{\mathcal{H}^2}  < \varepsilon_0,
  \end{equation}
  the corresponding solutions to \eqref{eq:wm1} blow up in finite time $0<T=T(u_0,\dot{u}_0)<\infty$ as follows: for some $(u^*,\dot{u}^*) \in \mathcal{H}$,
  \begin{equation}\label{eq:strong convergence}
    {\left\lVert u(t,r)-Q\left(\frac{r}{\lambda(t)} \right)-u^*, \partial_t u(t,r) - \dot{u}^* \right\rVert}_{\mathcal{H}} \to 0 \quad \textnormal{as } t\to T
  \end{equation}
  with the universal blow up speed:
  \begin{equation}\label{eq:stable speed}
    \lambda(t)=2e^{-1}(1+o_{t\to T}(1))(T-t) e^{-\sqrt{|\log(T-t)|}}.
  \end{equation}
  Here, $\mathcal{H}$, $\mathcal{H}^2$ are given by \eqref{def:H sobolev}, \eqref{def:H2 sobolev}.
\end{theorem}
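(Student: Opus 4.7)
The plan is to follow the forward/modulation construction of Rapha\"el--Rodnianski, adapting it to the lower regularity class $\mathcal{H}^2$ as in Kim. I would decompose the solution near $Q$ as
\[
u(t,r) = (Q_{b(t)})_{\lambda(t)}(r) + \varepsilon(t,r), \qquad (Q_b)_\lambda(r) := Q_b(r/\lambda),
\]
where $Q_b = Q + bT_1 + b^2 T_2 + \cdots$ is a refined blow-up profile depending on a small parameter $b$, and $\lambda(t)$ is the modulation scale. The pair $(\lambda,b)$ is determined by imposing two orthogonality conditions on $\varepsilon$, designed so that the projection of the equation onto the generalized kernel of the linearized operator $H = -\partial_{rr}-r^{-1}\partial_r + r^{-2}\cos(2Q)$ produces the modulation ODEs
\[
\lambda_t/\lambda = -b, \qquad b_t + c\,b^2 = (\text{lower order}),
\]
whose integration will produce the predicted blow-up law.

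The construction of the profile $Q_b$ is carried out by solving an infinite hierarchy $HT_{j+1} = \text{(source built from }T_1,\dots,T_j\text{)}$. Because $\Lambda Q$ is a resonance of $H$ (harmonic but not in $L^2$, in fact $\|\Lambda Q\|_{L^2}^2$ diverges logarithmically at infinity for $k=1$), exact inversion of $H$ introduces logarithmic growth at infinity, which is the source of the anomalous factor $e^{-\sqrt{|\log(T-t)|}}$. I would use the factorization $H = A^*A$ with $A = -\partial_r + r^{-1}\cos Q$ and $A^* = \partial_r + r^{-1} + r^{-1}\cos Q$ to invert $H$ explicitly at each step and keep track of the tail behavior, truncating $Q_b$ on the parabolic cone at scale $\lambda\sqrt{b}$ to avoid the logarithmic divergence and to localize the support inside the backward light cone.

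The core analytic step is to propagate smallness of $\varepsilon$ in $\mathcal{H}^2$. I would set up an adapted energy
\[
\mathcal{E}_2(t) := \int \bigl(|A H_\lambda \varepsilon|^2 + |\partial_t H_\lambda \varepsilon|^2 \bigr) \,r\,dr + \text{correction terms},
\]
where $H_\lambda$ is the linearized operator conjugated to scale $\lambda$. Differentiating $\mathcal{E}_2$ along the flow produces unsigned bulk terms arising from the time-dependent rescaling (the $\lambda_t/\lambda$ commutators) and from the nonlinearity; these would be absorbed either by the pointwise Hardy-type bounds adapted to $H$ or by adding carefully chosen lower-order correction terms to $\mathcal{E}_2$, cancelling the top-order bad contributions. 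Together with the modulation identities, this yields a coupled Lyapunov-type inequality $\frac{d}{dt}\mathcal{E}_2 \lesssim \frac{b}{\lambda^2}\mathcal{E}_2 + \frac{1}{\lambda^4}(\text{profile error})$.

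With these ingredients in place, I would close the argument by a bootstrap: assume $|b - b_{\text{pred}}(t)| \ll b(t)$ and $\mathcal{E}_2(t) \le K b(t)^{2L}$ for some $L$ large, and use the energy inequality together with the modulation ODEs to strictly improve both bounds, provided the initial data is $\varepsilon_0$-close to $Q$ in $\mathcal{H}^2$. Integration of $b_t + cb^2 \simeq 0$ coupled with $\lambda_t = -b\lambda$ gives $b(t) \sim 1/|\log(T-t)|^{1/2}$ and then
\[
\log\lambda(t) = -\int_t^T b(s)\,\frac{ds}{\lambda(s)} \cdot \lambda(s) + \cdots \;\Longrightarrow\; \lambda(t) \sim (T-t)\,e^{-\sqrt{|\log(T-t)|}},
\]
after accounting for the constant $2e^{-1}$. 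The strong convergence \eqref{eq:strong convergence} follows by exhibiting a Cauchy sequence of the radiation $u(t) - Q(\cdot/\lambda(t))$ in $\mathcal{H}$ as $t \to T$, using the energy dissipation structure outside self-similar scales. The most delicate part is the energy step: without the dissipation available in the heat-flow analog, the higher-order estimates hinge on identifying exactly the correction terms that make $\mathcal{E}_2$ monotone to leading order, and this is the main technical obstacle that the present paper extends to obtain the quantized rates.
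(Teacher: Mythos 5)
This theorem is not proved in the paper at all: it is quoted as background with a citation to \cite{RaphaelRodnianski2012IHES} and \cite{Kim2023CMP}, so there is no ``paper's own proof'' to compare against. Your sketch is a fair high-level description of the cited forward-construction strategy (modulation near $Q_b$, orthogonality conditions producing the ODE system, factorization $H=A^*A$, an $H^2$-level Lyapunov functional with sign-corrected top-order commutator terms, and a bootstrap), and you even get the factorization operators right with $\cos Q$ appearing where the paper's display \eqref{def:A} has a sign/labeling typo writing $\sin Q$ for $\tfrac{1-y^2}{1+y^2}$. So the approach is the right one, but a few concrete details are off.

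First, the localization scale is wrong. You truncate $Q_b$ at a ``parabolic cone at scale $\lambda\sqrt{b}$,'' which is the heat-flow notion; for wave maps the profile is cut off at $y\sim B_1 \sim |\log b_1|^\gamma/b_1$, i.e.\ at spatial scale $r\sim \lambda/b_1$, which is comparable to the backward light-cone radius $T-t$ (since $\lambda\sim (T-t)b_1$), not to $\lambda\sqrt{b}$. Second, your intermediate asymptotic $b(t)\sim |\log(T-t)|^{-1/2}$ is incorrect: from $b\approx -\lambda_t$ and \eqref{eq:stable speed} one gets $b\approx 2e^{-1}\,e^{-\sqrt{|\log(T-t)|}}$, i.e.\ $b$ is stretched-exponentially small in $|\log(T-t)|$, not an inverse power of the logarithm; it is $|\log b|$ that behaves like $|\log(T-t)|$. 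Your final displayed formula for $\lambda$ is nevertheless the correct one, so this is an error in the chain of reasoning rather than in the conclusion. Third, invoking an ``energy dissipation structure'' to obtain the strong $\mathcal{H}$-convergence \eqref{eq:strong convergence} is misleading for a Hamiltonian wave equation; the argument in \cite{RaphaelRodnianski2012IHES} relies instead on finite speed of propagation and on the quantitative decay of the modulated remainder furnished by the Lyapunov/bootstrap estimates to show that $u(t)-Q(\cdot/\lambda(t))$ is Cauchy in $\mathcal{H}$ as $t\to T$.
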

\begin{remark}[$1$-corotational symmetry]
  In \cite{RaphaelRodnianski2012IHES}, the authors mentioned that the nature of the harmonic map, which varies depending on whether $k$ equals to $1$ or not, leads to distinctive blow-up rates. As a result of the logarithmic calculation that occurs additionally only when $k=1$, the universality of the blow-up rate in this case was unclear. The sharp constant $2e^{-1}$ in \eqref{eq:stable speed} was later obtained by Kim \cite{Kim2023CMP} using a refined modulational analysis.

  Nevertheless, the slow decaying nature of the harmonic map is rather an advantage in our analysis, which allows us to exhibit the following smooth blow-up with the quantized blow-up rates corresponding to the excited regime.
\end{remark}

\begin{theorem}[Quantized blow-up for 1-corotational wave maps]\label{thm:main} For a natural number $\ell \ge 2$ and an arbitrarily small constant $\varepsilon_0 >0$, there exists a smooth 1-corotational initial data $(u_0,\dot{u}_0)$ with 
  \begin{equation}
    {\left\lVert u_0-Q, \dot{u}_0 \right\rVert}_{\mathcal{H}}  < \varepsilon_0
  \end{equation}
  such that the corresponding solution to \eqref{eq:wm1} blows up in finite time $0<T=T(u_0,\dot{u}_0)<\infty$ and satisfies \eqref{eq:strong convergence} with the quantized blow up speed:
\begin{equation}\label{eq:lambda}
  \lambda(t)=c(u_0,\dot{u}_0)(1+o_{t\to T}(1)) \frac{(T-t)^{\ell}}{|\log (T-t)|^{\ell/(\ell-1)}}, \quad c(u_0,\dot{u}_0)>0.
\end{equation}
\end{theorem}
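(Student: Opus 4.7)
The plan is to follow the general strategy of Rapha\"el--Rodnianski and Rapha\"el--Schweyer: build a refined approximate profile depending on $\ell$ modulation parameters, decompose $u$ as profile plus radiation, and exhibit a codimension $\ell-1$ set of initial data for which the parameters track a universal trajectory producing \eqref{eq:lambda}. Let $H:=-\partial_{rr}-\frac{1}{r}\partial_{r}+\frac{\cos 2Q}{r^2}$ be the linearization of \eqref{eq:wm1} around $Q$ in the variable $y=r/\lambda$, and $\Lambda:=y\partial_y$ the generator of scaling. I would build profiles $T_1,\dots,T_\ell$ inductively by solving the hierarchy $HT_j=-\Lambda T_{j-1}$ with $T_0=\Lambda Q$ (the zero mode of $H$ coming from the scaling symmetry of $Q$), set $Q_b:=Q+\sum_{j=1}^{\ell}b_j T_j$ with $b=(b_1,\dots,b_\ell)\in\mathbb{R}^\ell$, and truncate the $T_j$ at scale $|b_1|^{-1/2}$ inside the backward light cone to control their growth at infinity; a small residual correction is added so that $(Q_b)_{\lambda}$ solves \eqref{eq:wm1} up to an acceptable error.

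Next, I would decompose $u(t,r)=Q_{b(t)}(r/\lambda(t))+\varepsilon(t,r)$, pass to renormalized time $s=\int_0^t d\tau/\lambda(\tau)$, and impose $\ell$ orthogonality conditions on $\varepsilon$ against a suitable family of localized profiles to uniquely fix $(\lambda,b_1,\dots,b_\ell)$. A direct computation then produces the formal reduced ODE system
\begin{align*}
\frac{\lambda_s}{\lambda}+b_1 &=0,\\
(b_j)_s+c_j\, b_1 b_j-b_{j+1} &=0 \qquad (1\le j\le \ell-1),\\
(b_\ell)_s+c_\ell\, b_1 b_\ell &=0,
\end{align*}
with explicit universal constants $c_j$. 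A phase-plane analysis shows this system admits a one-parameter family of trajectories with $b_\ell>0$ and $b_j\sim b_1^{j}$, and integrating $\lambda_s/\lambda=-b_1$ back to the original time variable along such a trajectory reproduces exactly \eqref{eq:lambda}; the $(\ell-1)$ remaining directions correspond to unstable modes of the linearization of the system at this trajectory.

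To control the radiation $\varepsilon$ globally on $[0,T)$ I would introduce a high-order adapted energy, morally of the form
\[
\mathcal{E}_{\ell+1}(\varepsilon,\partial_t\varepsilon):=\int\bigl((\partial_t H^{\ell}\varepsilon)^2+(A H^{\ell}\varepsilon)^2\bigr)\,r\,dr \;+\; (\text{correction terms}),
\]
where $A$ is the Cauchy--Riemann type factor of $H=A^*A$ inherited from the Bogomol'ny\u{\i} structure discussed in the introduction. Differentiating in time along the equation for $\varepsilon$, inserting the orthogonality conditions, and Gr\"onwall-ing in renormalized time should yield a bound of the shape $\mathcal{E}_{\ell+1}\lesssim b_1^{4\ell+2}|\log b_1|^{C}$. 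Coercivity comes from iterating Hardy inequalities adapted to the factorization $H=A^*A$, and interpolation with the conserved energy \eqref{def:energy functional} transfers this into pointwise bounds on $\varepsilon$. The bootstrap closes along an $(\ell-1)$-parameter family of initial conditions, and a Brouwer-type topological shooting argument on the unstable directions selects one data for which $\varepsilon$ remains in the bootstrap regime for all $s\in[s_0,\infty)$; standard persistence of regularity then upgrades the solution to be smooth.

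The main obstacle, as flagged in the abstract, lies precisely in establishing the high-order energy estimate. In the heat flow setting of Rapha\"el--Schweyer the parabolic dissipation makes the leading quadratic form manifestly sign-definite, whereas here the time derivative of the naive version of $\mathcal{E}_{\ell+1}$ produces a large collection of critically-scaling terms coming from commutators $[H^{j},\partial_{t}]$ coupled to the slowly-decaying profile $(Q_b)_\lambda$ and from the $y^{-2}$ potential; a priori these contributions have no favorable sign. Overcoming this requires an inductive, order-by-order identification of algebraic correction terms to add to the naive energy, chosen so that after integration by parts all bad contributions either cancel exactly or get absorbed into a controllable sign-definite piece while preserving the $A^*A$ coercivity. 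Designing this systematic cancellation mechanism -- and verifying that the leftover errors are strictly subleading compared to the modulation dynamics of Step 2 -- is the technical heart of the argument and replaces the parabolic smoothing of \cite{RaphaelSchweyer2014Anal.PDE}.
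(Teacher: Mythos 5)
Your high-level strategy -- slowly modulated profile with $\ell$ parameters, decomposition into profile plus radiation with orthogonality conditions, a Lyapunov functional for a high-order adapted energy, and a Brouwer shooting argument on the $(\ell-1)$ unstable modes -- is exactly the paper's route, and you correctly identify the key obstruction (loss of parabolic dissipation) and the key fix (systematic correction terms in the energy functional to exploit the repulsivity of the conjugated potential $\widetilde H=AA^*$). These two aspects of the proposal are on target.

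However, there is a genuine gap in the profile construction that would derail the computation. Because the wave map is second order in time, the profile hierarchy has to be built on the phase space $\boldsymbol u=(u,\dot u)$ with the matrix operator $\boldsymbol H=\begin{pmatrix}0&-1\\ H&0\end{pmatrix}$, and the generalized kernel elements satisfy $\boldsymbol H\boldsymbol T_{j+1}=-\boldsymbol T_j$, $\boldsymbol T_0=\boldsymbol{\Lambda Q}$. This alternating first/second-coordinate structure makes the scalar tail $T_j$ grow only like $y^{j-1}$, so $\boldsymbol\Lambda\boldsymbol T_j\sim (j-1)\boldsymbol T_j$ and the modulation ODE becomes $(b_k)_s=b_{k+1}-(k-1)b_1b_k$. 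The special solution with $c_1=\ell/(\ell-1)$ is what produces the exponent $\ell/(\ell-1)$ in \eqref{eq:lambda}. Your scalar hierarchy $HT_j=-\Lambda T_{j-1}$ with $T_0=\Lambda Q$ is neither the wave maps nor the heat flow recursion (the latter is $HT_j=-T_{j-1}$), and in any case a \emph{scalar} profile built from $H^{-1}$ alone gains two powers of $y$ per step, so $T_j\sim y^{2j-1}$, $\Lambda T_j\sim(2j-1)T_j$, which gives the heat-flow coefficients $c_j=2j-1$ and would produce the heat-flow rate $\lambda\sim(T-t)^\ell/|\log(T-t)|^{2\ell/(2\ell-1)}$ rather than \eqref{eq:lambda}. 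Your energy $\int (\partial_t H^\ell\varepsilon)^2+(AH^\ell\varepsilon)^2$ and its bound $\lesssim b_1^{4\ell+2}|\log b_1|^C$ are likewise calibrated to the heat-flow scaling: the paper's energy is $\mathcal E_{L+1}\sim\|\mathcal A^{L+1}\varepsilon\|^2+\|\mathcal A^{L}\dot\varepsilon\|^2$ with $L=\ell+\overline{\ell+1}$ (roughly half as many derivatives) and obeys $\mathcal E_{L+1}\lesssim b_1^{2L+2}/|\log b_1|^2$. Without passing to the phase-space formulation and the correct vector-valued hierarchy, the modulation dynamics in your ``Step 2'' would not reproduce \eqref{eq:lambda}, and the Lyapunov bound in ``Step 3'' would be off by a factor of two in the exponents.
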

\begin{remark}[Further regularity of asymptotic profile]\label{rmk:further regularity}
  The asymptotic profile $(u^*,\dot{u}^*)$ also has $\dot{H}^{\ell}\times \dot{H}^{\ell-1}$ regularity in the sense that certain $\ell$-fold (resp., $\ell-1$-fold) derivatives of $u^*$ (resp., $\dot{u}^*$) belong to $L^2$. This is a consequence of the fact that the $\ell$-th order energy of the radiative part of the solution satisfies the scaling invariance bound ($\mathcal{E}_{\ell} \le C \lambda^{2(\ell-1)}$; see \eqref{eq:scale invariance bound}) similarly as in \cite{RaphaelSchweyer2014Anal.PDE}.
\end{remark}
\begin{remark}[Quantized blow-up]
  The existence of (type-II) blow-up solutions with quantized blow-up rates has also been well studied in parabolic equations, especially for nonlinear heat equations. Starting with the discovery of formal mechanisms \cite{HerreroVelazquez1994CRASPSI,FilippasHerreroVelazquez2000RSLPSAPES}, there are classification works \cite{Mizoguchi2004AdvDE, Mizoguchi2007Math.Ann.} in the energy-supercritical regime.  The proofs in this literature are based on maximum principle (cf. \cite{MatanoMerle2009JFA, MatanoMerle2004CPAM}).

  Through modulational analysis, not relying on maximum principle, there have been some (type-II) quantized rate constructions in the critical parabolic equations such as \cite{RaphaelSchweyer2014Anal.PDE, HadzicRaphael2019JEMS} for the energy-critical case and \cite{CollotGhoulMasmoudiNguyen2022CPAM} for the mass-critical case. See also the works \cite{DelpinoMussoWeiZhangZhang2020arxiv,Junichi2020Ann} relying on the inner-outer gluing method. In \cite{RaphaelSchweyer2014Anal.PDE}, the authors expected that their modulation technique is robust enough to be propagated to dispersive models including the wave maps problem, and the quantized rate constructions have been established in the energy-supercritical dispersive equations \cite{MerleRaphaelRodnianski2015Camb.J.Math., Collot2018MEM.AMS, GhoulIbrahimNguyen2018JDE}. Up to our knowledge, Theorem \ref{thm:main} provides the first rigorous quantized rate constructions for energy-critical dispersive equations. We expect that our analysis can also be extended to other energy-critical dispersive equations such as the nonlinear wave equation. 
\end{remark}


\begin{remark}[Instability of blow-up]
  In contrast to Theorem \ref{thm:stable results}, our initial data set is of codimension $\ell-1$, similar to \cite{RaphaelSchweyer2014Anal.PDE}, due to unstable directions inherent in the ODE system driving the blow-up dynamics. This similarity follows from the fact that the wave map problem and the harmonic map heat flow share the same ground states and linearized Hamiltonian under the 1-corotational symmetry. We also expect the stability formulated by constructing a smooth manifold of the initial data set.
\end{remark}


\subsection{Notation} We introduce some notation needed for the proof before going into the strategy of the proof.
 We first use the bold notation for vectors in $\mathbb{R}^2$:
\begin{equation}
  \boldsymbol{u}:=\begin{pmatrix}
    u \\
    \dot{u}
    \end{pmatrix}, \quad  \boldsymbol{u}(r):=\begin{pmatrix}
      u(r) \\
      \dot{u}(r)
      \end{pmatrix}.
\end{equation}
For $\lambda >0$, the $\dot{H}^1 \times L^2$ scaling is defined by:
  \begin{equation}
  \boldsymbol{u}_{\lambda}(r)=\begin{pmatrix}
    u_{\lambda}(r) \\
    \lambda^{-1} \dot{u}_{\lambda}(r )
    \end{pmatrix}  :=\begin{pmatrix}
  u(y) \\
  \lambda^{-1} \dot{u}(y )
  \end{pmatrix} , \quad y:=\frac{r}{\lambda}
  \end{equation}
  and the corresponding generator is denoted by
  \begin{equation}
    \boldsymbol{\Lambda} \boldsymbol{u} := \begin{pmatrix}
      \Lambda u \\
      \Lambda_0 \dot{u}
      \end{pmatrix}
      := -\frac{d \boldsymbol{u}_{\lambda}(r)}{d\lambda}\Big{\vert}_{\lambda=1}  = \begin{pmatrix}
      r\partial_r u(r) \\
      \left( 1 +r\partial_r \right)\dot{u}(r)
      \end{pmatrix} .
  \end{equation}
In general, we employ the $\dot{H}^{k}$ scaling generator
\begin{equation}
  \Lambda_{k} u:= - \frac{d}{d\lambda} \left( \lambda^{k-1} u_{\lambda}(r) \right)\Big{\vert}_{\lambda=1}  = (-k+1+r\partial_r)u(r).
\end{equation}
 We now reformulate \eqref{eq:wm1} using the vector-valued function $\boldsymbol{F}:\mathbb{R}^2 \to \mathbb{R}^2$:
\begin{equation}\label{eq:WM}
  \begin{cases}
    \partial_{t} \boldsymbol{u} =\boldsymbol{F}(\boldsymbol{u}), \\  \boldsymbol{u}_{|t=0}=\boldsymbol{u}_0, 
  \end{cases}\quad \boldsymbol{u}=\boldsymbol{u}(t,r) ,\quad \boldsymbol{F}(\boldsymbol{u}):=\begin{pmatrix}
    \dot{u} \\ \Delta u-\frac{1}{r^2}f(u)
  \end{pmatrix}
  \end{equation}
  where $\Delta = \partial_{rr} + \frac{1}{r}\partial_r$.

We use two subsets of the real line
\[\mathbb{R}_+=\{r \in \mathbb{R}: x\ge 0\},\quad \mathbb{R}_+^*=\{r \in \mathbb{R}: x >0\}.\]
  We denote by $\chi$ a $C^{\infty}$ radial cut-off function on $\mathbb{R}_+$: 
  \[\chi(r)=\begin{cases}
    1 & \textnormal{for } r\le 1 \\
    0 & \textnormal{for } r\ge 2
  \end{cases}.\]
   We let $\chi_B(r):=\chi(r/B)$ for $B>0$. Similarly, we denote by $\mathbf{1}_A(y)$ as the indicator function on the set $A$. In particular, $\mathbf{1}_{B\le y \le 2B}$ will be rewritten by $\mathbf{1}_{y\sim B}$, or simply $\mathbf{1}_B$ abusively. The cut-off boundary $B$ will often be chosen as the constant multiples of   
  \begin{equation}\label{def:B_0 B_1}
  B_0:=\frac{1}{b_1}, \quad B_1:=\frac{|\log b_1|^{\gamma}}{b_1}, \quad b_1>0.
  \end{equation}
  Later, we will choose $\gamma=1+\overline{\ell}$ where $\ell$ appeared from Theorem \ref{thm:main}. Here, we denote the remainder of dividing $i$ by $2$ as $\overline{i}$ i.e. $\overline{i}=i \mod 2$ for an integer $i$. We also denote $L=\ell+\overline{\ell+1}$ i.e. $L$ is the smallest odd integer greater than or equal to $\ell$. We also abuse the indicator notation $\mathbf{1}_{\{l \ge m \}}$ as
  \[\mathbf{1}_{\{l \ge m \}}=\begin{cases}
    1 & \textnormal{if }l\ge m \\
    0 & \textnormal{if }l<m
  \end{cases} ,\quad l,m\in \mathbb{Z}.\]

  We adopt the following $L^2(\mathbb{R}^2)$ inner product for radial functions $u,v$:
 \begin{equation*}
  \langle u, v \rangle := \int_{0}^{\infty} u(r)v(r) rdr
 \end{equation*}
 and $L^2\times L^2$ inner product for vector-valued functions $\boldsymbol{u},\boldsymbol{v}$:
 \begin{equation}\label{def:vector inner product}
  \langle \boldsymbol{u}, \boldsymbol{v} \rangle := \langle u, v \rangle + \langle \dot{u}, \dot{v} \rangle
 \end{equation}
 We introduce two Sobolev spaces $\mathcal{H}$ and $\mathcal{H}^2$ with the following norms:
\begin{align}
  {\lVert u,\dot{u} \rVert}_{\mathcal{H}}^2  &:= \int |\partial_y u|^2 + \frac{|u|^2}{y^2} + |\dot{u}|^2, \label{def:H sobolev}\\
  {\lVert u,\dot{u} \rVert}_{\mathcal{H}^2}^2  &:= {\lVert u,\dot{u} \rVert}_{\mathcal{H}}^2 + \int   |\partial_y^2 u|^2 + |\partial_y\dot{u}|^2 + \frac{|\dot{u}|^2}{y^2} + \int_{|y|\le 1} \frac{1}{y^2} \left( \partial_y u - \frac{u}{y} \right)^2 \label{def:H2 sobolev}
\end{align}
where the above shorthand for integrals is given by $\int = \int_{\mathbb{R}^2}$.

  For any $x:=(x_1,\dots,x_n)\in \mathbb{R}^n$, we set $|x|^2 = x_1^2+ \cdots + x_n^2$ and 
  \[ \mathcal{B}^n:=\left\{x\in \mathbb{R}^n,   |x|\leq 1 \right\}, \quad \mathcal{S}^n:=\partial \mathcal{B}^n=\left\{x\in \mathbb{R}^n,   |x|= 1 \right\}.\]
  We use the Kronecker delta notation: $\delta_{ij}=1$ for $i=j$ and $\delta_{ij}=0$ for $i\neq j$. 

\subsection{Strategy of the proof}
Our proof is based on the general modulational analysis scheme developed by Rapha\"el--Rodnianski \cite{RaphaelRodnianski2012IHES}, Merle--Rapha\"el--Rodnianski \cite{MerleRaphaelRodnianski2013Invent.Math.} and Rapha\"el--Schweyer \cite{RaphaelSchweyer2014Anal.PDE}, which also have difficulties arising from energy-critical nature and the small equivariance index, including logarithmic computations. We closely follow the main strategy of \cite{RaphaelSchweyer2014Anal.PDE}. However, notable differences stem from the lack of dissipation in the higher-order ($H^{L+1}$, $L \gg 1$) energy estimates due to the dispersive nature of our problem. We overcome this difficulty by carefully correcting the higher-order energy functional to uncover the repulsive property (to identify terms with good sign), generalizing the computation in the $H^2$ energy estimates of \cite{RaphaelRodnianski2012IHES}.

Given an odd integer $L\ge 3$, we first construct the blow-up profile $\boldsymbol{Q}_b$ of the form 
\begin{equation}
     \boldsymbol{Q}_b := \boldsymbol{Q} + \boldsymbol{\alpha}_b:= \begin{pmatrix}
    Q \\ 0    
  \end{pmatrix} + \sum_{i=1}^L b_i \boldsymbol{T}_i + \sum_{i=2}^{L+2} \boldsymbol{S}_i 
\end{equation}
where $b=(b_1,\dots,b_L)$ is a set of modulation parameters and $\boldsymbol{T}_i$, $\boldsymbol{S}_i$ are deformation directions so that $(\boldsymbol{Q}_{b(t)})_{\lambda(t)}$ solves \eqref{eq:WM} approximately. Equivalently, $\boldsymbol{Q}_b$ satisfies
\begin{equation}\label{eq:approx eqn}
  \partial_s \boldsymbol{Q}_b-\boldsymbol{F}(\boldsymbol{Q}_b) -\frac{\lambda_s}{\lambda}\boldsymbol{\Lambda Q}_b \approx 0 ,\quad   \frac{ds}{dt} = \frac{1}{\lambda(t)}.
\end{equation}
From the imposed relations \eqref{eq:approx eqn}, the blow-up dynamics is determined by the evolution of the modulation parameters $b=(b_1,\dots,b_L)$. The leading dynamics of $b$ and $\boldsymbol{T}_i$ are determined by considering the linearized flow of \eqref{eq:approx eqn} near $\boldsymbol{Q}$:
\begin{align}\label{eq:approx linearized eqn}
  0\approx \partial_s \boldsymbol{Q}_b-\boldsymbol{F}(\boldsymbol{Q}_b) -\frac{\lambda_s}{\lambda} \boldsymbol{\Lambda Q}_b &= \partial_s (\boldsymbol{Q}_b-\boldsymbol{Q})-\boldsymbol{F}(\boldsymbol{Q}_b) + \boldsymbol{F}(\boldsymbol{Q}) -\frac{\lambda_s}{\lambda} \boldsymbol{\Lambda Q}_b  \nonumber \\
  &\approx \partial_s \boldsymbol{\alpha}_b + \boldsymbol{H}\boldsymbol{\alpha}_b -\frac{\lambda_s}{\lambda} \boldsymbol{\Lambda }(\boldsymbol{Q} + \boldsymbol{\alpha}_b)
\end{align}
where $\boldsymbol{H}$ denotes the linearized Hamiltonian 
\begin{equation}
  \boldsymbol{H}:= \begin{pmatrix}
    0 & -1 \\
    H & 0
\end{pmatrix}, \quad H= -\Delta + \frac{f'(Q)}{y^2}.
\end{equation}
After defining $\boldsymbol{T}_i$ inductively
\begin{equation}
  \boldsymbol{H}\boldsymbol{T}_{i+1}=- \boldsymbol{T}_i, \quad \boldsymbol{T}_0:=\boldsymbol{\Lambda Q},
\end{equation}
\eqref{eq:approx linearized eqn} and asymptotics $\boldsymbol{\Lambda T}_i \sim (i-1)\boldsymbol{T}_i$ yield the leading dynamics of $b$:
\begin{equation}\label{eq:mod leading ode}
 -\frac{\lambda_s}{\lambda}=b_1,\quad (b_k)_s=b_{k+1}-\left(k-1\right)b_1b_k,\quad b_{L+1}:=0,\quad 1\le k \le L.
\end{equation}
$\boldsymbol{S}_i$ appears to correct \eqref{eq:approx linearized eqn} to \eqref{eq:approx eqn} containing some radiative terms from the difference $\boldsymbol{\Lambda T}_i -(i-1) \boldsymbol{T}_i$ and the nonlinear effect from $\boldsymbol{F}(\boldsymbol{Q}_b) - \boldsymbol{F}(\boldsymbol{Q}) + \boldsymbol{H}\boldsymbol{\alpha}_b$. Then $b$ drives the following ODE system 
\begin{equation}\label{eq:mod ode}
  (b_k)_s=b_{k+1}-\left(k-1+\frac{1}{(1+\delta_{1k})\log s}\right)b_1b_k,\quad b_{L+1}:=0,\quad 1\le k \le L.
\end{equation}
We then choose a special solution of \eqref{eq:mod ode}:
\begin{equation}
  b_1(s) \sim \frac{\ell}{\ell-1} \left( \frac{1}{s} - \frac{(\ell-1)^{-1}}{s\log s} \right),
\end{equation}
which leads to \eqref{eq:lambda} from the relations $-\lambda_t=b_1$ and $\frac{ds}{dt}=\frac{1}{\lambda}$. Since the special solution we choose is formally codimension $\ell-1$ stable, we control the unstable directions in the vicinity of these special solutions to ODE system \eqref{eq:mod ode} by Brouwer's fixed point theorem.

Now, we decompose the solution $\boldsymbol{u}=\boldsymbol{u}(t,r)$ to \eqref{eq:WM} as follows
\begin{equation}\label{eq:decomposition and orthogonality}
  \boldsymbol{u}= ( \boldsymbol{Q}_{b(t)} + \boldsymbol{\varepsilon})_{\lambda(t)}= (\boldsymbol{Q}_{b(t)})_{\lambda(t)} + \boldsymbol{w},\quad \langle \boldsymbol{H}^{i}\boldsymbol{\varepsilon},  \boldsymbol{\Phi}_M \rangle = 0, \quad 0\le i \le L 
\end{equation}
where $\boldsymbol{\Phi}_M$ is defined in \eqref{def:PhiM}.
The orthogonality conditions in \eqref{eq:decomposition and orthogonality} uniquely determine the decomposition by the implicit function theorem. Then we derive the evolution equation of $\boldsymbol{\varepsilon}$ from \eqref{eq:WM}, which contains the formal modulation ODE \eqref{eq:mod ode} with some errors in terms of $\boldsymbol{\varepsilon}$. 

To justify the formal modulation ODE \eqref{eq:mod ode}, we need sufficient smallness of $\boldsymbol{\varepsilon}$ and we need to propagate it. For this purpose, we consider the higher-order energy associated to the linearized Hamiltonian $H$:
\begin{equation}\label{eq:higher energy}
  \mathcal{E}_{L+1} = \langle H^{\frac{L+1}{2}}\varepsilon, H^{\frac{L+1}{2}}\varepsilon\rangle + \langle H H^{\frac{L-1}{2}}\dot{\varepsilon}, H^{\frac{L-1}{2}}\dot{\varepsilon} \rangle    .
\end{equation} This energy is coercive thanks to the orthogonality conditions in \eqref{eq:decomposition and orthogonality}. 

Thus, our analysis boils down to estimating the time derivative of $\mathcal{E}_{L+1}$. Unlike in \cite{RaphaelSchweyer2014Anal.PDE}, we cannot employ dissipation to control the time derivative of $\mathcal{E}_{L+1}$ due to the dispersive nature of our problem. Instead, we use the repulsive property of the (super-symmetric) conjugated Hamiltonian $\widetilde{H}$ of $H$ observed in \cite{RodnianskiSterbenz2010Ann.Math.} and \cite{RaphaelRodnianski2012IHES}. To illuminate the repulsive property in the energy estimate, we consider the linearized flow in terms of $w$ from $\boldsymbol{w}=(w,\dot{w})$ and the well-known factorization:
\[{w}_{tt} + H_{\lambda} w=0,\quad  H_{\lambda} =A_{\lambda}^* A_{\lambda},\quad A_{\lambda}=-\partial_r + \frac{\sin Q_{\lambda}}{r}.\]
Defining the higher-order derivatives adapted to $H_{\lambda}$ and its corresponding operator
\[w_k:=\mathcal{A}_{\lambda}^k w,\quad \mathcal{A}_{\lambda} = A_{\lambda},\; \mathcal{A}_{\lambda}^2 = A_{\lambda}^*A_{\lambda}, \; \cdots,  \; \mathcal{A}_{\lambda}^k = \underbrace{\cdots A_{\lambda}^*A_{\lambda} A_{\lambda}^* A_{\lambda}}_{k \textnormal{ times}},\]
 the higher-order energy \eqref{eq:higher energy} can essentially be written as follows:
 \begin{align*}
  {\mathcal{E}_{L+1}} &\approx \lambda^{2L}(\langle w_{L+1},w_{L+1} \rangle + \langle \partial_t w_L, \partial_t w_L \rangle)\\
  &=\lambda^{2L}(\langle \widetilde{H}_{\lambda} w_{L},w_{L} \rangle + \langle \partial_t w_L, \partial_t w_L \rangle)
 \end{align*}
where $\widetilde{H}_{\lambda}=A_{\lambda} A_{\lambda}^*$ is the conjugated Hamiltonian of $H_{\lambda}$. As an advantage of the adoption of the Leibniz rule notation between an operator and a function
\[\partial_t (P f) = \partial_t(P)f + P f_t,\quad \partial_t(P):=[\partial_t, P],\]
we can express the energy estimate for $\mathcal{E}_{L+1}$ succinctly:
\begin{align*}
  \frac{d}{dt} \left\{ \frac{\mathcal{E}_{L+1}}{2\lambda^{2L}} \right\} & \approx \frac{1}{2}\langle \partial_t(\widetilde{H}_{\lambda}) w_{L},w_{L} \rangle + \langle \widetilde{H}_{\lambda} w_{L},\partial_t w_{L} \rangle + \langle \partial_{tt} w_L, \partial_t w_L \rangle \\
  &\approx  \frac{1}{2}\langle \partial_t(\widetilde{H}_{\lambda}) w_{L},w_{L} \rangle + 2 \langle \partial_{t} w_L, \partial_t (\mathcal{A}_{\lambda}^L)w_t \rangle.
\end{align*}
Integrating by parts in time the second term, we get
\begin{align*}
  \frac{d}{dt} \left\{ \frac{\mathcal{E}_{L+1}}{2\lambda^{2L}} - 2  \langle  w_L, \partial_t (\mathcal{A}_{\lambda}^L)w_t \rangle\right\}
  &\approx  \frac{1}{2}\langle \partial_t(\widetilde{H}_{\lambda}) w_{L},w_{L} \rangle + 2 \langle  w_L, \partial_t (\mathcal{A}_{\lambda}^L)w_{2} \rangle.
\end{align*}
In \cite{RaphaelRodnianski2012IHES}, the authors exhibited the repulsive property by directly calculating the following identity with the advantage of $L=1$: 
\[\langle  w_1, \partial_t (\mathcal{A}_{\lambda})w_{2} \rangle=\frac{1}{2}\langle \partial_t(\widetilde{H}_{\lambda}) w_{1},w_{1} \rangle \le 0\]
However, this computation does not seem to be directly extended to our case $L\ge 3$. We overcome this problem by first writing $\mathcal{A}_{\lambda}^L = \widetilde{H}_{\lambda}\mathcal{A}_{\lambda}^{L-2}$ and pulling out the repulsive term using Leibniz rule
\begin{align*}
  \langle  w_L, \partial_t (\mathcal{A}_{\lambda}^L)w_{2} \rangle &= \langle  w_L, \partial_t (\widetilde{H}_{\lambda})w_{L} \rangle + \langle \widetilde{H}_{\lambda} w_L, \partial_t (\mathcal{A}_{\lambda}^{L-2})w_{2} \rangle\\
  & \approx  \langle  w_L, \partial_t (\widetilde{H}_{\lambda})w_{L} \rangle - \langle \partial_{tt} w_L, \partial_t (\mathcal{A}_{\lambda}^{L-2})w_{2} \rangle.
\end{align*}
Again integrating by parts in time, we obtain
\begin{align*}
  &\frac{d}{dt} \left\{ \frac{\mathcal{E}_{L+1}}{2\lambda^{2L}} - 2 ( \langle  w_L, \partial_t (\mathcal{A}_{\lambda}^L)w_t \rangle -  \langle \partial_{t} w_L, \partial_t (\mathcal{A}_{\lambda}^{L-2})w_{2} \rangle + \langle  w_L, \partial_t (\mathcal{A}_{\lambda}^{L-2})\partial_t w_{2} \rangle) \right\}\\ 
  &\approx  \frac{5}{2}\langle \partial_t(\widetilde{H}_{\lambda}) w_{L},w_{L} \rangle + 2 \langle  w_L, \partial_t (\mathcal{A}_{\lambda}^{L-2})w_{4} \rangle.
\end{align*}
Repeating the above correction procedure, we arrive at the term with good sign:
\begin{align*}
  \frac{d}{dt} \left\{ \frac{\mathcal{E}_{L+1}}{2\lambda^{2L}} + \textnormal{corrections} \right\}
  &\approx  \frac{2L-1}{2}\langle \partial_t(\widetilde{H}_{\lambda}) w_{L},w_{L} \rangle + 2 \langle  w_L, \partial_t (\mathcal{A}_{\lambda})w_{L+1} \rangle \\
  &\approx  \frac{2L+1}{2}\langle \partial_t(\widetilde{H}_{\lambda}) w_{L},w_{L} \rangle \le 0.
\end{align*}

 In the actual energy estimate, there are also error terms such as the profile equation error and nonlinear terms in $\boldsymbol{\varepsilon}$. For these nonlinear terms, we also estimate the intermediate energies $\mathcal{E}_k$, which can be defined similarly to $\mathcal{E}_{L+1}$. 

\subsection*{Organization of the paper} 
In section 2, we construct the approximate blow-up profile with the description of the ODE dynamics of the modulation equations. Section 3 is devoted to the decomposition of the solution into the blow-up profile constructed in the previous section and the remaining error. We also introduce the bootstrap setting to control the error and establish a Lyapunov-type monotonicity for the higher-order energy with respect to such error. Section 4 provides the proof of Theorem \ref{thm:main} by closing the bootstrap with some standard topological arguments. 

\subsection*{Acknowledgements}
The author appreciates Kihyun Kim and Soonsik Kwon for helpful discussions and suggestions for this work. The author is partially supported by the National Research Foundation of Korea (NRF) grant funded by the Korea government (MSIT) (NRF-2019R1A5A1028324 and NRF-2022R1A2C109149912).

\section{Construction of the approximate solution}
In this section, we construct the approximate blow-up profile $\boldsymbol{Q}_b$, represented by a deformation of the harmonic map $\boldsymbol{Q}$ through modulation parameters $b=(b_1,\dots,b_L)$. We also derive formal dynamical laws of $b$, which leads to our desired blow-up rate. 


\subsection{The linearized dynamics}
It is natural to look into the linearized dynamics of our system near the stationary solution $Q$. Let $\boldsymbol{u}=\boldsymbol{Q}+\boldsymbol{\varepsilon}$ where $\boldsymbol{Q}=(Q,0)^t$ and $\boldsymbol{u}$ is the solution to \eqref{eq:WM}. Then $\boldsymbol{\varepsilon}$ satisfies
\begin{align*}
  \partial_t \boldsymbol{\varepsilon} &= \boldsymbol{F}(\boldsymbol{Q}+\boldsymbol{\varepsilon})-\boldsymbol{F}(\boldsymbol{Q}) \\
  & =\begin{pmatrix}
    \dot{\varepsilon} \\
    \Delta \varepsilon - \frac{1}{r^2} (f(Q+\varepsilon) - f(Q))
  \end{pmatrix} \\
  & =\begin{pmatrix}
    \dot{\varepsilon} \\ \Delta \varepsilon - r^{-2}f'(Q)\varepsilon\end{pmatrix} - \frac{1}{r^2} \begin{pmatrix}
      0 \\ f(Q+\varepsilon)-f(Q)-f'(Q)\varepsilon
      \end{pmatrix}.
\end{align*}
Ignoring higher-order terms for $\boldsymbol{\varepsilon}$ and setting $\lambda =1$ (i.e. $r=y$), we roughly obtain the linearized system:
\begin{equation}\label{def:big H}
  \partial_t \boldsymbol{\varepsilon} + \boldsymbol{H}\boldsymbol{\varepsilon} =0,\quad \boldsymbol{H}\boldsymbol{\varepsilon}=\begin{pmatrix}
    0 & -1 \\
    H & 0
    \end{pmatrix} \begin{pmatrix}
      \varepsilon \\
      \dot{\varepsilon} 
      \end{pmatrix} 
\end{equation} 
where $H$ is the Schr\"odinger operator with explicitly computable potential $f'(Q)$ from \eqref{eq:wm1} and \eqref{def:Q} 
\begin{equation}\label{def:H}
  H:=-\Delta +\frac{V}{y^2},\quad V=f'(Q)=\frac{y^4-6y^2+1}{(y^2+1)^2}.
  \end{equation}
Due to the scaling invariance, we have $H\Lambda Q=0$ where 
\begin{equation}\label{def:Lambda Q}
  \Lambda Q=\frac{2y}{1+y^2}.
\end{equation}
However, $\Lambda Q$ slightly fails to belong to $L^2(\mathbb{R}^2)$, so we call $\Lambda Q$ the \emph{resonance} of $H$. The positivity of $\Lambda Q$ on $\mathbb{R}_+^*$ allows us to factorize $H$:
\begin{equation}\label{def:A}
  H=A^*A,\quad A= -\partial_y + \frac{Z}{y}, \quad A^*= \partial_y + \frac{1+Z}{y},\quad Z(y)=\sin Q=\frac{1-y^2}{1+y^2}.
\end{equation}
The above factorization facilitates examining the formal kernel of $H$ on $\mathbb{R}_+^*$, denoted by $\mathrm{Ker}(H)$. More precisely, the following equivalent form
\begin{align}
  Au&= -\partial_y u + \partial_y(\log \Lambda Q)u =-\Lambda Q{\partial_y}\left(\frac{u}{\Lambda Q}\right)\label{eq:equi form of A} \\
  A^*u&= \frac{1}{y} \partial_y(yu) + \partial_y(\log \Lambda Q)u = \frac{1}{y\Lambda Q}{\partial_y}\left(uy\Lambda Q\right)\label{eq:equi form of A*}
\end{align}
yields for $y>0$, $\mathrm{Ker}(H)=\textnormal{Span}(\Lambda Q,\Gamma)$ where
\begin{equation}\label{def:ker H}
   \Gamma(y)=\Lambda Q\int_1^y\frac{dx}{x(\Lambda Q(x))^2}=\begin{cases}
    O\left(\frac{1}{y}\right) & \textnormal{as } y\to 0 \\
    \frac{y}{4} + O\left(\frac{\log y}{y}\right) & \textnormal{as } y\to \infty.
  \end{cases}
\end{equation} 
From variation of parameters, we obtain the formal inverse of $H$: 
\begin{equation}\label{eq:inverse of H}
  H^{-1}f= \Lambda Q\int_{0}^y f\Gamma xdx-\Gamma\int_0^y f\Lambda Qxdx,
\end{equation}
so the inverse of $\boldsymbol{H}$ is given by
\begin{equation*}
\boldsymbol{H}^{-1}:=\begin{pmatrix}
0& H^{-1} \\
-1 & 0
\end{pmatrix}.
\end{equation*}
We remark that the inverse formula \eqref{eq:inverse of H} is uniquely determined by the boundary condition at the origin: for any smooth function $f$ with $f=O(1)$, $H^{-1}f=O(y^2)$ near the origin.

On the other hand, the super-symmetric conjugate operator $\widetilde{H}$ is given by
  \begin{equation}
    \widetilde{H}:=AA^*=-\Delta +\frac{\widetilde{V}}{y^2},\quad \widetilde{V}(y)=(1+Z)^2-\Lambda Z = \frac{4}{y^2+1}.
  \end{equation}
  We note that $\widetilde{H}$ has a repulsive property represented by its potential
  \begin{equation}\label{eq:repulsive property}
   \widetilde{V} = \frac{4}{y^2+1} >0,\quad \Lambda \widetilde{V} = -\frac{8y^2}{(y^2+1)^2} \le 0.
  \end{equation}
  Based on the following commutation relation
  \begin{equation*}
    A H = \widetilde{H} A,
  \end{equation*}
  we can naturally define higher-order derivatives adapted to the linearized Hamiltonian $H$ inductively:
  \begin{equation}\label{def:adaptedderivative}
    f_0:=f \quad f_{k+1}:=
    \begin{cases}
      A f_k & \textnormal{for} \ k \ \textnormal{even} ,\\
      A^* f_k & \textnormal{for} \ k \ \textnormal{odd} .
    \end{cases}
    \end{equation}
For the sake of simplicity, we denote the corresponding operator as follows:
\begin{equation}\label{def:A^k}
  \mathcal{A}:=A,\quad \mathcal{A}^2:=A^* A  ,\quad \mathcal{A}^3:= AA^*A,\quad \cdots \quad \mathcal{A}^k:=\underbrace{\cdots A^*A A^* A}_{k \textnormal{ times}}.
\end{equation}
    We observe that $f$ need an odd parity condition near the origin to define $f_k$. More precisely for any smooth function $f$, \eqref{eq:equi form of A} implies 
    \begin{equation}\label{eq:A near origin}
      f_1=Af\sim -y \partial_y(y^{-1}f)
    \end{equation}
    near $y=0$. Thus, $f$ must degenerate near the origin as $f=cy+O(y^2)$ and so $Af=c'y + O(y^2)$. Here, the leading term $c'y$ comes from a cancellation 
    \begin{equation}\label{eq:cancellation A}
      Ay=O(y^2),
    \end{equation}
    which is a direct consequence of \eqref{eq:A near origin}. However, $f_2$ does not degenerate near the origin like $f$ since $A^*$ does not have any cancellation like \eqref{eq:cancellation A}. Hence, $f$ should be more degenerate near the origin as $f=cy+c'y^3 + O(y^4)$. Furthermore, if $f_k$ is to be well-defined for all $k\in \mathbb{N}$, $f$ must satisfy the following condition: for all $p\in \mathbb{N}$, $f$ has a Taylor expansion near the origin as
    \begin{equation}\label{eq:odd near origin}
      f(y) = \sum_{k=0}^p c_k y^{2k+1} + O(y^{2p+3}).
    \end{equation} 
    In Appendix A of \cite{RaphaelSchweyer2014Anal.PDE}, it is proved that for a well-localized smooth 1-corotational map $\Phi(r,\theta)$, the corresponding $u$ be a smooth function that satisfies \eqref{eq:odd near origin}. 

\subsection{Admissible functions}\label{sec:admissible}
As mentioned earlier, the leading dynamics of the blow-up are determined by the leading growth of tails from the blow-up profile. 
In the same way as in \cite{RaphaelSchweyer2014Anal.PDE} and \cite{Collot2018MEM.AMS}, we first define an "admissible" vector-valued function characterized by three different indices, which represent a certain behavior near the origin and infinity, and the position of nonzero coordinate. 
\begin{definition}[Admissible functions]
  We say that a smooth vector-valued function $\boldsymbol{f}:\mathbb{R}_+ \to \mathbb{R}^2$ is admissible of degree $(p_1,p_2,\iota) \in \mathbb{N}\times \mathbb{Z} \times \{0,1\}$ if
  \begin{itemize}
  \item[(i)] $\boldsymbol{f}$ is situated on the $\iota + 1$-th coordinate, i.e.
  \begin{equation}
  \boldsymbol{f}=\begin{pmatrix} f \\ 0 \end{pmatrix}   \; \textnormal{if} \; \iota=0 \; \textnormal{and} \; \boldsymbol{f}=\begin{pmatrix} 0 \\ f \end{pmatrix} \; \textnormal{if} \; \iota=1 .
  \end{equation}
   As for such case, we use $f$ and $\boldsymbol{f}$ interchangeably.
  \item[(ii)] We can expand ${f}$ near $y=0$: for all $2p\ge p_1$,
  \begin{equation}\label{eq:admissible near origin condition}
   f(y)=\sum_{k=p_1-\iota, k  \textnormal{ is even}}^{2p} c_ky^{k+1}+ O(y^{2p+3})
  \end{equation}
  and similar expansions hold after taking derivatives.
  \item[(iii)] The adapted derivatives ${f}_k$ have the following bounds: for all $k \ge 0$ and $y\ge 1$,
  \begin{equation}\label{eq:admissible near infinity condition}
  |f_k(y)|\lesssim y^{p_2-1-\iota-k}(1+|\log y|\mathbf{1}_{p_2-k-\iota \ge 1})
  \end{equation}
  \end{itemize}
  \end{definition}
  \begin{remark}
    The logarithmic term in \eqref{eq:admissible near infinity condition} comes from integrating $y^{-1}$. 
  \end{remark}
  From \eqref{def:Lambda Q}, we can easily check that $\boldsymbol{\Lambda Q}=(\Lambda Q,0)^t$ is admissible of degree $(0,0,0)$. The next lemma says that admissible functions are designed to be compatible with the linearized operator $\boldsymbol{H}$.
  \begin{lemma}[Action of $\boldsymbol{H}$ and $\boldsymbol{H}^{-1}$ on admissible functions]\label{lem:action of H on admissible function} Let $\boldsymbol{f}$ be an admissible function of degree $(p_1,p_2,\iota)$. Recall $\overline{i}=i \mod 2$. Then 
    \begin{itemize}
    \item[(i)] For all $k \in \mathbb{N}$, $\boldsymbol{H}^k \boldsymbol{f}$ is admissible of degree
    \begin{equation}
      (\max(p_1-k,\iota),p_2-k,\overline{\iota+k}).
    \end{equation}
    \item[(ii)] For all $k \in \mathbb{N}$ and $p_2\ge \iota$, $\boldsymbol{H}^{-k}\boldsymbol{f}$ is admissible of degree
    \begin{equation}
      (p_1+k,p_2+k,\overline{\iota+k}).
    \end{equation}
    \end{itemize}
    \end{lemma}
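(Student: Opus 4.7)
My plan is to prove both statements by induction on $k$ after establishing the base case $k = 1$. The iteration is consistent because $\iota$ composes via the involution $\overline{\iota + k} = \overline{\overline{\iota + 1} + (k - 1)}$, the $p_2$-shift is linear, and the $p_1$-shift involving a $\max$ iterates correctly once one observes that the admissibility floor at each intermediate stage collapses consistently to the final parity index $\iota' = \overline{\iota + k}$.

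For the base case of (i), I would split on the value of $\iota$. When $\iota = 1$, the action $\boldsymbol{H}\boldsymbol{f} = (-f, 0)^{t}$ is only a transplantation of the scalar $f$ to the first slot with a sign; reinterpreting $f$ with the new index $\iota' = 0$ amounts exactly to the shift $(p_1, p_2) \mapsto (p_1 - 1, p_2 - 1)$ in the origin and infinity admissibility conditions, so no real computation is needed. When $\iota = 0$, I compute $Hf$ directly: using $H = -\Delta + V/y^2$ together with the Taylor expansion $V = 1 - 8y^2 + O(y^4)$ near the origin, one obtains
\begin{equation*}
  H(y^{2j+1}) = -2j(2j+2)\,y^{2j-1} - 8\,y^{2j+1} + O(y^{2j+3}),
\end{equation*}
which reproduces the required expansion of $Hf$ with new leading power $y^{p_1 - 1}$ whenever $p_1 \ge 2$; at $j = 0$ the leading coefficient vanishes, which is exactly the source of the $\max$. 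Near infinity, I would exploit the commutation $(Hf)_k = f_{k+2}$ coming from the definition of adapted derivatives, which inherits both the power bound and the log indicator from the admissibility of $f$.

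For the base case of (ii), the case $\iota = 0$ is again bookkeeping, since $\boldsymbol{H}^{-1}(f, 0)^{t} = (0, -f)^{t}$ and reinterpreting $f$ with $\iota' = 1$ corresponds exactly to $(p_1, p_2) \mapsto (p_1 + 1, p_2 + 1)$. The substantive case $\iota = 1$ uses the explicit inverse formula \eqref{eq:inverse of H}. Near the origin, plugging in $\Lambda Q = 2y + O(y^3)$ and $\Gamma = 1/(4y) + O(y)$ gives $H^{-1}(y^{2j+1}) \sim \frac{4}{(2j+2)(2j+4)}\,y^{2j+3}$, raising $p_1$ by exactly one (no $\max$ arises because the relevant coefficients never vanish). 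Near infinity, the asymptotics $\Lambda Q \sim 2/y$ and $\Gamma \sim y/4 + O((\log y)/y)$ from \eqref{def:ker H} let me bound both integrals; a logarithm appears precisely when integration of a power $y^{-1}$ creates a $\log y$, matching the indicator $\mathbf{1}_{p_2 - k - \iota \ge 1}$. For the adapted derivatives of $g := H^{-1}f$, I would use $g_k = f_{k-2}$ for $k \ge 2$, combined with the direct analysis at $k = 0$ and the auxiliary identity
\begin{equation*}
  Ag = \frac{1}{y\Lambda Q}\int_0^y f\,\Lambda Q\,x\,dx
\end{equation*}
for $k = 1$, which follows from $A\Lambda Q = 0$ together with the short calculation $A\Gamma = -1/(y\Lambda Q)$.

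The step I expect to be most delicate is organizing the $\max(p_1 - k, \iota)$ in the induction for part (i): applying $\boldsymbol{H}$ can annihilate the leading near-origin coefficient (as in $H\Lambda Q = 0$), so the naive shift $p_1 \mapsto p_1 - 1$ over-promises once the cancellation has fired. I would handle this by running the induction with a case split on whether the leading cancellation has occurred at some intermediate stage $j < k$; once it has, the admissibility floor saturates at the appropriate parity index and further applications of $\boldsymbol{H}$ cannot lower it. Propagation of the log indicator under $A$ and $A^*$ is a secondary bookkeeping issue, handled by noting that both operators behave like $\pm \partial_y \pm 1/y$ at infinity, so the indicator $\mathbf{1}_{p_2 - k - \iota \ge 1}$ shifts by an integer at each step and correctly tracks the creation of logarithms through integration in $H^{-1}$.
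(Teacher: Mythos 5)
Your proposal follows essentially the same route as the paper's proof: base case $k=1$ by a parity split on $\iota$, the near-origin cancellation as the source of the $\max$, $(Hf)_j=f_{j+2}$ at infinity, and for (ii) the identity $AH^{-1}f=(y\Lambda Q)^{-1}\int_0^y f\Lambda Q\,x\,dx$ together with $g_k=f_{k-2}$ for $k\ge 2$. The one presentational difference is that near the origin you feed the singular $\Gamma$-asymptotics into the explicit inverse formula, while the paper deliberately avoids $\Gamma$ there and instead iterates the two one-sided integral representations (first for $AH^{-1}f$, then for $H^{-1}f$), which sidesteps verifying the $\Gamma$-singularity cancellation by hand. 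Two corrections. First, your origin constants are wrong: $\Gamma(y)=-\frac{1}{4y}+O(y)$ and hence $H^{-1}(y)=-\frac{1}{8}y^3+\cdots$ (check from $H(cy^3)=-8cy+O(y^3)$); the exponents are what feed the degree count, so this is harmless. Second, and more substantively, your induction produces the floor $\overline{\iota+k}$ in the $\max$, whereas the lemma as printed uses the original $\iota$. For $\iota=0$ the two conventions impose the same near-origin Taylor floor, but for $\iota=1$ and odd $k$ with $p_1\le k$ they genuinely disagree: $\boldsymbol{H}\boldsymbol{T}_1=-\boldsymbol{T}_0=(-\Lambda Q,0)^t$ has degree $(0,0,0)$, whereas $\max(1-1,1)=1$ would rule out the $y^1$ term of $\Lambda Q$. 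The evenness of $p_1'-\iota'$ implicit in \eqref{eq:admissible near origin condition} is what forces the floor $\overline{\iota+k}$ (which is what your argument delivers); you should say so explicitly and reconcile it with the formula as printed rather than letting the two conventions pass without comment.
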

    \begin{proof}
    (i) This claim directly comes from the facts
      \[\boldsymbol{H}=\begin{pmatrix}
          0 & -1 \\
          H & 0
      \end{pmatrix},\quad  \boldsymbol{H}^2=\begin{pmatrix}
          -H & 0 \\
          0 & -H
        \end{pmatrix}.
          \]
        More precisely, the maximum choice $\max(p_1-k,\iota)$ appears from the cancellation \eqref{eq:cancellation A} near the origin. Near the infinity, the degree condition $p_2-k$ is a consequence of the simple relation $Hf=f_{2}$.  

   (ii) It suffices to calculate the case $k=1$ by induction. For $\iota=0$, 
      \[\boldsymbol{H}^{-1} \boldsymbol{f}=\begin{pmatrix}
          0& H^{-1} \\
          -1 & 0
        \end{pmatrix} \begin{pmatrix}
          f \\
           0
        \end{pmatrix}=\begin{pmatrix}
              0 \\ -f
          \end{pmatrix}, \]
     $\boldsymbol{H}^{-1}\boldsymbol{f}$ is admissible of degree $(p_1+1,p_2+1,1)$. For $\iota=1$, we have
       \[\boldsymbol{H}^{-1} \boldsymbol{f}=\begin{pmatrix}
          0& H^{-1} \\
          -1 & 0
       \end{pmatrix} \begin{pmatrix}
            0 \\
            f
         \end{pmatrix}=\begin{pmatrix}
           H^{-1}f \\
          0
        \end{pmatrix} \]  
        Instead of using the formal inverse formula \eqref{eq:inverse of H} directly, we utilize the relation \eqref{eq:equi form of A*} as
        \begin{equation}\label{eq:AH inverse}
          AH^{-1}f=\frac{1}{y\Lambda Q} \int_0^y f \Lambda Q x dx,
        \end{equation}
        and the relation \eqref{eq:equi form of A} as
        \begin{equation}\label{eq:H inverse}
          H^{-1}f=-\Lambda Q \int_0^y \frac{AH^{-1}f}{\Lambda Q}dx.
        \end{equation}
        Near the origin, \eqref{eq:AH inverse} gives the expansion for $AH^{-1}f$:
        \begin{equation}\label{eq:AH inverse near zero}
          AH^{-1}f = \sum_{k=p_1-1,\textnormal{even}}^{2p} \tilde{c}_k y^{k+2} + O(y^{2p+4}),
        \end{equation}
        thus $H^{-1}f$ satisfies the Taylor expansion
        \begin{equation}\label{eq:H inverse near zero}
          H^{-1}f = \sum_{k=p_1-1,\textnormal{even}}^{2p} \tilde{c}_k y^{k+3} + O(y^{2p+5})=\sum_{k=p_1+1-0,\textnormal{even}}^{2p} \tilde{c}_k y^{k+1} + O(y^{2p+3}).
        \end{equation}
        For $y\ge 1$, \eqref{eq:AH inverse} and \eqref{eq:H inverse} imply
        \begin{align}
          |A H^{-1}f|   &\lesssim  \int_0^y |f| dx \label{eq:AH inverse bound} \\
          &\lesssim \int_1^y x^{p_2-2}(1+|\log x|\mathbf{1}_{p_2 \ge 2}) dx \nonumber\\
          & \lesssim y^{(p_2+1)-1-0 -1}(1+|\log y|\mathbf{1}_{p_2 \ge 1}), \nonumber  \\
          |H^{-1}f| & \lesssim \frac{1}{y} \int_0^y |x AH^{-1}f| dx  \label{eq:H inverse bound}\\
          &\lesssim\frac{1}{y} \int_1^y x^{p_2}(1+|\log x|\mathbf{1}_{p_2 \ge 1}) dx \nonumber \\
          &\lesssim y^{(p_2+1)-0-1}(1+|\log y|\mathbf{1}_{p_2 \ge 0}),  \nonumber
        \end{align}
        we obtain \eqref{eq:admissible near infinity condition} for $f$ and $f_1$. The higher derivatives results come from $H(H^{-1}f)=f$. Hence, $\boldsymbol{H}^{-1}\boldsymbol{f}$ is admissible of degree $(p_1+1,p_2+1,0)$. 
    \end{proof}
    Lemma \ref{lem:action of H on admissible function} yields the presence of the admissible functions which generates the generalized null space of $\boldsymbol{H}$ formally: 
    \begin{definition}[\emph{Generalized kernel of $\boldsymbol{H}$}]\label{def:generalized ker} For each $i \ge 0$, we define an admissible function $\boldsymbol{T}_i$ of degree $(i,i,\overline{i})$ as follows:
      \begin{equation}\label{eq:def of T_i}
     \boldsymbol{T}_{i}:=(-\boldsymbol{H})^{-i}\boldsymbol{\Lambda Q}.
      \end{equation}
      \end{definition}
      \begin{remark}
        By the definition of the admissible functions, we will use the notation $T_i$ as a scalar function. 
      \end{remark}
  \subsection{$b_1$-admissible functions}
      We will keep track of the logarithmic weight $|\log b_1|$ from the blow-up profiles to be constructed later. In the sense, the logarithmic loss of $\boldsymbol{T}_i$ hinders our analysis, so we settle this problem via introducing a new class of functions.    
    \begin{definition}[$b_1$-admissible functions]\label{def:b-adm}
      We say that a smooth vector-valued function $\boldsymbol{f}:\mathbb{R}_+^*\times \mathbb{R}_+ \to \mathbb{R}^2$ is $b_1$-admissible of degree $(p_1,p_2,\iota)\in \mathbb{ N }\times \mathbb{Z}\times \{0,1\}$ if
      \begin{enumerate}
        \item[(i)] $\boldsymbol{f}$ is situated on the $\iota + 1$-th coordinate (so we use $f$ and $\boldsymbol{f}$ interchangeably).

        \item[(ii)] $f=f(b_1,y)$ can be expressed as a finite sum of the smooth functions of the form $h(b_1)\tilde{f}(y)$, where $\tilde{f}(y)$ has a Taylor expansion \eqref{eq:admissible near origin condition} and $h(b_1)$ satisfies 
        \begin{equation}
          \forall l\geq 0, \ \ \left|\frac{\partial^l h_j}{\partial b_1^l}\right|\lesssim \frac{1}{b_1^l},\quad b_1 >0.
        \end{equation}
        \item[(iii)] $f$ and its adapted derivatives $f_k$ given by \eqref{def:adaptedderivative} have the following bounds: there exists a constant $c_{p_2}>0$ such that for all $k \ge 0$ and $y\geq 1$,  
        \begin{equation}\label{eq:b admissible f_k}
          |f_k(b_1,y)| \lesssim   y^{p_2-k-1-\iota}\left(g_{p_2-k-\iota}(b_1,y)+\frac{|\log y|^{c_{p_2}}}{y^2} + \dfrac{\mathbf{1}_{\{p_2 \ge k+3+\iota, y\ge 3B_0\}}}{y^2 b_1^2|\log b_1|}\right),
        \end{equation}
         and for all $l \ge 1$
        \begin{equation}\label{eq:b admissible dbf_k}
          \left|\frac{\partial^{l}}{\partial b_1^l}f_k(b_1,y)\right|\lesssim \frac{ y^{p_2-k-1-\iota}}{b^l_1|\log b_1|}\left( \tilde{g}_{p_2-k-\iota}(b_1,y)+\frac{|\log y|^{c_{p_2}}}{y^2}+ \dfrac{\mathbf{1}_{\{p_2 \ge k+3+\iota,y\geq 3B_0\}}}{y^2b_1^2}  \right).
        \end{equation} 
         where $B_0$ is given by \eqref{def:B_0 B_1} and $g_l$, $\tilde{g}_l$ are defined as
        \begin{equation}\label{eq:b admissible g}
          g_l(b_1,y)=\dfrac{1+|\log ({b_1}y)|\mathbf{1}_{\{l\ge 1\}}}{|\log b_1|}{\mathbf{1}}_{y\leq 3B_0},\quad
          \tilde{g}_l(b_1,y)= \dfrac{1+|\log y|\mathbf{1}_{\{l\ge 1\}}}{|\log b_1|}{\mathbf{1}}_{y\leq 3B_0}.
        \end{equation}
      \end{enumerate}
    \end{definition}
  \begin{remark}
   One may think that the asymptotics \eqref{eq:b admissible f_k} and \eqref{eq:b admissible dbf_k} are quite artificial, the functions $g_{\ell}(b_1,y)$ and $\tilde{g}_{\ell}(b_1,y)$ will appear in the construction of the radiation, Lemma \ref{lem:theta_i}. Then the indicator part $\mathbf{1}_{p_2 \ge k+3+\iota,y\ge 3B_0}$ comes from integrating $g_{\ell}$ in the region $1\le y \le 3B_0$ to take $\boldsymbol{H}^{-1}$, which can be seen in more detail in the proof of the following lemma. 
  \end{remark}
    \begin{lemma}[Action of $\boldsymbol{H}$ and $\boldsymbol{H}^{-1}$ on $b_1$-admissible functions]\label{lem:action of H on b admissible} Let $\boldsymbol{f}$ be a $b_1$-admissible function of degree $(p_1,p_2,\iota)$. Then 
      \begin{itemize}
      \item[(i)] for all $k \in \mathbb{N}$, $\boldsymbol{H}^k \boldsymbol{f}$ is $b_1$-admissible of degree 
      \begin{equation}
        (\max(p_1-k,\iota),p_2-k,\overline{\iota+k}).
      \end{equation}
      \item[(ii)] for all $k \in \mathbb{N}$ and $p_2 \ge \iota$, $\boldsymbol{H}^{-k}\boldsymbol{f}$ is $b_1$-admissible of degree 
      \begin{equation}
        (p_1+k,p_2+k,\overline{\iota+k}).
      \end{equation}
      \item[(iii)] The operators $\boldsymbol{\Lambda} : \boldsymbol{f}\mapsto \boldsymbol{\Lambda}\boldsymbol{f}$ and $b_1 \frac{\partial}{\partial b_1} : \boldsymbol{f}\mapsto b_1 \frac{\partial \boldsymbol{f}}{\partial b_1}$ preserve the degree.
      \end{itemize}
      \end{lemma}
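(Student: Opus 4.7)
The proof parallels that of Lemma \ref{lem:action of H on admissible function}: I would reduce to $k = 1$ by induction, treat $\iota = 0$ and $\iota = 1$ separately, and invoke the explicit inversion formulas \eqref{eq:AH inverse}--\eqref{eq:H inverse}. The essentially new content beyond the admissible case is the bookkeeping of the $b_1$-dependent weights $g_l$ and $\tilde g_l$ and their splitting across the transition scale $y = 3B_0$.

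For (i), since $\boldsymbol{H}$ is $b_1$-independent, the finite-sum decomposition $f = \sum h(b_1)\tilde f(y)$ from Definition \ref{def:b-adm}(ii) is transported to $Hf$ (case $\iota = 0$) or $-f$ (case $\iota = 1$) coordinate-wise, and the $b_1$-derivative bounds follow at once because $\partial_{b_1}$ commutes with $\boldsymbol{H}$. The pointwise bounds \eqref{eq:b admissible f_k}--\eqref{eq:b admissible dbf_k} at the new index $k$ coincide with the old bounds at index $k + 2$ when $\iota = 0$ (through $Hf = f_2$) and at index $k$ when $\iota = 1$; the cancellation $Ay = O(y^2)$ recorded in \eqref{eq:cancellation A} supplies the floor $\max(p_1 - 1, \iota)$ near the origin exactly as in the admissible proof.

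For (ii), by induction and by (i) it suffices to treat $k = 1$ in the case $\iota = 1$, so the work reduces to bounding $H^{-1} f$ and $A H^{-1} f$ together with their $b_1$-derivatives. Near the origin the Taylor expansion is handled summand-by-summand via \eqref{eq:AH inverse near zero}--\eqref{eq:H inverse near zero}, producing the shift $p_1 \mapsto p_1 + 1$. For the far-field bound I would insert the $b_1$-admissible control on $f$ into \eqref{eq:AH inverse bound}--\eqref{eq:H inverse bound} and split each integral at $y = 3B_0$. On $[1, 3B_0]$ the $g$-weight contributes a uniform factor $O(1/|\log b_1|)$ (with an extra $|\log(b_1 x)|$ when the relevant parity index is $\ge 1$), and polynomial integration produces exactly $g_{p_2 + 1}(b_1, y)$ in accordance with Definition \ref{def:b-adm}(iii). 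For $y \ge 3B_0$ the $g$-contribution vanishes, but the accumulated mass on $[1, 3B_0]$ is $O(B_0^{p_2 - 1} / |\log b_1|)$, which on comparison with the target size $y^{p_2 - 1}/(y^2 b_1^2 |\log b_1|)$ generates precisely the far-field indicator term $\mathbf{1}_{\{p_2 + 1 \ge k + 3,\; y \ge 3B_0\}}$. The stationary tail $|\log y|^c / y^2$ integrates to itself with a bumped exponent absorbed in $c_{p_2 + 1}$. The bound \eqref{eq:b admissible dbf_k} for $H^{-1} f$ is obtained by differentiating \eqref{eq:AH inverse}--\eqref{eq:H inverse} in $b_1$ and repeating the splitting; the passage from $g$ to $\tilde g$ comes from $\partial_{b_1}[|\log(b_1 y)|/|\log b_1|]$, which drops one factor of $|\log b_1|$ and replaces $\log(b_1 y)$ by a $\log y$-type weight multiplied by $1/b_1$.

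For (iii), $\boldsymbol{\Lambda}$ and $b_1 \partial_{b_1}$ act component-wise and preserve the coordinate index $\iota$. Using $\Lambda(y^a) = a y^a$ and $\Lambda(\log y) = 1$, the degrees $(p_1, p_2)$ are preserved, and for the adapted derivatives one invokes $[\Lambda, H] = 2H$ together with the lower-order commutators $[\Lambda, A]$, $[\Lambda, A^*]$ to recover the bounds at each $f_k$. Preservation under $b_1 \partial_{b_1}$ is immediate from \eqref{eq:b admissible dbf_k} with $l = 1$, using $\tilde g_l \lesssim g_l$. The main obstacle in the whole argument is the threshold matching in the far-field estimate of (ii): one must verify index by index that the indicator $\mathbf{1}_{\{p_2 \ge k + 3 + \iota,\; y \ge 3B_0\}}$ in Definition \ref{def:b-adm}(iii) is generated at exactly the right degree by integration of the $g$-weight across $[1, 3B_0]$, which is precisely the regime where the $g$-term can no longer be dominated by the polynomial target and must be reclassified as a far-field contribution.
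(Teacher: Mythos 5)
Your proposal reproduces the paper's proof structure in all three parts, and the key steps (transferring from Lemma \ref{lem:action of H on admissible function}, reducing to $k=1$, $\iota=1$, splitting the far-field integral at $y = 3B_0$, tracking the accumulated mass into the indicator term, differentiating the inversion formulas in $b_1$) are exactly the ones the paper uses. Two slips in part (iii) should be corrected. First, the inequality $\tilde g_l \lesssim g_l$ is false for $y$ near $3B_0$ (there $\tilde g_l \approx 1$ while $g_l \approx 1/|\log b_1|$); what holds, and what the argument actually needs after multiplying \eqref{eq:b admissible dbf_k} by $b_1$ with $l=1$, is $\tilde g_l / |\log b_1| \lesssim g_l$, so the extra $1/|\log b_1|$ produced by the $\partial_{b_1}$-bound cannot be discarded. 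Second, $[\Lambda, H] \neq 2H$: the correct identity is $[\Lambda, H] = -2H + y^{-2}\Lambda V$, and the potential term $y^{-2}\Lambda V$ must be carried along; the paper avoids this bookkeeping by citing the bound $|(\Lambda f)_k| \lesssim |y f_{k+1}| + |f_k| + y^{p_2-k-3-\iota}$ for $y\ge 1$ from \cite{RaphaelSchweyer2014Anal.PDE}.
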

      \begin{proof}
      (i) We can borrow the proof of Lemma \ref{lem:action of H on admissible function} since $b_1$ is independent of $H$.
      
      (ii) Similar to the proof of Lemma \ref{lem:action of H on admissible function}, it suffices to consider the case $\iota=1$ and $k=1$. Near the origin, we still use \eqref{eq:AH inverse near zero} and \eqref{eq:H inverse near zero} for $\tilde{f}$ from $h(b_1)\tilde{f}(y)$ in the Definition \ref{def:b-adm}.
      
      However for $y\ge 1$, we need a subtle calculation to integrate the terms containing $g_l$ and $\tilde{g}_l$, defined in \eqref{eq:b admissible g}. More precisely, \eqref{eq:AH inverse bound} implies for $1\le y\le 3B_0$, 
      \begin{align}
        |AH^{-1}f| &\lesssim \int_1^y  x^{p_2-2} g_{p_2-1}(b_1,x)+x^{p_2-4}{|\log x|^{c_{p_2}}} dx \nonumber \\
        &\lesssim \int_1^y x^{p_2-2}\dfrac{1+|\log ({b_1}x)|\mathbf{1}_{\{p_2\ge 2\}}}{|\log b_1|} dx + y^{p_2-3}{|\log y|^{1+c_{p_2}}}\nonumber\\
        & \lesssim \frac{1}{b_1^{p_2-1}|\log b_1|} \int_{0}^{b_1 y} x^{p_2-2}(1+|\log x|\mathbf{1}_{\{p_2\ge 2\}}) dx + y^{p_2-3}{|\log y|^{1+c_{p_2}}} \nonumber\\
        &\lesssim y^{p_2-1} \frac{1+|\log (b_1 y)|\mathbf{1}_{\{p_2\ge 1\}}}{|\log b_1|} + y^{p_2-3}{|\log y|^{1+c_{p_2}}}\nonumber \\
        &= y^{(p_2+1)-1-1-0} \left( g_{(p_2+1)-1}(b_1,y) + \frac{|\log y|^{1+c_{p_2}}}{y^2} \right),\label{eq:AH inverse b admissible small y}
      \end{align}
      and for $y\ge 3B_0$, 
        \begin{align}
          |AH^{-1}f| &\lesssim \int_1^y  x^{p_2-2}g_{p_2-1}(b_1,x)+x^{p_2-4}{|\log x|^{c_{p_2}}} + \dfrac{x^{p_2-4}\mathbf{1}_{\{p_2 \ge 4 ,x\ge 3B_0 \}}}{b_1^2|\log b_1|} dx \nonumber \\
          &\lesssim  \frac{1}{b_1^{p_2-1}|\log b_1|} +  \dfrac{y^{p_2-3}\mathbf{1}_{\{p_2 \ge 4 \}}}{b_1^2|\log b_1|}  +y^{p_2-3}{|\log y|^{1+c_{p_2}}} \nonumber \\
          &\lesssim y^{(p_2+1)-1-1-0} \left( \dfrac{\mathbf{1}_{\{p_2 \ge 1+3, y\ge 3B_0 \}}}{y^2b_1^2|\log b_1|}+ \frac{|\log y|^{1+c_{p_2}}}{y^2} \right).\label{eq:AH inverse b admissible large y}
        \end{align}
        Once again, \eqref{eq:H inverse bound} and \eqref{eq:AH inverse b admissible small y} yield for $1\le y\le 3B_0$,
        \begin{align*}
          |H^{-1}f| &\lesssim \frac{1}{y}\int_1^y  x^{p_2} g_{p_2}(b_1,x)+x^{p_2-3}{|\log x|^{1+c_{p_2}}} dx  \\
          &= y^{(p_2+1)-1-0} \left( g_{p_2+1}(b_1,y) + \frac{|\log y|^{2+c_{p_2}}}{y^2} \right),
        \end{align*}
        and \eqref{eq:AH inverse b admissible large y} implies for $y\ge 3B_0$,
        \begin{align}
          |H^{-1}f| &\lesssim \frac{1}{y}\int_1^y x^{p_2-2}{|\log x|^{1+c_{p_2}}} + \dfrac{x^{p_2-2}\mathbf{1}_{\{p_2 \ge 4 ,x\ge 3B_0 \}}}{b_1^2|\log b_1|} dx \nonumber \\
          &\lesssim y^{(p_2+1)-1-0} \left( \dfrac{\mathbf{1}_{\{p_2 \ge 3, y\ge 3B_0 \}}}{y^2b_1^2|\log b_1|}+ \frac{|\log y|^{2+c_{p_2}}}{y^2} \right),\nonumber
        \end{align}
        we obtain \eqref{eq:b admissible f_k} for $f$ and $f_1$. The higher derivatives results come from $H(H^{-1}f)=f$. We can easily prove \eqref{eq:b admissible dbf_k} by replacing $g_{l}$ to $\tilde{g}_l$ and dividing $b_1^l |\log b_1|$. Hence, $\boldsymbol{H}^{-1}\boldsymbol{f}$ is $b_1$-admissible of degree $(p_1+1,p_2+1,0)$. 

        (iii) Note that
        \[ \boldsymbol{\Lambda f}=
        \begin{cases}
          (\Lambda f, 0)^t & \textnormal{if } \iota=0, \\
          (0,\Lambda_0 f)^t & \textnormal{if } \iota=1,
        \end{cases} \]
        and $\Lambda_0 f = f+ \Lambda f$, we get the desired result since ${\Lambda}$ preserve the parity of ${f}$ and its adapted derivative satisfies the bound
        \[|(\Lambda f)_k| \lesssim |y f_{k+1}| + |f_k| + y^{p_2-k-3-\iota} ,\quad y\ge 1,\]
        which established in \cite{RaphaelSchweyer2014Anal.PDE}.

        Near the origin, the property of the operator $b_1 \frac{\partial}{\partial b_1}$ comes from the fact that $b_1 \frac{\partial}{\partial b_1}$ preserves the parity of $f$. For $y\ge 1$, \eqref{eq:b admissible dbf_k} multiplied by $b_1$ with $l=1$ is bounded to \eqref{eq:b admissible f_k} from the following bound
        \[\frac{\tilde{g}_l(b_1,y)}{|\log b_1|} \lesssim {g}_l(b_1,y). \qedhere\]
      \end{proof}
\subsection{Control of the extra growth} The elements of the null space of $\boldsymbol{H}$, which was defined in \eqref{eq:def of T_i}, serves as a kind of tails in our blow-up profile. Since we basically plan a bubbling off blow-up by scaling, the situation where the scaling generator $\boldsymbol{\Lambda}$ is taken by the tails $\boldsymbol{T}_i$ naturally emerges. Especially for $i\ge 2$, the leading asymptotics of $\boldsymbol{\Lambda T}_i$ matches that of $(i-1)\boldsymbol{T}_i$ and determines the leading dynamical laws. However, the extra growth of $\boldsymbol{\Lambda T}_i - (i-1)\boldsymbol{T}_i$ is inadequate to close our analysis, we will eliminate it by adding some radiations, which were first introduced in \cite{MerleRaphaelRodnianski2013Invent.Math.}.

We now define the radiation situated on the first coordinate as follows: for small $b_1>0$,
\begin{equation}
\boldsymbol{\Sigma}_{b_1}=\begin{pmatrix} \Sigma_{b_1} \\ 0 \end{pmatrix},\quad \Sigma_{b_1} = H^{-1} \{-{c}_{b_1} \chi_{B_0/4} \Lambda Q + d_{b_1} H[(1-\chi_{B_0})\Lambda Q]\}
\end{equation}
where
\begin{align}
  c_{b_1}&= \frac{4}{\int \chi_{B_0/4} (\Lambda Q)^2}=\frac{1}{|\log b_1|}+O\left( \frac{1}{|\log b_1|^2}\right),\label{def:c_b}\\
  d_{b_1}&=c_{b_1} \int_0^{B_0} \chi_{B_0/4} \Lambda Q \Gamma y dy = O\left(\frac{1}{b_1^2 |\log b_1|}\right).\label{def:d_b}
\end{align}
From the inverse formula \eqref{eq:inverse of H}, we obtain the asymptotics near origin and infinity:
\begin{align}\label{eq:sigma asymptotics}
  \Sigma_{b_1}= \begin{cases}
    c_{b_1}T_2 & \textnormal{for} \ \ y \leq \frac{B_0}{4} \\
4\Gamma & \textnormal{for} \ \ y \geq 3B_0.
  \end{cases}
\end{align}
To deal with $\boldsymbol{T}_1$, which is radiative itself, we further define
\begin{equation}\label{def:tilde c_b}
  \tilde{c}_{b_1}:=\frac{\langle \Lambda_0 \Lambda Q, \Lambda Q \rangle}{\langle \chi_{B_0/4} \Lambda Q, \Lambda Q \rangle}=\frac{1}{2|\log b_1|} + O\left( \frac{1}{|\log b_1|^2} \right).
\end{equation}
\begin{lemma}[Cancellation by the radiation]\label{lem:theta_i}
  For $i\ge 1$, let $\boldsymbol{\Theta}_i$ be 
  \begin{align}
    \boldsymbol{\Theta}_1&:=\boldsymbol{\Lambda}\boldsymbol{T}_1 - \tilde{c}_{b_1} \chi_{B_0/4} \boldsymbol{T}_1 \\
    \textnormal{for }i\ge 2, \quad  \boldsymbol{\Theta}_i&:=\boldsymbol{\Lambda}\boldsymbol{T}_i - (i-1)\boldsymbol{T}_i -(-\boldsymbol{H})^{-i+2} \boldsymbol{\Sigma}_{b_1} 
  \end{align}
  where $\boldsymbol{T}_i$ is given by \eqref{eq:def of T_i}. Then $\boldsymbol{\Theta}_i$ is $b_1$-admissible of degree $(i,i,\overline{i})$.
\end{lemma}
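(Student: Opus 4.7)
My plan is to treat the cases $i=1$ and $i\ge 2$ separately, with the latter handled by an induction that reduces to a base case at $i=2$ where the design of the radiation $\boldsymbol{\Sigma}_{b_1}$ comes into play. The structural point is that the commutator between the scaling generator and $\boldsymbol{H}^{-1}$ produces errors that are themselves $b_1$-admissible of the correct degree, so the radiation only needs to absorb a single leading asymptotic mismatch.

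For $i=1$, I would compute directly. From $\boldsymbol{T}_1 = (0,\Lambda Q)^t$ one obtains $\boldsymbol{\Lambda T}_1 = (0,\Lambda_0\Lambda Q)^t$, so the nonzero component of $\boldsymbol{\Theta}_1$ is $\Lambda_0\Lambda Q - \tilde c_{b_1}\chi_{B_0/4}\Lambda Q$. Near the origin, the Taylor expansion $\Lambda Q = 2y + O(y^3)$ gives $\Lambda_0\Lambda Q = 4y+O(y^3)$, so $\boldsymbol{\Theta}_1 = (4-2\tilde c_{b_1})y + O(y^3)$, matching \eqref{eq:admissible near origin condition} with $(p_1,\iota) = (1,1)$. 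At infinity the cutoff vanishes for $y\ge B_0/2$, leaving $\Lambda_0\Lambda Q = 4/y^3 + O(y^{-5})$, which is far stronger than the required $y^{-1}$ decay. In the region $1\le y\le B_0/4$ the leading contribution is $-\tilde c_{b_1}\Lambda Q \sim -1/(y|\log b_1|)$, matching precisely the $b_1$-admissible weight $y^{-1}g_0(b_1,y)$ thanks to the asymptotic $\tilde c_{b_1} \sim 1/(2|\log b_1|)$ in \eqref{def:tilde c_b}. The $b_1$-derivative bounds \eqref{eq:b admissible dbf_k} follow from $|\partial_{b_1}\tilde c_{b_1}|\lesssim 1/(b_1|\log b_1|^2)$.

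For $i\ge 2$, I would argue by induction on $i$. The pointwise commutator $[H,\Lambda] = 2H - V'/y$ translates into a vector-valued identity $[\boldsymbol{\Lambda},\boldsymbol{H}^{-1}] = \boldsymbol{H}^{-1} - \boldsymbol{E}$, where $\boldsymbol{E}$ acts by $(u,\dot u)\mapsto (H^{-1}(V'/y)H^{-1}\dot u, 0)^t$, and a direct manipulation yields the recursion
\begin{equation*}
\boldsymbol{\Theta}_i = (-\boldsymbol{H})^{-1}\boldsymbol{\Theta}_{i-1} + \boldsymbol{E}\boldsymbol{T}_{i-1} \quad\text{for } i\ge 3.
\end{equation*}
Since $V'/y = -16 + O(y^2)$ near the origin and $V'/y = O(y^{-4})$ at infinity, inspecting the admissibility indices via Lemma \ref{lem:action of H on b admissible} shows that $\boldsymbol{E}\boldsymbol{T}_{i-1}$ is $b_1$-admissible of degree $(i,i,\overline i)$: the four-order infinity gain from $V'/y$ outweighs the two applications of $H^{-1}$, and the near-origin Taylor structure is preserved. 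Combined with Lemma \ref{lem:action of H on b admissible}(ii), which lifts $\boldsymbol{\Theta}_{i-1}$ from degree $(i-1,i-1,\overline{i-1})$ to degree $(i,i,\overline i)$ under $(-\boldsymbol{H})^{-1}$, the recursion propagates $b_1$-admissibility, provided the base case $i=2$ is established.

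The base case and main obstacle is the verification that $\boldsymbol{\Theta}_2 = \boldsymbol{\Lambda T}_2 - \boldsymbol{T}_2 - \boldsymbol{\Sigma}_{b_1}$ is $b_1$-admissible of degree $(2,2,0)$. Expanding $T_2 = -H^{-1}\Lambda Q$ via the inverse formula \eqref{eq:inverse of H} gives $T_2 \sim y\log y - y/2 + O(\log y/y)$ at infinity, so $\Lambda T_2 - T_2 \sim y + O(\log y/y)$; the leading $y\log y$ terms cancel exactly in this difference. The radiation is designed so that the remaining linear-in-$y$ growth cancels: for $y\ge 3B_0$ the tail $\Sigma_{b_1} = 4\Gamma \sim y$ absorbs this growth thanks to the choice of $c_{b_1}$ in \eqref{def:c_b}; for $y\le B_0/4$ the representation $\Sigma_{b_1} = c_{b_1}T_2$ with $c_{b_1}\sim 1/|\log b_1|$ produces the logarithmic weight $g_2(b_1,y)$ of \eqref{eq:b admissible f_k}; and in the transition region $B_0/4\le y\le 3B_0$, the correction $d_{b_1}H[(1-\chi_{B_0})\Lambda Q]$ with $d_{b_1} \sim 1/(b_1^2|\log b_1|)$ from \eqref{def:d_b} interpolates between the two asymptotics and accounts for the indicator term $\mathbf{1}_{\{y\ge 3B_0\}}/(y^2 b_1^2|\log b_1|)$. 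The heart of the argument is this region-by-region matching of asymptotics against the $b_1$-admissibility bounds, together with careful bookkeeping of the logarithmic exponent $c_{p_2}$ through iterated applications of $(-\boldsymbol{H})^{-1}$, the latter being automatic from the framework of Lemma \ref{lem:action of H on b admissible}.
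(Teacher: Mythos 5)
Your proof is correct and takes essentially the same route as the paper: a direct verification for $i=1$ and $i=2$ exploiting the explicit form of the radiation $\boldsymbol{\Sigma}_{b_1}$, followed by an induction in $i$ driven by the commutator $[H,\Lambda]=2H-y^{-2}\Lambda V$. Your packaging of the paper's odd/even sub-cases into the single recursion $\boldsymbol{\Theta}_i = (-\boldsymbol{H})^{-1}\boldsymbol{\Theta}_{i-1} + \boldsymbol{E}\boldsymbol{T}_{i-1}$ via $[\boldsymbol{\Lambda}, \boldsymbol{H}^{-1}] = \boldsymbol{H}^{-1} - \boldsymbol{E}$ is a clean reformulation but not a different argument (for even $i-1$ one has $\boldsymbol{E}\boldsymbol{T}_{i-1}=0$, recovering the paper's trivial coordinate swap; for odd $i-1$ it reproduces the paper's computation of $\boldsymbol{H\Theta}_{i+1}$, and the $b_1$-admissibility of $\boldsymbol{E}\boldsymbol{T}_{i-1}$ is checked exactly as the paper handles $(0,y^{-2}\Lambda V\,T_{i+1})^t$).
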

\begin{remark}
  As mentioned earlier, our radiation $\Sigma_{b_1}$ cancels the extra growth of $\Lambda T_2 - T_2  \sim y$ from the asymptotics
  \[T_2 = y\log y + cy + O\left( \frac{|\log y|^2}{y} \right),\quad \Lambda T_2 = y\log y + (c+1)y + O\left( \frac{|\log y|^2}{y} \right)\]
  by $4\Gamma$ in \eqref{eq:sigma asymptotics}. Since $T_2$ and $\Gamma$ are elements of the generalized null space of $H$, the above cancellation holds for all $\boldsymbol{\Theta}_i$, $i \ge 2$. 
\end{remark}
\begin{proof}
  \textbf{Step 1:} $i=1$.
  Note that $\boldsymbol{\Theta}_1=(0,\Theta_1)^t$ and \[\Theta_1=\Lambda_0\Lambda Q - \tilde{c}_{b_1} \Lambda Q \chi_{B_0/4},\]
  $\boldsymbol{\Theta}_1$ is $b_1$-admissible of degree $(1,1,1)$ from the explicit formulae
  \[\Lambda Q(y) =\frac{2y}{1+y^2},\quad \Lambda_0 \Lambda Q (y) =  4y/(1+y^2)^2\]
   and the bounds for $l\ge 1$,
  \begin{equation}\label{eq:b derivatives estimates}
    \left\lvert \frac{\partial^l {c}_{b_1}}{\partial b_1^l} \right\rvert+ \left\lvert \frac{\partial^l \tilde{c}_{b_1}}{\partial b_1^l} \right\rvert \lesssim \frac{1}{b_1^l |\log b_1|^2},\quad  \left\lvert \frac{\partial^l {d}_{b_1}}{\partial b_1^l}\right\rvert\lesssim \frac{1}{b_1^{l+2}|\log b_1|} ,\quad \left\lvert \frac{\partial^l \chi_{B_0}}{\partial b_1^l} \right\rvert \lesssim \frac{\mathbf{1}_{y\sim B_0}}{b_1^l}.
  \end{equation}
  \textbf{Step 2:} $i=2$. Now, we use induction on $i\ge 2$. For $i=2$, \eqref{eq:sigma asymptotics} and the admissibility of $\boldsymbol{T}_2$ imply that $\Theta_2$ satisfies the desired condition near zero \eqref{eq:admissible near origin condition} since 
\begin{equation}
  \boldsymbol{\Theta}_2=\begin{pmatrix} \Theta_2\\ 0 \end{pmatrix}= \begin{pmatrix} \Lambda T_2 - T_2 -\Sigma_{b_1}\\ 0 \end{pmatrix}.
\end{equation}
 To exhibit the behavior near infinity, we deal with the case $1\le y \le 3B_0$ and $y\ge 3B_0$ separately. The inverse formula \eqref{eq:inverse of H} yields for $1\le y \le 3B_0$, 
\begin{align}
  \Sigma_{b_1}(y)&=\Gamma\int_0^y {c}_{b_1} \chi_{B_0/4} (\Lambda Q)^2 xdx - \Lambda Q\int_{0}^y {c}_{b_1} \chi_{B_0/4} \Lambda Q \Gamma xdx + d_{b_1} (1-\chi_{B_0}) \Lambda Q \nonumber \\
  & =  y\frac{\int_0^y\chi_{\frac{B_0}{4}}(\Lambda Q)^2 x}{\int \chi_{\frac{B_0}{4}}(\Lambda Q)^2x}+O\left(\frac{1+y}{|\log b_1|}\right),\\
  \Theta_2(y)& = y+ O\left( \frac{|\log y|^2}{y} \right)  - y\frac{\int_0^y\chi_{\frac{B_0}{4}}(\Lambda Q)^2 x}{\int \chi_{\frac{B_0}{4}}(\Lambda Q)^2}+O\left(\frac{1+y}{|\log b_1|}\right)\nonumber\\
  &=y\frac{\int_{y}^{B_0}\chi_{B_0/4}(\Lambda Q)^2 x}{\int \chi_{\frac{B_0}{4}}(\Lambda Q)^2}+O\left(\frac{1+y}{|\log b_1|}\right)+O\left(\frac{|\log y|^2}{y}\right) \nonumber \\
  &= O \left( \frac{1+y}{|\log b_1|}(1+|\log (b_1 y) |) \right).
\end{align}
For $y\ge 3B_0$, \eqref{def:ker H} implies
\begin{equation}
  \Sigma_{b_1}(y) = \Gamma\int_0^y {c}_{b_1} \chi_{B_0/4} (\Lambda Q)^2 xdx = y+ O\left( \frac{\log y}{y} \right).
\end{equation}
Hence, for $y\ge 1$, $\Theta_2$ satisfies \eqref{eq:b admissible f_k} for the case $k=0$ as
\begin{equation}
|\Theta_2(y)| \lesssim y^{2-0-1-0}g_2(b_1,y) + y^{2-0-3-0}(\log y)^2.
\end{equation}
The higher derivatives, namely $f_k$ and $\partial^l f_k/\partial b_1^l$ can also be estimated by using \eqref{eq:AH inverse}, the bounds of the coefficients \eqref{def:c_b}, \eqref{def:d_b}, \eqref{eq:b derivatives estimates} and the commutator relation
\[A(\Lambda f) = Af + \Lambda Af -\frac{\Lambda Z}{y} f,\quad H(\Lambda f)=2Hf + \Lambda Hf - \frac{\Lambda V}{y^2}f\]
where $Z$ and $V$ are given by \eqref{def:H} and \eqref{def:A}. Here, we can easily check that $\Lambda Z /y$ is an odd function and $\Lambda V /y^2$ is an even function. Furthermore for $y\ge 1$,
\begin{equation}\label{eq:bound of potentials}
 \left\lvert \frac{\partial^k}{\partial y^k} \left( \frac{\Lambda Z}{y} \right) \right\rvert \lesssim \frac{1}{1+y^{k+3}},\quad \left\lvert \frac{\partial^k}{\partial y^k} \left( \frac{\Lambda V}{y} \right) \right\rvert \lesssim \frac{1}{1+y^{k+4}}.
\end{equation}
Therefore, $\boldsymbol{\Theta}_2$ is $b_1$-admissible of degree $(2,2,0)$.

\textbf{Step 3}: \emph{Induction on} $i$. Suppose that $\boldsymbol{\Theta}_i$ is $b_1$-admissible of degree $(i,i,\overline{i})$. For even $i$, $\boldsymbol{\Theta}_{i+1}$ is $b_1$-admissible of degree $(i+1,i+1,\overline{i+1})$ since
\begin{align*}
  \boldsymbol{\Theta}_{i+1}&= \begin{pmatrix} 0 \\ \Lambda_0 T_{i+1} - iT_{i+1} - (-H)^{-i/2+1}\Sigma_{b_1}  \end{pmatrix}\\
  & = \begin{pmatrix} 0 \\ \Lambda T_{i} - (i-1)T_{i} - (-H)^{-i/2+1}\Sigma_{b_1}  \end{pmatrix} = \begin{pmatrix} 0 \\ \Theta_i \end{pmatrix}.
\end{align*}
For odd $i$, we have
\begin{align*}
  \boldsymbol{H \Theta}_{i+1}&=\begin{pmatrix} 0 & 1 \\ H & 0 \end{pmatrix} \begin{pmatrix} \Theta_{i+1} \\ 0\end{pmatrix}
\\
& =\begin{pmatrix} 0 \\ H \Lambda T_{i+1} - i HT_{i+1} -H(-H)^{-(i+1)/2+1} \Sigma_{b_1}\end{pmatrix} \\
&= \begin{pmatrix} 0 \\ \Lambda HT_{i+1} - (i-2) HT_{i+1} -y^{-2}\Lambda V T_{i+1} + (-H)^{-(i-1)/2+1} \Sigma_{b_1}\end{pmatrix}\\
&=-\begin{pmatrix} 0 \\ \Lambda T_{i} - (i-2) T_{i}  - (-H)^{-(i-1)/2+1} \Sigma_{b_1}+y^{-2}\Lambda V T_{i+1}\end{pmatrix} \\
&= -\begin{pmatrix} 0 \\ \Lambda_0 T_{i} - (i-1) T_{i}  - (-H)^{-(i-1)/2+1} \Sigma_{b_1}\end{pmatrix} + \begin{pmatrix}
  0 \\ y^{-2}\Lambda V T_{i+1}
\end{pmatrix}\\
&= -\boldsymbol{\Theta}_i +\begin{pmatrix}
  0 \\ y^{-2}\Lambda V T_{i+1}
\end{pmatrix}.
\end{align*}
 The Taylor expansion condition \eqref{eq:admissible near origin condition} of $(0, y^{-2} \Lambda V T_{i+1})^t$ comes from the definition of $\boldsymbol{T}_i$ and the cancellation $\Lambda V = O(y^2)$ near $y=0$.

For $y \ge 1$, \eqref{eq:bound of potentials} implies
\[\mathcal{A}^k \left( \frac{\Lambda V}{y^2} T_{i+1} \right) \lesssim \sum_{j=0}^k \frac{1}{y^{j+4}} y^{i-(k-j)}|\log y|^{c_i}\lesssim y^{i-3-k-1}|\log y|^{c_i}.\]
Hence, $(0, y^{-2} \Lambda V T_{i+1})^t$ is $b_1$-admissible of degree $(i,i,1)$, the desired result comes from Lemma \ref{lem:action of H on b admissible}.
\end{proof}

\subsection{Adapted norms of $b_1$ admissible functions}
The next lemma yields some suitable norms corresponding to the adapted derivatives of $b_1$-admissible functions.
\begin{lemma}[Adapted norms of $b_1$-admissible function]\label{lem:admissible bounds}
For $i\ge 1$, a $b_1$-admissible function $\boldsymbol{f}$ of degree $(i,i,\overline{i})$ has the following bounds:
\begin{enumerate}
  \item[(i)] Global bounds:
  \begin{equation}\label{eq:sobolev global}
    {\lVert {f}_{k-\overline{i}} \rVert}_{L^2(|y|\le 2B_1)}  \lesssim 
    \begin{cases}
     {b_1^{k-i}}|\log b_1|^{\gamma(i-k-2)-1} & \textnormal{if } k\le i-3 \\
     \dfrac{b_1^{k-i}}{|\log b_1|} & \textnormal{if } k=i-2,i-1 \\
     1  & \textnormal{if } k\ge i
    \end{cases} 
  \end{equation}
  \item[(ii)] Logarithmic weighted bounds:  
\begin{equation}\label{eq:sobolev log}
  \sum_{k=0}^m {\left\lVert \frac{1+|\log y|}{1+y^{m-k}}f_{k-\overline{i}}\right\rVert}_{L^2(|y|\le 2B_1)}\lesssim \begin{cases}
    {b_1^{m-i}} |\log b_1|^C & \ \ \ \textnormal{ for } m\le i-1\\
    |\log b_1|^C & \ \ \ \textnormal{ for } m\ge i
   \end{cases} 
\end{equation}
  \item[(iii)] Improved global bounds:
  \begin{equation}\label{eq:sobolev improved}
    \sum_{j=0}^{k-\overline{i}} {\left\lVert y^{-(k-\overline{i}-j)} f_{j}\right\rVert}_{L^2(y \sim B_1)}\lesssim b_1^{k-i} |\log b_1|^{\gamma(i-k-2)-1}.
  \end{equation}
\end{enumerate}
Here, $B_1=\frac{|\log b_1|^{\gamma}}{b_1}$ and $\gamma = 1 + \overline{\ell}$.
\end{lemma}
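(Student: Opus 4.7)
The plan is to derive all three bounds by substituting the pointwise estimates of Definition \ref{def:b-adm} into the relevant $L^2(y\,dy)$ integrals and splitting the domain into natural pieces according to the scales $B_0$ and $B_1$. The partition of $\{y \leq 2B_1\}$ into the inner region $y \leq 1$, the bulk region $1 \leq y \leq 3B_0$, and the outer region $3B_0 \leq y \leq 2B_1$ matches the different regimes appearing in the admissibility bound \eqref{eq:b admissible f_k}, and part (iii) is the same computation restricted to the narrow annulus $y \sim B_1$. Since $\boldsymbol{f}$ has degree $(i,i,\overline{i})$, the shifted index $f_{k-\overline{i}}$ obeys $|f_{k-\overline{i}}(y)| \lesssim y^{i-k-1}$ on $y \geq 1$, modulated by $g_{i-k}$, $|\log y|^{c_i}/y^2$, and the outer-indicator term; the task then reduces to a careful bookkeeping of powers of $y$ together with the factors $|\log b_1|^{-1}$ inside $g_{i-k}$ and $|\log b_1|^{\gamma}$ hidden in $B_1$.

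First I would dispose of the inner region $y \leq 1$: by the Taylor expansion \eqref{eq:admissible near origin condition} and its analogues for the adapted derivatives, $f_{k-\overline{i}}$ is uniformly bounded there, so its contribution is at most $O(1)$ and never dominant. Next I would compute the bulk contribution coming from the $g_{i-k}$ piece: the substitution $u = b_1 y$ converts $\int_1^{3B_0} y^{2(i-k-1)+1}[g_{i-k}(b_1,y)]^2\,dy$ into $b_1^{-2(i-k)}|\log b_1|^{-2}$ times a bounded integral involving $(1+|\log u|)^2 u^{2(i-k)-1}$, producing the bound $b_1^{k-i}/|\log b_1|$. The subordinate piece $y^{i-k-3}|\log y|^{c_i}$ is always dominated in this range, and for the boundary cases $k = i-2, i-1$ this already yields the bound in (i), since the outer indicator in \eqref{eq:b admissible f_k} is then switched off.

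For $k \leq i-3$, the outer indicator is active and gives $|f_{k-\overline{i}}| \lesssim y^{i-k-3}/(b_1^2|\log b_1|)$ on $3B_0 \leq y \leq 2B_1$; its $L^2(y\,dy)$ norm is controlled by $B_1^{i-k-2}/(b_1^2|\log b_1|) = b_1^{k-i}|\log b_1|^{\gamma(i-k-2)-1}$ after using $B_1 = |\log b_1|^{\gamma}/b_1$. This matches (i) and, since $\gamma(i-k-2) \geq 1$ in this regime, dominates the bulk contribution by a factor $|\log b_1|^{\gamma(i-k-2)}$. For $k \geq i$ the leading power $y^{i-k-1}$ has non-positive degree and each region contributes at most $O(1)$. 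Part (iii) follows by restricting the same computations to the annulus $y \sim B_1$ and summing over $j$ via $\|y^{-(k-\overline{i}-j)}f_j\|_{L^2(y\sim B_1)} \lesssim B_1^{-(k-\overline{i}-j)}\|f_j\|_{L^2(y\sim B_1)}$. For (ii), the extra weight $(1+|\log y|)/(1+y^{m-k})$ is absorbed into the universal $|\log b_1|^C$ via $|\log y| \leq |\log b_1| + |\log(b_1 y)|$, while the power weight produces the stated $b_1^{m-i}$ or $1$ scaling depending on whether $m \leq i-1$ or $m \geq i$.

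The main obstacle will be tracking the competition between the gain $|\log b_1|^{-1}$ carried by $g_{i-k}$ in the bulk and the multiplicative loss $|\log b_1|^{\gamma(i-k-2)}$ hidden in the outer range through $B_1^{i-k-2}$. Since $\gamma = 1+\overline{\ell} \geq 1$ and $i-k-2 \geq 1$ whenever the indicator in \eqref{eq:b admissible f_k} is active, the outer contribution dominates the bulk precisely in the range $k \leq i-3$; this is exactly the dichotomy that produces the three-case split in (i). Apart from this logarithmic bookkeeping, everything else reduces to straightforward direct integration.
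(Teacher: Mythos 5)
Your proposal is correct and follows essentially the same route as the paper's proof: the same decomposition of $\{|y|\le 2B_1\}$ into $\{y\le 1\}$, $\{1\le y\le 3B_0\}$, $\{3B_0\le y\le 2B_1\}$, the same identification of the dominant outer indicator term when $k\le i-3$ producing the factor $B_1^{i-k-2}/(b_1^2|\log b_1|)=b_1^{k-i}|\log b_1|^{\gamma(i-k-2)-1}$, the same observation that (ii) is (i) up to a $|\log b_1|^C$ loss from $|\log y|\lesssim|\log b_1|$ on the whole range, and the same restriction to $y\sim B_1$ for (iii). The substitution $u=b_1 y$ in the bulk and the factoring of $y^{-(k-\overline{i}-j)}$ as $B_1^{-(k-\overline{i}-j)}$ in (iii) are cosmetic reformulations of what the paper writes as direct pointwise bounds followed by integration.
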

\begin{remark}
  Due to the growth in \eqref{eq:b admissible f_k}, it is indispensable to restrict the integration domain taking $L^2$ norm. Later, we will attach a cutoff function $\chi_{B_1}$ to the profile modifications. Considering Leibniz's rule, the adapted derivative $\mathcal{A}^k$ can be taken on such modifications or the cutoff function. Then the global bounds \eqref{eq:sobolev global} yield some estimates for the former case and \eqref{eq:sobolev improved} give those for the latter case. The choice of cutoff region $B_1$ will be determined by the localization of our blow-up profile, which can be seen in more detail in Proposition \ref{prop:local approx}.
\end{remark}
\begin{proof}
  (i) From \eqref{eq:b admissible f_k}, ${f}_{k- \overline{i}}$ satisfies the following estimate for $y\ge 2$:
   \[|{f}_{k-\overline{i}}|  \lesssim  
   y^{i-k-1}\left(g_{i-k}(b_1,y)+\frac{|\log y|^{c_{p_2}}}{y^2} + \dfrac{\mathbf{1}_{\{i \ge k+3, y\ge 3B_0\}}}{y^2 b_1^2|\log b_1|}\right).\]
   Therefore, we obtain \eqref{eq:sobolev global} for $i\ge k+1$,
   \begin{align*}
   {\lVert {f}_{k-\overline{i}} \rVert}_{L^2(|y|\le 2B_1)}  &\lesssim {\lVert \mathbf{1}_{|y|\le 2} \rVert}_{L^2} +  {\left\lVert  y^{i-k-1}\frac{1+|\log (b_1y)|}{|\log b_1|} \right\rVert}_{L^2 (2\le |y|\le 3B_0)} \\
   & + {\lVert  y^{i-k-3}|\log y|^{c_{i}}\rVert}_{L^2 (2\le |y|\le 2B_1)}+{\left\lVert  \frac{y^{i-k-3}\mathbf{1}_{\{i\ge k+3\}}}{b_1^2|\log b_1|}\right\rVert}_{L^2 (3B_0\le |y|\le 2B_1)} \\
    &\lesssim 1 + \frac{b_1^{k-i}}{|\log b_1|}+b_1^{(k-i+2)\mathbf{1}_{\{i\ge k+2\}}}|\log b_1|^C + \frac{B_1^{i-k-2} }{b_1^2 |\log b_1|}\mathbf{1}_{\{i\ge k+3\}}\\
    &\lesssim \dfrac{b_1^{k-i}}{|\log b_1|}|\log b_1|^{\gamma(i-k-2)\mathbf{1}_{\{i\ge k+3\}}},
   \end{align*}
   and the case $i\le k$ also holds similarly.
   
   (ii) The logarithmic weighted bounds \eqref{eq:sobolev log} are nothing but \eqref{eq:sobolev global} multiplied by the logarithmic loss $|\log b_1|^C$ with the fact $|\log y|/|\log b_1| \lesssim 1$ on $2\le |y| \le 3B_0$.

   (iii) We can prove \eqref{eq:sobolev improved} from pointwise estimate in the region $y\sim B_1$:
   \begin{equation}\label{eq:pointwise homogeneous}
    |y^{-(k-\overline{i} -j)} f_j | \lesssim y^{i-k-3} \left( |\log y|^C + \frac{\mathbf{1}_{\{i\ge \overline{i} + j +3\}}}{b_1^2 |\log b_1|} \right) \lesssim \frac{y^{i-k-1}}{|\log b_1|^{2\gamma + 1}}. \qedhere
   \end{equation}
\end{proof}

\subsection{Approximate blow-up profiles}
From now on, we fix
\[\ell \ge 2 \quad \textnormal{and} \quad L=\ell+\overline{\ell+1}.\]
We construct the blow-up profiles based on the generalized kernels $\boldsymbol{T}_i$. To be more specific, our blow-up scenario is done by bubbling off $\boldsymbol{Q}$ via scaling and adding $b_i \boldsymbol{T}_i$, the evolution of $\lambda$ is determined by the system of dynamical laws for $b=(b_1,\dots,b_L)$. Here, we are faced with unnecessary growth made by linear and nonlinear terms. To minimize this growth, we define the homogeneous functions, which do not affect the evolution of $b$ (i.e. $b_i T_i$). We note that this kind of construction was introduced in \cite{RaphaelSchweyer2014Anal.PDE}.
\begin{definition}[Homogeneous functions]
  Denote $J=(J_1,\dots,J_L)$ and $|J|_2=\sum_{k=1}^L k J_k$. We say that a smooth vector-valued function $\boldsymbol{S}(b,y)=\boldsymbol{S}(b_1,\dots,b_L,y)$ is homogeneous of degree $(p_1,p_2,\iota,p_3) \in \mathbb{N} \times \mathbb{Z}\times \{0,1\} \times\mathbb{N}$ if it can be expressed as a finite sum of smooth functions of the form $(\prod_{i=1}^L b_i^{J_i}) \boldsymbol{S}_J(y)$, where $\boldsymbol{S}_J(y)$ is a $b_1$-admissible function of degree $(p_1,p_2,\iota)$ with $|J|_2=p_3$.
\end{definition}



\begin{proposition}[Construction of the approximate profile]\label{prop:approx profile}
Given a large constant $M>0$, there exists a small constant $0<b^*(M) \ll 1$ such that a $C^1$ map 
\[b : s \mapsto (b_1(s),\dots,b_L(s))\in \mathbb{R}^{*}_+ \times \mathbb{R}^{L-1}\]
verifies the existence of a slowly modulated profile $\boldsymbol{Q}_b$ given by
\begin{equation}
  \boldsymbol{Q}_{b}:=\boldsymbol{Q}+\boldsymbol{\alpha}_{b}, \ \boldsymbol{\alpha}_{b}:=\sum_{i=1}^L b_i\boldsymbol{T}_i+\sum_{i=2}^{L+2} \boldsymbol{S}_i,
  \end{equation} 
which drives the following equation
\begin{equation}\label{eq:modpsi}
  \partial_s \boldsymbol{Q}_{b}-\boldsymbol{F}(\boldsymbol{Q}_b) + b_1 \boldsymbol{\Lambda Q}_b = \mathbf{Mod}(t) +\boldsymbol{\psi}_b. 
\end{equation}
where $\mathbf{Mod}(t)$ establishes the dynamical law of $b$:
\begin{equation}\label{def:mod}
  \mathbf{Mod}(t) = \sum_{i=1}^L ((b_i)_s +(i-1 + c_{b_1,i})b_1b_i - b_{i+1}) \left(\boldsymbol{T}_i + \sum_{j=i+1}^{L+2} \frac{\partial \boldsymbol{S}_j}{\partial b_i}\right),
\end{equation}
where we set $b_{L+1}=0$ for convenience and $c_{b_1,i}$ is defined by
\begin{equation}
  c_{b_1,i} = \begin{cases}
    \tilde{c}_{b_1}=\frac{\langle \Lambda_0 \Lambda Q, \Lambda Q \rangle}{\langle \chi_{B_0/4} \Lambda Q, \Lambda Q \rangle} & \textnormal{for } i=1 \\
    c_{b_1}=\frac{4}{\int \chi_{B_0/4} (\Lambda Q)^2} & \textnormal{for } i\neq 1
  \end{cases}
\end{equation}
Here, $\boldsymbol{T}_i$ is given by \eqref{eq:def of T_i} and $\boldsymbol{S}_i$ is a homogeneous function of degree $(i,i,\overline{i},i)$ satisfies
  \begin{equation}\label{linearized:eq:degree Si}
    \boldsymbol{S}_1=0,\quad
    \frac{\partial \boldsymbol{S}_i}{\partial b_j}=0 \ \ \textnormal{for} \ \ 2\leq i\leq j \leq L.
  \end{equation}
Moreover, the restriction $|b_k|\lesssim b_1^k$ and $0<b_1 < b^*(M)$ yield the estimates below for $\boldsymbol{\psi}_b=(\psi_b,\dot{\psi}_b)^t$, 
\begin{enumerate}
  \item[(i)] Global bound: for $2\le k \le L-1$,
  \begin{align}
     {\lVert \mathcal{A}^k{\psi}_b \rVert}_{L^2(|y|\le 2B_1)} + {\lVert \mathcal{A}^{k-1}{\dot{\psi}}_b \rVert}_{L^2(|y|\le 2B_1)} &\lesssim b_1^{k+1}|\log b_1|^C \label{eq:psi global k},\\
    \quad {\lVert \mathcal{A}^L{\psi}_b \rVert}_{L^2(|y|\le 2B_1)} + {\lVert \mathcal{A}^{L-1}{\dot{\psi}}_b \rVert}_{L^2(|y|\le 2B_1)} &\lesssim \frac{b_1^{L+1}}{|\log b_1|^{1/2}} \label{eq:psi global L}\\
    {\lVert \mathcal{A}^{L+1}{\psi}_b \rVert}_{L^2(|y|\le 2B_1)} + {\lVert \mathcal{A}^{L}{\dot{\psi}}_b \rVert}_{L^2(|y|\le 2B_1)} &\lesssim \frac{b_1^{L+2}}{|\log b_1|}.\label{eq:psi global L+1}
  \end{align}
  \item[(ii)] Logarithmic weighted bound: for $ m \ge 1 $ and $0\le k \le m$,
  \begin{align}
   {\left\lVert \frac{1+|\log y| }{1+y^{m-k}} \mathcal{A}^k \psi_b \right\rVert}_{L^2(|y|\le 2B_1)} &\lesssim b_1^{m+1}|\log b_1|^C ,\quad m \le L+1 \label{eq:psi log}\\
   {\left\lVert \frac{1+|\log y| }{1+y^{m-k}} \mathcal{A}^k \dot{\psi}_b \right\rVert}_{L^2(|y|\le 2B_1)} &\lesssim b_1^{m+2}|\log b_1|^C,\quad m \le L.\label{eq:psi dot log}
  \end{align}
  \item[(iii)] Improved local bound: 
  \begin{equation}\label{eq:psi local}
    {}^{\forall} 2\le k \le L+1,\quad {\lVert \mathcal{A}^k{\psi}_b \rVert}_{L^2(|y|\le 2M)} + {\lVert \mathcal{A}^{k-1}{\dot{\psi}}_b \rVert}_{L^2(|y|\le 2M)} \lesssim  C(M) b_1^{L+3}.
  \end{equation}
\end{enumerate}
Here, $B_0 = \frac{1}{b_1}$ and $B_1 = \frac{|\log b_1|^{\gamma}}{b_1}$.
\end{proposition}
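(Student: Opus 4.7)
The plan is to expand $\partial_s\boldsymbol{Q}_b - \boldsymbol{F}(\boldsymbol{Q}_b) + b_1\boldsymbol{\Lambda}\boldsymbol{Q}_b$ about $\boldsymbol{Q}$ and inductively solve at each homogeneity degree in $b$ (assigning $b_k$ the weight $k$). Using $\boldsymbol{F}(\boldsymbol{Q})=0$ and the linearization $\boldsymbol{F}(\boldsymbol{Q}+\boldsymbol{\alpha})=-\boldsymbol{H}\boldsymbol{\alpha}+\boldsymbol{N}(\boldsymbol{\alpha})$, the degree-$1$ contribution cancels via $\boldsymbol{H}(b_1\boldsymbol{T}_1)+b_1\boldsymbol{\Lambda Q}=0$. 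The relations $\boldsymbol{H}\boldsymbol{T}_i=-\boldsymbol{T}_{i-1}$ (Definition~\ref{def:generalized ker}) and Lemma~\ref{lem:theta_i} then let me rewrite the remaining terms so that kernel directions $\boldsymbol{T}_i$ are collected into $\mathbf{Mod}(t)$: the $(i-1)b_1 b_i\boldsymbol{T}_i$ piece comes from the leading part of $\boldsymbol{\Lambda T}_i$, while the modulation coefficient $c_{b_1,i}$ is produced by projecting the radiation piece $(-\boldsymbol{H})^{-i+2}\boldsymbol{\Sigma}_{b_1}$ (resp.\ $\tilde{c}_{b_1}\chi_{B_0/4}\boldsymbol{T}_1$ when $i=1$) onto the $\boldsymbol{T}_i$ direction, using $\Sigma_{b_1}=c_{b_1}T_2$ on $y\le B_0/4$ from \eqref{eq:sigma asymptotics}.

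The induction runs on $i=2,\dots,L+2$. Given $\boldsymbol{S}_2,\dots,\boldsymbol{S}_{i-1}$, I collect all terms of weighted degree exactly $i$ in $\partial_s\boldsymbol{\alpha}_b + \boldsymbol{H}\boldsymbol{\alpha}_b + b_1\boldsymbol{\Lambda}\boldsymbol{\alpha}_b - \boldsymbol{N}(\boldsymbol{\alpha}_b)$ that are not of the form $c(b)\,\boldsymbol{T}_j$; call this residual $\boldsymbol{R}_i$. Lemmas~\ref{lem:action of H on b admissible} and \ref{lem:theta_i} show that $\boldsymbol{R}_i$ is a finite sum of $b$-monomials of weight $i$ times $b_1$-admissible functions of degree $(i-1,i-1,\overline{i-1})$, and I set $\boldsymbol{S}_i:=-\boldsymbol{H}^{-1}\boldsymbol{R}_i$, which Lemma~\ref{lem:action of H on b admissible}(ii) promotes to a $b_1$-admissible function of degree $(i,i,\overline{i})$ — hence a homogeneous function of degree $(i,i,\overline{i},i)$. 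The constraint \eqref{linearized:eq:degree Si} holds because $\boldsymbol{R}_i$ involves only $b_1,\dots,b_{i-1}$. The nonlinearity is handled by Taylor-expanding $f(u)=\tfrac12\sin 2u$ around $Q$: smoothness yields admissible coefficients times polynomial combinations of $\boldsymbol{\alpha}_b$, so each homogeneity level closes in the admissible framework.

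The residual $\boldsymbol{\psi}_b$ left after truncating the induction at $i=L+2$ collects three groups: (a) cutoff errors such as $-b_1^2\tilde{c}_{b_1}(1-\chi_{B_0/4})\boldsymbol{T}_1$ and analogous pieces from $1-\chi_{B_0}$ inside $\boldsymbol{\Sigma}_{b_1}$, all supported near $y\sim B_0$; (b) the weighted-degree $\ge L+3$ contributions, chiefly $b_1\boldsymbol{\Lambda}\boldsymbol{S}_{L+1}$, $b_1\boldsymbol{\Lambda}\boldsymbol{S}_{L+2}$, and cross terms from substituting the modulation ODE into $\partial_s\boldsymbol{\alpha}_b$; (c) the tail of the Taylor expansion of $\boldsymbol{N}(\boldsymbol{\alpha}_b)$ beyond degree $L+2$. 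Each piece factors as a $b$-monomial of known weight times a $b_1$-admissible function of known degree, so the bounds \eqref{eq:psi global k}--\eqref{eq:psi global L+1} and the weighted versions \eqref{eq:psi log}--\eqref{eq:psi dot log} follow by direct application of \eqref{eq:sobolev global}, \eqref{eq:sobolev log}, and \eqref{eq:sobolev improved} for the cutoff-localized pieces. The local bound \eqref{eq:psi local} uses that $\chi_{B_0/4}\equiv 1$ on $|y|\le 2M$ once $b_1$ is small enough, killing all cutoff errors there so that only degree-$(L+3)$ or higher pieces survive, each bounded pointwise by $C(M)b_1^{L+3}$.

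The main obstacle is securing the logarithmic gains $|\log b_1|^{-1/2}$ and $|\log b_1|^{-1}$ in \eqref{eq:psi global L} and \eqref{eq:psi global L+1}, which are not generic. These force the precise choice of $c_{b_1}$ and $\tilde{c}_{b_1}$ in \eqref{def:c_b} and \eqref{def:tilde c_b}: the radiation $\boldsymbol{\Sigma}_{b_1}$ and the constant $\tilde{c}_{b_1}$ are tuned exactly so that, after subtracting off the projection onto $\boldsymbol{T}_1$, the residual $\boldsymbol{\Theta}_i$ lies in the $b_1$-admissible class of Definition~\ref{def:b-adm}, enjoying the built-in $|\log b_1|^{-1}$ improvement. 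Propagating this improvement through $\boldsymbol{H}^{-1}$ (preserved by Lemma~\ref{lem:action of H on b admissible}(ii)), through $\boldsymbol{\Lambda}$ (preserved by Lemma~\ref{lem:action of H on b admissible}(iii)), and without losing it to derivatives of the cutoffs $\chi_{B_0},\chi_{B_0/4}$ (which contribute an $\mathbf{1}_{y\sim B_0}$-localized factor) — while verifying at each step that the chosen constants annihilate the $O(1)$ leading coefficient — is the delicate bookkeeping that constitutes the technical heart of the construction, paralleling \cite{RaphaelSchweyer2014Anal.PDE}.
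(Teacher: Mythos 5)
The proposal is correct and follows essentially the same strategy as the paper's proof: linearize \eqref{eq:modpsi} about $\boldsymbol{Q}$, extract the $\boldsymbol{T}_i$ kernel directions into $\mathbf{Mod}(t)$ with the modulation shift $c_{b_1,i}$ arising from $\Sigma_{b_1}=c_{b_1}T_2$ on $y\le B_0/4$, build $\boldsymbol{S}_i$ by applying $(-\boldsymbol{H})^{-1}$ to the weight-$i$ residual (which is $b_1$-admissible of degree $(i-1,i-1,\overline{i-1})$, promoted to $(i,i,\overline{i})$ by Lemma~\ref{lem:action of H on b admissible}(ii)), Taylor-expand $f$ to distribute the nonlinearity by weight, and dump the weight $\ge L+3$ tail plus the $(1-\chi_{B_0/4})$- and $\widetilde{\boldsymbol{\Sigma}}_{b_1}$-supported cutoff errors into $\boldsymbol{\psi}_b$, which are then estimated via Lemma~\ref{lem:admissible bounds}. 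Your identification of the logarithmic gains in \eqref{eq:psi global L}--\eqref{eq:psi global L+1} as the delicate point is right; the paper secures them by combining the $b_1$-admissibility improvement on the $\boldsymbol{S}_{L+2}$ terms with the sharp pointwise bound \eqref{eq:sharp pointwise sigma} on $H\widetilde{\Sigma}_{b_1}$, which your narrative captures.
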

\begin{remark}
  As can be seen in the following proof, the homogeneous profile $\boldsymbol{S}_i$ is eventually derived from the $b_1$-admissible function $\boldsymbol{\Theta}_{i-1}$ with some nonlinear effects.
\end{remark}
\begin{proof}
  \textbf{Step 1: }Linearization. We pull out the modulation law of $b$ from linearizing the renormalized equation. Recall 
  \[\boldsymbol{F}(\boldsymbol{u}):=\begin{pmatrix}
    \dot{u} \\ \Delta u-\frac{1}{r^2}f(u)
  \end{pmatrix}.\]
  Since $\boldsymbol{F}(\boldsymbol{Q})=0$, we have
  \begin{align*}
    \partial_s \boldsymbol{Q}_b+b_1\boldsymbol{\Lambda} \boldsymbol{Q}_b -\boldsymbol{F}(\boldsymbol{Q}_b) &= \partial_s \boldsymbol{\alpha}_b + b_1 \boldsymbol{\Lambda} (\boldsymbol{Q} + \boldsymbol{\alpha}_b ) - (\boldsymbol{F}(\boldsymbol{Q}+ \boldsymbol{\alpha}_b) -\boldsymbol{F}(\boldsymbol{Q})) \\
    &=: b_1 \boldsymbol{\Lambda}\boldsymbol{Q} + (\partial_s + b_1 \boldsymbol{\Lambda})\boldsymbol{\alpha}_b + \boldsymbol{H}\boldsymbol{\alpha}_b + \boldsymbol{N}(\boldsymbol{\alpha}_b) 
  \end{align*}
  where $\boldsymbol{N}$ denotes the higher-order terms:
  \begin{equation}
    \boldsymbol{N}(\boldsymbol{\alpha}_b) := \frac{1}{y^2}\begin{pmatrix}
      0 \\ f(Q+\alpha_b)-f(Q)-f'(Q)\alpha_b 
      \end{pmatrix},\quad \boldsymbol{\alpha}_b = \begin{pmatrix}
        \alpha_b \\ \dot{\alpha}_b 
        \end{pmatrix}.
  \end{equation}
  Note that 
  \begin{align*}
    \partial_s \boldsymbol{\alpha}_b&=\sum_{i=1}^{L} \left[ (b_i)_s \boldsymbol{T}_i + \sum_{j=i+1}^{L+2}  (b_i)_s \frac{\partial \boldsymbol{S}_j}{\partial b_i}\right] \\
    & =\sum_{i=1}^{L} \left[ (b_i)_s \boldsymbol{T}_i + \sum_{j=1}^{i-1}  (b_j)_s \frac{\partial \boldsymbol{S}_i}{\partial b_j} \right] + \sum_{i=1}^L (b_i)_s \frac{\partial \boldsymbol{S}_{L+1}}{\partial b_i} + \sum_{i=1}^L (b_i)_s \frac{\partial \boldsymbol{S}_{L+2}}{\partial b_i}.
  \end{align*}
Rearranging the linear terms to the degree with respect to $b_1$ using the fact $\boldsymbol{H}\boldsymbol{T}_{i+1}=-\boldsymbol{T}_i$ for $1\le i \le L-1$,
  \begin{align}\label{eq:alpha linear1}
    b_1 \boldsymbol{\Lambda}\boldsymbol{Q} + (\partial_s + b_1 \boldsymbol{\Lambda})\boldsymbol{\alpha}_b + \boldsymbol{H}\boldsymbol{\alpha}_b &=  \sum_{i=1}^{L}[(b_i)_s \boldsymbol{T}_i + b_1b_i\boldsymbol{\Lambda} \boldsymbol{T}_i-b_{i+1}\boldsymbol{T}_{i}] \nonumber \\
    &+ \sum_{i=1}^L \left[ \boldsymbol{H}\boldsymbol{S}_{i+1}+b_1\boldsymbol{\Lambda} \boldsymbol{S}_{i}+\sum_{j=1}^{i-1}  (b_j)_s \frac{\partial \boldsymbol{S}_i}{\partial b_j}  \right] \nonumber \\
    & +  b_1 \boldsymbol{\Lambda} \boldsymbol{S}_{L+1} +\boldsymbol{H}\boldsymbol{S}_{L+2} + \sum_{i=1}^L (b_i)_s \frac{\partial \boldsymbol{S}_{L+1}}{\partial b_i} \nonumber  \\
  &   +b_1\boldsymbol{\Lambda} \boldsymbol{S}_{L+2} + \sum_{i=1}^L (b_i)_s \frac{\partial \boldsymbol{S}_{L+2}}{\partial b_i}. 
  \end{align}
  From Lemma \ref{lem:theta_i}, 
  \begin{equation*}
     (b_1)_s \boldsymbol{T}_1 + b_1^2  \boldsymbol{\Lambda} \boldsymbol{T}_1 - b_2 \boldsymbol{T}_1 = ((b_1)_s + b_1^2  \tilde{c}_{b_1}-b_2)\boldsymbol{T}_1 - b_1^2\tilde{c}_{b_1}(1-\chi_{B_0/4}) \boldsymbol{T}_1 + b_1^2\boldsymbol{\Theta}_1
  \end{equation*}
  and for $2\le i\le L$,
  \begin{align}
    (b_i)_s \boldsymbol{T}_i + b_1b_i\boldsymbol{\Lambda} \boldsymbol{T}_i - b_{i+1}\boldsymbol{T}_i &= ((b_i)_s + (i-1 + c_{b_1})b_1b_i -b_{i+1})\boldsymbol{T}_i \nonumber  \\
    & + b_1b_i(-\boldsymbol{H})^{-i+2} (\boldsymbol{\Sigma}_{b_1}-c_{b_1} \boldsymbol{T}_2) + b_1b_i\boldsymbol{\Theta}_i.\label{eq:b_i identity}
  \end{align}
Hence, we can separate $\mathbf{Mod}(t)$ from the RHS of \eqref{eq:alpha linear1}:
\begin{align}\label{eq:alpha linear2}
  \mathbf{Mod}(t) &- b_1^2\tilde{c}_{b_1}(1-\chi_{B_0/4}) \boldsymbol{T}_1 + \sum_{i=2}^L b_1b_i(-\boldsymbol{H})^{-i+2} (\boldsymbol{\Sigma}_{b_1}-c_{b_1} \boldsymbol{T}_2) \\
& + \sum_{i=1}^L \left[ \boldsymbol{H}\boldsymbol{S}_{i+1} +b_1b_i \boldsymbol{\Theta}_i+b_1\boldsymbol{\Lambda} \boldsymbol{S}_{i}-\sum_{j=1}^{i-1}((j-1+c_{b_1,j})b_1b_j-b_{j+1})\frac{\partial \boldsymbol{S}_i}{\partial b_j}  \right] \nonumber \\
& + \boldsymbol{H}\boldsymbol{S}_{L+2}+b_1 \boldsymbol{\Lambda} \boldsymbol{S}_{L+1}-\sum_{i=1}^L ((i-1+c_{b_1,i})b_1b_i-b_{i+1})\frac{\partial \boldsymbol{S}_{L+1}}{\partial b_i} \nonumber \\
& + b_1 \boldsymbol{\Lambda} \boldsymbol{S}_{L+2}-\sum_{i=1}^L ((i-1+c_{b_1,i})b_1b_i-b_{i+1})\frac{\partial \boldsymbol{S}_{L+2}}{\partial b_i}. \nonumber
\end{align}
\textbf{Step 2: }Construction of $\boldsymbol{S}_i$. One can observe that the second and third lines of \eqref{eq:alpha linear2} provide the definition of homogeneous profiles $\boldsymbol{S}_i$ inductively. We need to pull out the additional homogeneous functions from $\boldsymbol{N}(\boldsymbol{\alpha}_b)=(0,N({\alpha}_b))^t$ via Taylor theorem: 
\begin{equation*}
N(\alpha_b) = \frac{1}{y^2} \left\{ \sum_{j=2}^{\frac{L+1}{2}} \frac{f^{(j)}(Q)}{j!}\alpha_b^j + N_0(\alpha_b) \alpha_b^{\frac{L+3}{2}} \right\}
\end{equation*}
where $N_0(\alpha_b)$ is the coefficient of the remainder term
\begin{equation*}
  N_0(\alpha_b) =  \frac{1}{((L+1)/2)!}\int_0^1 (1-\tau)^{\frac{L+1}{2}}f^{\left(\frac{L+3}{2}\right)}(Q+\tau \alpha_b) d\tau.
  \end{equation*}
  Roughly, $N_0(\alpha_b)=O(b_1^{L+3})$. We also rewrite the Taylor polynomial part of $N(\alpha_b)$ in terms of the degree of $b_1$: for the $L$-tuple $J:=(J_2,J_4,\dots,J_{L-1},\tilde{J}_2,\tilde{J}_4,\dots,\tilde{J}_{L+1})$,  
\begin{align*}
  \sum_{j=2}^{\frac{L+1}{2}} \frac{f^{(j)}(Q)}{j!}\alpha_b^j &= \sum_{i=1}^{\frac{L+1}{2}} P_{2i} + R'
\end{align*} 
where 
\begin{align*}
  P_i &:= \sum_{j=2}^{\frac{L+1}{2}} \sum_{|J|_1=j}^{|J|_2=i} c_{j,J} \prod_{k=1}^{\frac{L-1}{2}} (b_{2k}T_{2k})^{J_{2k}} \prod_{k=1}^{\frac{L+1}{2}} S_{2k}^{\tilde{J}_{2k}}, \\   
R' &:= \sum_{j=2}^{\frac{L+1}{2}}\sum_{|J|_1=j }^{|J|_2 \ge L+3}   c_{j,J} \prod_{k=1}^{\frac{L-1}{2}} (b_{2k}T_{2k})^{J_{2k}} \prod_{k=1}^{\frac{L+1}{2}} S_{2k}^{\tilde{J}_{2k}},\quad  c_{j,J}= \frac{f^{(j)}(Q)}{ \prod_{k=1}^{\frac{L-1}{2}} J_{2k}! \prod_{k=1}^{\frac{L+1}{2}} \tilde{J}_{2k}!}
\end{align*}
with two distinct counting notations
\begin{align*}
   |J|_1 := \sum_{k=1}^{\frac{L-1}{2}} J_{2k} + \sum_{k=1}^{\frac{L+1}{2}} \tilde{J}_{2k},\quad  |J|_2 := \sum_{k=1}^{\frac{L-1}{2}} 2kJ_{2k} + \sum_{k=1}^{\frac{L+1}{2}} 2k\tilde{J}_{2k}.
\end{align*}
In short, $P_{2i} = O(b_1^{2i})$ and $R'=O(b_1^{L+3})$. We collect all $O(b_1^{L+3})$ terms
\begin{equation} \label{def:remainder of N}
  R:= N_0(\alpha_b)\alpha_b^{\frac{L+3}{2}} + R'
\end{equation}
We claim that $\boldsymbol{P}_{2i}/y^2 = (0,P_{2i}/y^2)$ is homogeneous of degree $(2i-1,2i-1,1,2i)$ for $1\le i \le \frac{L+1}{2}$. The case $i=1$ is trivial since $P_2=0$. For $2\le i \le \frac{L+1}{2}$, we recall that ${P}_{2i}/y^2$ is a linear combination of the following monomials: for $ |J|_1=j$, $|J|_2=2i$ and $2\le j \le i$,
\[\frac{f^{(j)}(Q)}{y^2}  \prod_{k=1}^{i} (b_{2k}T_{2k})^{J_{2k}} \prod_{k=1}^{i} S_{2k}^{\tilde{J}_{2k}}.\]
Near the origin, we observe that $T_{2k}$, $S_{2k}$ are odd functions and the parity of a function $f^{(j)}(Q)$ is determined by the parity of $j$, each monomial is either an odd or even function. Hence, it suffices to calculate the leading power of the Taylor expansion of each function constituting the monomial: $T_{2k} \sim y^{2k+1}$, $S_{2k} \sim O(b_1^{2k}) y^{2k+1}$ and $f^{(j)}(Q) \sim y^{\overline{j+1}}$, the leading power of each monomial is given by 
\begin{equation}\label{eq:b order monomial}
  b_1^{\sum_{k=1}^i 2k {J}_{2k} }\cdot b_1^{\sum_{k=1}^i 2k \tilde{J}_{2k} } = b_1^{2i},
\end{equation}
\[y^{-2} y^{\overline{j+1}}  y^{\sum_{k=1}^i (2k+1)J_{2k} }  y^{\sum_{k=1}^i (2k+1)\tilde{J}_{2k} }=  y^{2i+j-1-\overline{j} }.\]
Therefore, the Taylor expansion condition \eqref{eq:admissible near origin condition} comes from $j-1-\overline{j} \ge 1 $ is an odd number since $j\ge 2$. 

Similarly for $y\ge 1$, $|T_{2k}| \lesssim y^{2k-1}\log y$, $|S_{2k}|\lesssim b_1^{2k} y^{2k-1}$ and $|f^{(j)}(Q)|\lesssim y^{-1 + \overline{j}}$ imply 
\begin{align}
  \left| \frac{ f^{(j)}(Q)}{y^2} \prod_{k=1}^i b_{2k}^{J_{2k}}T_{2k}^{J_{2k}} \prod_{k=1}^i S_{2k}^{\tilde{J}_{2k}} \right| &\lesssim b_1^{2i} |y^{-3 + \overline{j}}| \prod_{k=1}^i |y^{2k-1}\log y|^{J_{2k}} \prod_{k=1}^i {|y^{2k-1}|}^{\tilde{J}_{2k}}\nonumber \\
  &\lesssim b_1^{2i} y^{2i-j-3 + \overline{j}} |\log y|^C \lesssim b_1^{2i} y^{2i-5} |\log y|^C \label{eq:far away monomial}
\end{align}
with the fact $j-\overline{j} \ge 2$. We can easily estimate the higher derivatives of each monomial.

Under the setting $\boldsymbol{P}_{2k+1} := (0,0)^t$ for $k \in \mathbb{N}$, we obtain the final definition of $\boldsymbol{S}_i$: $\boldsymbol{S}_1:=0$ and for $i=1,\dots,L+1$,
\begin{equation}
    \boldsymbol{S}_{i+1}:= (-\boldsymbol{H})^{-1} \left(b_1b_i \boldsymbol{\Theta}_i+b_1\boldsymbol{\Lambda} \boldsymbol{S}_{i} + \frac{\boldsymbol{P}_{i+1}}{y^2}-\sum_{j=1}^{i-1}((j-1+c_{b_1,j})b_1b_j-b_{j+1})\frac{\partial \boldsymbol{S}_i}{\partial b_j}  \right).
\end{equation}
From the homogeneity of $\boldsymbol{P}_i/y^2$ established above and Lemma \ref{lem:action of H on b admissible}, Lemma \ref{lem:theta_i}, we can prove $\boldsymbol{S}_i$ is homogeneous of degree $(i,i,\overline{i}, i)$ for $1\le i \le L+2$ with \eqref{linearized:eq:degree Si} via induction. To sum up, we get \eqref{eq:modpsi} by collecting remaining errors into $\boldsymbol{\psi}_b$:
\begin{align}
  \boldsymbol{\psi}_b :=&-b_1^2\tilde{c}_{b_1}(1-\chi_{B_0/4})\boldsymbol{T}_1 +\sum_{i=2}^{L}b_1b_i (-\boldsymbol{H})^{-i+2}\widetilde{\boldsymbol{\Sigma}}_{b_1}\label{eq:psi1}   \\
  & + b_1 \boldsymbol{\Lambda} \boldsymbol{S}_{L+2}-\sum_{i=1}^L ((i-1+c_{b_1,i})b_1b_i-b_{i+1})\frac{\partial \boldsymbol{S}_{L+2}}{\partial b_i} +\frac{\boldsymbol{R}}{y^2}\label{eq:psi2}
\end{align}
where $\widetilde{\boldsymbol{\Sigma}}_{b_1}:=\boldsymbol{\Sigma}_{b_1}-c_{b_1}\boldsymbol{T}_2$ and $\boldsymbol{R}=(0,R)^t$ from \eqref{def:remainder of N}. 

\textbf{Step 3: }Error bounds. Now, it remains to prove the Sobolev bounds: \eqref{eq:psi global k} to \eqref{eq:psi local}. We can treat the errors involving $\boldsymbol{S}_{L+2}$ in \eqref{eq:psi2} easily. Since $\boldsymbol{S}_{L+2}$ is homogeneous of degree $(L+2,L+2,1,L+2)$, Lemma \ref{lem:action of H on b admissible} ensures that the functions containing $\boldsymbol{S}_{L+2}$ are homogeneous of degree $(L+2,L+2,1,L+3)$ and thus the desired bounds come from Lemma \ref{lem:admissible bounds}. 

The other errors require separate integration to conclude. We first visit the RHS of \eqref{eq:psi1}. Note that $\boldsymbol{T}_1=(0,T_1)^t$ and $\Lambda Q \sim 1/y$ on $y\ge 1$, we have for $k\ge 0$,
\begin{equation}\label{eq:bound k t1}
  |\mathcal{A}^k(1-\chi_{B_0/4}) T_1|  \lesssim   y^{-(k+1)}\mathbf{1}_{y \ge B_0/4},
\end{equation}
which imply \eqref{eq:psi global k}, \eqref{eq:psi global L} and \eqref{eq:psi global L+1}: for $2\le k \le L+1$,
\begin{align*}
   {\lVert b_1^2 \tilde{c}_{b_1} \mathcal{A}^{k-1} (1-\chi_{B_0/4}) {T}_1 \rVert}_{L^2(|y|\le 2B_1)}&\lesssim \frac{b_1^2}{|\log b_1|}  {\lVert  y^{-k} \rVert}_{L^2(B_0/4 \le |y|\le 2B_1)} \lesssim \frac{b_1^{k+1}}{|\log b_1|}.
\end{align*}
For $2\le i \le L$, we rewrite 
\begin{equation}
  (-\boldsymbol{H})^{i+2} \widetilde{\boldsymbol{\Sigma}}_{b_1} = \begin{cases}
    ((-H)^{-\frac{i}{2} + 1}\widetilde{\Sigma}_{b_1},0 )^t  & \textnormal{for even } i \\
    (0,-(-H)^{-\frac{i-1}{2} + 1}\widetilde{\Sigma}_{b_1} )^t  & \textnormal{for odd } i
  \end{cases} 
\end{equation}
from the fact $\boldsymbol{H}^{-2}=-H^{-1}$. Moreover, 
$\mathrm{supp}(\widetilde{\Sigma}_{b_1}) \subset \{|y| \ge B_0/4\}$ and for $k\ge 0$, we have the crude bound: for $B_0/4 \le y\le 2B_1 $,
\begin{equation}\label{eq:bound k sigma}
  |\mathcal{A}^{k-\overline{i}} H^{-\frac{i - \overline{i}}{2}+1} \widetilde{\Sigma}_{b_1}| \lesssim  y^{i-k-1}\frac{|\log y|}{|\log b_1|} \lesssim y^{i-k-1}.
\end{equation}
Hence for $1\le k < i \le L$, we obtain \eqref{eq:psi global k} from the following estimation
\begin{align}
  {\lVert b_1 b_i \mathcal{A}^{k-\overline{i}} H^{-\frac{i - \overline{i}}{2}+1} \widetilde{\Sigma}_{b_1}\rVert}_{L^2(|y|\le 2B_1)} &\lesssim  b_1^{i+1} {\lVert  y^{i-k-1} \rVert}_{L^2(B_0/4 \le |y|\le 2B_1)} \nonumber \\
  & \lesssim b_1^{k+1} |\log b_1|^{\gamma(i-k)}.
\end{align}
We also observe for $k \ge i$,
\begin{equation}
  \mathcal{A}^{k-\overline{i}} H^{-\frac{i - \overline{i}}{2}+1} \widetilde{\Sigma}_{b_1}= \mathcal{A}^{k-i} H \widetilde{\Sigma}_{b_1}, 
\end{equation}
the sharp bounds
\begin{equation}\label{eq:sharp pointwise sigma}
  |H \widetilde{\Sigma}_{b_1}| \lesssim \frac{\mathbf{1}_{y\ge B_0/4}}{|\log b_1|} \frac{1}{y} ,\quad  |\mathcal{A}^j H \widetilde{\Sigma}_{b_1}| \lesssim \frac{\mathbf{1}_{y\sim B_0}}{B_0^{j+1} |\log b_1|} ,\quad j\ge 1
\end{equation}
imply \eqref{eq:psi global k}, \eqref{eq:psi global L} and \eqref{eq:psi global L+1}:
\[{\lVert b_1 b_i \mathcal{A}^{k-i} H \widetilde{\Sigma}_{b_1}\rVert}_{L^2(|y|\le 2B_1)} \lesssim  \frac{b_1^{i+1}}{|\log b_1|}   {\lVert  y^{i-k-1} \rVert}_{L^2(B_0/4 \le |y|\le 2B_1)}   \lesssim \frac{b_1^{k+1}}{|\log b_1|^{\frac{1}{2}}},\]
\[{\lVert b_1 b_i \mathcal{A}^{L+1-i} H \widetilde{\Sigma}_{b_1}\rVert}_{L^2(|y|\le 2B_1)} \lesssim  \frac{b_1^{i+1}}{B_0^{L+1-i} |\log b_1|}    \lesssim \frac{b_1^{L+2}}{|\log b_1|}.\]
The logarithmic weighted bounds \eqref{eq:psi log}, \eqref{eq:psi dot log} come from the above estimation with the trivial bound $|\log y /\log b_1| \lesssim 1$ on $B_0/4\le y\le 2B_1$ and the fact that the errors in the RHS of \eqref{eq:psi1} are supported in $y\ge B_0/4$. This support property also yields the improved local bound \eqref{eq:psi local} by choosing $b^*(M)$ small enough.

Now, we move to the last error: $\boldsymbol{R}/y^2$. Recall \eqref{def:remainder of N}, we observe that $\boldsymbol{R}/y^2=(0,R/y^2)$ has two parts: sum of monomials like ${P}_{2i}/y^2$ and nonlinear terms 
\[\frac{1}{y^2}N_0(\alpha_b) \alpha_b^{\frac{L+3}{2}}.\]

For the monomial part, we can borrow the calculation of $P_{2i}/y^2$: \eqref{eq:b order monomial} and \eqref{eq:far away monomial}. Under the range $|J|_1=j$, $|J|_2 \ge L+3$, $2\le j \le \frac{L+1}{2}$, those $k$-th suitable derivatives (i.e. $\mathcal{A}^k$) have the pointwise bounds
\begin{equation}\label{eq:pointwise monomial}
  \begin{cases}
    b_1^{L+3} & \textnormal{for }y\le 1, \\
    b_1^{|J|_2} y^{|J|_2 -k-5}|\log y|^C & \textnormal{for }1\le y\le 2B_1,
  \end{cases}
\end{equation}
we simply obtain from \eqref{eq:psi global k} to \eqref{eq:psi local} via integrating the above bound. It remains to estimate the nonlinear term. For $y\le 1$, we utilize the parity of $f^{(\frac{L+3}{2})}(Q)$ and $\alpha_b$. We already know that $\alpha_b$ is an odd function with the leading term $O(b_1^2) y^3$ and the parity of $f^{(\frac{L+3}{2})}(Q)$ is opposite of that of $\frac{L+3}{2}$, $N_0(\alpha_b)\alpha_b^{\frac{L+3}{2}}/y^2$ is an odd function with the leading term $O(b_1^{L+3}) y^{3\frac{L+3}{2}-1 -\overline{\frac{L+3}{2}}}$. Hence for $1\le k \le L$,
\[{\left\lVert \mathcal{A}^k\left(\frac{N_0(\alpha_b) }{y^2} \alpha_b^{\frac{L+3}{2}} \right) \right\rVert}_{L^{\infty}(y\le 1)} \lesssim b_1^{L+3}. \]
For $1\le y\le 2B_1$, the simple bound
\[|\partial_y^k (Q+ \tau \alpha_b)| \lesssim \frac{|\log b_1|^C}{y^{k+1}}, \quad k\ge 1\] 
implies 
\[|N_0(\alpha_b)|\lesssim 1,\quad  |\partial_y^k N_0(\alpha_b)| \lesssim \frac{|\log b_1|^C}{y^{k+1}} \quad \textnormal{for } k\ge 1.\]
 From the Leibniz rule and the crude bound $|\partial_y^k \alpha_b| \lesssim b_1^2 |\log b_1| y^{1-k}$, we have
 \begin{equation}\label{eq:pointwise non}
  \left\lvert \mathcal{A}^k\left(\frac{N_0(\alpha_b) }{y^2} \alpha_b^{\frac{L+3}{2}} \right) \right\rvert \lesssim \sum_{j=0}^k \frac{|\partial_y^j (N_0(\alpha_b) \alpha_b^{\frac{L+3}{2}}) |}{y^{2+k-j}}  \lesssim b_1^{L+3} |\log b_1|^C y^{\frac{L+3}{2}-2-k}
 \end{equation}
for $0\le k \le L$, the above pointwise bound yields from \eqref{eq:psi global k} to \eqref{eq:psi local} via integration. 
\end{proof}


\subsection{Localization of the approximate profile}
In the previous construction, we observe that the blow-up profile does not approximate the solution of \eqref{eq:modpsi} on the region $y\ge 2B_1$. Hence, it is necessary to cut off the overgrowth of each tail.
\begin{proposition}[Localization of the approximate profile]\label{prop:local approx}
 Assume the hypotheses of Proposition \ref{prop:approx profile} and assume moreover the a priori bounds
\begin{equation}\label{eq:a priori for local}
  |(b_1)_s| \lesssim b_1^2,\quad |b_L| \lesssim \frac{b_1^L}{|\log b_1|} \textnormal{ when }\ell=L-1.
\end{equation}
Then the localized profile $\tilde{\boldsymbol{Q}}_b $ given by
\begin{equation}
  \tilde{\boldsymbol{Q}}_b = \boldsymbol{Q} + \chi_{B_1} \boldsymbol{\alpha}_b
\end{equation}
drives the following equation:
\begin{equation}
  \partial_s \tilde{\boldsymbol{Q}}_{b}-\boldsymbol{F}(\tilde{\boldsymbol{Q}}_b) + b_1 \boldsymbol{\Lambda} \tilde{\boldsymbol{Q}}_b = \chi_{B_1}\mathbf{Mod}(t) +\tilde{\boldsymbol{\psi}}_b 
\end{equation}
where $\mathbf{Mod}(t)$ was defined in \eqref{def:mod} and $\tilde{\boldsymbol{\psi}}_b=(\tilde{\psi}_b,\dot{\tilde{\psi}}_b)^t$ satisfies the bounds:

(i) Global bound:
\begin{align}
  {}^{\forall} 2\le k \le L-1,\quad {\lVert \mathcal{A}^k{\tilde{\psi}}_b \rVert}_{L^2} + {\lVert \mathcal{A}^{k-1}{\dot{\tilde{\psi}}}_b \rVert}_{L^2} &\lesssim b_1^{k+1}|\log b_1|^C, \label{eq:psit global k} \\
  {\lVert \mathcal{A}^L{\tilde{\psi}}_b \rVert}_{L^2} + {\lVert \mathcal{A}^{L-1}{\dot{\tilde{\psi}}}_b \rVert}_{L^2} &\lesssim b_1^{L+1}|\log b_1|, \label{eq:psit global L} \\
  {\lVert \mathcal{A}^{L+1}{\tilde{\psi}}_b \rVert}_{L^2} + {\lVert \mathcal{A}^{L}{\dot{\tilde{\psi}}}_b \rVert}_{L^2} &\lesssim \frac{b_1^{L+2}}{|\log b_1|}\label{eq:psit global L+1}. 
\end{align}

(ii) Logarithmic weighted bound: for $m\ge 1$ and $0\le k \le m$,
\begin{align}
 {\left\lVert \frac{1+|\log y| }{1+y^{m-k}} \mathcal{A}^k \tilde{\psi}_b \right\rVert}_{L^2} &\lesssim b_1^{m+1}|\log b_1|^C ,\quad m \le L+1,\label{eq:psit log}\\
 {\left\lVert \frac{1+|\log y| }{1+y^{m-k}} \mathcal{A}^k \dot{\tilde{\psi}}_b \right\rVert}_{L^2} &\lesssim b_1^{m+2}|\log b_1|^C ,\quad m\le L.\label{eq:psit dot log}
\end{align}

(iii) Improved local bound:
\begin{equation}\label{eq:psit local}
  {}^{\forall} 2\le k \le L+1,\quad {\lVert \mathcal{A}^k{\tilde{\psi}}_b \rVert}_{L^2(|y|\le 2M)} + {\lVert \mathcal{A}^{k-1}{\dot{\tilde{\psi}}}_b \rVert}_{L^2(|y|\le 2M)} \lesssim C(M) b_1^{L+3}.
\end{equation}
\end{proposition}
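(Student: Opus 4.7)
The plan is to subtract from the localized equation the corresponding equation satisfied by $\boldsymbol{Q}_b$, so that $\tilde{\boldsymbol{\psi}}_b$ can be written as $\chi_{B_1}\boldsymbol{\psi}_b$ (controlled by Proposition~\ref{prop:approx profile}) plus a collection of explicit error terms supported in $y\sim B_1$. Writing $\tilde{\boldsymbol{Q}}_b = \boldsymbol{Q} + \chi_{B_1}\boldsymbol{\alpha}_b$ and using Leibniz, I decompose
\[
\partial_s\tilde{\boldsymbol{Q}}_b - \boldsymbol{F}(\tilde{\boldsymbol{Q}}_b) + b_1\boldsymbol{\Lambda}\tilde{\boldsymbol{Q}}_b = \chi_{B_1}\bigl(\partial_s\boldsymbol{Q}_b - \boldsymbol{F}(\boldsymbol{Q}_b) + b_1\boldsymbol{\Lambda}\boldsymbol{Q}_b\bigr) + \boldsymbol{E}_{\rm time} + \boldsymbol{E}_{\rm scale} + \boldsymbol{E}_{\rm nl} + \boldsymbol{E}_{\rm comm},
\]
where $\boldsymbol{E}_{\rm time} = (\partial_s\chi_{B_1})\boldsymbol{\alpha}_b$, $\boldsymbol{E}_{\rm scale} = b_1(y\partial_y\chi_{B_1})\boldsymbol{\alpha}_b$ arise from the time and scaling generators hitting the cut-off, $\boldsymbol{E}_{\rm comm}$ collects the commutator terms $[\Delta,\chi_{B_1}]\alpha_b$ from the second component of $\boldsymbol{F}$, and $\boldsymbol{E}_{\rm nl} = -y^{-2}\bigl[f(Q+\chi_{B_1}\alpha_b) - f(Q) - \chi_{B_1}(f(Q+\alpha_b) - f(Q))\bigr]$ is the nonlinear localization error. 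All four error pieces are supported in the annulus $y\sim B_1$.

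The first step is to estimate $\chi_{B_1}\boldsymbol{\psi}_b$: distributing $\mathcal{A}^k$ by Leibniz gives a main piece bounded by Proposition~\ref{prop:approx profile} and commutator pieces where at least one derivative falls on $\chi_{B_1}$; the latter are again supported in $y\sim B_1$ and handled exactly as the other error terms. Next, I expand $\boldsymbol{\alpha}_b$ as $\sum b_i\boldsymbol{T}_i + \sum\boldsymbol{S}_i$, which by Definition~2.7 and Lemma~\ref{lem:theta_i}, \ref{lem:action of H on b admissible} makes each $b_i\boldsymbol{T}_i$ and $\boldsymbol{S}_i$ a $b_1$-admissible/homogeneous function. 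On $y\sim B_1$ the \emph{improved} bounds \eqref{eq:sobolev improved} apply: each $\mathcal{A}^j$ on $b_i \boldsymbol{T}_i$ or $\boldsymbol{S}_i$ produces $b_1^{i}\cdot b_1^{-(j-i)}|\log b_1|^{-1}|\log b_1|^{\gamma(i-j-2)}$ up to sign, and the cut-off derivatives $(y\partial_y)^p\chi_{B_1}$ are $O(\mathbf{1}_{y\sim B_1})$ of size $1$.

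For $\boldsymbol{E}_{\rm time}$ I use $\partial_s\chi_{B_1} = -(y/B_1)\chi'(y/B_1)\partial_s\log B_1$ together with the a priori bound $|(b_1)_s|\lesssim b_1^2$, which yields $|\partial_s\log B_1|\lesssim b_1$, so $\boldsymbol{E}_{\rm time}$ gains an extra $b_1$ compared with $\boldsymbol{E}_{\rm scale}$. For $\boldsymbol{E}_{\rm scale}$ and $\boldsymbol{E}_{\rm comm}$ the extra factor $b_1$ comes directly from the definition. Combining these with the improved bound on $y\sim B_1$ and a volume factor $|\mathbf{1}_{y\sim B_1}|_{L^2}\lesssim B_1$, the generic term of order $k\le L-1$ is of size $b_1\cdot b_1^{i}\cdot b_1^{-(L-1-i)}|\log b_1|^{C}\cdot B_1 \lesssim b_1^{k+1}|\log b_1|^{C}$, proving \eqref{eq:psit global k}. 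For the critical order $k=L$, the additional logarithmic saving in \eqref{eq:sobolev improved} beats $B_1 = b_1^{-1}|\log b_1|^{\gamma}$ and produces exactly \eqref{eq:psit global L}; here the a priori hypothesis $|b_L|\lesssim b_1^L/|\log b_1|$ when $\ell = L-1$ is crucial to absorb the worst tail $b_L\boldsymbol{T}_L$ without losing the $|\log b_1|$ factor. The bound \eqref{eq:psit global L+1} is obtained in the same way. The logarithmically weighted bounds \eqref{eq:psit log}–\eqref{eq:psit dot log} follow by the same decomposition, since on $y\sim B_1$ the factor $(1+|\log y|)/(1+y^{m-k})$ contributes only a power of $|\log b_1|$. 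The improved local bound \eqref{eq:psit local} is immediate from \eqref{eq:psi local} once $b^*(M)$ is chosen so that $2M\le B_1$ and the localization errors vanish on $|y|\le 2M$.

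For the nonlinear error $\boldsymbol{E}_{\rm nl}$, a Taylor expansion of $f$ around $Q$ gives the leading contribution $y^{-2}\chi_{B_1}(1-\chi_{B_1})\tfrac{1}{2}f''(Q)\alpha_b^2$, supported in $y\sim B_1$, plus higher-order powers of $\alpha_b$; each factor of $\alpha_b$ brings a power of $b_1$ via \eqref{eq:sobolev improved}, and the prefactor $f''(Q) = O(y^{-1})$ provides the decay needed to close each bound. The main technical obstacle is precisely the $L$-th order estimate \eqref{eq:psit global L}: the naive Leibniz distribution of $\mathcal{A}^L$ over $\chi_{B_1}\boldsymbol{\alpha}_b$ produces terms at the cut-off boundary that are on the borderline; it is only thanks to the sharp pointwise estimate \eqref{eq:pointwise homogeneous} of Lemma~\ref{lem:admissible bounds}(iii), combined with the two a priori bounds in \eqref{eq:a priori for local}, that the resulting $|\log b_1|^{\gamma}$ loss can be compensated to give the final factor $|\log b_1|^{1}$. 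All remaining pieces are strictly better by positive powers of $b_1$ or $|\log b_1|^{-1}$.
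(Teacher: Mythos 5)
Your decomposition is essentially the one the paper uses, and the general strategy (split into $\chi_{B_1}\boldsymbol{\psi}_b$ plus cut-off commutator errors, then estimate each piece at the annulus $y\sim B_1$) is the right one. There are, however, two genuine gaps.

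\textbf{Missing term in the decomposition.} Since $b_1\boldsymbol{\Lambda}\tilde{\boldsymbol{Q}}_b = b_1\boldsymbol{\Lambda}\boldsymbol{Q} + b_1\boldsymbol{\Lambda}(\chi_{B_1}\boldsymbol{\alpha}_b)$ while $\chi_{B_1}\bigl(b_1\boldsymbol{\Lambda}\boldsymbol{Q}_b\bigr) = b_1\chi_{B_1}\boldsymbol{\Lambda}\boldsymbol{Q} + b_1\chi_{B_1}\boldsymbol{\Lambda}\boldsymbol{\alpha}_b$, the difference produces, besides your $\boldsymbol{E}_{\rm scale}$, the extra term $b_1(1-\chi_{B_1})\boldsymbol{\Lambda}\boldsymbol{Q}$ supported on $\{y\ge B_1\}$ (not $y\sim B_1$). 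Your claim ``all four error pieces are supported in the annulus $y\sim B_1$'' is therefore incorrect; $\boldsymbol{\Lambda Q}$ does not vanish outside $B_1$. The term is easy to bound, $\|b_1\mathcal{A}^k(1-\chi_{B_1})\Lambda Q\|_{L^2}\lesssim b_1\|y^{-k-1}\|_{L^2(y\ge B_1)}\lesssim b_1^{k+1}|\log b_1|^{-\gamma k}$, and is not the crux, but it must be in the decomposition. (There is a compensating cancellation $(1-\chi_{B_1})(\Delta Q - y^{-2}f(Q)) = 0$ from the stationary equation, which is why no analogous $\boldsymbol{F}$-term survives; this should be noted explicitly.)

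\textbf{Misuse of the improved bound.} You assert that each $b_i\boldsymbol{T}_i$ is $b_1$-admissible and invoke Lemma~\ref{lem:admissible bounds}(iii). But $\boldsymbol{T}_i$ is only \emph{admissible} (Definition~2.5), not $b_1$-admissible (Definition~2.8): it has no $b_1$-dependence and its adapted derivatives grow like $y^{i-1-k-\overline{i}}|\log y|$ at large $y$, which is not controlled by the $b_1$-admissible bound \eqref{eq:b admissible f_k} on $y\sim B_1$ (there the $g$-term vanishes and the allowed growth is strictly smaller by a factor of order $|\log b_1|^{2\gamma+2}$). So \eqref{eq:sobolev improved} and the pointwise estimate \eqref{eq:pointwise homogeneous} apply to $\boldsymbol{S}_i$ but not to $\boldsymbol{T}_i$. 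For the $\boldsymbol{T}_i$ contribution one must instead use the admissibility pointwise bound directly, $|\mathcal{A}^j T_i|\lesssim y^{i-1-j-\overline{i}}|\log y|$ at $y\sim B_1$, which produces the weaker estimate $b_1^{k+1-i}|b_i||\log b_1|^{\gamma(i-k)+1}$. It is precisely this extra $|\log b_1|$ from the $T_i$ tail that forces the choice $\gamma = 1+\overline{\ell}$ (and the a priori gain $|b_L|\lesssim b_1^L/|\log b_1|$ when $\ell = L-1$) in order to close \eqref{eq:psit global L+1}. Your conclusion is right but the mechanism is misattributed; the $S_i$ terms (where the improved bound \emph{does} apply) are never the borderline ones.

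The rest of the argument, including the treatment of $\partial_s\chi_{B_1}$ and the nonlinear localization error, is in line with the paper's proof.
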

\begin{remark}
This proposition says that our cutoff function $\chi_{B_1}$ does not affect the estimates from \eqref{eq:psi global k} to \eqref{eq:psi local} in Proposition \ref{prop:approx profile}. Although such bounds came from integrating over the region $|y|\le 2B_1$, there are two main reasons why this is possible. First, we do not need to keep track of logarithmic weight $|\log b_1|$ except for \eqref{eq:psi global L+1} corresponding to the highest order derivative. Second, \eqref{eq:psi global L+1} was derived from the sharp pointwise bound \eqref{eq:sharp pointwise sigma}, which only depends on $B_0$. Thus, $B_1=|\log b_1|^{\gamma}/b_1$ just needs to be large enough to obtain \eqref{eq:psit global L+1} by raising $\gamma$. 
\end{remark}
\begin{proof}
  Note that $\tilde{\boldsymbol{\psi}}_b=\boldsymbol{\psi}_b$ on $|y|\le B_1$, \eqref{eq:psi local} directly implies the local bound \eqref{eq:psit local}. For the other estimates,
we will prove the global bounds \eqref{eq:psit global k}, \eqref{eq:psit global L+1} first, and the less demanding logarithmic weighted bounds \eqref{eq:psit log}, \eqref{eq:psit dot log} later. By a straightforward calculation, $\tilde{\boldsymbol{\psi}}_b$ is given by
\begin{align}
  \tilde{\boldsymbol{\psi}}_b&= \chi_{B_1}\boldsymbol{\psi}_b + (\partial_s(\chi_{B_1})+ b_1(y\chi')_{B_1})\boldsymbol{\alpha}_b +b_1(1-\chi_{B_1})\boldsymbol{\Lambda} \boldsymbol{Q} \label{eq:modified psi}  \\
  &-\begin{pmatrix}
    0 \\ \Delta (\chi_{B_1}\alpha_b)-\chi_{B_1}\Delta(\alpha_b)
    \end{pmatrix} -\frac{1}{y^2} \begin{pmatrix}
      0 \\ f(\tilde{Q}_b)-f(Q)-\chi_{B_1}(f(Q_b)-f(Q))
      \end{pmatrix}. \label{eq:modified psi f}
\end{align}
Before we estimate $\chi_{B_1}\boldsymbol{\psi}_b$ in the RHS of \eqref{eq:modified psi}, we introduce a useful asymptotics of cutoff:
\begin{equation}\label{eq:cutoff asymptotic}
  \mathcal{A}^k(\chi_{B_1} f) = \chi_{B_1} \mathcal{A}^k f + \mathbf{1}_{y\sim B_1}\sum_{j=0}^{k-1} O(y^{-(k-j)}) \mathcal{A}^j f.
\end{equation}
 Applying the above asymptotics to $\chi_{B_1}\boldsymbol{\psi}_b$, we get from Proposition \ref{prop:approx profile} that we only need to estimate the errors localized in $y\sim B_1$. From \eqref{eq:pointwise homogeneous}, \eqref{eq:bound k t1}, \eqref{eq:bound k sigma}, \eqref{eq:pointwise monomial} and \eqref{eq:pointwise non}, we obtain the following pointwise bounds: for $y\sim B_1$ and $0\le j \le k$,
\begin{align*}
  |y^{-(k-j)} \mathcal{A}^{j} \psi_{b_1} | \lesssim \sum_{i=1}^{\frac{L-1}{2}} b_1^{2i+1} y^{2i-k-1}  \lesssim b_1^{k+1}|\log b_1|^{\gamma(L-1-k)} B_1^{-1}
\end{align*}
and
\begin{align*}
   |y^{-(k-1-j)} \mathcal{A}^{j} \dot{\psi}_{b_1} | &\lesssim  \sum_{i=1}^{\frac{L+1}{2}} b_1^{2i} y^{2i-k-2} + \frac{b_1^{L+3}y^{L+1-k}}{|\log b_1|^{2\gamma + 1}}  + (b_1^{k+4} + b_1^{\frac{L+3}{2} + k+1 }) |\log b_1|^C \\
  &\lesssim b_1^{k+1}|\log b_1|^{\gamma(L-k)} B_1^{-1}. 
\end{align*}
These pointwise bounds directly imply the global bounds \eqref{eq:psit global k}, \eqref{eq:psit global L} and \eqref{eq:psit global L+1} if we choose $\gamma \ge 1$. 

For the second term in the RHS of \eqref{eq:modified psi}, we recall
\[\boldsymbol{\alpha}_b = \begin{pmatrix}
  \alpha_b \\
  \dot{\alpha}_b
\end{pmatrix}
  = \begin{pmatrix}
  \sum_{i=1,\textnormal{even}}^{L} b_i  T_i  + \sum_{i=2,\textnormal{even}}^{L+2}  S_i \\
  \sum_{i=1,\textnormal{odd}}^{L} b_i  T_i  + \sum_{i=2,\textnormal{odd}}^{L+2}   S_i
\end{pmatrix}.\]
From the a priori bound $|b_{1,s}|\lesssim b_1^2$, 
\begin{equation}\label{eq:derivative chi}
  |\partial_s(\chi_{B_1})+ b_1(y\chi')_{B_1} |\lesssim \left(\frac{|b_{1,s}|}{b_1} +b_1 \right) |(y \chi')_{B_1}|\lesssim b_1 \mathbf{1}_{y \sim B_1}.
\end{equation}
One can easily check that \eqref{eq:cutoff asymptotic} still holds even if we replace the cutoff function $\chi_{B_1}$ to other cutoff functions supported in $y\sim B_1$. Hence, the cutoff asymptotics \eqref{eq:cutoff asymptotic} and the admissibility of $\boldsymbol{T}_i$ imply for $1\le i \le L$,
\begin{align}
    {\left\lVert b_i \mathcal{A}^{k-\overline{i}} (\partial_s(\chi_{B_1}) + b_1(y\chi')_{B_1} ) T_i  \right\rVert}_{L^2} & \lesssim  \sum_{j=0}^{k-\overline{i}} b_1|b_i| {\left\lVert y^{-(k-j-\overline{i})} \mathcal{A}^j T_i  \right\rVert}_{L^2 (y\sim B_1)} \nonumber \\
  &\lesssim  b_1|b_i| {\left\lVert y^{i-k-1}|\log y|  \right\rVert}_{L^2 (y\sim B_1)} \nonumber \\
  &\lesssim  b_1^{k+1-i}|b_i| |\log b_1|^{\gamma(i-k)+1},\label{eq:bound k T_i}
\end{align}
and for $2\le i \le L+2$, Lemma \ref{lem:admissible bounds} implies
\begin{align}
  {\left\lVert  \mathcal{A}^{k-\overline{i}} (\partial_s(\chi_{B_1}) + b_1(y\chi')_{B_1} )S_i  \right\rVert}_{L^2} & \lesssim  b_1 \sum_{j=0}^{k-\overline{i}}  {\left\lVert y^{-(k-j-\overline{i})} \mathcal{A}^j S_i  \right\rVert}_{L^2 (y\sim B_1)} \nonumber \\
  & \lesssim  b_1^{k+1} |\log b_1|^{\gamma(i-k-2)-1}, \label{eq:bound k S_i}
\end{align}
we obtain the global bounds \eqref{eq:psit global k} and \eqref{eq:psit global L}. In \eqref{eq:bound k T_i}, we cannot cancel $\log y$ from $T_i$, the additional $|\log b_1|$ appears. Thus, we need to choose $\gamma = 1+\overline{\ell}$ for the case $(k,i)=(L+1,L)$, which corresponds to \eqref{eq:psit global L+1}. We note that $\gamma=1$ when $\ell=L-1$ since we have the additional $|\log b_1|$ gain of $b_L$ from \eqref{eq:a priori for local}.  

The third term in \eqref{eq:modified psi} can be estimated
\[{\left\lVert b_1 \mathcal{A}^k (1-\chi_{B_1}) \Lambda Q \right\rVert}_{L^2} \lesssim b_1  {\left\lVert y^{-k-1} \right\rVert}_{L^2(y\ge B_1)} \lesssim \frac{b_1^{k+1}}{|\log b_1|^{\gamma k}}.\]

Finally, we compute \eqref{eq:modified psi f}
\[\Delta (\chi_{B_1}\alpha_b)-\chi_{B_1}\Delta(\alpha_b) = (\Delta \chi_{B_1})\alpha_b + 2 \partial_y(\chi_{B_1})\partial_y(\alpha_b),\]
\[ f(\tilde{Q}_b)-f(Q)-\chi_{B_1}(f(Q_b)-f(Q)) = \chi_{B_1} \alpha_b \int_0^1 [f'(Q+\tau \chi_{B_1}{\alpha}_b) -f'(Q+\tau {\alpha}_b) ] d\tau,\]
each term is localized in $y\sim B_1$. In this region, the rough bounds $|f^{(k)}| \lesssim 1$ and $|\partial_y^k Q| + |\partial_y^{k}\chi_{B_1}|\lesssim y^{-k}$ yield 
\[\left\lvert\frac{\partial^k}{\partial y^k}\left( \Delta (\chi_{B_1}\alpha_b)-\chi_{B_1}\Delta(\alpha_b) + \frac{f(\tilde{Q}_b)-f(Q)-\chi_{B_1}(f(Q_b)-f(Q)) }{y^2}  \right) \right\rvert \lesssim  \frac{|\alpha_b|}{y^{k+2}},\]
we can borrow the estimation of $\partial_s(\chi_{B_1})\alpha_b$, namely \eqref{eq:bound k T_i} and \eqref{eq:bound k S_i}. 

The logarithmic weighted bounds \eqref{eq:psit log}, \eqref{eq:psit dot log} basically come from the fact $|\log y| \sim |\log b_1|$ on $y\sim B_1$, we further use the decay property $|\log y|^C/y \to 0$ as $y \to \infty$ for the third term in the RHS of \eqref{eq:modified psi}. 
\end{proof}
We also introduce another localization that depends on $\ell$ to verify the further regularity in Remark \ref{rmk:further regularity}.
\begin{proposition}[Localization for the case when $\ell=L$]\label{prop:2nd local approx}
  Assume the hypotheses of Proposition \ref{prop:local approx}. Then the localized profile $\hat{\boldsymbol{Q}}_b$ given by
  \begin{equation}
    \hat{\boldsymbol{Q}}_b =  \tilde{\boldsymbol{Q}}_b + \boldsymbol{\zeta}_b :=  \tilde{\boldsymbol{Q}}_b + (\chi_{B_0} - \chi_{B_1} ) b_L \boldsymbol{T}_L 
  \end{equation}
  drives the following equation:
  \begin{equation}
    \partial_s \hat{\boldsymbol{Q}}_{b}-\boldsymbol{F}(\hat{\boldsymbol{Q}}_b) + b_1 \boldsymbol{\Lambda} \hat{\boldsymbol{Q}}_b = \widehat{\mathbf{Mod}}(t) +\hat{\boldsymbol{\psi}}_b 
  \end{equation}
  where $\widehat{\mathbf{Mod}}(t)$ is given by
  \begin{equation}
    \widehat{\mathbf{Mod}}(t) = \chi_{B_1} {\mathbf{Mod}}(t) + (\chi_{B_0}-\chi_{B_1}) \left( (b_L)_s + (L-1 + c_{b,L})b_1b_L\right)\boldsymbol{T}_L
  \end{equation}
   and $\hat{\boldsymbol{\psi}}_b=(\hat{\psi}_b,\dot{\hat{\psi}}_b)^t$ satisfies the bounds: 
   \begin{align}
    {\lVert \mathcal{A}^{L}({\hat{\psi}}_b -(\chi_{B_1}-\chi_{B_0}) b_L T_{L-1} )\rVert}_{L^2} & \lesssim b_1^{L+1} \label{eq:psih1 global L} \\
    {\lVert \mathcal{A}^{L-1}({\dot{\hat{\psi}}}_b -(\partial_s \chi_{B_0} + b_1(y\chi')_{B_0})b_L T_L)\rVert}_{L^2} &\lesssim b_1^{L+1} \label{eq:dpsih1 global L}
   \end{align}
\end{proposition}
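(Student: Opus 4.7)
The plan is to leverage Proposition \ref{prop:local approx} by writing $\hat{\boldsymbol{Q}}_b = \tilde{\boldsymbol{Q}}_b + \boldsymbol{\zeta}_b$ with $\boldsymbol{\zeta}_b = (\chi_{B_0}-\chi_{B_1})b_L\boldsymbol{T}_L$, which yields
\[
(\partial_s - \boldsymbol{F} + b_1\boldsymbol{\Lambda})\hat{\boldsymbol{Q}}_b = \chi_{B_1}\mathbf{Mod}(t) + \tilde{\boldsymbol{\psi}}_b + (\partial_s + b_1\boldsymbol{\Lambda})\boldsymbol{\zeta}_b + \boldsymbol{H}\boldsymbol{\zeta}_b + \boldsymbol{N},
\]
where $\boldsymbol{N} := \boldsymbol{H}\boldsymbol{\zeta}_b - \boldsymbol{F}(\hat{\boldsymbol{Q}}_b) + \boldsymbol{F}(\tilde{\boldsymbol{Q}}_b)$ collects the quadratic-and-higher part of the $f$-nonlinearity linearised about $\tilde{\boldsymbol{Q}}_b$. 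The goal is then to read off $\widehat{\mathbf{Mod}}(t)$ and the two explicit subtracted pieces in \eqref{eq:psih1 global L}--\eqref{eq:dpsih1 global L} from the right-hand side, and control everything left by $b_1^{L+1}$.

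Since $L$ is odd, $\boldsymbol{T}_L = (0,T_L)^t$ sits purely on the second coordinate, so $\boldsymbol{H}$ acts on $\boldsymbol{\zeta}_b$ algebraically with \emph{no} commutator on the scalar multiplier, and
\[
\boldsymbol{H}\boldsymbol{\zeta}_b = -(\chi_{B_0}-\chi_{B_1})b_L\boldsymbol{T}_{L-1},
\]
whose non-trivial first coordinate is precisely $(\chi_{B_1}-\chi_{B_0})b_L T_{L-1}$, the term subtracted in \eqref{eq:psih1 global L}. By the Leibniz rule,
\[
(\partial_s + b_1\boldsymbol{\Lambda})\boldsymbol{\zeta}_b = \sum_{j=0,1}(-1)^j(\partial_s\chi_{B_j}+b_1(y\chi')_{B_j})b_L\boldsymbol{T}_L + (\chi_{B_0}-\chi_{B_1})\bigl\{(b_L)_s\boldsymbol{T}_L + b_1 b_L\boldsymbol{\Lambda}\boldsymbol{T}_L\bigr\}.
\]
The $j=0$ contribution is precisely the second explicit piece subtracted in \eqref{eq:dpsih1 global L}, while the $j=1$ contribution cancels exactly the $b_L\boldsymbol{T}_L$ part of the cutoff-derivative term $(\partial_s\chi_{B_1}+b_1(y\chi')_{B_1})\boldsymbol{\alpha}_b$ sitting inside $\tilde{\boldsymbol{\psi}}_b$ by \eqref{eq:modified psi}. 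This cancellation is the whole motivation for the choice $\boldsymbol{\zeta}_b = (\chi_{B_0}-\chi_{B_1})b_L\boldsymbol{T}_L$: together with the $\chi_{B_1}$-cutoff baked into $\tilde{\boldsymbol{Q}}_b$ it effectively shifts the cutoff of $b_L\boldsymbol{T}_L$ from $B_1$ down to $B_0$, since $\chi_{B_1}+(\chi_{B_0}-\chi_{B_1}) = \chi_{B_0}$. For the last summand, Lemma \ref{lem:theta_i} gives $\boldsymbol{\Lambda}\boldsymbol{T}_L = (L-1)\boldsymbol{T}_L + (-\boldsymbol{H})^{-L+2}\boldsymbol{\Sigma}_{b_1} + \boldsymbol{\Theta}_L$, and decomposing $\boldsymbol{\Sigma}_{b_1} = c_{b_1}\boldsymbol{T}_2 + \widetilde{\boldsymbol{\Sigma}}_{b_1}$ together with $(-\boldsymbol{H})^{-L+2}\boldsymbol{T}_2 = \boldsymbol{T}_L$ absorbs the $c_{b_1,L}=c_{b_1}$ correction into the modulation coefficient, producing the second summand of $\widehat{\mathbf{Mod}}(t)$; the remaining $(\chi_{B_0}-\chi_{B_1})b_1 b_L\{(-\boldsymbol{H})^{-L+2}\widetilde{\boldsymbol{\Sigma}}_{b_1} + \boldsymbol{\Theta}_L\}$ enters $\hat{\boldsymbol{\psi}}_b$.

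What remains in $\hat{\boldsymbol{\psi}}_b$ after these extractions is: the truncated $\tilde{\boldsymbol{\psi}}_b$ (with its $B_1$-cutoff derivative on $b_L T_L$ removed by the cancellation above), the admissible residue $(\chi_{B_0}-\chi_{B_1})b_1 b_L\{(-\boldsymbol{H})^{-L+2}\widetilde{\boldsymbol{\Sigma}}_{b_1}+\boldsymbol{\Theta}_L\}$, and the nonlinear $\boldsymbol{N}$. The $|\log b_1|$ loss in \eqref{eq:psit global L} was sourced solely by the term just cancelled, so the truncated $\tilde{\boldsymbol{\psi}}_b$ obeys the sharp $b_1^{L+1}$ bound via the same integrals that produce \eqref{eq:psit global L+1} (noting $\gamma = 1$ here since $\ell = L$). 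The admissible residue is controlled by Lemma \ref{lem:admissible bounds} together with the sharp pointwise estimate \eqref{eq:sharp pointwise sigma} on $\widetilde{\boldsymbol{\Sigma}}_{b_1}$; the a priori bound $|b_L|\lesssim b_1^L/|\log b_1|$ of \eqref{eq:a priori for local} absorbs the logarithmic loss from integrating over $B_0\lesssim y\lesssim B_1$. Finally, $\boldsymbol{N}$ is quadratic in $\boldsymbol{\zeta}_b$ with prefactor $y^{-2}$, and a crude $L^\infty\times L^2$ split on the support of $\boldsymbol{\zeta}_b$ yields a bound of order $b_L^2|\log b_1|^C$, far better than $b_1^{L+1}$.

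The delicate point is verifying the logarithmic cancellation: the sharpening from $b_1^{L+1}|\log b_1|$ in \eqref{eq:psit global L} to $b_1^{L+1}$ in \eqref{eq:psih1 global L}--\eqref{eq:dpsih1 global L} is driven entirely by (i) identifying the single logarithmically loud term in $\tilde{\boldsymbol{\psi}}_b$ as the one $\boldsymbol{\zeta}_b$ removes, and (ii) invoking the sharp bound $|b_L|\lesssim b_1^L/|\log b_1|$ that is valid only in the critical case $\ell = L$ where the evolution of $b_L$ is not balanced against a higher modulation parameter.
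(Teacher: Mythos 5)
Your overall decomposition and the identification of the two explicitly subtracted pieces (the first coordinate of $\boldsymbol{H}\boldsymbol{\zeta}_b$ and the $j=0$ cutoff-derivative term), as well as the way $(\chi_{B_0}-\chi_{B_1})b_1 b_L\boldsymbol{\Lambda}\boldsymbol{T}_L$ is split into the modulation piece and the admissible residue, match the paper's proof closely. However, there is a genuine gap in your handling of the nonlinearity, plus two smaller factual errors.

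First, your setup is algebraically inconsistent: you display the identity with $+\boldsymbol{H}\boldsymbol{\zeta}_b + \boldsymbol{N}$ on the right, but you define $\boldsymbol{N} := \boldsymbol{H}\boldsymbol{\zeta}_b - \boldsymbol{F}(\hat{\boldsymbol{Q}}_b) + \boldsymbol{F}(\tilde{\boldsymbol{Q}}_b)$, which double-counts $\boldsymbol{H}\boldsymbol{\zeta}_b$. Substitute and you obtain $2\boldsymbol{H}\boldsymbol{\zeta}_b - \boldsymbol{F}(\hat{\boldsymbol{Q}}_b) + \boldsymbol{F}(\tilde{\boldsymbol{Q}}_b)$ instead of the correct $-\boldsymbol{F}(\hat{\boldsymbol{Q}}_b) + \boldsymbol{F}(\tilde{\boldsymbol{Q}}_b)$. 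More importantly, you have missed the structural observation that makes this $\ell=L$ case simpler than the $\ell=L-1$ case: since $L$ is odd, $\boldsymbol{\zeta}_b=(\chi_{B_0}-\chi_{B_1})b_L\boldsymbol{T}_L$ sits entirely on the second coordinate, i.e. in the $\dot{u}$ slot. Recalling $\boldsymbol{F}(u,\dot u)=(\dot u,\Delta u - f(u)/r^2)^t$, the map $\boldsymbol{F}$ is \emph{linear} in $\dot u$, so
\[
\boldsymbol{F}(\tilde{\boldsymbol{Q}}_b+\boldsymbol{\zeta}_b) - \boldsymbol{F}(\tilde{\boldsymbol{Q}}_b) = (\chi_{B_0}-\chi_{B_1})b_L\boldsymbol{T}_{L-1}
\]
is an exact identity with no quadratic remainder, and in fact $\boldsymbol{F}(\hat{\boldsymbol{Q}}_b)-\boldsymbol{F}(\tilde{\boldsymbol{Q}}_b)=-\boldsymbol{H}\boldsymbol{\zeta}_b$, so your ``quadratic-and-higher'' $\boldsymbol{N}$ (with the sign corrected) is identically zero. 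Your crude $L^\infty \times L^2$ bound of order $b_L^2|\log b_1|^C$ therefore estimates a term that doesn't exist, and if one were to take your $\boldsymbol{N}=2\boldsymbol{H}\boldsymbol{\zeta}_b$ at face value, it is linear (not quadratic) in $\boldsymbol{\zeta}_b$, concentrated on $B_0\lesssim y\lesssim B_1$, and would not obey any $b_L^2$-size bound. This is exactly the distinction from Proposition~\ref{prop:3rd local approx}, where $\boldsymbol{\zeta}_b$ has a first-coordinate component and genuine terms $\boldsymbol{NL}(\boldsymbol{\zeta}_b)$, $\boldsymbol{L}(\boldsymbol{\zeta}_b)$ must be estimated.

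Two further small points: when $\ell = L$ (odd), the choice in the paper is $\gamma=1+\overline{\ell}=2$, not $\gamma=1$ as you state; and the a priori bound $|b_L|\lesssim b_1^L/|\log b_1|$ from \eqref{eq:a priori for local} is only asserted in the case $\ell=L-1$, so it is not available to absorb logarithms here. Fortunately the crude bound $|b_L|\lesssim b_1^L$ combined with the sharp pointwise estimate \eqref{eq:sharp pointwise sigma} and Lemma~\ref{lem:admissible bounds} for the $\boldsymbol{\Theta}_L$ and $\widetilde{\boldsymbol{\Sigma}}_{b_1}$ contributions already yields $b_1^{L+1}$ without any extra $|\log b_1|^{-1}$ gain, so this is a misattribution rather than a fatal flaw.
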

\begin{proof}
  Note that $\boldsymbol{F}(\tilde{\boldsymbol{Q}}_b+\boldsymbol{\zeta}_b )-\boldsymbol{F}(\tilde{\boldsymbol{Q}}_b )=(\chi_{B_0}-\chi_{B_1})b_L \boldsymbol{T}_{L-1}$. From \eqref{eq:b_i identity} and \eqref{def:mod}, we have
  \begin{align}
    &\partial_s \hat{\boldsymbol{Q}}_{b}-\boldsymbol{F}(\hat{\boldsymbol{Q}}_b) + b_1 \boldsymbol{\Lambda} \hat{\boldsymbol{Q}}_b  \nonumber \\
    &= \chi_{B_1} \mathbf{Mod}(t) + \tilde{\boldsymbol{\psi}}_b + \partial_s \boldsymbol{\zeta}_b - (\boldsymbol{F}(\tilde{\boldsymbol{Q}}_b+\boldsymbol{\zeta}_b )-\boldsymbol{F}(\tilde{\boldsymbol{Q}}_b ))+ b_1 \boldsymbol{\Lambda} \boldsymbol{\zeta}_b \nonumber \\
    &= \widehat{\mathbf{Mod}}(t)  + b_1b_L(\chi_{B_0}-\chi_{B_1})\{(-\boldsymbol{H})^{L+2}\widetilde{\Sigma}_{b_1} + \boldsymbol{\theta}_L\}  \label{eq:2nd mod local} \\
    &+ \tilde{\boldsymbol{\psi}}_b -(\partial_s(\chi_{B_1})+ b_1(y\chi')_{B_1})b_L \boldsymbol{T}_L \label{eq:2nd psi local} \\
    & + (\partial_s(\chi_{B_0}) + b_1(y\chi')_{B_0})b_L \boldsymbol{T}_L + (\chi_{B_1}-\chi_{B_0})b_L \boldsymbol{T}_{L-1}. \label{eq:2nd psi main}
  \end{align}
From the above identity, we can see that \eqref{eq:2nd psi main} is exactly subtracted from $\hat{\boldsymbol{\psi}}_b$ in \eqref{eq:psih1 global L} and \eqref{eq:dpsih1 global L}. Hence,
 we need to estimate the second term of \eqref{eq:2nd mod local} and \eqref{eq:2nd psi local}. We point out that the logarithm weight $|\log b_1|$ in \eqref{eq:psit global L} comes from the estimate \eqref{eq:bound k T_i} when $i=L$, which is eliminated in \eqref{eq:2nd psi local}. For the second term of \eqref{eq:2nd mod local}, we can borrow the bound \eqref{eq:sharp pointwise sigma} and Lemma \ref{lem:admissible bounds}.
\end{proof}
\begin{proposition}[Localization for the case when $\ell=L-1$]\label{prop:3rd local approx}
  Assume the hypotheses of Proposition \ref{prop:local approx}. Then the localized profile $\hat{\boldsymbol{Q}}_b$ given by
  \begin{equation}
    \hat{\boldsymbol{Q}}_b =  \tilde{\boldsymbol{Q}}_b + \boldsymbol{\zeta}_b :=  \tilde{\boldsymbol{Q}}_b + (\chi_{B_0} - \chi_{B_1} )(b_{L-1}\boldsymbol{T}_{L-1} +  b_L \boldsymbol{T}_L )
  \end{equation}
  drives the following equation:
  \begin{equation}
    \partial_s \hat{\boldsymbol{Q}}_{b}-\boldsymbol{F}(\hat{\boldsymbol{Q}}_b) + b_1 \boldsymbol{\Lambda} \hat{\boldsymbol{Q}}_b = \widehat{\mathbf{Mod}}(t) +\hat{\boldsymbol{\psi}}_b 
  \end{equation}
  where $\widehat{\mathbf{Mod}}(t)$ is given by
  \begin{align*}
    \widehat{\mathbf{Mod}}(t) = \chi_{B_1} {\mathbf{Mod}}(t) &+ (\chi_{B_0}-\chi_{B_1}) \left( (b_{L-1})_s + (L-2 + c_{b,L-1})b_1b_{L-1}\right)\boldsymbol{T}_{L-1} \\
    &+ (\chi_{B_0}-\chi_{B_1}) \left( (b_L)_s + (L-1 + c_{b,L})b_1b_L\right)\boldsymbol{T}_L
  \end{align*}
   and $\hat{\boldsymbol{\psi}}_b=(\hat{\psi}_b,\dot{\hat{\psi}}_b)^t$ satisfies the bounds: 
   \begin{align}
    {\lVert \mathcal{A}^{L-1}({\hat{\psi}}_b -(\partial_s \chi_{B_0} + b_1(y\chi')_{B_0})b_{L-1} T_{L-1} -(\chi_{B_1}-\chi_{B_0}) b_L T_{L-1} )\rVert}_{L^2} & \lesssim b_1^{L} \label{eq:psih2 global L-1} \\
    {\lVert \mathcal{A}^{L-2}({\dot{\hat{\psi}}}_b -(\partial_s \chi_{B_0} + b_1(y\chi')_{B_0})b_L T_L + b_{L-1}H(\chi_{B_1}-\chi_{B_0})  T_{L})\rVert}_{L^2} &\lesssim b_1^{L} \label{eq:dpsih2 global L-1}
   \end{align}
\end{proposition}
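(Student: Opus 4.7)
The plan is to mirror the preceding proposition (for $\ell=L$), now with two correction terms in $\boldsymbol{\zeta}_b$ since $L-1$ is even and $L$ is odd. Decomposing $\hat{\boldsymbol{Q}}_b = \tilde{\boldsymbol{Q}}_b + \boldsymbol{\zeta}_b$ and invoking Proposition \ref{prop:local approx} for $\tilde{\boldsymbol{Q}}_b$ reduces the problem to computing
\[
  \partial_s\boldsymbol{\zeta}_b + b_1\boldsymbol{\Lambda}\boldsymbol{\zeta}_b - \bigl(\boldsymbol{F}(\tilde{\boldsymbol{Q}}_b+\boldsymbol{\zeta}_b) - \boldsymbol{F}(\tilde{\boldsymbol{Q}}_b)\bigr).
\]
I write $\boldsymbol{\zeta}_b = (\zeta,\dot\zeta)^t$ with $\zeta = (\chi_{B_0}-\chi_{B_1})b_{L-1}T_{L-1}$ and $\dot\zeta = (\chi_{B_0}-\chi_{B_1})b_L T_L$, and expand the nonlinearity as $\boldsymbol{F}(\tilde{\boldsymbol{Q}}_b+\boldsymbol{\zeta}_b) - \boldsymbol{F}(\tilde{\boldsymbol{Q}}_b) = -\boldsymbol{H}\boldsymbol{\zeta}_b + \boldsymbol{N}_{\mathrm{ext}}$, where $\boldsymbol{N}_{\mathrm{ext}}$ collects the quadratic tail in $\zeta$ and the potential-difference piece $r^{-2}(f'(Q)-f'(\tilde u_b))\zeta$. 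Both pieces are supported in $\{B_0 \le y \le 2B_1\}$ and produce only $O(b_1^L)$ in the relevant $\mathcal{A}^k$ norms by standard Leibniz estimates.

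I then apply identity \eqref{eq:b_i identity} with $i = L-1$ and $i = L$ to the smooth part of $\partial_s\boldsymbol{\zeta}_b + b_1\boldsymbol{\Lambda}\boldsymbol{\zeta}_b$ and multiply by $\chi_{B_0}-\chi_{B_1}$ to extract the announced modulation terms. By adding $b_L\boldsymbol{T}_{L-1}$ to both sides of the $i=L-1$ identity, the $-b_L$ inside the modulation coefficient cancels, yielding the unshifted expression $(b_{L-1})_s + (L-2 + c_{b,L-1})b_1b_{L-1}$ appearing in $\widehat{\mathbf{Mod}}(t)$; similarly for $i=L$. The residual radiation and $\boldsymbol{\Theta}$-type contributions $b_1 b_{L-1}[(-\boldsymbol{H})^{-L+3}\widetilde{\boldsymbol{\Sigma}}_{b_1} + \boldsymbol{\Theta}_{L-1}]$ and $b_1 b_L[(-\boldsymbol{H})^{-L+2}\widetilde{\boldsymbol{\Sigma}}_{b_1} + \boldsymbol{\Theta}_L]$ are $b_1$-admissible of sufficiently high degree to be absorbed into $\hat{\boldsymbol{\psi}}_b$ with bound $O(b_1^L)$ via Lemma \ref{lem:admissible bounds} and their support in $\{y \ge B_0/4\}$.

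Next, I isolate the displayed residuals in \eqref{eq:psih2 global L-1}--\eqref{eq:dpsih2 global L-1}. The identity $\boldsymbol{H}\boldsymbol{\zeta}_b = (-\dot\zeta, H\zeta)^t$ produces, in the first coordinate, $-\dot\zeta = (\chi_{B_1}-\chi_{B_0})b_L T_L = (\chi_{B_1}-\chi_{B_0})b_L T_{L-1}$, where the scalar equality $T_L = T_{L-1}$ follows from the recursion $\boldsymbol{T}_L = -\boldsymbol{H}^{-1}\boldsymbol{T}_{L-1}$; this matches the displayed first-coordinate term. In the second coordinate, $H\zeta = b_{L-1}H[(\chi_{B_0}-\chi_{B_1})T_{L-1}] = -b_{L-1}H[(\chi_{B_1}-\chi_{B_0})T_L]$ reproduces the displayed $+b_{L-1}H(\chi_{B_1}-\chi_{B_0})T_L$. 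The cutoff-derivative factor $\partial_s(\chi_{B_0}-\chi_{B_1}) + b_1[(y\chi')_{B_0} - (y\chi')_{B_1}]$ splits into a $\chi_{B_0}$-part (giving the displayed contributions on $b_{L-1}T_{L-1}$ and $b_L T_L$) plus a $\chi_{B_1}$-part localized on $y \sim B_1$. The latter is controlled via Lemma \ref{lem:admissible bounds} together with the a priori bound $|b_L| \lesssim b_1^L/|\log b_1|$ from \eqref{eq:a priori for local}, which supplies the logarithmic gain needed to close the estimate at the $b_1^L$ threshold without a log factor.

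\textbf{Main obstacle.} The two-coordinate bookkeeping is more intricate than in Proposition \ref{prop:2nd local approx}, because $b_{L-1}\boldsymbol{T}_{L-1}$ sits in position zero: $H$ then acts on it nontrivially, producing both a principal term $HT_{L-1} = -T_{L-2}$ and a commutator $[H,\chi_{B_0}-\chi_{B_1}]T_{L-1}$ in the second coordinate, which together must repackage cleanly into the single displayed expression $-b_{L-1}H[(\chi_{B_1}-\chi_{B_0})T_L]$. Tracking signs consistently across the modulation identity, the cutoff action, and the linearization performed around $\tilde{\boldsymbol{Q}}_b$ rather than $\boldsymbol{Q}$, while simultaneously exploiting the $b_L$ a priori gain to remove the otherwise-present log factor, is the most delicate part of the argument.
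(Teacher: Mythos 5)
Your overall structure matches the paper's: expand $\boldsymbol{F}(\tilde{\boldsymbol{Q}}_b+\boldsymbol{\zeta}_b)-\boldsymbol{F}(\tilde{\boldsymbol{Q}}_b)=-\boldsymbol{H}\boldsymbol{\zeta}_b-\boldsymbol{NL}(\boldsymbol{\zeta}_b)-\boldsymbol{L}(\boldsymbol{\zeta}_b)$, extract $\widehat{\mathbf{Mod}}(t)$ from $\partial_s\boldsymbol{\zeta}_b+b_1\boldsymbol{\Lambda}\boldsymbol{\zeta}_b$ via \eqref{eq:b_i identity}, and read off the two explicit ``bad'' terms from the $\chi_{B_0}$-cutoff derivative and from the two components of $\boldsymbol{H}\boldsymbol{\zeta}_b$ (using the scalar identity $T_L=T_{L-1}$, which you verify correctly). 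However, there is a genuine gap in how you close the remainder estimate, and it concerns precisely the step your argument treats most casually.

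\textbf{The gap: the $\chi_{B_1}$-localized cutoff derivative terms must cancel, not be estimated.} You claim the residual $\chi_{B_1}$-part $-(\partial_s\chi_{B_1}+b_1(y\chi')_{B_1})(b_{L-1}\boldsymbol{T}_{L-1}+b_L\boldsymbol{T}_L)$ can be controlled by Lemma \ref{lem:admissible bounds} together with the a priori bound $|b_L|\lesssim b_1^L/|\log b_1|$. This fails. For the first coordinate the relevant tail is $b_{L-1}T_{L-1}$, to which no $|b_L|$-gain applies, and a direct application of \eqref{eq:bound k T_i} with $(k,i)=(L-1,L-1)$ yields
\[
 \bigl\|\mathcal{A}^{L-1}\bigl[(\partial_s\chi_{B_1}+b_1(y\chi')_{B_1})b_{L-1}T_{L-1}\bigr]\bigr\|_{L^2}
 \lesssim b_1\,|b_{L-1}|\,|\log b_1| \lesssim b_1^{L}|\log b_1|,
\]
a full logarithm above the stated threshold $b_1^{L}$. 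For the second coordinate the same direct estimate with $(k,i)=(L-1,L)$ gives $|b_L|\,|\log b_1|^{\gamma+1}=|b_L|\,|\log b_1|^2$, and even after the a priori bound $|b_L|\lesssim b_1^{L}/|\log b_1|$ this is still $b_1^{L}|\log b_1|$, one logarithm too large. The paper's actual mechanism, made explicit in the proof of Proposition \ref{prop:2nd local approx} (``...the logarithm weight $|\log b_1|$ in \eqref{eq:psit global L} comes from the estimate \eqref{eq:bound k T_i} when $i=L$, which is eliminated in \eqref{eq:2nd psi local}''), is that $\tilde{\boldsymbol{\psi}}_b$ already contains the matching term $+(\partial_s\chi_{B_1}+b_1(y\chi')_{B_1})(b_{L-1}\boldsymbol{T}_{L-1}+b_L\boldsymbol{T}_L)$ from the cutoff derivative acting on $\chi_{B_1}\boldsymbol{\alpha}_b$, and the new contribution from $\boldsymbol{\zeta}_b$ cancels it exactly. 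Only after this cancellation does the residual in \eqref{eq:psih2 global L-1}--\eqref{eq:dpsih2 global L-1} reach the bare $b_1^{L}$ level, which the remark after the proposition notes is essential for \eqref{eq:l energy mono}. Without invoking that cancellation, your estimate cannot close, so the claimed bound is not proved.

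Two smaller points. First, the ``main obstacle'' you highlight is not one: once you accept $T_L=T_{L-1}$ as scalars, the second-coordinate piece $H\zeta=b_{L-1}H\bigl[(\chi_{B_0}-\chi_{B_1})T_{L-1}\bigr]=-b_{L-1}H\bigl[(\chi_{B_1}-\chi_{B_0})T_L\bigr]$ is a tautology of distributivity; there is no nontrivial ``repackaging'' of a principal term plus commutator to perform. Second, the paper reduces the whole proposition, after the cancellation just discussed, to the new nonlinear bounds $\|\mathcal{A}^{L-2}NL(\zeta_b)\|_{L^2}+\|\mathcal{A}^{L-2}L(\zeta_b)\|_{L^2}\lesssim b_1^{L}$ via the crude pointwise bounds in $B_0\le y\le 2B_1$; your phrase ``standard Leibniz estimates'' should be fleshed out to the pointwise bounds $|\mathcal{A}^k NL(\zeta_b)|\lesssim b_1^{2L-2}y^{2L-6-k}|\log b_1|^C$ and $|\mathcal{A}^k L(\zeta_b)|\lesssim b_1^{L}y^{L-4-k}|\log b_1|^C$, since these are genuinely the only new computation needed beyond Proposition \ref{prop:2nd local approx}.
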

\begin{remark}
We point out that Propositions \ref{prop:2nd local approx} and \ref{prop:3rd local approx} provide improved bounds \eqref{eq:psih1 global L}, \eqref{eq:dpsih1 global L}, \eqref{eq:psih2 global L-1} and \eqref{eq:dpsih2 global L-1} compared to \eqref{eq:psit global k} and \eqref{eq:psit global L} in Proposition \ref{prop:local approx}. These improved bounds will be essential to prove the monotonicity formula \eqref{eq:l energy mono} later.
\end{remark}
\begin{proof}
  Note that $\boldsymbol{F}(\tilde{\boldsymbol{Q}}_b+\boldsymbol{\zeta}_b )-\boldsymbol{F}(\tilde{\boldsymbol{Q}}_b )=-\boldsymbol{H} \boldsymbol{\zeta}_b - \boldsymbol{NL}(\boldsymbol{\zeta}_b) - \boldsymbol{L}(\boldsymbol{\zeta}_b) $ where
  \begin{align}
    \boldsymbol{NL}(\boldsymbol{\zeta}_b)= \begin{pmatrix}
      0 \\
      NL(\zeta_b)
    \end{pmatrix} &:= \frac{1}{y^2}\begin{pmatrix}
      0 \\
      f(\tilde{Q}_b+{\zeta}_b) -f(\tilde{Q}_b) - f'(\tilde{Q}_b){\zeta}_b
    \end{pmatrix}, \\
    \boldsymbol{L}(\boldsymbol{\zeta}_b)=\begin{pmatrix}
      0 \\
      L(\zeta_b)
    \end{pmatrix} &:= \frac{1}{y^2}\begin{pmatrix}
      0 \\
      (f'(\tilde{Q}_b) -f'(Q)){\zeta}_b
    \end{pmatrix}.
  \end{align}
  From \eqref{eq:b_i identity} and \eqref{def:mod}, we have
  \begin{align}
    &\partial_s \hat{\boldsymbol{Q}}_{b}-\boldsymbol{F}(\hat{\boldsymbol{Q}}_b) + b_1 \boldsymbol{\Lambda} \hat{\boldsymbol{Q}}_b  \nonumber \\
    &= \chi_{B_1} \mathbf{Mod}(t) + \tilde{\boldsymbol{\psi}}_b + \partial_s \boldsymbol{\zeta}_b - (\boldsymbol{F}(\tilde{\boldsymbol{Q}}_b+\boldsymbol{\zeta}_b )-\boldsymbol{F}(\tilde{\boldsymbol{Q}}_b ))+ b_1 \boldsymbol{\Lambda} \boldsymbol{\zeta}_b \nonumber \\
    &= \widehat{\mathbf{Mod}}(t) + b_1b_{L-1}(\chi_{B_0}-\chi_{B_1})\{(-\boldsymbol{H})^{L+1}\widetilde{\Sigma}_{b_1} + \boldsymbol{\theta}_{L-1}\}  \nonumber \\
    &  + b_1b_L(\chi_{B_0}-\chi_{B_1})\{(-\boldsymbol{H})^{L+2}\widetilde{\Sigma}_{b_1} + \boldsymbol{\theta}_L\} + \boldsymbol{NL}(\boldsymbol{\zeta}_b) + \boldsymbol{L}(\boldsymbol{\zeta}_b) \label{eq:3rd mod2 local} \\
    &+ \tilde{\boldsymbol{\psi}}_b -(\partial_s(\chi_{B_1})+ b_1(y\chi')_{B_1})(b_{L-1}\boldsymbol{T}_{L-1}+b_L \boldsymbol{T}_L)  \nonumber \\
    & + (\partial_s(\chi_{B_0}) + b_1(y\chi')_{B_0})b_L \boldsymbol{T}_L + (\chi_{B_1}-\chi_{B_0})b_L \boldsymbol{T}_{L-1}+ \boldsymbol{H}\boldsymbol{\zeta}_b.\nonumber
  \end{align}
  Based on the proof of the previous proposition, it suffices to show that
  \[{\lVert \mathcal{A}^{L-2}NL(\zeta_b)\rVert}_{L^2}+{\lVert \mathcal{A}^{L-2}L(\zeta_b)\rVert}_{L^2} \lesssim b_1^{L}, \]
  which come from the following crude pointwise bounds in $B_0 \le y \le 2 B_1$: for $k \ge 0$,
  \[|\mathcal{A}^{k} NL(\zeta_b)|\lesssim b_1^{2L-2} y^{2L-6-k}|\log b_1|^C,\quad  |\mathcal{A}^{k} L(\zeta_b)|\lesssim b_1^L y^{L-4-k}|\log b_1|^C.\qedhere\] 
\end{proof}
\subsection{Dynamical laws of $b=(b_1,\dots,b_L)$}\label{subsec:law of b}
As previously mentioned, the blow-up rate is determined by the evolution of the vector $b$, we figure out its dynamical laws from \eqref{def:mod}: for $1\le k \le L$,
\begin{equation}\label{eq:exact mod eqn}
  (b_k)_s=b_{k+1}-\left(k-1+\frac{1}{(1+\delta_{1k})\log s}\right)b_1b_k,\quad b_{L+1}=0.
\end{equation}
One can check that the above system has $L$ independent solutions characterized by the number of nonzero coordinates: for $1\le k \le L$, $b=(b_1,\dots,b_{k},0,\dots,0)$. Here, we adopt two special solutions (recall that there are two $\ell$s that can achieve the same $L$) among them.
\begin{lemma}[Special solutions for the $b$ system]
  For all $\ell \ge 2$, the vector of functions
  \begin{equation}\label{eq:exact b formula}
    b_k^{e}(s)=\frac{c_k}{s^k}+\frac{d_k}{s^k\log s} \textnormal{ for } 1\leq k\leq \ell, \quad b_{k}^{e}\equiv 0 \textnormal{ for } k>\ell
  \end{equation}
solves \eqref{eq:exact mod eqn} approximately: for $1\le k \le L$,
\begin{equation}\label{eq:exact b system}
  (b_k^{e})_s+\left(k-1+\frac{1}{(1+\delta_{1k})\log s}\right)b^e_1b^e_k-b^e_{k+1}=O\left(\frac{1}{s^{k+1}(\log s)^2}\right), \textnormal{ as } s\to +\infty
\end{equation}
where the sequence $(c_k,d_k)_{k=1,\dots,\ell}$ is given by
\begin{equation}\label{eq:c recurrence}
  c_1= \frac{\ell}{\ell-1}, \quad c_{k+1}=-\frac{\ell-k}{\ell-1} c_k,\quad 1\le k \le \ell
\end{equation}
and for $2\le k \le \ell-1$,
\begin{equation}
  d_1= -\frac{\ell}{(\ell-1)^2},\quad d_2=-d_1+\frac{1}{2}c_1^2,\quad d_{k+1}=-\frac{\ell-k}{\ell-1}d_k+\frac{\ell(\ell-k)}{(\ell-1)^2} c_k. \label{eq:d recurrence}
\end{equation}
\end{lemma}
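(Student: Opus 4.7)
The plan is a direct verification: substitute the ansatz $b_k^e(s) = c_k/s^k + d_k/(s^k \log s)$ into the formal modulation system \eqref{eq:exact mod eqn}, collect terms by their order in $1/s^{k+1}$ and $1/(s^{k+1}\log s)$, and show that the recurrences \eqref{eq:c recurrence}, \eqref{eq:d recurrence} are exactly what is required to cancel the leading two orders, leaving an $O(1/(s^{k+1}(\log s)^2))$ remainder.

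First I would compute
\[
(b_k^e)_s = -\frac{k c_k}{s^{k+1}} - \frac{k d_k}{s^{k+1}\log s} - \frac{d_k}{s^{k+1}(\log s)^2},
\]
noting that the last summand is already absorbed into the claimed error. Next I would expand
\[
b_1^e b_k^e = \frac{c_1 c_k}{s^{k+1}} + \frac{c_1 d_k + d_1 c_k}{s^{k+1}\log s} + O\!\left(\frac{1}{s^{k+1}(\log s)^2}\right),
\]
multiply by $(k-1 + 1/((1+\delta_{1k})\log s))$, and subtract $b_{k+1}^e$. This produces a single expression whose $1/s^{k+1}$ and $1/(s^{k+1}\log s)$ coefficients we then match to zero.

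The $1/s^{k+1}$ identity reduces to $c_{k+1} = (-k + (k-1) c_1)c_k$, and the choice $c_1 = \ell/(\ell-1)$ converts this to $c_{k+1} = -\tfrac{\ell-k}{\ell-1} c_k$, which is \eqref{eq:c recurrence}. For the $1/(s^{k+1}\log s)$ identity, the coefficient of $d_k$ is again $-k + (k-1)c_1 = -(\ell-k)/(\ell-1)$, and the remaining inhomogeneous piece is $(k-1)d_1 c_k + \frac{c_1 c_k}{1+\delta_{1k}}$. The case $k=1$ gives directly $d_2 = -d_1 + c_1^2/2$. For $k \ge 2$, the $\delta_{1k}$ vanishes and the identity becomes $d_{k+1} = -\tfrac{\ell-k}{\ell-1} d_k + c_k[(k-1)d_1 + c_1]$; substituting $d_1 = -\ell/(\ell-1)^2$ yields $(k-1)d_1 + c_1 = \ell(\ell-k)/(\ell-1)^2$, exactly \eqref{eq:d recurrence}.

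The only subtle point, and what I expect will be the one step that needs care, is the boundary case $k = \ell$, where $b_{\ell+1}^e = 0$ is imposed by hand rather than dictated by the recurrence. Here both the $1/s^{\ell+1}$ and $1/(s^{\ell+1}\log s)$ coefficients must vanish \emph{automatically}. The first vanishes because $-\ell + (\ell-1) c_1 = 0$ by the definition of $c_1$, and the second follows because the same factor kills the $d_\ell$ term while the inhomogeneous piece $(\ell-1) d_1 + c_1$ also equals zero (this is exactly the value $\ell(\ell-k)/(\ell-1)^2$ at $k=\ell$). The compatibility with $c_1$ and $d_1$ is precisely why these particular initial values were chosen. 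For $k > \ell$, both sides of \eqref{eq:exact b system} are identically zero, so the statement is trivial. Collecting the residual cubic-in-$1/\log s$ contributions and the $-d_k/(s^{k+1}(\log s)^2)$ from $(b_k^e)_s$ completes the $O(1/(s^{k+1}(\log s)^2))$ bound.
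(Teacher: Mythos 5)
Your proposal is correct and follows exactly the route the paper indicates (the Remark after the Lemma says the recurrences come from substituting the ansatz into the system and matching the $s^{-(k+1)}$ and $(s^{k+1}\log s)^{-1}$ coefficients). Your careful treatment of the boundary case $k=\ell$, where both coefficients must vanish automatically given the choices of $c_1$ and $d_1$, is the only nontrivial point and you handle it correctly.
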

\begin{remark}
  The recurrence relations \eqref{eq:c recurrence} and \eqref{eq:d recurrence} are obtained by substituting \eqref{eq:exact b formula} into \eqref{eq:exact b system} and comparing the coefficients of $s^{-k}$ and $(s^{k}\log s)^{-1}$, which yields the proof.
\end{remark}
For our $b$ system to drive like the special solution $b^e$, we should control the fluctuation
\begin{equation}\label{eq:difference b_k}
  \frac{U_k(s)}{s^k(\log s)^{\beta}}:=b_k(s)-b_k^e(s)\textnormal{ for } 1\le k \le \ell.
\end{equation}
Here, \eqref{eq:exact b formula} and \eqref{eq:exact b system} restrict the range of $\beta$ to $1<\beta<2$, we will choose $\beta=5/4$ later. The next lemma provides the evolution of $U=(U_1,\dots,U_{\ell})$ from \eqref{eq:exact mod eqn}.
\begin{lemma}[Evolution of $U$]
  Let $b_k(s)$ be a solution to \eqref{eq:exact mod eqn} and $U$ be defined by \eqref{eq:difference b_k}. Then $U$ solves 
  \begin{align}\label{eq:linearization of b}
     s(U)_s= A_{\ell}U + O \left( \frac{1}{(\log s)^{2-\beta}} + \frac{|U|+|U|^2}{\log s} \right) ,
  \end{align}
  where the $\ell \times \ell$ matrix $A_{\ell}$ has of the form:
  \begin{equation}\label{def:matrix A}
    A_{\ell}=\begin{pmatrix} 1 & 1 &  & & &   \\ -c_2 & \frac{\ell-2}{\ell-1} & 1 & & (0) &  \\ -2c_3 & & \frac{\ell-3}{\ell-1} & 1 & &  \\ \vdots &  & & \ddots & \ddots &   \\ -(\ell-2)c_{\ell-1} & & (0) & & \frac{1}{\ell-1} & 1   \\  -(\ell-1)c_{\ell} & & & & & 0     \end{pmatrix}. 
  \end{equation}
Moreover, there exists an invertible matrix $P_{\ell}$ such that $A_{\ell}=P_{\ell}^{-1}D_{\ell}P_{\ell}$ with
\begin{equation}\label{eq:diagonalization}
  D_{\ell}=\begin{pmatrix} -1 &  &  & & &   \\  & \frac{2}{\ell-1} &  & & (0) &  \\  & & \frac{3}{\ell-1} &  & &  \\  &  & & \ddots &  &   \\  & (0) &  & & 1 &   \\  & & & & & \frac{\ell}{\ell-1}     \end{pmatrix}. 
\end{equation}
\end{lemma}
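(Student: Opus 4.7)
The plan is to plug \eqref{eq:difference b_k} into \eqref{eq:exact mod eqn}, expand via the product rule, and subtract the approximate relation \eqref{eq:exact b system} to kill the $b^e$-dependent piece. After multiplying through by $s^{k+1}(\log s)^\beta$, the surviving leading-order linear-in-$U$ terms are: (i) $kU_k$ from $s\partial_s$ acting on $s^{-k}$; (ii) $U_{k+1}$ from the $b_{k+1}$ term; and (iii) $-(k-1)c_1 U_k - (k-1)c_k U_1$ from the bilinear expansion $b_1 b_k - b_1^e b_k^e \approx b_1^e\,\delta b_k + b_k^e\,\delta b_1$, using $b_1^e = c_1/s + O(1/(s\log s))$ and similarly for $b_k^e$. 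Combining, $k-(k-1)c_1 = (\ell-k)/(\ell-1)$ via $c_1 = \ell/(\ell-1)$ reproduces the diagonal of $A_\ell$ in \eqref{def:matrix A}; the superdiagonal $1$ and the first column $-(k-1)c_k$ match (ii) and (iii).

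\textbf{Error bookkeeping.} The remainder has three sources: the $O(s^{-(k+1)}(\log s)^{-2})$ residue from \eqref{eq:exact b system}, which rescales to $O((\log s)^{-(2-\beta)})$; linear-in-$U$ corrections of size $O(|U|/\log s)$ coming from $\partial_s(\log s)^{-\beta}$, from the subleading $d_k/(s^k\log s)$ pieces of $b^e$, and from the $1/((1+\delta_{1k})\log s)$ factor in \eqref{eq:exact mod eqn}; and the quadratic contribution $-(k-1)\,\delta b_1\,\delta b_k$ of size $O(|U|^2/(\log s)^\beta) \le O(|U|^2/\log s)$ (using $\beta > 1$). Collecting these yields \eqref{eq:linearization of b}.

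\textbf{Step 2: Diagonalization.} Since $A_\ell$ is upper Hessenberg with a filled first column, the cleanest route is to construct eigenvectors by recurrence. For a candidate eigenvalue $\mu$, normalize $v_1 = 1$; the first $\ell-1$ rows of $A_\ell v = \mu v$ give
\[
v_{k+1} = \left(\mu - \tfrac{\ell-k}{\ell-1}\right) v_k + (k-1)\, c_k, \qquad 1 \le k \le \ell - 1,
\]
determining $v_2, \ldots, v_\ell$ as polynomials in $\mu$. The last row then imposes a degree-$\ell$ polynomial equation in $\mu$, which one verifies factors as $(\mu+1)\prod_{k=2}^{\ell}\bigl(\mu - k/(\ell-1)\bigr)$. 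Since the $\ell$ roots are pairwise distinct for $\ell \ge 2$, $A_\ell$ is diagonalizable and the eigenvectors form the columns of $P_\ell^{-1}$.

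\textbf{Main obstacle.} The technical heart is the factorization of $\det(\mu I - A_\ell)$. The cleanest route is an induction on $\ell$ by cofactor expansion along the last row (whose only nonzero entries are $-(\ell-1)c_\ell$ in column $1$ and $-\mu$ on the diagonal), combined with the recursion $c_{k+1} = -\tfrac{\ell-k}{\ell-1}c_k$ from \eqref{eq:c recurrence}. A conceptual check for the eigenvalue $\mu = -1$: the time-translation $s \mapsto s + a$ of the base special solution produces $\delta b_k = -a k c_k / s^{k+1}$, i.e.\ $U_k \propto k c_k s^{-1}(\log s)^\beta$, which is exactly the $e^{-\tau}$ mode in $\tau = \log s$. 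The remaining positive eigenvalues $k/(\ell-1)$ for $2 \le k \le \ell$ account for the $\ell-1$ unstable directions that will be neutralized by the codimension-$(\ell-1)$ topological argument in the proof of Theorem \ref{thm:main}.
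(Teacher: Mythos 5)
Your Step 1 (linearization and error bookkeeping) reproduces the paper's computation essentially verbatim: the same cancellation $k-(k-1)c_1=\frac{\ell-k}{\ell-1}$, the same sources of error, and the same resulting matrix $A_\ell$. The genuine departure is Step 2. The paper handles the diagonalization \eqref{eq:diagonalization} by citing Lemma~2.17 of \cite{Collot2018MEM.AMS} (with $\alpha=1$), whereas you sketch a self-contained argument: build the eigenvector from $v_1=1$ by the recurrence $v_{k+1}=(\mu-\frac{\ell-k}{\ell-1})v_k+(k-1)c_k$ and show the last row forces the characteristic polynomial to factor as $(\mu+1)\prod_{k=2}^{\ell}(\mu-\frac{k}{\ell-1})$. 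This is a more elementary and self-contained route, and the time-translation check for the $\mu=-1$ mode is a nice addition the paper does not offer. Two caveats, though. First, the "induction on $\ell$ by cofactor expansion" suggestion does not quite go through as stated: deleting the last row and column of $\mu I-A_\ell$ does not produce $\mu I-A_{\ell-1}$, because the entries $c_k$ and the diagonal terms $\frac{\ell-k}{\ell-1}$ both depend on $\ell$, so $A_{\ell-1}$ is not a principal submatrix of $A_\ell$; any induction would have to be on an auxiliary polynomial sequence, not directly on the matrices. Second, the actual factorization ("one verifies factors as") is asserted but not carried out; this is the substantive content your route must supply, and it is precisely what the paper offloads to the cited reference. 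As presented, then, your proposal and the paper's proof leave the same verification implicit, yours by a "one verifies" and the paper's by a citation; if you intend a self-contained proof you still owe that computation, and the eigenvector-recurrence route (not the cofactor induction) is the one that will close it.
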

\begin{proof}
  Observing the relation
  \[(k-1)c_1-k=\frac{(k-1)\ell}{\ell-1}-k=-\frac{\ell-k}{\ell-1},\]
  we obtain \eqref{eq:linearization of b} and \eqref{def:matrix A} since
\begin{align}\label{eq:mod linearization of b}
  &(b_k)_s+\left(k-1+\frac{1}{(1+\delta_{1k})\log s}\right)b_1b_k-b_{k+1} \\
  &=\frac{1}{s^{k+1}(\log s)^{\beta}}\left[ s(U_k)_s - kU_k + O\left(\frac{|U|}{\log s} \right) \right] + O\left(\frac{1}{s^{k+1}(\log s)^2} \right) \nonumber \\
  &+\frac{1}{s^{k+1}(\log s)^{\beta}}\left[ (k-1)c_kU_1 + (k-1)c_1U_k - U_{k+1} + O\left(\frac{|U|+|U|^2}{\log s} \right) \right] \nonumber \\
  &=\frac{1}{s^{k+1}(\log s)^{\beta}}\left[ s(U_k)_s+(k-1)c_kU_1 - \frac{\ell-k}{\ell-1}U_k -U_{k+1}  \right] \nonumber\\
  &+ O\left( \frac{1}{s^{k+1}(\log s)^{2}}+\frac{|U|+|U^2|}{s^{k+1}(\log s)^{1+\beta}} \right). \nonumber
\end{align}
 \eqref{eq:diagonalization} is obtained by substituting $\alpha=1$ for the result of Lemma 2.17 in \cite{Collot2018MEM.AMS}.
\end{proof}
\begin{remark}
  Since the above process can be seen as linearizing \eqref{eq:exact mod eqn} around our special solution $b^e$, the appearance of the matrix $A_{\ell}$ is quite natural. We also note that $\ell-1$ unstable directions corresponding to $\ell-1$ positive eigenvalues yield the (formal) codimension $\ell-1$ restriction of our initial data. 
\end{remark}
\section{The trapped solutions}
Our goal in this section is to implement the blow-up dynamics constructed in the previous section into the real solution $\boldsymbol{u}$. To do this, we first decompose the solution $\boldsymbol{u}$ as the blow-up profile and the error, i.e. $\boldsymbol{u}=(\tilde{\boldsymbol{Q}}_{b}+\boldsymbol{\varepsilon})_{\lambda}=\tilde{\boldsymbol{Q}}_{b,\lambda} + \boldsymbol{w}$. For the term "error" to be meaningful, we need to control the "direction" and "size" of $\boldsymbol{w}=\boldsymbol{\varepsilon}_{\lambda}$. 

Here, $\boldsymbol{\varepsilon}$ must be orthogonal to the directions that provoke blow-up from $\tilde{\boldsymbol{Q}}_{b,\lambda}$. Such orthogonal conditions determine the modulation equations system of the dynamical parameters $b$ as designed in subsection \ref{subsec:law of b}. 

In this process, $\boldsymbol{\varepsilon}$ appears as an error that is small in some suitable norms. The smallness is required not to change the leading order evolution laws \eqref{eq:exact mod eqn}. We describe the set of initial data and the trapped conditions represented by some bootstrap bounds for such suitable norms i.e, the higher-order energies.

After establishing estimates of modulation parameters, we also establish a Lyapunov type monotonicity of the higher-order energies to close our bootstrap assumptions.
\subsection{Decomposition of the flow} 
We recall the approximate direction $\boldsymbol{\Phi}_M$ which was defined in \cite{Collot2018MEM.AMS}. For a large constant $M>0$, we define
\begin{equation}\label{def:PhiM}
\boldsymbol{\Phi}_M=\sum_{p=0}^Lc_{p,M} \boldsymbol{H}^{*p}(\chi_M \boldsymbol{\Lambda} \boldsymbol{Q}) ,\quad  \boldsymbol{H}^*= \begin{pmatrix} 0 & H \\
  -1 & 0
  \end{pmatrix}
\end{equation}
where $c_{p,M}$ is given by
\begin{equation}
c_{0,M}=1,\quad  c_{k,M}=(-1)^{k+1} \frac{\sum_{p=0}^{k-1} c_{p,M} \langle \boldsymbol{H}^{*p}(\chi_M \boldsymbol{\Lambda} \boldsymbol{Q}),\boldsymbol{T}_k \rangle}{\langle \chi_M \boldsymbol{\Lambda} \boldsymbol{Q}, \boldsymbol{\Lambda} \boldsymbol{Q} \rangle} ,\quad 1\le k \le L.
\end{equation}
One can easily verify (see section 3.1.1 in \cite{Collot2018MEM.AMS}) that $\boldsymbol{H}^*$ is an adjoint operator of $\boldsymbol{H}$ in the sense that
\[\langle \boldsymbol{H} \boldsymbol{u},\boldsymbol{v}\rangle=\langle \boldsymbol{u},\boldsymbol{H}^*\boldsymbol{v} \rangle\]
and
$\boldsymbol{\Phi}_M=(\Phi_M,0)$ satisfies 
\begin{equation} \label{eq:PhiM properties}
     \langle \boldsymbol{\Phi}_M , \boldsymbol{\Lambda Q} \rangle =  \langle  \chi_M \boldsymbol{\Lambda Q} , \boldsymbol{\Lambda Q} \rangle  \sim 4\log M, \quad 
    |c_{p,M}| \lesssim M^p , \quad 
     ||\Phi_M||_{L^2}^2 \sim c\log M.
\end{equation}
We then obtain our desired decomposition by imposing a collection of orthogonal directions, which approximates the generalized kernel defined in Definition \ref{def:generalized ker}.
\begin{lemma}[Decomposition]
  Let $\boldsymbol{u}(t)$ be a solution to \eqref{eq:WM}  starting close enough to $\boldsymbol{Q}$ in $\mathcal{H}$. Then there exist $C^1$ functions $\lambda(t)$ and $b(t)=(b_1,\dots,b_L)$ such that $\boldsymbol{u}$ can be decomposed as
  \begin{equation}\label{eq:decomposition}
    \boldsymbol{u}=(\tilde{\boldsymbol{Q}}_{b(t)}+\boldsymbol{\varepsilon})_{{\lambda(t)}} 
    \end{equation}
    where $\tilde{\boldsymbol{Q}}_b $ is given in Proposition \ref{prop:local approx} and $\boldsymbol{\varepsilon}$ satisfies the orthogonality conditions \begin{equation}\label{eq:orthogonal conditions}
      \langle \boldsymbol{\varepsilon},\boldsymbol{H}^{*i}\boldsymbol{\Phi}_M\rangle=0, \ \textnormal{for} \ 0\leq i \leq  L .
      \end{equation}
       and an orbital stability estimate:
  \begin{equation}\label{eq:orbital stability}
    |b(t)|+{\left\lVert \boldsymbol{\varepsilon} \right\rVert}_{\mathcal{H}}  \ll 1
  \end{equation}  
\end{lemma}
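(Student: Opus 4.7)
The plan is to apply the implicit function theorem in the $(L+1)$-dimensional parameter space $(\lambda, b_1, \ldots, b_L) \in \mathbb{R}_+^* \times \mathbb{R}^L$, which exactly matches the number of orthogonality constraints in \eqref{eq:orthogonal conditions}. Given $\boldsymbol{u} \in \mathcal{H}$ close to $\boldsymbol{Q}$, define the candidate error
\[
\boldsymbol{\varepsilon}(\lambda, b; \boldsymbol{u}) := (\boldsymbol{u})_{1/\lambda} - \tilde{\boldsymbol{Q}}_b,
\]
which is $C^1$ in $(\lambda, b)$ in a neighborhood of $(1, 0)$ with the convention $\tilde{\boldsymbol{Q}}_0 := \boldsymbol{Q}$ (consistent since $\chi_{B_1} \to 1$ and $\boldsymbol{\alpha}_b \to 0$ smoothly as $b \to 0^+$, thanks to $\chi \equiv 1$ on $[0, 1]$ and to every $\boldsymbol{S}_i$ being at least quadratic in $b$-monomials). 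Setting
\[
F_i(\lambda, b; \boldsymbol{u}) := \langle \boldsymbol{\varepsilon}(\lambda, b; \boldsymbol{u}), \boldsymbol{H}^{*i} \boldsymbol{\Phi}_M \rangle, \qquad 0 \le i \le L,
\]
one has $F(1, 0; \boldsymbol{Q}) = 0$, and the task reduces to showing that $D_{(\lambda, b)} F\big|_{(1, 0; \boldsymbol{Q})}$ is invertible.

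The core computation is the Jacobian at the base point. First, $\partial_\lambda \boldsymbol{\varepsilon}\big|_{\text{base}} = \boldsymbol{\Lambda Q} = \boldsymbol{T}_0$ by the definition of the $\dot H^1 \times L^2$ scaling generator. Second, since each $\boldsymbol{S}_i$ from Proposition \ref{prop:approx profile} is homogeneous in $b$-monomials with $|J|_1 \ge 2$ (the recursion starts $\boldsymbol{S}_{i+1}$ from $b_1 b_i \boldsymbol{\Theta}_i$, $b_1 \boldsymbol{\Lambda S}_i$, and $\boldsymbol{P}_{i+1}/y^2$, all at least quadratic in $b$), one obtains $\partial_{b_j} \boldsymbol{\varepsilon}\big|_{\text{base}} = -\boldsymbol{T}_j$. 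Using the adjoint relation $\langle \cdot, \boldsymbol{H}^{*i} \cdot \rangle = \langle \boldsymbol{H}^i \cdot, \cdot \rangle$ together with $\boldsymbol{H} \boldsymbol{T}_{k+1} = -\boldsymbol{T}_k$ and $\boldsymbol{H} \boldsymbol{T}_0 = 0$, one computes
\[
\boldsymbol{H}^i \boldsymbol{T}_j = \begin{cases} (-1)^i \boldsymbol{T}_{j-i} & \text{if } i \le j, \\ 0 & \text{if } i > j, \end{cases}
\]
and therefore
\[
\partial_{b_j} F_i\big|_{\text{base}} = (-1)^{i+1} \langle \boldsymbol{T}_{j-i}, \boldsymbol{\Phi}_M \rangle \, \mathbf{1}_{\{i \le j\}}, \qquad \partial_\lambda F_i\big|_{\text{base}} = \langle \boldsymbol{T}_0, \boldsymbol{\Phi}_M \rangle \, \delta_{i 0}.
\]
The coefficients $c_{p, M}$ entering \eqref{def:PhiM} were engineered precisely to enforce $\langle \boldsymbol{T}_k, \boldsymbol{\Phi}_M \rangle = 0$ for $1 \le k \le L$, while \eqref{eq:PhiM properties} gives $\langle \boldsymbol{T}_0, \boldsymbol{\Phi}_M \rangle = \langle \chi_M \boldsymbol{\Lambda Q}, \boldsymbol{\Lambda Q} \rangle \sim 4 \log M \ne 0$. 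This collapses the $(L+1) \times (L+1)$ Jacobian to a diagonal matrix with entries $\pm \langle \boldsymbol{T}_0, \boldsymbol{\Phi}_M \rangle$, and hence invertibility is immediate. This verification of diagonality, although a short calculation, is the only genuinely non-trivial step and is the payoff of the careful construction of $\boldsymbol{\Phi}_M$ and of the homogeneity discipline on $\boldsymbol{S}_i$.

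The implicit function theorem then produces a $C^1$ map $\boldsymbol{u} \mapsto (\lambda(\boldsymbol{u}), b(\boldsymbol{u}))$ on a $\mathcal{H}$-neighborhood of $\boldsymbol{Q}$ solving $F \equiv 0$, with $(\lambda, b) \to (1, 0)$ as $\boldsymbol{u} \to \boldsymbol{Q}$. Composing with the continuous trajectory $t \mapsto \boldsymbol{u}(t)$ (which remains in this neighborhood for short times by local well-posedness in $\mathcal{H}$) gives the asserted $C^1$ functions $\lambda(t)$, $b(t)$, the decomposition \eqref{eq:decomposition}, and the orbital stability bound \eqref{eq:orbital stability} as a direct consequence of continuity of the IFT inverse at $\boldsymbol{u} = \boldsymbol{Q}$. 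The only technical care point (not a substantive obstacle) is verifying $C^1$ regularity of $b \mapsto \tilde{\boldsymbol{Q}}_b$ at $b = 0$, which is automatic from the quadratic vanishing of each $\boldsymbol{S}_i$ and from $\partial_{b_1}\chi_{B_1}\big|_{b_1 = 0} = 0$ (since $\chi' \equiv 0$ on $[0, 1]$), ensuring that the cutoff's $b_1$-dependence introduces no linear contribution that could perturb the diagonal Jacobian.
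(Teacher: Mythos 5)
Your proposal is correct and follows essentially the same route as the paper: an implicit function theorem argument in the $(L+1)$-dimensional parameter space, reduced to invertibility of the Jacobian at $(\lambda,b,\boldsymbol{u})=(1,0,\boldsymbol{Q})$, which collapses to a nonzero diagonal matrix via the transversality relation $\langle \boldsymbol{\Phi}_M, \boldsymbol{H}^{i}\boldsymbol{T}_j\rangle = (-1)^j \langle \chi_M\boldsymbol{\Lambda Q},\boldsymbol{\Lambda Q}\rangle\delta_{ij}$ (which is exactly \eqref{eq:PhiM orthogonality} in the paper, obtained telescopically from the recursive definition of $c_{p,M}$). The only difference is presentational: the paper computes the determinant of the matrix $(\langle \boldsymbol{\Phi}_M,\boldsymbol{H}^i\boldsymbol{T}_j\rangle)_{ij}$ directly, while you present the individual entries $\partial_{b_j}F_i$; and you spend an extra sentence on the $C^1$-regularity of $b\mapsto\tilde{\boldsymbol{Q}}_b$ near $b=0$, a subtlety the paper leaves implicit — your observation that $\boldsymbol{\Phi}_M$ is compactly supported (so $\chi_{B_1}\equiv 1$ on its support for small $b_1$) is indeed why the $b_1$-dependence of the cutoff produces no spurious contribution to the Jacobian.
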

\begin{remark}
   \eqref{eq:PhiM orthogonality} says that $\{\langle \cdot , \boldsymbol{H}^{*i}\boldsymbol{\Phi}_M\rangle\}_{i \ge 0}$ serves as coordinate functions on the space $\textnormal{Span}\{\boldsymbol{T}_i\}_{i \ge 0}$.
\end{remark}
\begin{proof}
  It is clear that $\boldsymbol{H}^i \boldsymbol{T}_j=0$ for $i>j$.  For $0\le i \le j$,
  \begin{align}
    \langle \boldsymbol{\Phi}_M, \boldsymbol{H}^i \boldsymbol{T}_j \rangle &= (-1)^{i}\langle \boldsymbol{\Phi}_M,\boldsymbol{T}_{j-i} \rangle \nonumber \\ 
    &= (-1)^{i}\sum_{p=0}^{j-i-1} c_{p,M}\langle \boldsymbol{H}^{*p}(\chi_M \boldsymbol{\Lambda} \boldsymbol{Q}),\boldsymbol{T}_{j-i} \rangle + (-1)^{j} c_{j-i,M} \langle \chi_M \boldsymbol{\Lambda} \boldsymbol{Q}, \boldsymbol{\Lambda} \boldsymbol{Q} \rangle\nonumber \\
    &=(-1)^j\langle \chi_M \boldsymbol{\Lambda} \boldsymbol{Q}, \boldsymbol{\Lambda} \boldsymbol{Q} \rangle \delta_{i,j} .\label{eq:PhiM orthogonality}
  \end{align}
  Now, we consider $\boldsymbol{\varepsilon}:=\boldsymbol{u}_{1/\lambda} - \tilde{\boldsymbol{Q}}_b$ as a map in the $(\lambda,b,\boldsymbol{u})$ basis. By the implicit function theorem, \eqref{eq:decomposition} is deduced from the non-degeneracy of the following Jacobian
  \begin{align}
    {\left\lvert \left(\frac{\partial  }{\partial(\lambda,b)}  \langle \boldsymbol{\varepsilon}, \boldsymbol{H}^{*i}\boldsymbol{\Phi}_M \rangle\right)_{0\le i \le L}\right\rvert}_{(\lambda,b,\boldsymbol{u})=(1,0,\boldsymbol{Q})} &=  (-1)^{L+1}{\left\lvert \left(  \langle \boldsymbol{T}_j, \boldsymbol{H}^{*i}\boldsymbol{\Phi}_M \rangle\right)_{0\le i,j \le L}\right\rvert} \nonumber\\
    &= {\left\lvert \left(  \langle \boldsymbol{\Phi}_M , \boldsymbol{H}^{i} \boldsymbol{T}_j\rangle\right)_{0\le i,j \le L}\right\rvert} \nonumber\\
    &= {\left\lvert \left( (-1)^j\langle \chi_M \boldsymbol{\Lambda} \boldsymbol{Q}, \boldsymbol{\Lambda} \boldsymbol{Q} \rangle \delta_{i,j}\right)_{0\le i,j \le L}\right\rvert} \nonumber\\
    &=(-1)^{\frac{L+1}{2}} \langle \chi_M \boldsymbol{\Lambda} \boldsymbol{Q}, \boldsymbol{\Lambda} \boldsymbol{Q} \rangle^{L+1} \neq 0 . \nonumber  \qedhere
  \end{align}
\end{proof}


\subsection{Equation for the error}
Based on the previously established decomposition
\[\boldsymbol{u}=\tilde{\boldsymbol{Q}}_{b(t),{\lambda (t)}}+\boldsymbol{w}=(\tilde{\boldsymbol{Q}}_{b(s)}+\boldsymbol{\varepsilon} (s))_{{\lambda(s)}}, \]
\eqref{eq:WM} turns into the evolution equation of $\boldsymbol{\varepsilon}$:
\begin{align}
  \partial_s\boldsymbol{\varepsilon} -\frac{\lambda_s}{\lambda} \boldsymbol{\Lambda} \boldsymbol{\varepsilon} + \boldsymbol{H}\boldsymbol{\varepsilon} =& - \left( \partial_s \tilde{\boldsymbol{Q}}_b - \frac{\lambda_s}{\lambda} \boldsymbol{\Lambda}\tilde{\boldsymbol{Q}}_b  \right) + \boldsymbol{F}(\tilde{\boldsymbol{Q}}_b + \boldsymbol{\varepsilon} )  + \boldsymbol{H}\boldsymbol{\varepsilon}\nonumber\\
  =&-\left( \partial_s \tilde{\boldsymbol{Q}}_b - \boldsymbol{F}(\tilde{\boldsymbol{Q}}_b  ) + b_1 \boldsymbol{\Lambda}\tilde{\boldsymbol{Q}}_b  \right) + \left(\frac{\lambda_s}{\lambda}+b_1\right)\boldsymbol{\Lambda} \tilde{\boldsymbol{Q}}_b\nonumber\\
  & + \boldsymbol{F}(\tilde{\boldsymbol{Q}}_b + \boldsymbol{\varepsilon} )-\boldsymbol{F}(\tilde{\boldsymbol{Q}}_b  )  + \boldsymbol{H}\boldsymbol{\varepsilon} \nonumber \\
  =& -\widetilde{\mathbf{Mod}}(t)  - \tilde{\boldsymbol{\psi}}_b -\boldsymbol{NL}(\boldsymbol{\varepsilon}) -\boldsymbol{L}(\boldsymbol{\varepsilon}),\label{eq:evolution epsilon}
\end{align}
where
\begin{equation}\label{eq:mod tilde}
  \widetilde{\mathbf{Mod}}(t):=\chi_{B_1}\mathbf{Mod}(t) -  \left(\frac{\lambda_s}{\lambda}+b_1\right)\boldsymbol{\Lambda} \tilde{\boldsymbol{Q}}_b , 
\end{equation}
\begin{equation}
  \boldsymbol{NL}(\boldsymbol{\varepsilon}):= \frac{1}{y^2}\begin{pmatrix}
    0 \\
    f(\tilde{Q}_b+\varepsilon) -f(\tilde{Q}_b) - f'(\tilde{Q}_b)\varepsilon
  \end{pmatrix},\ \ \boldsymbol{L}(\boldsymbol{\varepsilon}):= \frac{1}{y^2}\begin{pmatrix}
    0 \\
    (f'(\tilde{Q}_b) -f'(Q))\varepsilon
  \end{pmatrix}.
\end{equation}
For later analysis, we also employ the evolution equation of $\boldsymbol{w}$:
\begin{equation}\label{eq:evolution W}
  \partial_t \boldsymbol{w}+\boldsymbol{H}_{{\lambda}} \boldsymbol{w}= \frac{1}{\lambda}\boldsymbol{\mathcal{F}}_{\lambda},\quad \boldsymbol{\mathcal{F}}= -\widetilde{\mathbf{Mod}}(t) -\tilde{\boldsymbol{\psi}}_{b} -\boldsymbol{ NL}(\boldsymbol{\varepsilon})-\boldsymbol{L}{(\boldsymbol{\varepsilon})},
\end{equation}
where:
\begin{equation}\label{def:capital H rescaled}
\boldsymbol{H}_{{\lambda}}=\begin{pmatrix}
  0 & -1 \\
  H_{\lambda} & 0
  \end{pmatrix}:=\begin{pmatrix}
0 & -1 \\
-\Delta +r^{-2} f'(Q_{\lambda}) & 0
\end{pmatrix}, 
\end{equation}
We notice that the $\boldsymbol{NL}$ and $\boldsymbol{L}$ terms are situated on the second coordinate:
\begin{equation}\label{eq:localisation NL et L}
  \boldsymbol{NL}(\boldsymbol{\varepsilon})=\begin{pmatrix}0 \\ NL(\varepsilon) \end{pmatrix},  \ \  \boldsymbol{L}(\boldsymbol{\varepsilon})=\begin{pmatrix}0 \\ L(\varepsilon) \end{pmatrix}.
\end{equation}
We also introduce another decomposition
\[\boldsymbol{u}=\hat{\boldsymbol{Q}}_{b(t),{\lambda (t)}}+\hat{\boldsymbol{w}}=(\hat{\boldsymbol{Q}}_{b(s)}+\hat{\boldsymbol{\varepsilon}} (s))_{{\lambda(s)}} \]
which depends on whether $\ell=L$ (Proposition \ref{prop:2nd local approx}) or $\ell=L-1$ (Proposition \ref{prop:3rd local approx}). The evolution equation of $\hat{\boldsymbol{\varepsilon}}$ is given by
\begin{align}
  \partial_s\hat{\boldsymbol{\varepsilon}} -\frac{\lambda_s}{\lambda} \boldsymbol{\Lambda}\hat{\boldsymbol{\varepsilon}} + \boldsymbol{H}\hat{\boldsymbol{\varepsilon}} = -\widehat{\mathbf{Mod}}'(t)  - \hat{\boldsymbol{\psi}}_b -\widehat{\boldsymbol{NL}}(\hat{\boldsymbol{\varepsilon}}) -{\widehat{\boldsymbol{L}}}(\hat{\boldsymbol{\varepsilon}}),\label{eq:evolution hat epsilon}
\end{align}
where
\begin{equation}\label{eq:mod hat}
  \widehat{\mathbf{Mod}}'(t):=\widehat{\mathbf{Mod}}(t) -  \left(\frac{\lambda_s}{\lambda}+b_1\right)\boldsymbol{\Lambda} \hat{\boldsymbol{Q}}_b , 
\end{equation}
\begin{equation}
  \widehat{\boldsymbol{NL}}(\hat{\boldsymbol{\varepsilon}}):= \frac{1}{y^2}\begin{pmatrix}
    0 \\
    f(\hat{Q}_b+\hat{\varepsilon}) -f(\hat{Q}_b) - f'(\hat{Q}_b)\hat{\varepsilon}
  \end{pmatrix},\ \ \widehat{\boldsymbol{L}}(\hat{\boldsymbol{\varepsilon}}):= \frac{1}{y^2}\begin{pmatrix}
    0 \\
    (f'(\hat{Q}_b) -f'(Q))\hat{\varepsilon}
  \end{pmatrix}.
\end{equation}
We also employ the evolution equation of $\hat{\boldsymbol{w}}$:
\begin{equation}\label{eq:evolution hat W}
  \partial_t \hat{\boldsymbol{w}}+\boldsymbol{H}_{{\lambda}} \hat{\boldsymbol{w}}= \frac{1}{\lambda}\widehat{\boldsymbol{\mathcal{F}}}_{\lambda},\quad \widehat{\boldsymbol{\mathcal{F}}}= -\widehat{\mathbf{Mod}}'(t) -\hat{\boldsymbol{\psi}}_{b} -\widehat{\boldsymbol{ NL}}(\hat{\boldsymbol{\varepsilon}})-\widehat{\boldsymbol{L}}{(\hat{\boldsymbol{\varepsilon}})}.
\end{equation}


\subsection{Initial data setting for the bootstrap}\label{subsec:initial and bootstrap}
In this subsection, we describe our initial data and the bootstrap assumption. To do this, we recall the fluctuation \eqref{eq:difference b_k} i.e. $U=(U_1,\cdots, U_{\ell})$,
\[ U_k(s)=s^k(\log s)^{\beta}(b_k(s)-b_k^e(s)). \]
We also define the adapted higher-order energies given by
\begin{equation} \label{eq:def E_k}
\mathcal{E}_{k}:=  \langle  \varepsilon_{k},\varepsilon_{k} \rangle + \langle  \dot{\varepsilon}_{k-1},  \dot{\varepsilon}_{k-1} \rangle , \quad 2\le k \le L+1.
\end{equation}
We set our renormalized spacetime variables $(s,y)$ as follows: for a large enough $s_0 \gg 1$, 
\begin{equation*}
  y=\frac{r}{\lambda(t)}, \quad s(t)=s_0+\int_0^t \frac{d \tau}{\lambda(\tau)} .
  \end{equation*}
  For the sake of simplicity, we use a transformed fluctuation $V=(V_1(s),\dots,V_{\ell}(s))$,
  \begin{equation} \label{eq:def V}
    V=P_{\ell} U 
  \end{equation}
   where $P_{\ell}$ yields the diagonalization \eqref{eq:diagonalization}. Then we illustrate the modulation parameters $b$ as a sum of the exact solutions $b^e(s)$ and $V(s)$: for $\ell=L-1$ or $L$,
\begin{align*}
  b(s)=b^e(s)+\left(\frac{(P_{\ell}^{-1}V(s))_1}{s(\log s)^{\beta}},\dots,\frac{(P_{\ell}^{-1}V(s))_{\ell}}{s^{\ell}(\log s)^{\beta}},b_{\ell+1}(s),\dots,b_L(s)\right).
\end{align*}

   Now, we assume some smallness conditions for our initial data $\boldsymbol{u}_0 (s_0) = (u_0,\dot{u}_0)$ as follows: for large constants $M=M(L)$, $K=K(L,M)$, $s_0=s_0(L,M,K)$, we set the initial data $\boldsymbol{u}_0 = \boldsymbol{u}(s_0)$ as
  \begin{equation}\label{eq:initial data form}
    \boldsymbol{u}_0 = (\tilde{\boldsymbol{Q}}_{b(s_0)} + \boldsymbol{\varepsilon}(s_0))_{\lambda(s_0)},
  \end{equation}
where $\boldsymbol{\varepsilon}(s_0)$ satisfies the orthogonality conditions \eqref{eq:orthogonal conditions}, the smallness of higher-order energies
\begin{align}
   \mathcal{E}_k (s_0) &\le b_1^{2L+4}(s_0)\label{eq:initial energies}
 \end{align}
 and $b(s_0)$ satisfies the smallness of the stable modes:
 \begin{equation}\label{eq:initial stable modes}
  |V_1(s_0)|\leq \frac{1}{4}  \ \textnormal{ and } \ |b_L(s_0)|\leq \frac{1}{s_0^{(L-1)c_1 }(\log s_0)^{3/2}} \ \textnormal{ for } \ \ell= L-1 
  \end{equation}
 where $c_1 = \frac{\ell}{\ell-1}$. Furthermore, we may assume
 \begin{equation}\label{eq:lambda renormalized}
  \lambda(s_0)=1
 \end{equation}
 up to rescaling.
\begin{proposition}[The existence of trapped solutions]\label{prop:bootstrap}
 Given $\boldsymbol{u}(s_0)$ of the form \eqref{eq:initial data form} satisfying \eqref{eq:orthogonal conditions}, \eqref{eq:initial energies} and \eqref{eq:initial stable modes}, there exists an initial direction of the unstable modes
 \begin{equation}\label{eq:initial unstable modes}
  (V_2(s_0),...,V_{\ell}(s_0)) \in \mathcal{B}^{{\ell}-1} 
  \end{equation}
  such that the corresponding solution to \eqref{eq:WM} becomes \textit{trapped}, namely, it satisfies the following bounds for all $s\geq s_0$,
\begin{itemize}
  \item\emph{Control of the higher-order energies:} for $2\le k \le \ell-1$,
\begin{align}
  &\mathcal{E}_k(s) \le b_1^{2(k-1)c_1} |\log b_1|^K, \quad\mathcal{E}_{L+1}(s)  \le K \frac{b_1^{2L+2}}{|\log b_1|^2},\label{eq:bootstrap epsilon}\\
  &\mathcal{E}_L(s) \le \begin{cases}
    K \lambda^{2(L-1)} & \textnormal{when }  {\ell}=L,\\
    b_1^{2L}|\log b_1|^K &  \textnormal{when }  {\ell}=L-1,
  \end{cases}  \label{eq:bootstrap epsilon L}\\
  &   \mathcal{E}_{L-1}(s) \le K \lambda^{2(L-2)}\quad \textnormal{when }  {\ell}=L-1. \label{eq:bootstrap epsilon L-1}
\end{align}
\item\emph{Control of the stable modes:} 
\begin{equation}\label{eq:bootstrap modes stables}
  |V_1(s)|\leq 1, \quad |b_L(s)|\leq \frac{1}{s^L (\log s)^{\beta}}, \quad \textnormal{when }  {\ell}=L-1.
  \end{equation}
  
\item\emph{Control of the unstable modes:}  
\begin{equation}\label{eq:bootstrap modes unstables}
  (V_2(s),\dots,V_{\ell}(s))\in \mathcal{B}^{\ell-1}.
  \end{equation}

\end{itemize}
\end{proposition}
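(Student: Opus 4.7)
The plan is a bootstrap argument combined with a Brouwer-type topological argument on the unstable mode parameters. For each choice of initial direction $(V_2(s_0),\ldots,V_\ell(s_0)) \in \mathcal{B}^{\ell-1}$, local well-posedness of \eqref{eq:WM} together with the smallness hypotheses \eqref{eq:initial energies}--\eqref{eq:initial stable modes} ensures that the bounds \eqref{eq:bootstrap epsilon}--\eqref{eq:bootstrap modes unstables} persist on some maximal interval $[s_0, s^*)$, with $s^* = s^*(V_2(s_0),\ldots,V_\ell(s_0))$. The goal is to prove strict improvement of all bootstrap bounds except the unstable-mode bound \eqref{eq:bootstrap modes unstables} on $[s_0, s^*)$, and then use a topological argument on the initial unstable data to force $s^* = +\infty$ for at least one choice.

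For the strict improvement I would proceed in two steps. First, differentiating the orthogonality conditions \eqref{eq:orthogonal conditions} in time, inserting \eqref{eq:evolution epsilon}, and using the non-degeneracy observed in \eqref{eq:PhiM orthogonality}, one obtains an invertible linear system for the vector $\big((b_k)_s + (k-1+c_{b_1,k})b_1 b_k - b_{k+1}\big)_{1\le k \le L}$ together with $\frac{\lambda_s}{\lambda}+b_1$, whose right-hand side is controlled in terms of $\boldsymbol{\tilde\psi}_b$, $\boldsymbol{NL}(\boldsymbol\varepsilon)$, $\boldsymbol{L}(\boldsymbol\varepsilon)$ and the higher-order energies $\mathcal{E}_k$. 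Second, I would establish a Lyapunov-type monotonicity for corrected versions of the functionals $\mathcal{E}_k$, $2\le k\le L+1$. As outlined in the introduction, for the top order $\mathcal{E}_{L+1}$ this requires extracting a sign-favorable coercive contribution using the repulsivity of the super-symmetric conjugate $\widetilde{H}_\lambda = AA^*$ through the iterative Leibniz correction procedure and integration by parts in $t$. Reintegrating these differential inequalities in $s$, with careful bookkeeping of the $|\log b_1|^K$ weights, would strictly improve \eqref{eq:bootstrap epsilon}--\eqref{eq:bootstrap epsilon L-1}. Rewriting the modulation ODE in diagonalized coordinates via \eqref{eq:mod linearization of b} and \eqref{eq:diagonalization} yields $s(V)_s = D_\ell V + o(1)$; the eigenvalue $-1$ gives exponential contraction for $V_1$, strictly improving $|V_1(s)|\le 1$, and an analogous direct analysis of the $b_L$ equation in the case $\ell = L-1$ strictly improves the $b_L$ bound in \eqref{eq:bootstrap modes stables}.

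The unstable coordinates $V_j$, $2\le j\le \ell$, instead satisfy $s(V_j)_s = \tfrac{j}{\ell-1}V_j + o(1)$ with $\tfrac{j}{\ell-1}>0$. Hence whenever $|(V_2(s),\ldots,V_\ell(s))| = 1$, for $s_0$ large one has $\frac{d}{ds}|(V_2,\ldots,V_\ell)|^2 > 0$, so any exit through the boundary of $\mathcal{B}^{\ell-1}$ is transversal. Assume for contradiction that $s^* < +\infty$ for every initial datum in $\mathcal{B}^{\ell-1}$; since every other bootstrap bound has just been strictly improved, exit must occur through $|(V_2,\ldots,V_\ell)| = 1$. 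Transversality plus continuous dependence on initial data then makes the exit map
\[\Psi:\mathcal{B}^{\ell-1}\to \mathcal{S}^{\ell-1},\qquad (V_2(s_0),\ldots,V_\ell(s_0)) \mapsto (V_2(s^*),\ldots,V_\ell(s^*)),\]
well-defined and continuous, and $\Psi|_{\mathcal{S}^{\ell-1}}=\mathrm{id}$ since on the boundary $s^* = s_0$. This is a continuous retraction of $\mathcal{B}^{\ell-1}$ onto $\mathcal{S}^{\ell-1}$, contradicting Brouwer's no-retraction theorem, and thus yields some initial unstable mode data in $\mathcal{B}^{\ell-1}$ for which $s^* = +\infty$.

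The main anticipated obstacle is the Lyapunov monotonicity for the top-order energy $\mathcal{E}_{L+1}$: because the problem is dispersive and energy-critical, no dissipation is available, and one must extract a coercive contribution iteratively via $\widetilde{H}_\lambda = AA^*$ and repeated time integration by parts, as outlined in the strategy. Matching this single coercive term against all the forcing terms generated by $\widetilde{\mathbf{Mod}}(t)$, $\tilde{\boldsymbol{\psi}}_b$, $\boldsymbol{NL}(\boldsymbol{\varepsilon})$ and $\boldsymbol{L}(\boldsymbol{\varepsilon})$, while preserving the delicate logarithmic gains needed to strictly improve \eqref{eq:bootstrap epsilon L}--\eqref{eq:bootstrap epsilon} and to close the modulation estimates on $V_1$ and $b_L$, is expected to be the most demanding technical step of the proof.
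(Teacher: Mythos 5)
Your overall structure is the right one: a bootstrap argument with modulation equations extracted from the orthogonality conditions, Lyapunov monotonicity for the higher-order energies via the repulsivity of $\widetilde{H}_\lambda = AA^*$ with the iterated time integrations by parts, and a Brouwer/no-retraction argument on the unstable modes. But there is a genuine gap in your treatment of the top modulation parameter $b_L$. You write $s(V)_s = D_\ell V + o(1)$ and rely on the $o(1)$ both to improve $|V_1|\le 1$ and to get transversality on the sphere, but the modulation estimate \eqref{eq:mod bound2} from Lemma~\ref{lem:mod eqn} only gives
\begin{equation*}
|(b_L)_s + (L-1+c_{b_1,L})b_1 b_L| \lesssim \frac{\sqrt{\mathcal{E}_{L+1}}}{\sqrt{\log M}} + C(M)\,b_1^{L+3},
\end{equation*}
and when you insert $\mathcal{E}_{L+1}\lesssim Kb_1^{2L+2}/|\log b_1|^2$ and multiply by $s^{L+1}(\log s)^\beta$ to pass to the $U_\ell$ (or $b_L$) equation, the resulting error is of size $\sqrt{K}(\log s)^{\beta-1}/\sqrt{\log M}$, which diverges for the relevant choice $\beta>1$ ($\beta = 5/4$ in the paper). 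The bootstrap on $V_1$ and the transversality estimate \eqref{eq:outgoing} therefore cannot be closed with the crude modulation bound. The missing ingredient is Lemma~\ref{lem:improved b_L}: passing to the corrected parameter $\tilde{b}_L = b_L + (-1)^L\langle \boldsymbol{H}^L\boldsymbol{\varepsilon},\chi_{B_\delta}\boldsymbol{\Lambda Q}\rangle/(4\delta|\log b_1|)$ trades the fixed constant $\sqrt{\log M}$ for the divergent quantity $\sqrt{|\log b_1|}$, yielding the improved modulation bound \eqref{eq:mod eqn tilde b_L} and hence the $O(1/(\log s)^{3/2-\beta})$ error in \eqref{eq:derivative of tilde V} that one actually needs. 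Without this refinement your $o(1)$ is unbounded.

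A second, smaller gap concerns the scale-invariant bootstrap bounds \eqref{eq:bootstrap epsilon L} and \eqref{eq:bootstrap epsilon L-1}. Direct integration of the intermediate monotonicity formula only gives $\mathcal{E}_\ell \lesssim b_1^{2(\ell-1)c_1}|\log b_1|^{C+K/2}$, which carries a logarithmic loss over $\lambda^{2(\ell-1)}$. The paper resolves this by introducing a second localized profile $\hat{\boldsymbol{Q}}_b$ (Propositions~\ref{prop:2nd local approx} and \ref{prop:3rd local approx}) and the corresponding energies $\widehat{\mathcal{E}}_\ell$, whose monotonicity \eqref{eq:l energy mono} avoids the log loss because the dominant error terms concentrated at $y\sim B_0$ are absorbed into the modified profile. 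Your phrase ``corrected versions of $\mathcal{E}_k$'' does not capture this: the correction here is not an additive remainder in the Lyapunov functional of the type used for $\mathcal{E}_{L+1}$, but a genuinely different profile decomposition, and it is needed to obtain the scale-invariant bound that underlies Remark~\ref{rmk:further regularity}.
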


Under the initial setting of $(\boldsymbol{\varepsilon}(s_0),V(s_0),b_{\ell+1}(s_0),\dots,b_L(s_0))$ (see \eqref{eq:initial data form}, \eqref{eq:initial energies}, \eqref{eq:initial stable modes} and \eqref{eq:initial unstable modes}), We define an exit time
\begin{align}\label{def:exit time}
  s^*=\sup\{ s\ge s_0 : \eqref{eq:bootstrap epsilon},\ \eqref{eq:bootstrap epsilon L}, \  \eqref{eq:bootstrap epsilon L-1},   \  \eqref{eq:bootstrap modes stables} \ \textnormal{and} \ \eqref{eq:bootstrap modes unstables} \ \textnormal{hold} \ \textnormal{on } [s_0,s] \}.
\end{align}
From \eqref{eq:initial data form}, \eqref{eq:initial energies}, \eqref{eq:initial stable modes} and \eqref{eq:initial unstable modes}, it is clear that \eqref{eq:bootstrap epsilon}, \eqref{eq:bootstrap epsilon L}, \eqref{eq:bootstrap epsilon L-1}, \eqref{eq:bootstrap modes stables} and \eqref{eq:bootstrap modes unstables} hold at $s=s_0$.
We will prove Proposition \ref{prop:bootstrap} in Section 4 by contradiction, assume that 
\begin{align}
    s^* < \infty \quad \textnormal{for all }(V_2(s_0),\dots,V_{\ell}(s_0))\in \mathcal{B}^{{\ell}-1} .\label{eq:finiteness assumption of exit time}
\end{align}
At the exit time $s^*$, we claim that only \eqref{eq:bootstrap modes unstables} fails among the bootstrap bounds in Proposition \ref{prop:bootstrap} through establishing estimates of modulation paramters and some monotonicity formulae of the higher-order energies. Then, the codimension $(\ell-1)$ stability \eqref{eq:diagonalization} leads a contradiction by Brouwer's fixed point theorem.


\subsection{Modulation equations} 
Now we provide the evolution of the modulation parameters from the orthogonality conditions \eqref{eq:orthogonal conditions}.
\begin{lemma}[Modulation equations]\label{lem:mod eqn} The modulation parameters $(\lambda,b_1,\dots,b_L)$ satisfy the following bound 
  \begin{align}
    \left\lvert \frac{\lambda_s}{\lambda} + b_1 \right\rvert + \sum_{i=1}^{L-1} \lvert(b_i)_s +(i-1 + c_{b_1,i})b_1b_i - b_{i+1}\rvert &\lesssim C(M) b_1(\sqrt{\mathcal{E}_{L+1}}+b_1^{L+2}),\label{eq:mod bound1}\\
     \lvert (b_L)_s +(L-1 + c_{b_1,L})b_1b_L \rvert & \lesssim \frac{\sqrt{\mathcal{E}_{L+1}}}{\sqrt{\log M}}  + C(M) b_1^{L+3}.\label{eq:mod bound2}
  \end{align}
\end{lemma}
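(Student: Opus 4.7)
The plan is to extract the modulation ODEs by differentiating the orthogonality conditions \eqref{eq:orthogonal conditions} in the renormalized time $s$. Since $\boldsymbol{\Phi}_M$ depends on neither $s$, $b$, nor $\lambda$, this gives $\langle \partial_s \boldsymbol{\varepsilon}, \boldsymbol{H}^{*i}\boldsymbol{\Phi}_M\rangle = 0$ for each $0 \le i \le L$. Substituting the evolution equation \eqref{eq:evolution epsilon} yields
\begin{equation*}
\langle \widetilde{\mathbf{Mod}}(t), \boldsymbol{H}^{*i}\boldsymbol{\Phi}_M\rangle = \Big\langle \tfrac{\lambda_s}{\lambda}\boldsymbol{\Lambda}\boldsymbol{\varepsilon} - \boldsymbol{H}\boldsymbol{\varepsilon} - \tilde{\boldsymbol{\psi}}_b - \boldsymbol{NL}(\boldsymbol{\varepsilon}) - \boldsymbol{L}(\boldsymbol{\varepsilon}),\, \boldsymbol{H}^{*i}\boldsymbol{\Phi}_M\Big\rangle.
\end{equation*}
The key simplification is that $\langle \boldsymbol{H}\boldsymbol{\varepsilon}, \boldsymbol{H}^{*i}\boldsymbol{\Phi}_M\rangle = \langle \boldsymbol{\varepsilon}, \boldsymbol{H}^{*(i+1)}\boldsymbol{\Phi}_M\rangle$ vanishes for $0 \le i \le L-1$ by \eqref{eq:orthogonal conditions}, and survives only at $i = L$.

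To extract the dynamical laws, I would expand $\widetilde{\mathbf{Mod}}$ via \eqref{def:mod}, \eqref{eq:mod tilde} and use the pairing identity $\langle \boldsymbol{T}_j, \boldsymbol{H}^{*i}\boldsymbol{\Phi}_M\rangle = (-1)^j \langle \chi_M\boldsymbol{\Lambda Q}, \boldsymbol{\Lambda Q}\rangle \delta_{i,j}$ coming from \eqref{eq:PhiM orthogonality}, together with $\langle \boldsymbol{\Lambda}\tilde{\boldsymbol{Q}}_b, \boldsymbol{\Phi}_M\rangle = \langle \chi_M\boldsymbol{\Lambda Q}, \boldsymbol{\Lambda Q}\rangle + O(b_1)$ and $\langle \boldsymbol{\Lambda Q}, \boldsymbol{H}^{*i}\boldsymbol{\Phi}_M\rangle = 0$ for $i \ge 1$ (since $\boldsymbol{H}\boldsymbol{\Lambda Q}=0$). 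This produces a nearly upper triangular $(L+1)\times(L+1)$ linear system in the unknowns $\bigl(\tfrac{\lambda_s}{\lambda}+b_1,\, (b_i)_s+(i-1+c_{b_1,i})b_1b_i-b_{i+1}\bigr)$ with common diagonal coefficient of size $\langle \chi_M\boldsymbol{\Lambda Q}, \boldsymbol{\Lambda Q}\rangle \sim 4\log M$; the off-diagonal contributions from $\partial \boldsymbol{S}_j/\partial b_i$ and from $\langle \boldsymbol{\Lambda}\chi_{B_1}\boldsymbol{\alpha}_b, \cdot\rangle$ carry an extra $O(b_1)$ and can be absorbed by iterating the solution in cascade from $i = 0$ upward.

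The RHS error terms are then estimated via the crucial localization $\mathrm{supp}(\boldsymbol{H}^{*i}\boldsymbol{\Phi}_M)\subset\{y\le 2M\}$: pairing with $\tilde{\boldsymbol{\psi}}_b$ gives $C(M)b_1^{L+3}$ by the improved local bound \eqref{eq:psit local}; pairing with $\boldsymbol{NL}(\boldsymbol{\varepsilon})$ and $\boldsymbol{L}(\boldsymbol{\varepsilon})$ yields $C(M) b_1 \sqrt{\mathcal{E}_{L+1}}$ via the pointwise bound $|f'(\tilde{Q}_b)-f'(Q)|\lesssim b_1$, Taylor expansion for the quadratic part, and local coercivity (orthogonal to the finite-dimensional span $\{\boldsymbol{H}^{*i}\boldsymbol{\Phi}_M\}$) of the higher-order energy controlling low-order Sobolev norms of $\boldsymbol{\varepsilon}$ on $\{y \le 2M\}$; finally $\tfrac{\lambda_s}{\lambda}\langle \boldsymbol{\Lambda}\boldsymbol{\varepsilon}, \cdot\rangle$ is of the same or smaller size after bootstrapping $\tfrac{\lambda_s}{\lambda}\approx -b_1$. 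Dividing the $i = 0, \ldots, L-1$ equations by the diagonal coefficient $\sim \log M$ then yields \eqref{eq:mod bound1}.

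The main obstacle, which also explains the absence of both the extra $b_1$ factor and of $C(M)$ in front of $\sqrt{\mathcal{E}_{L+1}}$ in \eqref{eq:mod bound2}, is the term $\langle \boldsymbol{H}\boldsymbol{\varepsilon}, \boldsymbol{H}^{*L}\boldsymbol{\Phi}_M\rangle = \langle \boldsymbol{\varepsilon}, \boldsymbol{H}^{*(L+1)}\boldsymbol{\Phi}_M\rangle$ appearing only at $i = L$. The plan is to redistribute the $L+1$ powers of $\boldsymbol{H}^*$ between the two factors so that the top-order derivatives fall on $\boldsymbol{\varepsilon}$, where they are controlled precisely by $\sqrt{\mathcal{E}_{L+1}}$, against a test function whose $L^2$ size is comparable to $\|\boldsymbol{\Phi}_M\|_{L^2}\sim\sqrt{\log M}$ from \eqref{eq:PhiM properties}. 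Cauchy--Schwarz then gives $\sqrt{\mathcal{E}_{L+1}}\sqrt{\log M}$, and after dividing by the diagonal coefficient $\log M$ we obtain the universal ($M$-independent) contribution $\sqrt{\mathcal{E}_{L+1}}/\sqrt{\log M}$ of \eqref{eq:mod bound2}, which is the sharp factor needed to close the monotonicity estimate for $b_L$ later.
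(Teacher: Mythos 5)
Your proposal is correct and follows essentially the same route as the paper: pairing the $\boldsymbol{\varepsilon}$-equation with $\boldsymbol{H}^{*k}\boldsymbol{\Phi}_M$, using the transversality $\langle \boldsymbol{T}_j,\boldsymbol{H}^{*i}\boldsymbol{\Phi}_M\rangle = (-1)^j\langle\chi_M\boldsymbol{\Lambda Q},\boldsymbol{\Lambda Q}\rangle\delta_{ij}$ to isolate each $D_k$, exploiting the vanishing of $\langle\boldsymbol{H\varepsilon},\boldsymbol{H}^{*k}\boldsymbol{\Phi}_M\rangle$ for $k\le L-1$ versus the Cauchy--Schwarz bound $\sqrt{\log M}\sqrt{\mathcal{E}_{L+1}}$ at $k=L$, and controlling the remaining terms by the local bound \eqref{eq:psit local} and local coercivity before dividing by $\langle\boldsymbol{\Lambda Q},\boldsymbol{\Phi}_M\rangle\sim\log M$. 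The only minor imprecision is that the paper uses $|f'(\tilde{Q}_b)-f'(Q)|\lesssim b_1^2$ rather than your $O(b_1)$ (the first component of $\alpha_b$ starts at $b_2T_2=O(b_1^2)$), but this does not affect the final bound since even the cruder estimate still fits inside $C(M)b_1\sqrt{\mathcal{E}_{L+1}}$.
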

\begin{remark}
  \eqref{eq:mod bound1} and \eqref{eq:bootstrap epsilon} allow us to obtain the a priori assumption \eqref{eq:a priori for local}.
\end{remark}
  \begin{proof}
    \textbf{Step 1:} \emph{Modulation identity}.
    Denote $D(t)=(D_0(t),\dots,D_L(t))$ where $D_i(t)$ is given by
    \begin{equation*}
      D_0(t):=  -\left(\frac{\lambda_s}{\lambda} + b_1\right),\quad D_i(t):=  (b_i)_s +(i-1 + c_{b_1,i})b_1b_i - b_{i+1} ,\quad b_{L+1}=0.
    \end{equation*}
    We take the vector-valued inner product \eqref{def:vector inner product} of \eqref{eq:evolution epsilon} with $\boldsymbol{H}^{*k}\boldsymbol{\Phi}_M$ for $0\le k \le L$, we have the following identity
 \begin{align}\label{eq:modulation identity}
  \langle \widetilde{\mathbf{Mod}}(t), \boldsymbol{H}^{*k}\boldsymbol{\Phi}_M \rangle +\langle \boldsymbol{H}\boldsymbol{\varepsilon}, \boldsymbol{H}^{*k}\boldsymbol{\Phi}_M \rangle &=   \frac{\lambda_s}{\lambda} \langle \boldsymbol{\Lambda} \boldsymbol{\varepsilon},\boldsymbol{H}^{*k}\boldsymbol{\Phi}_M\rangle  - \langle \tilde{\boldsymbol{\psi}}_b,\boldsymbol{H}^{*k}\boldsymbol{\Phi}_M\rangle \nonumber \\
& - \langle \boldsymbol{NL}(\boldsymbol{\varepsilon}) + \boldsymbol{L}(\boldsymbol{\varepsilon}) ,\boldsymbol{H}^{*k}\boldsymbol{\Phi}_M\rangle  .
 \end{align}
 \textbf{Step 2:} \emph{Estimates for each terms in }\eqref{eq:modulation identity}. We claim that the LHS of \eqref{eq:modulation identity} gives the main contribution to prove \eqref{eq:mod bound1} and \eqref{eq:mod bound2}.
 
 (i) $\widetilde{\mathbf{Mod}}(t)$ \emph{terms}.
First, $\chi_{B_1} \boldsymbol{\alpha}_b = \boldsymbol{\alpha}_b$ holds on $|y|\le 2M$ for small enough $b_1$. We also have the pointwise bound
\[|\boldsymbol{\Lambda}\boldsymbol{\alpha}_b|  + \sum_{i=1}^L \sum_{j=i+1}^{L+2}  \left\lvert \frac{\partial \boldsymbol{S}_j}{\partial b_i}  \right\rvert \lesssim b_1 C(M) \quad \textnormal{for } |y| \le 2M\]
from our blow-up profile construction. 
Hence, we estimate the $\widetilde{\mathbf{Mod}}(t)$ term in \eqref{eq:modulation identity} by the transversality \eqref{eq:PhiM orthogonality} and the compact support property of $\boldsymbol{\Phi}_M$
 \begin{align}
  &\langle \widetilde{\mathbf{Mod}}(t), \boldsymbol{H}^{*k}
 \boldsymbol{\Phi}_M \rangle  = D_0(t) \langle \boldsymbol{\Lambda} {\boldsymbol{Q}}_b, \boldsymbol{H}^{*k} \boldsymbol{\Phi}_M \rangle +\sum_{i=1}^L D_i(t) \langle \boldsymbol{T}_i + \sum_{j=i+1}^{L+2}  \frac{\partial \boldsymbol{S}_j}{\partial b_i}, \boldsymbol{H}^{*k}\boldsymbol{\Phi}_M \rangle  \nonumber \\
 &= \sum_{i=0}^L D_i(t)  \langle \boldsymbol{T}_i , \boldsymbol{H}^{*k}\boldsymbol{\Phi}_M \rangle +  \left\langle D_0 (t) \boldsymbol{\Lambda} {\boldsymbol{\alpha}}_b +\sum_{i=1}^L \sum_{j=i+1}^{L+2} D_i(t) \frac{\partial \boldsymbol{S}_j}{\partial b_i} , \boldsymbol{H}^{*k}\boldsymbol{\Phi}_M \right\rangle \nonumber \\
  &=(-1)^k D_k(t) \langle \boldsymbol{\Lambda} {\boldsymbol{Q}}, \boldsymbol{\Phi}_M \rangle + O(C(M) b_1 |D(t)|). \label{eq:mod main}
\end{align}
(ii) \emph{Linear terms}. For $0\le k \le L-1$, we have
\[\langle \boldsymbol{H}\boldsymbol{\varepsilon}, \boldsymbol{H}^{*k}\boldsymbol{\Phi}_M \rangle = \langle \boldsymbol{\varepsilon}, \boldsymbol{H}^{*(k+1)}\boldsymbol{\Phi}_M \rangle  =0\]
from the orthogonal conditions \eqref{eq:orthogonal conditions}. For $k=L$, Cauchy-Schwarz inequality implies
\begin{equation}
  |\langle \boldsymbol{\varepsilon}, \boldsymbol{H}^{*(L+1)}\boldsymbol{\Phi}_M \rangle |=|\langle \boldsymbol{H}^{L+1}\boldsymbol{\varepsilon}, \boldsymbol{\Phi}_M \rangle | \lesssim \sqrt{\log M} \sqrt{\mathcal{E}_{L+1}}.
\end{equation}
(iii) \emph{Scaling terms}. We can estimate the scaling term in \eqref{eq:modulation identity} from the compact support property of $\boldsymbol{\Phi}_M$ and the coercivity bound \eqref{eq:coercivity bound}
\begin{align}
 \left\lvert \frac{\lambda_s}{\lambda} \langle \boldsymbol{\Lambda} \boldsymbol{\varepsilon}, \boldsymbol{H}^{*k}\boldsymbol{\Phi}_M\rangle  \right\rvert &\le \left(b_1 + |D_0(t)| \right)   | \langle \boldsymbol{\Lambda} \boldsymbol{\varepsilon}, \boldsymbol{H}^{*k}\boldsymbol{\Phi}_M\rangle  |  \nonumber\\
 &\lesssim  \left(b_1 + |D_0(t)| \right) C(M) \sqrt{\mathcal{E}_{L+1}}.
\end{align}
(iv) $\tilde{\boldsymbol{\psi}}_b$ \emph{terms}. Here, the improved local bound \eqref{eq:psit local} implies
    \begin{equation}
      |\langle \tilde{\boldsymbol{\psi}}_b,\boldsymbol{H}^{*k}\boldsymbol{\Phi}_M\rangle |  \lesssim C(M) b_1^{L+3}.
    \end{equation}
    (v) $\boldsymbol{NL}(\boldsymbol{\varepsilon})$ and $\boldsymbol{L}(\boldsymbol{\varepsilon})$ \emph{terms}.
 Using the coercivity bound \eqref{eq:coercivity bound} with the crude bound $|NL(\varepsilon)|\lesssim |\varepsilon|^2/y^2$ and $|L(\varepsilon)|\lesssim b_1^2|\varepsilon|/y$,
\begin{equation}\label{eq:mod NL}
  |\langle \boldsymbol{NL}(\boldsymbol{\varepsilon}),\boldsymbol{H}^{*i}\boldsymbol{\Phi}_M\rangle | \lesssim C(M) \mathcal{E}_{L+1},\quad |\langle \boldsymbol{L}(\boldsymbol{\varepsilon}),\boldsymbol{H}^{*i}\boldsymbol{\Phi}_M\rangle| \lesssim C(M) b_1^2 \sqrt{\mathcal{E}_{L+1}}.
\end{equation}
\textbf{Step 3:} \emph{Conclusion}. 
Injecting the estimates from \eqref{eq:mod main} to \eqref{eq:mod NL} into \eqref{eq:modulation identity}, we obtain 
\begin{align}
  (-1)^k D_k(t) \langle \boldsymbol{\Lambda} {\boldsymbol{Q}}, \boldsymbol{\Phi}_M \rangle + O(C(M) b_1 |D(t)|) &= O(\sqrt{\log M} \sqrt{\mathcal{E}_{L+1}})\delta_{kL} \nonumber \\
  &+ O(C(M) b_1 ( \sqrt{\mathcal{E}_{L+1}}+b_1^{L+2} )) \label{eq:final mod identity}
\end{align}
for $0\le k \le L$. We then divide them above equation by $\langle \boldsymbol{\Lambda} {\boldsymbol{Q}}, \boldsymbol{\Phi}_M \rangle$, \eqref{eq:PhiM properties} implies
\begin{align*}
   D_k(t)  + O(C(M) b_1 |D(t)|) &= O\left(\frac{\sqrt{\mathcal{E}_{L+1}}}{\sqrt{\log M}} \right)\delta_{kL} + O(C(M) b_1 ( \sqrt{\mathcal{E}_{L+1}}+b_1^{L+2} )),
\end{align*}
which yields \eqref{eq:mod bound1} and \eqref{eq:mod bound2}.
  \end{proof}

\subsection{Improved modulation equation of $b_L$}
At first glance, \eqref{eq:mod bound2} seems sufficient to close the modulation equation for $b_L$ because of the presence of $\sqrt{\log M}$. However, our desired blow-up scenario comes from the exact solution $b_L^e$, \eqref{eq:mod bound2} is inadequate to close the bootstrap bounds for stable/unstable modes $V(s)$. Thus, we need to obtain a further logarithm room by adding some correction to $b_L$.
  \begin{lemma}[Improved modulation equation of $b_L$]\label{lem:improved b_L}
    Let $B_{\delta}=B_0^{\delta}$ and 
    \begin{equation}\label{def:tilde b_L}
      \tilde{b}_L=b_L + (-1)^{L}\frac{\langle \boldsymbol{H}^{L}\boldsymbol{\varepsilon}, \chi_{B_{\delta}} \boldsymbol{\Lambda} \boldsymbol{Q} \rangle}{4\delta |\log b_1|}.
    \end{equation}
    for some small enough universal constant $0<\delta \ll 1$. Then $\tilde{b}_L$ satisfies
    \begin{equation}\label{eq:difference tilde b_L}
      |\tilde{b}_L-b_L| \lesssim b_1^{L+1-C\delta}
    \end{equation}
    and 
    \begin{equation}\label{eq:mod eqn tilde b_L}
      |(\tilde{b}_L)_s + (L-1 + c_{b,L})b_1\tilde{b}_L| \lesssim \frac{\sqrt{\mathcal{E}_{L+1}}}{\sqrt{|\log b_1|}}.
    \end{equation}
  \end{lemma}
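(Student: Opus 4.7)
The plan is to interpret $\tilde b_L$ as the modification of $b_L$ that arises when, at the level $k=L$ of the modulation identity \eqref{eq:modulation identity}, the test direction $\boldsymbol{\Phi}_M$ is replaced by $\chi_{B_\delta}\boldsymbol{\Lambda Q}$. The essential gain is that the normalization $\langle\boldsymbol{\Lambda Q},\boldsymbol{\Phi}_M\rangle\sim 4\log M$, which is static in $s$ and produces the loss $1/\sqrt{\log M}$ in \eqref{eq:mod bound2}, is replaced by $\langle\boldsymbol{\Lambda Q},\chi_{B_\delta}\boldsymbol{\Lambda Q}\rangle=4\delta|\log b_1|+O(1)$, which grows as $b_1\to 0$; the worst Cauchy-Schwarz factor passes from $\|\Phi_M\|_{L^2}\sim\sqrt{\log M}$ to $\|\chi_{B_\delta}\Lambda Q\|_{L^2}\sim\sqrt{\delta|\log b_1|}$. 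The quotient $1/\sqrt{\delta|\log b_1|}$ produces the desired $1/\sqrt{|\log b_1|}$ improvement. Set $X:=\langle \boldsymbol{H}^L\boldsymbol{\varepsilon},\chi_{B_\delta}\boldsymbol{\Lambda Q}\rangle$ throughout.

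For the difference bound \eqref{eq:difference tilde b_L}, first reduce $X$ to a scalar inner product. Iterating $\boldsymbol{H}^*$ on $(\chi_{B_\delta}\Lambda Q,0)^t$ (and using $H\Lambda Q=0$ at each step so only commutators with $\chi_{B_\delta}$ survive), one obtains $\boldsymbol{H}^{*L}(\chi_{B_\delta}\boldsymbol{\Lambda Q})=(0,(-1)^{(L+1)/2}H^{(L-1)/2}(\chi_{B_\delta}\Lambda Q))^t$ for odd $L$, so by self-adjointness of $H$,
\[
X=(-1)^{(L+1)/2}\langle \dot\varepsilon_{L-1},\chi_{B_\delta}\Lambda Q\rangle.
\]
Combining Cauchy-Schwarz with $\|\chi_{B_\delta}\Lambda Q\|_{L^2}^2\sim 4\delta|\log b_1|$ and a localized Hardy/coercivity estimate of the form $\|\dot\varepsilon_{L-1}\|_{L^2(y\le 2B_\delta)}\lesssim B_\delta|\log B_\delta|\,\sqrt{\mathcal{E}_{L+1}}$ (integrating $\dot\varepsilon_L=A\dot\varepsilon_{L-1}$ backward with a Hardy weight), together with the bootstrap $\sqrt{\mathcal{E}_{L+1}}\lesssim\sqrt{K}\,b_1^{L+1}/|\log b_1|$, yields $|X|\lesssim b_1^{L+1-\delta}|\log b_1|^{1/2}$. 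Division by $4\delta|\log b_1|$ then delivers \eqref{eq:difference tilde b_L}.

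For \eqref{eq:mod eqn tilde b_L}, write
\[
(\tilde b_L)_s + (L-1+c_{b,L})b_1\tilde b_L
= D_L(t) + \frac{(-1)^L\,\partial_s X}{4\delta|\log b_1|} + \mathrm{err},
\]
where $D_L(t):=(b_L)_s+(L-1+c_{b_1,L})b_1 b_L$ and $\mathrm{err}$ collects $b_1|\tilde b_L-b_L|$ and $|X||(b_1)_s|/(b_1|\log b_1|^2)$ contributions, all negligible by the first bound and $|(b_1)_s|\lesssim b_1^2$. Expand $\partial_s X$ via \eqref{eq:evolution epsilon}. Using $\boldsymbol{H}^L\boldsymbol{T}_i=0$ for $i<L$, $\boldsymbol{H}^L\boldsymbol{T}_L=(-1)^L\boldsymbol{\Lambda Q}$, $\boldsymbol{H}\boldsymbol{\Lambda Q}=0$, and the fact that $\chi_{B_1}\equiv 1$ on the support of $\chi_{B_\delta}$, the $\widetilde{\mathbf{Mod}}$ contribution to $\partial_s X$ extracts exactly $-(-1)^L D_L(t)\,\langle\boldsymbol{\Lambda Q},\chi_{B_\delta}\boldsymbol{\Lambda Q}\rangle$; its quotient by $4\delta|\log b_1|$ equals $-(-1)^L D_L(t)(1+O(1/|\log b_1|))$, so that $D_L(t)$ cancels up to an $O(|D_L(t)|/|\log b_1|)$ remainder, harmlessly bounded via \eqref{eq:mod bound2} by $\sqrt{\mathcal{E}_{L+1}}/(|\log b_1|\sqrt{\log M}) \lesssim \sqrt{\mathcal{E}_{L+1}}/\sqrt{|\log b_1|}$. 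The remaining pieces of $\partial_s X/(4\delta|\log b_1|)$ are: the principal term $-\langle\boldsymbol{H}^{L+1}\boldsymbol{\varepsilon},\chi_{B_\delta}\boldsymbol{\Lambda Q}\rangle/(4\delta|\log b_1|)$, bounded by Cauchy-Schwarz as $\sqrt{\mathcal{E}_{L+1}}\sqrt{\delta|\log b_1|}/(4\delta|\log b_1|)\lesssim\sqrt{\mathcal{E}_{L+1}}/\sqrt{|\log b_1|}$; the scaling $\frac{\lambda_s}{\lambda}\langle\boldsymbol{H}^L\boldsymbol{\Lambda\varepsilon},\chi_{B_\delta}\boldsymbol{\Lambda Q}\rangle$ and the $\partial_s\chi_{B_\delta}$ commutator (both of order $b_1\cdot b_1^{L+1-C\delta}$); and the $\tilde{\boldsymbol{\psi}}_b$, $\boldsymbol{NL}$, $\boldsymbol{L}$ contributions, estimated by transferring $\boldsymbol{H}^L$ onto $\chi_{B_\delta}\boldsymbol{\Lambda Q}$ and using Proposition \ref{prop:local approx} with the bootstrap assumptions.

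The main technical obstacle is the localized Hardy/coercivity passage from $\mathcal{E}_{L+1}$ (which controls $\|\dot\varepsilon_L\|_{L^2}$) to $\|\dot\varepsilon_{L-1}\|_{L^2(y\le 2B_\delta)}$; since only $\dot\varepsilon_L=A\dot\varepsilon_{L-1}$ is directly bounded, one integrates back along $A$ with a Hardy weight, and the cutoff scale $B_\delta=b_1^{-\delta}$ is precisely the source of the $b_1^{-C\delta}$ loss in \eqref{eq:difference tilde b_L}. A secondary difficulty is tracking the commutators $[\boldsymbol{H}^L,\chi_{B_\delta}]$ when substituting $\partial_s\boldsymbol{\varepsilon}$: they are supported in $y\sim B_\delta\ll B_1$, so they do not interact with the $\chi_{B_1}$ factor inside $\widetilde{\mathbf{Mod}}$ and contribute only terms absorbable by the same localized coercivity bound at scale $B_\delta$.
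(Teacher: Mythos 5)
Your proposal is correct and follows essentially the same approach as the paper's proof. You identify the same key mechanism — the growing normalization $\langle\boldsymbol{\Lambda Q},\chi_{B_\delta}\boldsymbol{\Lambda Q}\rangle\sim 4\delta|\log b_1|$ and the Cauchy--Schwarz factor $\|\chi_{B_\delta}\Lambda Q\|_{L^2}\sim\sqrt{\delta|\log b_1|}$ replacing $\sqrt{\log M}$ — and you carry out the same steps: reduce $X$ to $\langle\dot\varepsilon_{L-1},\chi_{B_\delta}\Lambda Q\rangle$ and bound it via the coercivity of $\mathcal{E}_{L+1}$; differentiate $X$ in $s$ via \eqref{eq:evolution epsilon}; extract the $(-1)^{L+1}4\delta|\log b_1|\,D_L(t)$ contribution from $\widetilde{\mathbf{Mod}}(t)$ (using $\boldsymbol{H}^L\boldsymbol{T}_i=0$ for $i<L$ and $\chi_{B_1}\equiv1$ on $\mathrm{supp}\,\chi_{B_\delta}$); observe that dividing by $4\delta|\log b_1|$ makes $D_L(t)$ cancel; and bound the remaining linear, scaling, $\partial_s\chi_{B_\delta}$, $\tilde{\boldsymbol{\psi}}_b$, $\boldsymbol{NL}$, $\boldsymbol{L}$ contributions by repeating Step~2 of Lemma~\ref{lem:mod eqn} with $\log M\to|\log b_1|$ and $C(M)\to b_1^{-C\delta}$. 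The only inessential deviation is that you frame the bound on $\|\dot\varepsilon_{L-1}\|_{L^2(y\le 2B_\delta)}$ as a ``localized Hardy integration back along $A$'', whereas the coercivity bound \eqref{eq:coercivity bound} already supplies the weighted estimate $\|\dot\varepsilon_{L-1}/[y(1+|\log y|)]\|_{L^2}\lesssim\sqrt{\mathcal{E}_{L+1}}$ directly, so this is not a genuine obstacle; both routes yield the same $b_1^{-C\delta}$ loss at scale $B_\delta$.
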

  \begin{remark}\label{rem:improved b_L}
    We point out that $\tilde{b}_L$ is well-defined at time $s=s_0$, since $\tilde{b}_L-b_L$ only depends on $b_1$ and $\boldsymbol{\varepsilon}$. 
  \end{remark}
  \begin{proof}
   We obtain \eqref{eq:difference tilde b_L} from the coercivity bound \eqref{eq:coercivity bound} and \eqref{eq:mod bound1} 
    \begin{align}\label{eq:delta cutoff bound1}
      |\langle \boldsymbol{H}^{L}\boldsymbol{\varepsilon}, \chi_{B_{\delta}} \boldsymbol{\Lambda} \boldsymbol{Q} \rangle|  &\lesssim \left| \langle H^{\frac{L-1}{2}} \dot{\varepsilon},  \chi_{B_{\delta}} \Lambda Q \rangle \right| \lesssim C(M)\delta b_1^{-C\delta} \sqrt{\mathcal{E}_{L+1}} \lesssim b_1^{L+1-C\delta},
    \end{align}
We also know
    \begin{equation}\label{eq:ds b_L correction}
      \frac{d}{ds} \langle \boldsymbol{H}^{L}\boldsymbol{\varepsilon}, \chi_{B_{\delta}} \boldsymbol{\Lambda} \boldsymbol{Q} \rangle =  \langle \boldsymbol{H}^{L}\boldsymbol{\varepsilon}_s, \chi_{B_{\delta}} \boldsymbol{\Lambda} \boldsymbol{Q} \rangle + \langle \boldsymbol{H}^{L}\boldsymbol{\varepsilon}, (\chi_{B_{\delta}})_s \boldsymbol{\Lambda} \boldsymbol{Q} \rangle .
    \end{equation}
    We compute the last inner product in \eqref{eq:ds b_L correction} similarly to \eqref{eq:delta cutoff bound1}:
    \begin{align}
      \left|\langle \boldsymbol{H}^{L}\boldsymbol{\varepsilon}, (\chi_{B_{\delta}})_s \boldsymbol{\Lambda} \boldsymbol{Q} \rangle  \right| &=   |\delta (b_1)_s b_1^{-1}| \left| \langle H^{\frac{L-1}{2}} \dot{\varepsilon}, (y\partial_y \chi)_{B_{\delta}} \Lambda Q \rangle \right| \lesssim C(M)\delta b_1^{1-\delta} \sqrt{\mathcal{E}_{L+1}}.\label{eq:delta cutoff bound2}
    \end{align}
    Using \eqref{eq:evolution epsilon}, we obtain the following identity similar to \eqref{eq:modulation identity}
    \begin{align*}
      \langle \boldsymbol{H}^{L}\boldsymbol{\varepsilon}_s, \chi_{B_{\delta}} \boldsymbol{\Lambda} \boldsymbol{Q} \rangle &=-\langle \boldsymbol{H}^{L}\widetilde{\mathbf{Mod}}(t), \chi_{B_{\delta}} \boldsymbol{\Lambda} \boldsymbol{Q} \rangle  - \langle \boldsymbol{H}^{L+1}\boldsymbol{\varepsilon}, \chi_{B_{\delta}} \boldsymbol{\Lambda} \boldsymbol{Q} \rangle \\
      &+\frac{\lambda_s}{\lambda}\langle \boldsymbol{H}^{L}\boldsymbol{\Lambda}\boldsymbol{\varepsilon}, \chi_{B_{\delta}} \boldsymbol{\Lambda} \boldsymbol{Q} \rangle 
      -\langle \boldsymbol{H}^{L}\tilde{\boldsymbol{\psi}}_b, \chi_{B_{\delta}} \boldsymbol{\Lambda} \boldsymbol{Q} \rangle \\
      & - \langle \boldsymbol{H}^{L}\boldsymbol{NL}(\boldsymbol{\varepsilon}), \chi_{B_{\delta}} \boldsymbol{\Lambda} \boldsymbol{Q} \rangle  - \langle \boldsymbol{H}^{L}\boldsymbol{L}(\boldsymbol{\varepsilon}), \chi_{B_{\delta}} \boldsymbol{\Lambda} \boldsymbol{Q} \rangle
    \end{align*}
    Considering the support of $\chi_{B_{\delta}} \boldsymbol{\Lambda Q} $, we can borrow all the estimates in \textbf{Step 2} of the proof of Lemma \ref{lem:mod eqn} by replacing the weight $\log M$ and $C(M)$ to $|\log b_1|$ and $b_1^{-C\delta}$, respectively. Hence, Lemma \ref{lem:mod eqn} and \eqref{eq:delta cutoff bound2} give a "$B_{\delta}$ version" of \eqref{eq:final mod identity}
    \begin{align*}
      \frac{d}{ds} \langle \boldsymbol{H}^{L}\boldsymbol{\varepsilon}, \chi_{B_{\delta}} \boldsymbol{\Lambda} \boldsymbol{Q} \rangle &=  
       (-1)^{L+1}D_L(t) \langle \boldsymbol{\Lambda} {\boldsymbol{Q}}, \chi_{B_{\delta}} \boldsymbol{\Lambda Q} \rangle + O( b_1^{1-C\delta} |D(t)|) \\
        &+ O(\sqrt{|\log b_1|} \sqrt{\mathcal{E}_{L+1}}) + O( b_1^{1-C\delta} ( \sqrt{\mathcal{E}_{L+1}}+b_1^{L+2} )) \\
        &=(-1)^{L+1}4\delta |\log b_1| D_L(t) +  O(\sqrt{|\log b_1|} \sqrt{\mathcal{E}_{L+1}}).
    \end{align*}
  Hence, we obtain \eqref{eq:mod eqn tilde b_L} as follows:
  \begin{align*}
    |(\tilde{b}_L)_s + (L-1 + c_{b,L})b_1\tilde{b}_L| &\lesssim |\langle \boldsymbol{H}^{L}\boldsymbol{\varepsilon}, \chi_{B_{\delta}} \boldsymbol{\Lambda} \boldsymbol{Q} \rangle| \left\lvert b_1+\frac{d}{ds} \left\{  \frac{1}{4\delta \log b_1} \right\} \right\rvert + \frac{\sqrt{\mathcal{E}_{L+1}}}{\sqrt{|\log b_1|}}\\
    &\lesssim \frac{\sqrt{\mathcal{E}_{L+1}}}{\sqrt{|\log b_1|}} +b_1^{L+2-C\delta}. \qedhere
  \end{align*}
  \end{proof}


\subsection{Lyapunov monotonicity for $\mathcal{E}_{L+1}$} 
A simple way to control the adapted higher-order energy $\mathcal{E}_{L+1}$ is to estimate its time derivative. However, we cannot obtain enough estimates to close the bootstrap bound \eqref{eq:bootstrap epsilon} with $\mathcal{E}_{L+1}$ by itself, i.e. with $b_1 \sim -\lambda_t $,
\begin{align*}
  \frac{d}{dt}  \left\{  \frac{  \mathcal{E}_{L+1}    }{\lambda^{2L}}\right\} \le C b_1  \frac{  \mathcal{E}_{L+1}    }{\lambda^{2L+1}}, \quad  \frac{  \mathcal{E}_{L+1}(t)    }{\lambda^{2L}(t)} &\le  \frac{  \mathcal{E}_{L+1}(0)    }{\lambda^{2L}(0)} + C\int_0^t b_1(\tau)\frac{  \mathcal{E}_{L+1}(\tau)    }{\lambda^{2L+1}(\tau)}   d\tau \\
  &\le  K\int_0^t \frac{   b_1(\tau)  }{\lambda^{2L+1}(\tau)}  \frac{b_1^{2(L+1)}(\tau)}{|\log b_1(\tau)|^2}  d\tau \\
  &\lesssim  \frac{   K  }{\lambda^{2L}(t)}  \frac{b_1^{2(L+1)}(t)}{|\log b_1(t)|^2}.
\end{align*}
Thus, we use the repulsive property of the conjugated Hamiltonian $\widetilde{H}$ of $H$ observed in \cite{RodnianskiSterbenz2010Ann.Math.} and \cite{RaphaelRodnianski2012IHES} with some additional integration by parts to pull out the accurate corrections.

\begin{proposition}[Lyapunov monotonicity for $\mathcal{E}_{L+1}$]\label{prop:monotonicity}
  We have the following bound:
  \begin{align}
    \frac{d}{dt}  \left\{  \frac{  \mathcal{E}_{L+1}    }{\lambda^{2L}} + O\left(\frac{   b_1 C(M) \mathcal{E}_{L+1}    }{\lambda^{2L}}\right)\right\} \le C \frac{b_1}{\lambda^{2L+1}} \left[ \frac{b_1^{L+1}}{|\log b_1|} \sqrt{\mathcal{E}_{L+1}}  +\frac{\mathcal{E}_{L+1}}{\sqrt{\log M}} \right]\label{eq:monotonicity}
  \end{align}

\end{proposition}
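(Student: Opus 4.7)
The plan is to work in the original unrescaled variables using the decomposition $\boldsymbol{u} = \tilde{\boldsymbol{Q}}_{b,\lambda} + \boldsymbol{w}$ and the equation \eqref{eq:evolution W}. Write $w_k := \mathcal{A}_\lambda^k w$ where $\mathcal{A}_\lambda$ is the rescaled version of $\mathcal{A}$, and observe that by the conjugation identity $A_\lambda H_\lambda = \widetilde{H}_\lambda A_\lambda$ together with a change of variables $y = r/\lambda$,
\begin{equation*}
\mathcal{E}_{L+1} = \lambda^{2L}\bigl(\langle w_{L+1}, w_{L+1}\rangle + \langle \partial_t w_L, \partial_t w_L\rangle\bigr) + \text{l.o.t.}\,,
\end{equation*}
where the lower-order terms come from $[\partial_t, \mathcal{A}_\lambda^k]$-commutators. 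The first step is to differentiate this identity in $t$ and use $w_{tt} = -H_\lambda w + \lambda^{-1} \mathcal{F}_\lambda$ (from \eqref{eq:evolution W}, using the scalar part) to arrive at
\begin{equation*}
\frac{d}{dt}\left\{\frac{\mathcal{E}_{L+1}}{2\lambda^{2L}}\right\} = \frac{1}{2}\langle \partial_t(\widetilde{H}_\lambda) w_L, w_L\rangle + 2 \langle \partial_t w_L, \partial_t(\mathcal{A}_\lambda^L) w_t\rangle + \text{forcing terms}\,,
\end{equation*}
exactly as in the strategy section.

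The core computation is then the iterated integration-by-parts / Leibniz trick outlined in the introduction: write $\mathcal{A}_\lambda^L = \widetilde{H}_\lambda \mathcal{A}_\lambda^{L-2} = \widetilde{H}_\lambda^2 \mathcal{A}_\lambda^{L-4} = \dots = \widetilde{H}_\lambda^{(L-1)/2}\mathcal{A}_\lambda$ (recall $L$ is odd), and peel off one factor of $\widetilde{H}_\lambda$ at each step using $\partial_t(\widetilde{H}_\lambda \mathcal{A}_\lambda^{k}) = \partial_t(\widetilde{H}_\lambda)\mathcal{A}_\lambda^k + \widetilde{H}_\lambda\partial_t(\mathcal{A}_\lambda^k)$ followed by integration by parts in time against $\partial_t w_L$ (absorbing one $\widetilde{H}_\lambda$ onto the test function via $\widetilde{H}_\lambda = \mathcal{A}_\lambda \mathcal{A}_\lambda^*$, turning it into a shift of the index). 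After $(L-1)/2$ such corrections, one ends up with the coefficient $(2L+1)/2$ in front of the repulsive term and an irreducible remainder of the form $\langle w_L, \partial_t(\mathcal{A}_\lambda) w_{L+1}\rangle$, which is then recognized as another copy of $\frac{1}{2}\langle \partial_t(\widetilde{H}_\lambda) w_L, w_L\rangle$, yielding a good-sign contribution by the repulsivity \eqref{eq:repulsive property} (applied to $\widetilde{V}_\lambda$ whose rescaled $\Lambda$-derivative is $\le 0$, with $\partial_t \sim -(\lambda_t/\lambda)\Lambda$). All pieces generated along the way are absorbed into the correction term $O(b_1 C(M) \mathcal{E}_{L+1}/\lambda^{2L})$ on the left-hand side, using modulation bounds \eqref{eq:mod bound1} and the bootstrap control of $\lambda_t/\lambda \sim -b_1/\lambda$.

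Next I would treat the forcing contribution $\lambda^{-1}\boldsymbol{\mathcal{F}}_\lambda$, split according to \eqref{eq:evolution W} into $\widetilde{\mathbf{Mod}}$, $\tilde{\boldsymbol{\psi}}_b$, $\boldsymbol{NL}(\boldsymbol{\varepsilon})$ and $\boldsymbol{L}(\boldsymbol{\varepsilon})$. The profile error $\tilde{\boldsymbol{\psi}}_b$ is handled by \eqref{eq:psit global L+1} combined with Cauchy--Schwarz against $w_{L+1}$ or $\partial_t w_L$, yielding the $b_1^{L+2}/|\log b_1|$ contribution. The modulation error, after switching to $\boldsymbol{\varepsilon}$ coordinates, contains the $D_L(t) \boldsymbol{T}_L$ direction which almost fails; here I would use the improved modulation equation Lemma~\ref{lem:improved b_L} (with $\tilde b_L$) to extract the $\sqrt{\mathcal{E}_{L+1}}/\sqrt{\log M}$ gain, absorbing the residual into the $C(M) b_1^{L+3}$ profile error. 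The linear perturbation $\boldsymbol{L}(\boldsymbol{\varepsilon})$ is lower-order in $b_1$ and handled via the coercivity \eqref{eq:coercivity bound}; the nonlinear term $\boldsymbol{NL}(\boldsymbol{\varepsilon})$ requires the intermediate energy bounds \eqref{eq:bootstrap epsilon}, together with Hardy/Sobolev estimates on $y\lesssim B_1$ to control all the $\mathcal{A}_\lambda^k \boldsymbol{NL}$ after commuting through the adapted derivatives.

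The main obstacle is precisely the sign extraction via the iterated Leibniz/integration-by-parts scheme: keeping rigorous track of every $[\partial_t, \mathcal{A}_\lambda]$ commutator and every boundary term from passing $\widetilde{H}_\lambda$ through $\mathcal{A}_\lambda^j$ at general odd $L$ is delicate, and one must verify that all the cross terms either cancel, acquire the good sign, or fall into the $O(b_1 C(M)\mathcal{E}_{L+1}/\lambda^{2L})$ correction pocket on the LHS. The secondary difficulty is estimating the projection of $\widetilde{\mathbf{Mod}}$ onto the top direction without losing the $\sqrt{\log M}$ factor, which is exactly what necessitated the construction of $\tilde b_L$ in Lemma~\ref{lem:improved b_L}.
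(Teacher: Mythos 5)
Your outline captures the broad structure (rescaled operators, iterated Leibniz/integration-by-parts to surface the repulsive term with coefficient $(2L+1)/2$, splitting of the forcing $\boldsymbol{\mathcal{F}}$), but it has one genuine gap and two points worth correcting.

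The gap is the treatment of the second time-derivative $\lambda_{tt}$. Each round of integration by parts in time produces a term of the form $\langle \dot{w}_L, \partial_{tt}(\mathcal{A}_\lambda^{L-2k}) w_{2k}\rangle$, and by Lemma~\ref{lem:Leibniz rule} the operator $\partial_{tt}(\mathcal{A}_\lambda^{L-2k})$ carries a coefficient $\lambda_{tt}$. You claim all such cross-terms ``fall into the $O(b_1 C(M)\mathcal{E}_{L+1}/\lambda^{2L})$ correction pocket'' using \eqref{eq:mod bound1}, but \eqref{eq:mod bound1} only controls first derivatives of the modulation parameters; there is no pointwise estimate on $\lambda_{tt}$, so these terms cannot simply be absorbed. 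The paper handles them with a dedicated second layer of corrections (its Step 3): one writes $\lambda_{tt} = (\lambda_t + b_1)_t - (b_1)_t$, notes that $(b_1)_t$ is controlled via the modulation equation, and integrates by parts in time \emph{again} on the $(\lambda_t + b_1)_t$ piece to move the derivative off the problematic factor. This produces the additional corrections $D_{k,i,2}$ in the final modified energy identity \eqref{eq:final form}. Without this step the argument as you sketch it does not close.

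Two further points. First, the energy identity you start from, $\mathcal{E}_{L+1} \approx \lambda^{2L}(\langle w_{L+1},w_{L+1}\rangle + \langle \partial_t w_L, \partial_t w_L\rangle) + \text{l.o.t.}$, is not quite right: by the definitions $w_k = (\varepsilon_k)_\lambda/\lambda^k$ and $\dot{w}_k = (\dot{\varepsilon}_k)_\lambda/\lambda^{k+1}$ one has the \emph{exact} identity $\mathcal{E}_{L+1}/\lambda^{2L} = \langle \widetilde{H}_\lambda w_L, w_L\rangle + \langle \dot{w}_L, \dot{w}_L\rangle$, with $\dot{w}_L$ (the second component acted on by $\mathcal{A}_\lambda^L$) rather than $\partial_t w_L$. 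The difference $\partial_t w_L - \dot{w}_L = \partial_t(\mathcal{A}_\lambda^L)w + \mathcal{A}_\lambda^L\mathcal{F}_1$ contains the full forcing and is not automatically lower order, so that substitution needs to be made cleanly from the start rather than declared negligible. Second, the $\sqrt{\log M}$ gain on the $\widetilde{\mathbf{Mod}}$ projection is exactly the content of Lemma~\ref{lem:mod eqn}, bound \eqref{eq:mod bound2}, which is what the paper invokes here; the improved parameter $\tilde{b}_L$ of Lemma~\ref{lem:improved b_L} (with its $\sqrt{|\log b_1|}$ gain) is reserved for closing the bootstrap on the stable mode $b_L$, not for the energy estimate. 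Invoking it inside the monotonicity argument would force you to carry the replacement $b_L \mapsto \tilde{b}_L$ through the profile and the energy, creating bookkeeping the paper avoids.
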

\begin{proof}
  \textbf{Step 1:} \emph{Evolution of adapted derivatives.} We start by introducing the rescaled version of the operators $A$ and $A^*$
  \[ A_{\lambda} :=-\partial_r + \frac{Z_{\lambda}}{r} ,\quad A^*_{\lambda} :=\partial_r + \frac{1+Z_{\lambda}}{r},\quad Z_{\lambda}(r)=Z\left(\frac{r}{\lambda}\right)=\frac{1-(r/\lambda)^2}{1+(r/\lambda)^2} .\]
  We also recall $H_{\lambda}$ in \eqref{def:capital H rescaled} and define its conjugate operator $\widetilde{H}_{\lambda}$ as the rescaled version of the linearized operator $H$ and its conjugate $\widetilde{H}$:
  \begin{align*}
    H_{\lambda} &:= A^*_{\lambda}A_{\lambda}=-\Delta + \frac{V_{\lambda}}{r^2},\quad  V(y)=\frac{y^4-6y^2+1}{(y^2+1)^2}, \\
     \widetilde{H}_{\lambda} &:= A_{\lambda}A^*_{\lambda}=-\Delta + \frac{\widetilde{V}_{\lambda}}{r^2},\quad \widetilde{V}(y) =\frac{4}{y^2+1}.
  \end{align*}
  In the same manner as \eqref{def:A^k}, we denote the rescaled version of the adapted derivative operator
  \begin{equation}\label{def:rescaled A^k}
    \mathcal{A}_{\lambda}:=A_{\lambda},\quad \mathcal{A}_{\lambda}^2:=A_{\lambda}^* A_{\lambda}  ,\quad \mathcal{A}_{\lambda}^3:= A_{\lambda}A_{\lambda}^*A_{\lambda},\;\cdots, \; \mathcal{A}_{\lambda}^k:=\underbrace{\cdots A_{\lambda}^*A_{\lambda} A_{\lambda}^* A_{\lambda}}_{k \textnormal{ times}},
  \end{equation}
  so the higher-order derivatives of $\boldsymbol{w}=(w,\dot{w})^t$ adapted to the Hamiltonian $H_{\lambda}$ are given by 
  \[w_k:=\mathcal{A}_{\lambda}^k w,\quad \dot{w}_k:=\mathcal{A}_{\lambda}^k \dot{w}.\]
One can easily check that $w_k=\frac{(\varepsilon_k)_{\lambda}}{\lambda^k}$ and  $\dot{w}_k=\frac{(\dot{\varepsilon_k})_{\lambda}}{\lambda^{k+1}}$, our target energy can be written as
\begin{equation}\label{eq:energy w form}
 \frac{\mathcal{E}_{L+1}}{\lambda^{2L}}=\langle  w_{L+1},w_{L+1} \rangle + \langle \dot{w}_L, \dot{w}_L \rangle = \langle \widetilde{H}_{\lambda} w_L,w_L \rangle + \langle \dot{w}_L, \dot{w}_L \rangle.
\end{equation}
To describe the evolution of $w_k$ and $\dot{w}_{k}$, we first rewrite the flow \eqref{eq:evolution W} of $\boldsymbol{w}=(w,\dot{w})$ component-wisely:
  \begin{equation}\label{eq:evolution w}
    \begin{cases}
      w_t -\dot{w} = \mathcal{F}_1 \\
      \dot{w}_t + H_{\lambda} w = \mathcal{F}_2 
    \end{cases},\quad \begin{pmatrix}
      \mathcal{F}_1  \\
      \mathcal{F}_2
    \end{pmatrix} := \frac{1}{\lambda} \boldsymbol{\mathcal{F}}_{\lambda}=\frac{1}{\lambda}\begin{pmatrix}
      \mathcal{F} \\
      \dot{\mathcal{F}}
    \end{pmatrix}_{\lambda} .
  \end{equation}
  Taking $\mathcal{A}_{\lambda}^k$ given by \eqref{def:rescaled A^k} into \eqref{eq:evolution w}, we obtain the evolution equation of $w_k$:
\begin{equation}\label{eq:evolution w_k}
  \begin{cases}
    \partial_t w_{k} -\dot{w}_{k} = [\partial_t,\mathcal{A}_{\lambda}^{k}]w +\mathcal{A}_{\lambda}^{k}\mathcal{F}_1 \\
    \partial_t\dot{w}_{k} + w_{k+2} = [\partial_t,\mathcal{A}_{\lambda}^{k}]\dot{w} +\mathcal{A}_{\lambda}^{k}\mathcal{F}_2 
  \end{cases} .
\end{equation}
Lastly, we employ the following notation: for any time-dependent operator $P$,
\begin{equation*}
  \partial_t(P):=[\partial_t, P], 
\end{equation*}
which yields the Leibniz rule between the operator and function:
\[\partial_t(Pf)=\partial_t(P)f + P f_t.\]
\textbf{Step 2:} \emph{First energy identity}. 
Recall \eqref{eq:energy w form}, we compute the energy identity:
\begin{align}
  \partial_t \left(\frac{\mathcal{E}_{L+1}}{2\lambda^{2L}} \right)&= \frac{1}{2} \langle \partial_t(\widetilde{H}_{\lambda}) w_L,w_L \rangle + \langle \widetilde{H}_{\lambda} w_L,\partial_t w_L \rangle +    \langle  \dot{w}_L,\partial_t \dot{w}_L \rangle \nonumber \\
  &= \frac{1}{2} \langle \partial_t(\widetilde{H}_{\lambda}) w_L,w_L \rangle \label{eq:initial repulsive} \\
  & +\langle  \widetilde{H}_{\lambda} w_L, \partial_t(\mathcal{A}_{\lambda}^L)w \rangle +   \langle \dot{w}_L,\partial_t(\mathcal{A}_{\lambda}^L)\dot{w} \rangle \label{eq:initial main}\\
  &+ \langle  \widetilde{H}_{\lambda} w_L, \mathcal{A}_{\lambda}^L \mathcal{F}_1 \rangle +   \langle \dot{w}_L,\mathcal{A}_{\lambda}^L\mathcal{F}_2 \rangle. \label{eq:initial error}
\end{align}
We will check that \eqref{eq:initial error} satisfies the desired bound \eqref{eq:monotonicity} later. Unlike \eqref{eq:initial error}, when \eqref{eq:initial repulsive} and \eqref{eq:initial main} are estimated using coercivity \eqref{eq:coercivity bound} directly, we obtain the following insufficient bound
\[\frac{b_1}{\lambda^{2L+1}} C(M) \mathcal{E}_{L+1}.\]
One can employ repulsive property \eqref{eq:repulsive property} for \eqref{eq:initial repulsive} with the modulation equation \eqref{eq:mod bound1}:
\begin{equation}\label{eq:explicit repulsive}
  \partial_t(\widetilde{H}_{\lambda})=-\frac{\lambda_t}{\lambda} \frac{(\Lambda \widetilde{V})_{\lambda}}{r^2} = -\frac{b_1+O(b_1^{L+2})}{\lambda^3} \frac{8}{(1+y^2)^2} \Rightarrow \langle \partial_t(\widetilde{H}_{\lambda}) w_L,w_L \rangle <0.
\end{equation}
We claim that \eqref{eq:initial main} is eventually negative like \eqref{eq:explicit repulsive} by adding some corrections. For this,  we start by employing \eqref{eq:evolution w_k} to exchange $\widetilde{H}_{\lambda}w_L$ for $-\partial_{t}\dot{w}_L$,
\begin{align}
  \langle  \widetilde{H}_{\lambda} w_L, \partial_t(\mathcal{A}_{\lambda}^L)w \rangle  =& -\langle  \partial_t \dot{w}_L, \partial_t(\mathcal{A}_{\lambda}^L)w \rangle \label{eq:H to dt}\\
  &+ \langle  \partial_t(\mathcal{A}_{\lambda}^L) \dot{w}, \partial_t(\mathcal{A}_{\lambda}^L)w \rangle + \langle  \mathcal{A}_{\lambda}^L \mathcal{F}_2, \partial_t(\mathcal{A}_{\lambda}^L)w \rangle \label{eq:H to dt error},
\end{align}
we can treat \eqref{eq:H to dt} via integration by parts in time with \eqref{eq:evolution w},
\begin{align}
  -\langle  \partial_t \dot{w}_L, \partial_t(\mathcal{A}_{\lambda}^L)w \rangle +\partial_t \langle  \dot{w}_L, \partial_t(\mathcal{A}_{\lambda}^L)w \rangle &=   \langle   \dot{w}_L, \partial_{tt}(\mathcal{A}_{\lambda}^L)w \rangle + \langle   \dot{w}_L, \partial_t(\mathcal{A}_{\lambda}^L)w_t \rangle \label{eq:first int by parts correction}\\
  &=  \langle   \dot{w}_L, \partial_t(\mathcal{A}_{\lambda}^L)\dot{w} \rangle \nonumber \\
  & +   \langle   \dot{w}_L, \partial_{tt}(\mathcal{A}_{\lambda}^L)w \rangle + \langle   \dot{w}_L, \partial_t(\mathcal{A}_{\lambda}^L) \mathcal{F}_1 \rangle. \label{eq:first int by parts error}
\end{align}
In short, we add a correction to the energy identity to transform the first inner product in \eqref{eq:initial main} to the second one in \eqref{eq:initial main} up to some errors \eqref{eq:H to dt error}, \eqref{eq:first int by parts error}:
\begin{align}
  \langle  \widetilde{H}_{\lambda} w_L, \partial_t(\mathcal{A}_{\lambda}^L)w \rangle  + \partial_t D_{0,1,1} &=  \langle   \dot{w}_L, \partial_t(\mathcal{A}_{\lambda}^L)\dot{w} \rangle \label{eq:first int by parts}\\
  &+ E_{0,1,1} + E_{0,1,2} + F_{0,1,1} + F_{0,1,2} \nonumber
\end{align}
where
\begin{align*}
  D_{0,1,1} =  \langle  \dot{w}_L, \partial_t(\mathcal{A}_{\lambda}^L)w \rangle ,\quad E_{0,1,1}&=  \langle   \dot{w}_L, \partial_{tt}(\mathcal{A}_{\lambda}^L)w \rangle ,\quad E_{0,1,2}= \langle  \partial_t(\mathcal{A}_{\lambda}^L) \dot{w}, \partial_t(\mathcal{A}_{\lambda}^L)w \rangle, \\
  F_{0,1,1}&= \langle   \dot{w}_L, \partial_t(\mathcal{A}_{\lambda}^L) \mathcal{F}_1 \rangle ,\quad F_{0,1,2}=\langle  \mathcal{A}_{\lambda}^L \mathcal{F}_2, \partial_t(\mathcal{A}_{\lambda}^L)w \rangle .
\end{align*}
However, the second inner product in \eqref{eq:initial main} is also not small enough to close our bootstrap by itself. Thus, we use \eqref{eq:evolution w_k} again to exchange $\dot{w}_L$ for $\partial_t w_L$,
\begin{align*}
  \langle   \dot{w}_L, \partial_t(\mathcal{A}_{\lambda}^L)\dot{w} \rangle  &= \langle   \partial_t{w}_L, \partial_t(\mathcal{A}_{\lambda}^L)\dot{w} \rangle \\
  &- \langle   \partial_t(\mathcal{A}_{\lambda}^L)w, \partial_t(\mathcal{A}_{\lambda}^L)\dot{w} \rangle - \langle  \mathcal{A}_{\lambda}^L \mathcal{F}_1, \partial_t(\mathcal{A}_{\lambda}^L)\dot{w} \rangle.
\end{align*}
Integrating by parts in time once more,  
\begin{align*}
  \langle   \partial_t{w}_L, \partial_t(\mathcal{A}_{\lambda}^L)\dot{w} \rangle - \partial_t\langle  {w}_L, \partial_t(\mathcal{A}_{\lambda}^L)\dot{w} \rangle &=  -  \langle  {w}_L, \partial_{tt}(\mathcal{A}_{\lambda}^L)\dot{w} \rangle-\langle  {w}_L, \partial_t(\mathcal{A}_{\lambda}^L)\dot{w}_t \rangle  \\
  &=  \langle  {w}_L, \partial_t(\mathcal{A}_{\lambda}^L){w}_2 \rangle \\
  &- \langle  {w}_L, \partial_{tt}(\mathcal{A}_{\lambda}^L)\dot{w} \rangle -\langle  {w}_L, \partial_t(\mathcal{A}_{\lambda}^L)\mathcal{F}_2\rangle. 
\end{align*}
To sum it up, we obtain a relation similar to \eqref{eq:first int by parts}:
\begin{align}
  \langle   \dot{w}_L, \partial_t(\mathcal{A}_{\lambda}^L)\dot{w} \rangle + \partial_t D_{0,2,1} &= \langle  {w}_L, \partial_t(\mathcal{A}_{\lambda}^L){w}_2 \rangle \label{eq:second int by parts}\\
  & + E_{0,2,1} + E_{0,2,2} + F_{0,2,1} + F_{0,2,2} \nonumber
\end{align}
where
\begin{align*}
  D_{0,2,1} &=  -\langle  {w}_L, \partial_t(\mathcal{A}_{\lambda}^L)\dot{w} \rangle ,\\
   E_{0,2,1}&= - \langle  {w}_L, \partial_{tt}(\mathcal{A}_{\lambda}^L)\dot{w} \rangle ,\quad E_{0,2,2}= - \langle   \partial_t(\mathcal{A}_{\lambda}^L)w, \partial_t(\mathcal{A}_{\lambda}^L)\dot{w} \rangle, \\
  F_{0,2,1}&= - \langle  \mathcal{A}_{\lambda}^L \mathcal{F}_1, \partial_t(\mathcal{A}_{\lambda}^L)\dot{w} \rangle ,\quad F_{0,2,2}=-\langle  {w}_L, \partial_t(\mathcal{A}_{\lambda}^L)\mathcal{F}_2\rangle.
\end{align*}
In \cite{RaphaelRodnianski2012IHES} (the case $L=1$), the authors directly checked that $\langle w_1,  \partial_{t}(\mathcal{A}_{\lambda}^L)w_{2} \rangle < 0$. In contrast, when $L\ge 3$, we cannot obtain similar information from $ \langle w_L, \partial_{t}(\mathcal{A}_{\lambda}^L)w_{2} \rangle $ by itself. We pull out the repulsive terms using the Leibniz rule,
\begin{align}\label{eq:pullout repulsive}
  \langle   w_L , \partial_{t}(\mathcal{A}_{\lambda}^L)w_{2}\rangle &= \langle w_L, \partial_{t}(\widetilde{H}_{\lambda})w_{L} \rangle + \langle  w_L, \widetilde{H}_{\lambda}\partial_{t}(\mathcal{A}_{\lambda}^{L-2})w_{2} \rangle \nonumber\\
  &=\langle w_L, \partial_{t}(\widetilde{H}_{\lambda})w_{L} \rangle + \langle \widetilde{H}_{\lambda} w_L, \partial_{t}(\mathcal{A}_{\lambda}^{L-2})w_{2} \rangle .
\end{align}
We observe that the second inner product in \eqref{eq:pullout repulsive} has the same form as the first inner product in \eqref{eq:first int by parts}, we can iterate integration by parts, which leads to the following recurrence equations. For $0\le k \le \frac{L-1}{2}$,
\begin{align}
  \langle  \widetilde{H}_{\lambda} w_L, \partial_t(\mathcal{A}_{\lambda}^{L-2k})w_{2k} \rangle  + \partial_t D_{k,1,1} &=  \langle   \dot{w}_L, \partial_t(\mathcal{A}_{\lambda}^{L-2k})\dot{w}_{2k} \rangle  \label{eq:first int by parts k} \\
  & + E_{k,1,1} + E_{k,1,2} + F_{k,1,1} + F_{k,1,2}\nonumber
\end{align}
where
\begin{align*}
  D_{k,1,1} &= \langle \dot{w}_L, \partial_t(\mathcal{A}_{\lambda}^{L-2k}) w_{2k} \rangle, \quad E_{k,1,1}  = \langle   \dot{w}_L, \partial_{tt}(\mathcal{A}_{\lambda}^{L-2k})w_{2k} \rangle,\\
  E_{k,1,2} &= \langle  \partial_t(\mathcal{A}_{\lambda}^L) \dot{w}, \partial_t(\mathcal{A}_{\lambda}^{L-2k})w_{2k} \rangle +  \langle   \dot{w}_L, \partial_t(\mathcal{A}_{\lambda}^{L-2k}) \partial_t(H_{\lambda}^{k})w \rangle,\\
  F_{k,1,1} & = \langle   \dot{w}_L, \partial_t(\mathcal{A}_{\lambda}^{L-2k}) H_{\lambda}^{k}\mathcal{F}_1  \rangle, \quad F_{k,1,2}  = \langle  \mathcal{A}_{\lambda}^L \mathcal{F}_2, \partial_t(\mathcal{A}_{\lambda}^{L-2k})w_{2k} \rangle 
\end{align*}
and
\begin{align}
  \langle   \dot{w}_L, \partial_t(\mathcal{A}_{\lambda}^{L-2k})\dot{w}_{2k} \rangle + \partial_t D_{k,2,1} &= \langle  {w}_L, \partial_t(\mathcal{A}_{\lambda}^{L-2k}){w}_{2k+2} \rangle \label{eq:second int by parts k} \\
  & + E_{k,2,1} + E_{k,2,2} + F_{k,2,1} + F_{k,2,2} \nonumber
\end{align}
where
\begin{align*}
  D_{k,2,1} &=-\langle w_L, \partial_t(\mathcal{A}_{\lambda}^{L-2k}) \dot{w}_{2k} \rangle, \quad E_{k,2,1}  =- \langle  {w}_L, \partial_{tt}(\mathcal{A}_{\lambda}^{L-2k})\dot{w}_{2k} \rangle ,\\
  E_{k,2,2} &= - \langle   \partial_t(\mathcal{A}_{\lambda}^L)w, \partial_t(\mathcal{A}_{\lambda}^{L-2k})\dot{w}_{2k} \rangle - \langle  w_L, \partial_t(\mathcal{A}_{\lambda}^{L-2k})\partial_t(H_{\lambda}^k)\dot{w}\rangle,\\
  F_{k,2,1} & =  - \langle  \mathcal{A}_{\lambda}^L \mathcal{F}_1, \partial_t(\mathcal{A}_{\lambda}^{L-2k})\dot{w}_{2k} \rangle , \quad F_{k,2,2}  = -\langle  {w}_L, \partial_t(\mathcal{A}_{\lambda}^{L-2k})\mathcal{F}_2\rangle.
\end{align*}
We can also pull out the repulsive term like \eqref{eq:pullout repulsive} from \eqref{eq:second int by parts k}: for $0\le k \le \frac{L-3}{2}$, 
\begin{align}\label{eq:pullout repulsive k}
  \langle   w_L , \partial_{t}(\mathcal{A}_{\lambda}^{L-2k})w_{2k+2}\rangle &= \langle w_L, \partial_{t}(\widetilde{H}_{\lambda})w_{L} \rangle + \langle  \widetilde{H}_{\lambda} w_L,\partial_{t}(\mathcal{A}_{\lambda}^{L-2k-2})w_{2k+2} \rangle.
\end{align}
The displays \eqref{eq:first int by parts k}, \eqref{eq:second int by parts k} and \eqref{eq:pullout repulsive k} allow us to iterate our recurrence relations. For $k=\frac{L-1}{2}$, we can verify that \eqref{eq:second int by parts k} is negative from the fact $\partial_t(A_{\lambda})=\partial_t(A_{\lambda}^*)=\frac{-\lambda_t}{\lambda} \frac{(\Lambda {Z})_{\lambda}}{r}$,
\begin{align*}
  \langle \partial_t(\widetilde{H}_{\lambda}) w_L,w_L \rangle &= \langle \partial_t(A_{\lambda}A_{\lambda}^*) w_L,w_L \rangle \\
  &= \langle \partial_t(A_{\lambda})A_{\lambda}^* w_L,w_L \rangle + \langle A_{\lambda}\partial_t(A_{\lambda}^* )w_L,w_L \rangle = 2\langle \partial_{t}({A}_{\lambda}) w_{L+1},  w_L \rangle .
\end{align*}
Hence, we decompose the first term of \eqref{eq:initial main} as follows: 
\begin{align*}
  \langle  \widetilde{H}_{\lambda} w_L, \partial_t(\mathcal{A}_{\lambda}^L)w \rangle + \sum_{k=0}^{\frac{L-1}{2}} \sum_{i=1}^2 \partial_t D_{k,i,1} &= \frac{L}{2}  \langle \partial_t(\widetilde{H}_{\lambda}) w_L,w_L \rangle + \sum_{k=0}^{\frac{L-1}{2}} \sum_{i,j=1}^2  (E_{k,i,j}+F_{k,i,j} ).
\end{align*}
Similarly, we decompose the second term of \eqref{eq:initial main} as follows: 
\begin{align*}
  &\langle \dot{w}_L,\partial_t(\mathcal{A}_{\lambda}^L)\dot{w} \rangle + \sum_{k=0}^{\frac{L-1}{2}} \sum_{i=1}^2 \partial_t (1-\delta_{k,0}\delta_{i,1})D_{k,i,1} \\
   &= \frac{L}{2}  \langle \partial_t(\widetilde{H}_{\lambda}) w_L,w_L \rangle + \sum_{k=0}^{\frac{L-1}{2}} \sum_{i,j=1}^2  (1-\delta_{k,0}\delta_{i,1})(E_{k,i,j}+F_{k,i,j} ).
\end{align*}
Together with \eqref{eq:initial repulsive} and \eqref{eq:initial error}, we obtain the following initial identity of $\mathcal{E}_{L+1}$:
\begin{align}\label{eq:repulsive form}
  &\partial_t \left\{\frac{\mathcal{E}_{L+1}}{2\lambda^{2L}}  +  \sum_{k=0}^{\frac{L-1}{2}} \sum_{i=1}^2 (2-\delta_{k,0}\delta_{i,1}) D_{k,i,1}  \right\} =\frac{2L+1}{2} \langle \partial_t(\widetilde{H}_{\lambda}) w_L,w_L \rangle \\
  & \qquad \qquad \quad   +  \langle  \widetilde{H}_{\lambda} w_L, \mathcal{A}_{\lambda}^L \mathcal{F}_1 \rangle +   \langle \dot{w}_L,\mathcal{A}_{\lambda}^L\mathcal{F}_2 \rangle    +  \sum_{k=0}^{\frac{L-1}{2}} \sum_{i,j=1}^2 (2-\delta_{k,0}\delta_{i,1}) (E_{k,i,j}+F_{k,i,j} ). \nonumber
\end{align}
\textbf{Step 3:} \emph{Second energy identity}. We find out another corrections from $E_{k,i,1}$, which contains $\partial_{tt}(\mathcal{A}_{\lambda}^{L-2k})$. More precisely from Lemma \ref{lem:Leibniz rule},
\begin{align*}
  E_{k,1,1}  &= \langle   \dot{w}_L, \partial_{tt}(\mathcal{A}_{\lambda}^{L-2k})w_{2k} \rangle \\
  &=  \sum_{m=2k}^{L-1} \frac{ \lambda_{tt}}{\lambda^{L+1-m}} \langle (\Phi_{m,L,k}^{(1)})_{\lambda} w_{m}, \dot{w}_L \rangle  + \sum_{m=2k}^{L-1} \frac{ O(b_1^2)}{\lambda^{L+2-m}} \langle (\Phi_{m,L,k}^{(2)})_{\lambda} w_{m}, \dot{w}_L \rangle
\end{align*}
and
\begin{align*}
  E_{k,2,1}  &= -\langle   {w}_L, \partial_{tt}(\mathcal{A}_{\lambda}^{L-2k})\dot{w}_{2k} \rangle \\
  &=  -\sum_{m=2k}^{L-1} \frac{ \lambda_{tt}}{\lambda^{L+1-m}} \langle (\Phi_{m,L,k}^{(1)})_{\lambda} \dot{w}_{m}, {w}_L \rangle  - \sum_{m=2k}^{L-1} \frac{ O(b_1^2)}{\lambda^{L+2-m}} \langle (\Phi_{m,L,k}^{(2)})_{\lambda} \dot{w}_{m}, {w}_L \rangle
\end{align*}
where $\Phi_{m,L,k}^{(j_1)}(y):= \Phi_{m-2k,L-2k}^{(j_1)}(y)$ with $j_1=1,2$, so that 
\[ |\Phi_{m,L,k}^{(j_1)}(y)| \lesssim \frac{1}{1+y^{L+2-m}}.\]
 Here, we cannot treat $\lambda_{tt}$ directly because we do not have estimates on second derivatives of the modulation parameters (and we did not set $\lambda_t=-b_1$). Thus, we add $(b_1)_t$ to $\lambda_{tt}$ and use \eqref{eq:mod bound1},
\begin{align}
  \frac{ \lambda_{tt}}{\lambda^{L+1-m}} \langle (\Phi_{m,L,k}^{(1)})_{\lambda} w_{m}, \dot{w}_L \rangle &= \frac{ (\lambda_{t}+b_1)_t}{\lambda^{L+1-m}} \langle (\Phi_{m,L,k}^{(1)})_{\lambda} w_{m}, \dot{w}_L \rangle \label{eq:further correction} \\
  &+ \frac{ O(b_1^2)}{\lambda^{L+2-m}} \langle (\Phi_{m,L,k}^{(1)})_{\lambda} w_{m}, \dot{w}_L \rangle. \nonumber
\end{align}
We then correct \eqref{eq:further correction} via integration by parts in time with \eqref{eq:evolution w_k}:
\begin{align*}
  &\frac{(\lambda_{t} + b_1)_t}{\lambda^{L+1-m}} \langle (\Phi_{m,L,k}^{(1)})_{\lambda} w_{m} , \dot{w}_L \rangle - \partial_t \left(  \frac{\lambda_{t} + b_1}{\lambda^{L+1-m}} \langle (\Phi_{m,L,k}^{(1)})_{\lambda} w_{m} , \dot{w}_L \rangle  \right) \\
  &= ({\lambda_{t} + b_1})\left\langle  \partial_t\left(\frac{1}{\lambda^{L+1-m}}{(\Phi_{m,L,k}^{(1)})_{\lambda}}\right) w_{m} , \dot{w}_L \right\rangle   \\
  & + \frac{\lambda_{t} + b_1}{\lambda^{L+1-m}} \left[  \left\langle (\Phi_{m,L,k}^{(1)})_{\lambda} \partial_tw_{m} , \dot{w}_L \right\rangle + \left\langle (\Phi_{m,L,k}^{(1)})_{\lambda} w_{m} , \partial_t\dot{w}_L \right\rangle  \right] \\
  &  = -\frac{\lambda_t(\lambda_{t} + b_1)}{\lambda^{L+2-m}}  \langle ( \Lambda_{m-L}\Phi_{m,L,k}^{(1)})_{\lambda} w_{m} , \dot{w}_L \rangle \\
  &- \frac{\lambda_{t} + b_1}{\lambda^{L+1-m}}  \langle (\Phi_{m,L,k}^{(1)})_{\lambda} (\dot{w}_m + \partial_t(\mathcal{A}_{\lambda}^m)w + \mathcal{A}_{\lambda}^m \mathcal{F}_1) , \dot{w}_L \rangle \\
  & +\frac{\lambda_{t} + b_1}{\lambda^{L+1-m}}  \langle (\Phi_{m,L,k}^{(1)})_{\lambda} w_{m} , w_{L+2} - \partial_t(\mathcal{A}_{\lambda}^L)\dot{w} - \mathcal{A}_{\lambda}^L \mathcal{F}_2  \rangle.
\end{align*}
We can also obtain the same correction for $E_{k,2,1}$:
\begin{align*}
  &\frac{(\lambda_{t} + b_1)_t}{\lambda^{L+1-m}} \langle (\Phi_{m,L,k}^{(1)})_{\lambda} \dot{w}_{m} , {w}_L \rangle - \partial_t \left(  \frac{\lambda_{t} + b_1}{\lambda^{L+1-m}} \langle (\Phi_{m,L,k}^{(1)})_{\lambda} \dot{w}_{m} , w_L \rangle  \right) \\
  &  = -\frac{\lambda_t(\lambda_{t} + b_1)}{\lambda^{L+2-m}}  \langle (\Lambda_{m-L}\Phi_{m,L,k}^{(1)})_{\lambda} \dot{w}_{m} , {w}_L \rangle  \\
  &- \frac{\lambda_{t} + b_1}{\lambda^{L+1-m}}  \langle (\Phi_{m,L,k}^{(1)})_{\lambda} ( w_{m+2} - \partial_t(\mathcal{A}_{\lambda}^m)\dot{w} - \mathcal{A}_{\lambda}^m \mathcal{F}_2  ) , {w}_L \rangle \\
  &+ \frac{\lambda_{t} + b_1}{\lambda^{L+1-m}}  \langle (\Phi_{m,L,k}^{(1)})_{\lambda} \dot{w}_{m} , \dot{w}_L + \partial_t(\mathcal{A}_{\lambda}^L)w + \mathcal{A}_{\lambda}^L \mathcal{F}_1\rangle.
\end{align*}
Rearranging the existing errors $E_{k,i,j}$, $F_{k,i,j}$ with introducing a new correction notation $D_{k,i,2}$ and new error notation $E^*_{k,i,j}$, $F^*_{k,i,j}$ for $0\le k \le \frac{L-1}{2}$ and $i=1,2$:
\begin{equation}
  E_{k,i,1} - \partial_t D_{k,i,2}  + E_{k,i,2} + F_{k,i,1} + F_{k,i,2} = E^*_{k,i,1} + E^*_{k,i,2} + F^*_{k,i,1} + F^*_{k,i,2}
\end{equation}
where
\begin{align*}
  D_{k,1,2}  =&  \sum_{m=2k}^{L-1}  \frac{\lambda_{t} + b_1}{\lambda^{L+1-m}} \langle (\Phi_{m,L,k}^{(1)})_{\lambda} w_{m} , \dot{w}_L \rangle , \\
  E_{k,1,1}^* =&-\sum_{m=2k}^{L-1}\frac{\lambda_t(\lambda_{t} + b_1)}{\lambda^{L+2-m}}  \langle (\Lambda_{m-L}\Phi_{m,L,k}^{(1)})_{\lambda} w_{m} , \dot{w}_L \rangle  \\
     &-\sum_{m=2k}^{L-1}  \frac{\lambda_{t} + b_1}{\lambda^{L+1-m}}  \langle (\Phi_{m,L,k}^{(1)})_{\lambda} (\dot{w}_m + \partial_t(\mathcal{A}_{\lambda}^m)w) , \dot{w}_L \rangle \\
   &+ \sum_{m=2k}^{L-1} \frac{\lambda_{t} + b_1}{\lambda^{L+1-m}}  \langle (\Phi_{m,L,k}^{(1)})_{\lambda} w_{m} , w_{L+2} - \partial_t(\mathcal{A}_{\lambda}^L)\dot{w}   \rangle,\\
  E_{k,1,2}^*  =&  E_{k,1,2}+  \sum_{m=2k}^{L-1}  \frac{O(b_1^2)}{\lambda^{L+2-m}}  \langle (\Phi_{m,L,k}^{(2)})_{\lambda} w_{m} , \dot{w}_L \rangle,\\
  F_{k,1,1}^*=& F_{k,1,1}  - \sum_{m=2k}^{L-1}  \frac{\lambda_{t} + b_1}{\lambda^{L+1-m}}  \langle (\Phi_{m,L,k}^{(1)})_{\lambda} \mathcal{A}_{\lambda}^m \mathcal{F}_1 , \dot{w}_L \rangle\\
  F_{k,1,2}^*=& F_{k,1,2} - \sum_{m=2k}^{L-1} \frac{\lambda_{t} + b_1}{\lambda^{L+1-m}}  \langle (\Phi_{m,L,k}^{(1)})_{\lambda} w_{m} , \mathcal{A}_{\lambda}^L \mathcal{F}_2   \rangle
\end{align*}
and
\begin{align*}
  \quad D_{k,2,2} = & -\sum_{m=2k}^{L-1}    \frac{\lambda_{t} + b_1}{\lambda^{L+1-m}} \langle (\Phi_{m,L,k}^{(1)})_{\lambda} \dot{w}_{m} , w_L \rangle , \\
  E_{k,2,1}^*  =&  \sum_{m=2k}^{L-1} \frac{\lambda_t(\lambda_{t} + b_1)}{\lambda^{L+2-m}}  \langle (\Lambda_{m-L}\Phi_{m,L,k}^{(1)})_{\lambda} \dot{w}_{m} , {w}_L \rangle  \\
   +&\sum_{k=2m}^{L-1}\frac{\lambda_{t} + b_1}{\lambda^{L+1-m}}  \langle (\Phi_{m,L,k}^{(1)})_{\lambda} ( w_{m+2} - \partial_t(\mathcal{A}_{\lambda}^m)\dot{w}  ) , {w}_L \rangle\\
   -& \sum_{m=2k}^{L-1} \frac{\lambda_{t} + b_1}{\lambda^{L+1-m}}  \langle (\Phi_{m,L,k}^{(1)})_{\lambda} \dot{w}_{m} , \dot{w}_L + \partial_t(\mathcal{A}_{\lambda}^L)w \rangle,\\
  E_{k,2,2}^*  =&  E_{k,2,2} -  \sum_{m=2k}^{L-1}  \frac{O(b_1^2)}{\lambda^{L+2-m}} \langle  (\Phi_{m,L,k}^{(2)})_{\lambda} \dot{w}_{m} , {w}_L \rangle,\\
  F_{k,2,1}^*=& F_{k,2,1}  - \sum_{m=2k}^{L-1} \frac{\lambda_{t} + b_1}{\lambda^{L+1-m}}  \langle (\Phi_{m,L,k}^{(1)})_{\lambda} \dot{w}_{m} , \mathcal{A}_{\lambda}^L \mathcal{F}_1\rangle\\
  F_{k,2,2}^*=& F_{k,2,2} - \sum_{m=2k}^{L-1} \frac{\lambda_{t} + b_1}{\lambda^{L+1-m}}  \langle (\Phi_{m,L,k}^{(1)})_{\lambda}  \mathcal{A}_{\lambda}^m \mathcal{F}_2  , {w}_L \rangle,
\end{align*}
we obtain the following modified energy identity:
\begin{align}\label{eq:final form}
  &\partial_t \left\{\frac{\mathcal{E}_{L+1}}{2\lambda^{2L}}  +  \sum_{k=0}^{\frac{L-1}{2}} \sum_{i,j=1}^2 (2-\delta_{k,0}\delta_{i,1}) D_{k,i,j}  \right\} =\frac{2L+1}{2} \langle \partial_t(\widetilde{H}_{\lambda}) w_L,w_L \rangle \\
  & \qquad \qquad \quad   +  \langle  \widetilde{H}_{\lambda} w_L, \mathcal{A}_{\lambda}^L \mathcal{F}_1 \rangle +   \langle \dot{w}_L,\mathcal{A}_{\lambda}^L\mathcal{F}_2 \rangle    +  \sum_{k=0}^{\frac{L-1}{2}} \sum_{i,j=1}^2 (2-\delta_{k,0}\delta_{i,1}) (E^*_{k,i,j}+F^*_{k,i,j} ). \nonumber
\end{align}
\textbf{Step 4:} \emph{Error estimation}.
All we need is to estimate all inner products except the repulsive one $\langle \partial_t(\widetilde{H}_{\lambda}) w_L, w_L \rangle$. We can classify such inner products into two main categories: quadratic terms with respect to $w$ (i.e. $D_{k,i,j}$ and $E^*_{k,i,j}$), those involving $\mathcal{F}_i$, $i=1,2$ (i.e. $F^*_{k,i,j}$ and \eqref{eq:initial error}). 
 
(i) $D_{k,i,j}$ \emph{terms}. From \eqref{eq:1-t} and Lemma \ref{lem:Leibniz rule}, all inner products of $D_{k,i,j}$ can be written as sums of terms of the form: $0\le m \le L-1$,
\begin{align*}
  \frac{O(b_1)}{\lambda^{2L}} \langle \Phi_{m,L} \varepsilon_m  ,  \dot{\varepsilon}_L \rangle,\quad  \frac{O(b_1)}{\lambda^{2L}} \langle \Phi_{m,L} \dot{\varepsilon}_m   ,  {\varepsilon}_L \rangle,\quad |\Phi_{m,L}(y)| \lesssim  \frac{1}{1+y^{L+2-m}}.
\end{align*}
Indeed, the $\Phi_{m,L}$'s included in each of the above inner products are different functions (ex. $\Phi_{m-2k,L-2k}^{(j_1)}$, $\Phi_{m,L,k}^{(j_2)}$, $\Lambda_{m-L}\Phi_{m,L,k}^{(1)}$\dots), but we abuse the notation because they are all rational functions with the same asymptotics. From the coercive property \eqref{eq:coercivity bound}, we obtain the desired bound for the correction in \eqref{eq:monotonicity}:
\begin{align*}
   | \langle \Phi_{m,L} \varepsilon_m  ,  \dot{\varepsilon}_L \rangle | &\lesssim  {\left\lVert \frac{\varepsilon_m}{1+y^{L+2-m}}  \right\rVert}_{L^2} \sqrt{\mathcal{E}_{L+1}}\lesssim C(M) \mathcal{E}_{L+1}, \\
  | \langle \Phi_{m,L} \dot{\varepsilon}_m   ,  {\varepsilon}_L \rangle | & \lesssim {\left\lVert \frac{1+|\log y|}{1+y^{L+1-m}} \dot{\varepsilon}_m \right\rVert}_{L^2} \sqrt{\mathcal{E}_{L+1}} 
  \lesssim C(M) \mathcal{E}_{L+1}. 
\end{align*}
(ii) $E^*_{k,i,j}$ \emph{terms}. Similarly, all inner products of $E^*_{k,i,j}$ can be written as sums of terms of the form: for $0\le m,n \le L-1$, 
\begin{align*}
  \frac{O(b_1^2)}{\lambda^{2L+1}}   \langle \Phi_{m,L} \varepsilon_m  ,  \dot{\varepsilon}_L \rangle ,\quad \frac{O(b_1^2)}{\lambda^{2L+1}} \langle \Phi_{m,L} \dot{\varepsilon}_m   ,  {\varepsilon}_L \rangle ,\quad  \frac{O(b_1^2)}{\lambda^{2L+1}}\langle \Phi_{m,L} \dot{\varepsilon}_m   , \Phi_{n,L} {\varepsilon}_n \rangle \\
  \frac{O(b_1^{2})}{\lambda^{2L+1}}   \langle \Phi_{m,L} \dot{\varepsilon}_m  ,  \dot{\varepsilon}_L \rangle ,\quad \frac{O(b_1^{2})}{\lambda^{2L+1}} \langle \Phi_{m,L} {\varepsilon}_m   ,  {\varepsilon}_{L+2} \rangle ,\quad  \frac{O(b_1^{2})}{\lambda^{2L+1}}\langle \Phi_{m,L} {\varepsilon}_{m+2}   ,  {\varepsilon}_L \rangle,
\end{align*}
which are bounded by
\[\frac{b_1^2}{\lambda^{2L+1}} C(M) \mathcal{E}_{L+1}.\]
(iii) $F^*_{k,i,j}$ \emph{and} \eqref{eq:initial error}. Recall $\mathcal{F}_1 = \lambda^{-1}\mathcal{F}_{\lambda}$ and $\mathcal{F}_2 = \lambda^{-2}\dot{\mathcal{F}}_{\lambda}$, all inner products of $F^*_{k,i,j}$ can be written as sums of terms of the form: for $0\le m \le L-1$
\begin{align}
  &\frac{O(b_1)}{\lambda^{2L+1}}   \langle \Phi_{m,L} \mathcal{A}^m \mathcal{F} ,  \dot{\varepsilon}_L \rangle ,\quad  \frac{O(b_1)}{\lambda^{2L+1}}\langle \Phi_{m,L} \dot{\varepsilon}_m   , \mathcal{A}^L \mathcal{F}  \rangle , \quad \frac{O(b_1)}{\lambda^{2L+1}}\langle \Phi_{m,L} {\varepsilon}_m   , \mathcal{A}^L \dot{\mathcal{F}}  \rangle, \label{eq:rough F} \\
  &\frac{O(b_1)}{\lambda^{2L+1}} \langle \Phi_{m,L} \mathcal{A}^m \dot{\mathcal{F}}   ,  {\varepsilon}_L \rangle ,\quad  \frac{1}{\lambda^{2L+1}}\langle  \varepsilon_{L+1}, \mathcal{A}^{L+1} \mathcal{F} \rangle ,\quad    \frac{1}{\lambda^{2L+1}}\langle \dot{\varepsilon}_L,\mathcal{A}^L\dot{\mathcal{F}} \rangle \label{eq:sharp F}. 
\end{align}
We claim that $\mathcal{F}$ and $\dot{\mathcal{F}}$ satisfy the following estimates: for $0\le k \le L-1$, 
\begin{align}
  {\left\lVert \mathcal{A}^{L+1} \mathcal{F} \right\rVert}_{L^2} +  {\left\lVert \mathcal{A}^L \dot{\mathcal{F}} \right\rVert}_{L^2} & \lesssim b_1 \left[ \frac{b_1^{L+1}}{|\log b_1|} + \sqrt{\frac{\mathcal{E}_{L+1}}{\log M}} \right], \label{eq:main FF} \\
  {\left\lVert \frac{1+|\log y|}{1+y^{L+1-k}} \mathcal{A}^k \mathcal{F} \right\rVert}_{L^2}& \lesssim b_1^{L+2}|\log b_1|^C, \label{eq:weighted F}\\
  {\left\lVert \frac{1+|\log y|}{1+y^{L+1-k}} \mathcal{A}^k \dot{\mathcal{F}} \right\rVert}_{L^2}&\lesssim  \frac{b_1^{L+1}}{|\log b_1|} + \sqrt{\frac{\mathcal{E}_{L+1}}{\log M}} \label{eq:weighted dot F}.
\end{align}
Assuming these claims \eqref{eq:main FF}, \eqref{eq:weighted F} and \eqref{eq:weighted dot F} with the coercivity \eqref{eq:coercivity bound}, we can estimate $F^*_{k,i,j}$ terms as follows: the three inner products in \eqref{eq:rough F} are bounded by
\[\frac{b_1}{\lambda^{2L+1}} C(M) b_1^{L+2}|\log b_1|^C  \sqrt{\mathcal{E}_{L+1}} .\]
For the three inner products in \eqref{eq:sharp F}, we obtain the sharp bound
\[\frac{b_1}{\lambda^{2L+1}} \left(  \frac{b_1^{L+1}}{|\log b_1|} + \sqrt{\frac{\mathcal{E}_{L+1}}{\log M}} \right)\sqrt{\mathcal{E}_{L+1}}\]
 from \eqref{eq:main FF}, \eqref{eq:weighted dot F} and the sharp coercivity bound
 \[ {\left\lVert \frac{\varepsilon_L}{y(1+|\log y|)} \right\rVert}_{L^2}^2 \le C\langle \widetilde{H} \varepsilon_L, \varepsilon_L \rangle \le C{\mathcal{E}_{L+1}}.\]
Hence, it remains to prove \eqref{eq:main FF}, \eqref{eq:weighted F} and \eqref{eq:weighted dot F}.

\textbf{Step 5:} Proof of \eqref{eq:main FF}, \eqref{eq:weighted F} and \eqref{eq:weighted dot F}. Recall \eqref{eq:evolution W}, we have $\boldsymbol{\mathcal{F}}=(\mathcal{F},\dot{\mathcal{F}})^t$ and
\[\begin{pmatrix}
  \mathcal{F} \\
  \dot{\mathcal{F}}
\end{pmatrix}= -\widetilde{\mathbf{Mod}}(t) -\tilde{\boldsymbol{\psi}}_{b} -\boldsymbol{ NL}(\boldsymbol{\varepsilon})-\boldsymbol{L}{(\boldsymbol{\varepsilon})}, \quad \boldsymbol{ NL}(\boldsymbol{\varepsilon}) = \begin{pmatrix}
  0 \\
  NL(\varepsilon)
\end{pmatrix},\quad   \boldsymbol{L}(\boldsymbol{\varepsilon}) = \begin{pmatrix}
  0 \\
  L(\varepsilon)
\end{pmatrix}.\]
Thus, we will estimate each of the above four errors.

(i) $\tilde{\boldsymbol{\psi}}_b$ \emph{term}. It directly follows from the global and logarithmic weighted bounds of Proposition \ref{prop:local approx}.

(ii) ${\widetilde{\mathbf{Mod}}}(t)$ \emph{term}. Recall \eqref{eq:mod tilde}, we have
\begin{align}
  {\widetilde{\mathbf{Mod}}}(t)=&-\left(\frac{\lambda_s}{\lambda}+b_1 \right)\left( \boldsymbol{\Lambda }\boldsymbol{Q} + \sum_{i=1}^L b_i\boldsymbol{\Lambda }( \chi_{B_1} \boldsymbol{T}_i) +  \sum_{i=2}^{L+2} \boldsymbol{\Lambda }(\chi_{B_1} \boldsymbol{S}_i) \right) \nonumber \\
  &+ \sum_{i=1}^L \left( (b_{i})_s+(i-1 + c_{b,i})b_1b_i -b_{i+1}  \right) \chi_{B_1}\left( \boldsymbol{T}_i   +  \sum_{j=i+1}^{L+2}  \frac{\partial \boldsymbol{S}_j}{\partial b_i} \right) \label{eq:mod in energy}. 
\end{align}
Due to Lemma \ref{lem:mod eqn}, the logarithmic weighted bounds \eqref{eq:weighted F} and \eqref{eq:weighted dot F} are derived from the finiteness of the following integrals
\begin{align*}
  \int  \left\lvert \frac{1+|\log y|}{1+y^{L+1-k}}\mathcal{A}^k \left[ \Lambda Q + \sum_{i=1}^L b_i\Lambda_{1-\overline{i}}(\chi_{B_1}T_i)+  \sum_{i=2}^{L+2}\Lambda_{1-\overline{i}}(\chi_{B_1}S_i) \right] \right\rvert^2 \lesssim 1\\
  \sum_{i=1}^L\int  \left\lvert \frac{1+|\log y|}{1+y^{L+1-k}}\mathcal{A}^k \left[ \chi_{B_1}T_i+ \chi_{B_1}\sum_{j=i+1}^{L+2} \frac{\partial {S}_j}{\partial b_i} \right] \right\rvert^2 \lesssim 1,
\end{align*}
which comes from the admissibility of $\boldsymbol{T}_i$ and Lemma \ref{lem:admissible bounds}. For the global bounds \eqref{eq:main FF}, we need to gain one extra $b_1$ as follows: since $A \Lambda Q=0$, the admissibility of $\boldsymbol{T}_i$ and Lemma \ref{lem:admissible bounds} imply  
\begin{align*}
  &\int  \left\lvert \mathcal{A}^{L+1} \Lambda Q + \sum_{i=1}^L b_i\mathcal{A}^{L+1-\overline{i}}\left[\Lambda_{1-\overline{i}}(\chi_{B_1}T_i)\right]+  \sum_{i=2}^{L+2} \mathcal{A}^{L+1-\overline{i}}\left[\Lambda_{1-\overline{i}}(\chi_{B_1}S_i)  \right]\right\rvert^2 \\
  &\lesssim  \sum_{i=1}^L  \int_{y\le 2B_1}  b_1^i \left\lvert \frac{(1+|\log y|)y^{i-2}}{1+y^L}  \right\rvert^2 + \sum_{i=2}^{L+1} b_1^{2i} + \frac{b_1^{2(L+1)}}{|\log b_1|^2} \lesssim b_1^2.
\end{align*}
For \eqref{eq:mod in energy}, we additionally use the cancellation $\mathcal{A}^L T_i = 0$ for $1\le i \le L$ to estimate
\begin{align*}
  \sum_{i=1}^L \int |\mathcal{A}^{L+1-\overline{i}} (\chi_{B_1} T_i) |^2 &\lesssim \sum_{i=1}^L  \int_{y\sim B_1} \left\lvert \frac{y^{i-2} \log y}{y^L} \right\rvert^2 \lesssim  b_1^2.\\
  \sum_{j=i+1}^{L+2} \int  \left\lvert \mathcal{A}^{L+1-\overline{i}} \left[\chi_{B_1}  \frac{\partial {S}_j}{\partial b_i} \right] \right\rvert^2 &\lesssim \sum_{j=i+1}^{L+2} b_1^{2(j-i)} + \frac{b_1^{2(L+1-i)}}{|\log b_1|^2} \lesssim b_1^2. 
\end{align*}
Hence, \eqref{eq:main FF} comes from Lemma \ref{lem:mod eqn}:
\[{\left\lVert \mathcal{A}^{L+1} {\widetilde{\mathrm{Mod}}}(t) \right\rVert}_{L^2}+{\left\lVert \mathcal{A}^L \dot{\widetilde{\mathrm{Mod}}}(t) \right\rVert}_{L^2}\lesssim b_1\left[ \frac{b_1^{L+1}}{|\log b_1|} + \sqrt{\frac{\mathcal{E}_{L+1}}{\log M}} \right].\]
For the remaining two terms, $\boldsymbol{NL}(\boldsymbol{\varepsilon})$ and $\boldsymbol{L}(\boldsymbol{\varepsilon})$, we follow the approach developed in \cite{RaphaelSchweyer2014Anal.PDE}. We deal with the case $y\le 1$ and $y\ge 1$ separately.

(iii) $\boldsymbol{NL}(\boldsymbol{\varepsilon})$ \emph{term}: 
(a) $y\le 1$.
From a Taylor Lagrange formula in Lemma \ref{lem:interpolation bound}, $NL(\varepsilon)$ also satisfies a Taylor Lagrange formula
\begin{equation}
  NL(\varepsilon)= \sum_{i=0}^{\frac{L-1}{2}}{c}_i y^{2i+1} + {r}_{\varepsilon},
\end{equation}
where
\begin{align}\label{eq:NL origin remainder}
  |{c}_i|\lesssim C(M) \mathcal{E}_{L+1},\quad |\mathcal{A}^k {r}_{\varepsilon}| \lesssim y^{L-k}|\log y|C(M) \mathcal{E}_{L+1}, \quad 0\le k \le L.
\end{align}
Since the expansion part of $NL(\varepsilon)$ is an odd function, that of $\mathcal{A}^k NL(\varepsilon)$ also has a single parity from the cancellation $A(y)=O(y^2)$. Using \eqref{eq:NL origin remainder}, we obtain
\begin{equation}\label{eq:NL bound near zero}
  |\mathcal{A}^k NL(\varepsilon)(y)| \lesssim C(M)|\log y| \mathcal{E}_{L+1},\quad 0\le k \le L,  
\end{equation}
and thus we conclude
\[{\left\lVert \mathcal{A}^L NL(\varepsilon) \right\rVert}_{L^2(y\le 1)}+{\left\lVert \frac{1+|\log y|^C}{1+y^{L+1-k}} \mathcal{A}^k NL(\varepsilon) \right\rVert}_{L^2(y\le 1)} \lesssim C(M) \mathcal{E}_{L+1}\lesssim b_1^{2L+1}.\]
(b) $y\ge 1$. Let
\begin{equation}\label{eq:decompose NL}
  NL(\varepsilon)=\zeta^2 N_1(\varepsilon), \quad \zeta=\frac{\varepsilon}{y},\quad N_1(\varepsilon) = \int_0^1 (1-\tau)f''(\tilde{Q}_b+\tau \varepsilon)d\tau .
\end{equation}
We have the following bounds for $i\ge 0$, $j\ge 1$ and $1\le i+j \le L$,
\begin{align}
  {\left\lVert \frac{\partial_y^i \zeta}{y^{j-1}} \right\rVert}_{L^{\infty}(y\ge 1)} + {\left\lVert \frac{\partial_y^i \zeta}{y^{j}} \right\rVert}_{L^{2}(y\ge 1)} &\lesssim |\log b_1|^{C(K)} b_1^{m_{i+j+1}}, \quad {\left\lVert \zeta \right\rVert}_{L^{2}(y\ge 1)} \lesssim 1 \label{eq:zeta bound}\\
  |N_1(\varepsilon)| \lesssim 1 ,\quad |\partial_y^k N_1(\varepsilon)| &\lesssim |\log b_1|^{C(K)} \left[ \frac{1}{y^{k+1}} + b_1^{m_{k+1}} \right],\; 1\le k \le L \label{eq:N_1 bound}
\end{align}
where 
\begin{equation}
  m_{k+1} = \begin{cases}
    kc_1 & \textnormal{if}\quad 1\le k \le L-2, \\
    L & \textnormal{if} \quad k=L-1, \\
    L+1 & \textnormal{if}\quad  k =L.
  \end{cases}
\end{equation}
The estimates \eqref{eq:zeta bound} are consequences of Lemma \ref{lem:interpolation bound} and the orbital stability \eqref{eq:orbital stability}.
We can prove the estimates \eqref{eq:N_1 bound} by borrowing \textit{Proof of} (3-77) in \cite{RaphaelSchweyer2014Anal.PDE} (p. 1768 line 1 of \cite{RaphaelSchweyer2014Anal.PDE}), since we can obtain the crude bound
\[|\partial_y^k \tilde{Q}_b | \lesssim |\log b_1|^C \left[ \frac{1}{y^{k+1}} + \sum_{i=1}^{\frac{L+1}{2}} b_1^{2i} y^{2i-1-k} 1_{y\le 2B_1} \right] \lesssim \frac{|\log b_1|^C}{y^{k+1}}.\]
Returning to the estimates for $\boldsymbol{NL}(\boldsymbol{\varepsilon})$, we have the trivial bound 
\[ \textnormal{for } 0\le k \le L, \quad\left\lvert\frac{1+|\log y|^C}{y^{L+1-k}}\mathcal{A}^k NL(\varepsilon) \right\rvert \lesssim \left\lvert\frac{\mathcal{A}^k NL(\varepsilon)}{y^{L-k}} \right\rvert,  \]
 \eqref{eq:decompose NL} and \eqref{eq:N_1 bound} imply
\begin{align*}
   \left\lvert\frac{\mathcal{A}^k NL(\varepsilon)}{y^{L-k}} \right\rvert 
   & \lesssim \sum_{k=0}^L \frac{|\partial_y^k NL(\varepsilon)|}{y^{L-k}}  \lesssim \sum_{k=0}^L \frac{1}{y^{L-k}} \sum_{i=0}^k  |\partial_y^i \zeta^2| |\partial_y^{k-i}N_1(\varepsilon)| \\
   &\lesssim \sum_{k=0}^L \frac{|\log b_1|^{C(K)}}{y^{L-k}}  \left[ {|\partial_y^k \zeta^2|} + \sum_{i=0}^{k-1} b_1^{m_{k-i+1}} {|\partial_y^i \zeta^2|} \right] \\
   &\lesssim  \sum_{k=0}^L \frac{|\log b_1|^{C(K)}}{y^{L-k}}   \left[ \sum_{i=0}^k {|\partial_y^i \zeta||\partial_y^{k-i}\zeta|} + \sum_{i=0}^{k-1} \sum_{j=0}^i  b_1^{m_{k-i+1}} {|\partial_y^j \zeta||\partial_y^{i-j}\zeta|} \right].
\end{align*}
Denote $I_1=k-i$, $I_2=i$, there exists $J_2 \in \mathbb{N}$ such that
\[\max(0,1-i) \le J_2 \le \min(L+1-k,L-i),\quad J_1=L+1-k-J_2,\]
we have
\[1\le I_1+J_1 \le L,\; 1\le I_2+J_2 \le L,\;I_1+I_2+J_1+J_2=L+1.\]
Thus
\begin{align*}
  {\left\lVert \frac{\partial_y^i\zeta \cdot \partial_y^{k-i}\zeta}{y^{L-k}} \right\rVert}_{L^{2}(y\ge 1)} &\le {\left\lVert \frac{\partial_y^{I_1}\zeta}{y^{J_1-1}} \right\rVert}_{L^{\infty}(y\ge 1)}{\left\lVert \frac{\partial_y^{I_2}\zeta}{y^{J_2}} \right\rVert}_{L^{2}(y\ge 1)} \\
  &\lesssim |\log b_1|^{C(K)}  b_1^{m_{I_1+J_1+1}}b_1^{m_{I_2+J_2+1}} \lesssim b_1^{\delta(L)} b_1^{L+2}
\end{align*}
since
\begin{align*}
  m_{I_1+J_1+1} + m_{I_2+J_2+1} &= \begin{cases}
    (L+1)c_1 & \textnormal{if $I_1+J_1<L-1$ and $I_2+J_2<L-1$},\\
    L + 2c_1 & \textnormal{if $I_1+J_1=L-1$ or $I_2+J_2=L-1$},\\
    L + 1+ c_1 & \textnormal{if $I_1+J_1=L$ or $I_2+J_2=L$}
  \end{cases}\\
  & > L+2.
\end{align*}
We calculate the latter term similarly except for the case $k=L$ and $0\le i=j \le k-1$. Here, we use the energy bound ${\left\lVert \zeta \right\rVert}_{L^{2}(y\ge 1)} \lesssim 1$, 
\begin{align*}
  |\log b_1|^{C(K)} b_1^{m_{L-i+1}}  {\left\lVert \partial_y^i \zeta \cdot \zeta \right\rVert}_{L^{2}(y\ge 1)}&\lesssim |\log b_1|^{C(K)} b_1^{m_{L-i+1}}{\left\lVert \partial_y^i \zeta \right\rVert}_{L^{\infty}(y\ge 1)}\\
  &\lesssim \begin{cases}
    |\log b_1|^{C(K)} b_1^{(L+1)c_1} & \textnormal{if } 0<i<L-1 \\
    |\log b_1|^{C(K)} b_1^{L+2c_1} & \textnormal{if } i=1,L-2 \\
    |\log b_1|^{C(K)} b_1^{L+1 + c_1} & \textnormal{if } i=0,L-1 
  \end{cases}\\ &\lesssim b_1^{\delta(L)} b_1^{L+2}.
\end{align*}
The remaining case can be estimated by the following inequalities: since $k-i\ge 1$, $I_1+J_1\ge 1$, $I_2+J_2\ge 1$ and $I_1+I_2+J_1+J_2=L+1-(k-i)$, 
\begin{align*}
  |\log b_1|^{C(K)} b_1^{m_{k-i+1} + m_{I_1+J_1+1} + m_{I_2+J_2+1}} &\lesssim \begin{cases}
    |\log b_1|^{C(K)} b_1^{(L+1)c_1} & \textnormal{if } k-i <L-1 \\
    |\log b_1|^{C(K)} b_1^{L+2c_1} & \textnormal{if } k-i=L-1 
  \end{cases} \\
  &\lesssim b_1^{\delta(L)} b_1^{L+2}.
\end{align*}
(iv) $\boldsymbol{L}(\boldsymbol{\varepsilon})$ \emph{term} : 
(a) $y\le 1$. Similar to the case $\boldsymbol{NL}(\boldsymbol{\varepsilon})$, we obtain a Taylor Lagrange formula for $\boldsymbol{L}(\boldsymbol{\varepsilon})$:
\begin{equation}
  L(\varepsilon)=  b_1^2\left[\sum_{i=0}^{\frac{L-1}{2}}{\tilde{c}}_i y^{2i+1} + {\tilde{r}}_{\varepsilon}\right],
\end{equation}
where
\begin{align}\label{eq:L origin remainder}
  |\tilde{c}_i|\lesssim C(M) \sqrt{\mathcal{E}_{L+1}},\quad |\mathcal{A}^k {\tilde{r}}_{\varepsilon}| \lesssim y^{L-k}|\log y|C(M) \sqrt{\mathcal{E}_{L+1}}, \quad 0\le k \le L.
\end{align}
Using the cancellation $A(y)=O(y^2)$ and \eqref{eq:L origin remainder}, we obtain
\begin{equation}\label{eq:L bound near zero}
  |\mathcal{A}^k L(\varepsilon)(y)| \lesssim C(M)b_1^2|\log y| \sqrt{\mathcal{E}_{L+1}},\quad 0\le k \le L,  
\end{equation}
and thus we conclude
\[{\left\lVert \mathcal{A}^L L(\varepsilon) \right\rVert}_{L^2(y\le 1)}+{\left\lVert \frac{1+|\log y|^C}{1+y^{L+1-k}} \mathcal{A}^k L(\varepsilon) \right\rVert}_{L^2(y\le 1)} \lesssim C(M) b_1^2 \sqrt{\mathcal{E}_{L+1}}.\]
(b) $y\ge 1$. Let 
\[L(\varepsilon)=\varepsilon N_2({\alpha}_{b}),\quad N_2({\alpha}_{b})=\frac{f'(\tilde{Q}_{b})-f'(Q)}{y^2}=\frac{\chi_{B_1}\alpha_b}{y^2}\int_0^1 f''(Q+\tau \chi_{B_1}\alpha_{b})d\tau.\]
Similar to \eqref{eq:N_1 bound}, we have the bound
\begin{equation}
  |\partial_y^k N_2|\lesssim \frac{b_1^2 |\log b_1|^C}{y^{k+1}},\quad 0\le k \le L,
\end{equation}
this yields the desired result since $L(\varepsilon)$ satisfies the pointwise bound
\begin{equation}\label{eq:L bound near infinity}
  \left\lvert\frac{\mathcal{A}^k L(\varepsilon)}{y^{L-k}} \right\rvert \lesssim \sum_{i=0}^k \frac{|\partial_y^i \varepsilon||\partial_y^{k-i} N_2|}{y^{L-k}} \lesssim b_1^2 |\log b_1|^C \sum_{i=0}^k \frac{|\partial_y^i\varepsilon|}{y^{L+1-i}}. \qedhere
\end{equation}
\end{proof}

\section{Proof of the main theorem}
\subsection{Proof of Proposition \ref{prop:bootstrap}} 
\begin{proof}
  \textbf{Step 1:} \emph{Control of the scaling law}. We have the bound
  \[-\frac{\lambda_s}{\lambda}=\frac{c_1}{s} + \frac{d_1}{s\log s} + O\left( \frac{1}{s(\log s)^{\beta}} \right).\]
  We rewrite as
\[\left\lvert \frac{d}{ds}\left(\log \left( s^{c_1}(\log s)^{d_1} \lambda(s) \right)\right)\right\rvert \lesssim \frac{1}{s (\log s)^{\beta}},\]
integration and \eqref{eq:lambda renormalized} 
give
\begin{equation}\label{eq:scaling law}
  \lambda(s)=\frac{s_0^{c_1}(\log s_0)^{d_1}}{s^{c_1}(\log s)^{d_1}} \left(1+O\left( \frac{1}{(\log s_0)^{\beta-1}} \right) \right).
\end{equation}
Note that
  \begin{equation}\label{eq:derivative scaling law}
    \frac{d}{ds}\left( \frac{ b_1^{2n} (\log b_1)^{2m} }{\lambda^{2k-2}}\right) = 2\frac{ b_1^{2n-1} (\log b_1)^{2m} }{\lambda^{2k-2}} \left[ (k-1)b_1^2 + b_{1s} \left( n + \frac{m}{\log b_1} \right) +O(b_1^{L+2}) \right] . 
  \end{equation}
From Lemma \ref{lem:mod eqn} with \eqref{eq:difference b_k}, \eqref{eq:exact b system} and \eqref{eq:bootstrap modes stables},
\begin{align*}
  (k-1)b_1^2 + b_{1s} \left( n + \frac{m}{\log b_1} \right) & =  (k-1)b_1^2 + \left( b_2 - c_{b_1,1} b_1^2 \right) \left( n + \frac{m}{\log b_1} \right) + O(b_1^{L+2} )\\
  &= (k-1)b_1^2 + n b_2 + \frac{2mb_2-nb_1^2}{2\log b_1} + O\left( \frac{b_1^2}{(\log b_1)^2} \right) \\
  &= \frac{(k-1)c_1^2 + n c_2}{s^2} + \frac{ 2(k-1)c_1d_1 - nd_2 - mc_2 + \frac{n}{2}c_1^2 }{s^2 \log s}\\
  &+ O\left( \frac{1}{s^2 (\log s)^{\beta}}\right). 
\end{align*}
The recurrence relations \eqref{eq:c recurrence} and \eqref{eq:d recurrence} imply
\begin{equation*}
  (k-1)c_1^2 + nc_2 = c_1\left( (k-1)\frac{\ell}{\ell-1} -n \right) 
\end{equation*}
and
\begin{equation*}
  2(k-1)c_1d_1 -nd_2+ \frac{n}{2}c_1^2 = d_1\left(2(k-1)c_1 +n \right) <0.
\end{equation*}
Hence, if we set $n=L+1$ and $m=-1$ for $k=L+1$, $c_1 \ge \frac{L}{L-1}$ implies
\begin{equation*}
  (k-1)b_1^2 + b_{1s} \left( n + \frac{m}{\log b_1} \right) \ge \frac{1}{s^2} \left(\frac{c_1}{L-1} + O\left( \frac{1}{\log s} \right) \right) > 0 
\end{equation*} 
and if we set $n=(k-1)c_1$ and large enough $m=m(k,L)$ for $k\le L$,
\begin{equation*}
  (k-1)b_1^2 + b_{1s} \left( n + \frac{m}{\log b_1} \right) \ge \frac{c_1}{s^2\log s} \left( \frac{m}{2} + O\left(\frac{1}{(\log s)^{\beta-1}} \right)\right) > 0
\end{equation*}
for all $s\in [s_0,s^*)$ with sufficiently large $s_0$. Thus, 
\begin{equation}\label{eq:initial scaling law control1}
  \frac{b_1^{2(L+1)}(0)}{(\log b_1(0))^2 \lambda^{2L}(0)} \le  \frac{b_1^{2(L+1)}(t)}{(\log b_1(t))^2 \lambda^{2L}(t)}
\end{equation}
and
\begin{equation}
  \frac{b_1^{2(k-1)c_1}(0)|\log b_1(0)|^{m}}{ \lambda^{2(k-1)}(0)} \le  \frac{b_1^{2(k-1)c_1}(t)|\log b_1(t)|^{m}}{ \lambda^{2(k-1)}(t)}.
\end{equation}
\textbf{Step 2:} \emph{Improved bound on} $\mathcal{E}_{L+1}$. 
  We integrate the Lyapunov monotonicity \eqref{eq:monotonicity} and inject the bootstrap bounds \eqref{eq:initial energies} and \eqref{eq:bootstrap epsilon},
\begin{align}
  \mathcal{E}_{L+1}(t) & \lesssim  \frac{\lambda^{2L}(t)}{\lambda^{2L}(0)}  (1+b_1C(M))\mathcal{E}_{L+1}(0) + b_1C(M) \mathcal{E}_{L+1}(t) \nonumber \\
  &  + \left[ \frac{K}{\sqrt{\log M}} +\sqrt{K} \right] \lambda^{2L}(t) \int_0^t \frac{b_1}{\lambda^{2L+1}} \frac{b_1^{2(L+1)}}{|\log b_1|^2} d\tau \nonumber \\
  &\lesssim  \frac{b_1^{2(L+1)}(t)}{|\log b_1(t)|^2} + \left[  \frac{K}{\sqrt{\log M}} +\sqrt{K} \right] \lambda^{2L}(t) \int_0^t \frac{b_1}{\lambda^{2L+1}} \frac{b_1^{2(L+1)}}{|\log b_1|^2}. \label{eq:energy integral}  
\end{align}
To deal with the integral in \eqref{eq:energy integral}, one can directly replace $\lambda$ and $b_1$ with functions of $s$ using \eqref{eq:scaling law} and \eqref{eq:difference b_k}. However, the fact that $s_0$ in \eqref{eq:scaling law} depends on the bootstrap constant $K$ requires (more) care in direct substitution. On behalf of this approach, we integrate by parts using \eqref{eq:derivative scaling law}, \eqref{eq:initial scaling law control1} and the fact $c_1 \ge L/(L-1)$,
\begin{align}
  \int_0^t \frac{b_1}{\lambda^{2L+1}} \frac{b_1^{2(L+1)}}{|\log b_1|^2}  & = - \int_0^t \frac{\lambda_t}{\lambda^{2L+1}} \frac{b_1^{2(L+1)}}{|\log b_1|^2} + \int_0^t O\left( b_1^{L+2}\right)\frac{b_1^{2(L+1)}}{\lambda^{2L+1}|\log b_1|^2}  \nonumber \\
  & = \frac{1}{2L} \left[\frac{b_1^{2(L+1)}(t)}{\lambda^{2L}(t)|\log b_1(t)|^2}-\frac{b_1^{2(L+1)}(0)}{\lambda^{2L}(0)|\log b_1(0)|^2} \right] \nonumber \\
  &-\frac{1}{2L}\int_0 ^t \frac{1}{\lambda^{2L}} \left(  \frac{b_1^{2(L+1)}}{|\log b_1|^2} \right)_t  +\int_0^t O\left( b_1^{L+2}\right)\frac{b_1^{2(L+1)}}{\lambda^{2L+1}|\log b_1|^2} \nonumber \\
  &\le \frac{b_1^{2(L+1)}(t)}{\lambda^{2L}(t)|\log b_1(t)|^2} + \int_0^t \frac{b_1}{\lambda^{2L+1}} \left( \frac{L^2-1}{L^2} + \frac{C}{|\log b_1|} \right) \frac{b_1^{2(L+1)}}{|\log b_1|^2},\nonumber
\end{align}
we obtain the bound
\begin{equation*}
  \int_0^t \frac{b_1}{\lambda^{2L+1}} \frac{b_1^{2(L+1)}}{|\log b_1|^2}  \lesssim  \frac{b_1^{2(L+1)}(t)}{\lambda^{2L}(t)|\log b_1(t)|^2}
\end{equation*}
and therefore,
\begin{equation}
  \mathcal{E}_{L+1}(t) \lesssim  \left[ 1+ \frac{K}{\sqrt{\log M}} +\sqrt{K} \right] \frac{b_1^{2(L+1)}(t)}{|\log b_1(t)|^2} \le \frac{K}{2} \frac{b_1^{2(L+1)}(t)}{|\log b_1(t)|^2} .
\end{equation}
  \textbf{Step 3:} \emph{Improved bound on} $\mathcal{E}_{k}$. We now claim the improved bound on the intermediate energies: for $2\le k \le L$,
  \begin{equation}\label{eq:improved intermediate energy}
    \mathcal{E}_{k} \le  b_1^{2(k-1)c_1} |\log b_1|^{C+K/2}.
  \end{equation}
  This follows from the monotonicity formula for $2\le k \le L$,
  \begin{align}\label{eq:intermediate energy mono}
    \frac{d}{dt}  \left\{  \frac{\mathcal{E}_{k}  }{\lambda^{2k-2}}   \right\} \le C \frac{b_1|\log b_1|^C}{\lambda^{2k-1}}  (\sqrt{\mathcal{E}_{k+1}}+b_1^{k}+b_1^{\delta(k)+(k-1)c_1})\sqrt{\mathcal{E}_k}
  \end{align}
  for some universal constants $C$, $\delta>0$ independent of the bootstrap constant $K$. \eqref{eq:improved intermediate energy} will be proved in Appendix \ref{sec:intermediate monotonicity}. We integrate the above monotonicity formula ($K/2$ comes from $\sqrt{\mathcal{E}_{k}}$),
  \begin{equation}\label{eq:int intermediate monotonicity}
    \mathcal{E}_k \lesssim b_1^{2(k-1)c_1}|\log b_1|^{C+K/2} + \lambda^{2k-2}(t) \int_0^t \frac{b_1^{1+ 2(k-1)c_1}}{\lambda^{2k-1}} |\log b_1|^{C+K/2}
  \end{equation}
 In this case, we directly substitute $\lambda$ and $b_1$ with functions of $s$ since the possible large coefficient can be absorbed by $|\log b_1|^C$. From \eqref{eq:scaling law}, \eqref{eq:exact b formula} and \eqref{eq:difference b_k},
\begin{align}
  \lambda^{2k-2}(t) \int_0^t \frac{b_1^{1+ 2(k-1)c_1}}{\lambda^{2k-1}} |\log b_1|^{C+K/2} d\tau & = \lambda^{2k-2}(s) \int_{s_0}^s  \frac{b_1^{1+ 2(k-1)c_1}}{\lambda^{2k-2}} |\log b_1|^{C+K/2} d \sigma \nonumber \\
  &\lesssim \frac{(\log s)^{C+K/2}}{s^{2(k-1)c_1}} \int_{s_0}^s \frac{1}{\sigma} d\sigma \nonumber \\
  &\lesssim b_1^{2(k-1)c_1}|\log b_1|^{C + K/2}.\label{eq:int rough}
\end{align}
However, these improved bounds \eqref{eq:improved intermediate energy} are inadequate to close the bootstrap bounds when $\ell=L$ \eqref{eq:bootstrap epsilon L} and when $\ell=L-1$  \eqref{eq:bootstrap epsilon L-1} due to the logarithm factor. In these cases, we employ alternative energies defined by
\begin{equation}
  \widehat{\mathcal{E}}_{\ell}:=\langle \hat{\varepsilon}_{\ell}, \hat{\varepsilon}_{\ell} \rangle + \langle \dot{\hat{\varepsilon}}_{\ell-1}, \dot{\hat{\varepsilon}}_{\ell-1} \rangle.
\end{equation}
We can easily check that
\[ \widehat{\mathcal{E}}_{\ell} =  {\mathcal{E}}_{\ell} + O(b_1^{2\ell} |\log b_1|^2)\]
Then we have the following monotonicity formulae 
\begin{align}
  \frac{d}{dt}  \left\{  \frac{\widehat{\mathcal{E}}_{\ell}}{\lambda^{2\ell-2}} + O\left(\frac{b_1^{2\ell}|\log b_1|^2}{\lambda^{2\ell-2}}\right)  \right\}  &\le  \frac{b_1^{\ell+1}|\log b_1|^{\delta}}{\lambda^{2\ell-1}} ( b_1^{\ell}|\log b_1|+\sqrt{\mathcal{E}_{\ell}}  ). \label{eq:l energy mono}
\end{align}
Integrating \eqref{eq:l energy mono}, the initial bounds \eqref{eq:initial energies} and the bootstrap bounds \eqref{eq:bootstrap epsilon L}, \eqref{eq:bootstrap epsilon L-1} imply
\begin{align*}
  \frac{\widehat{\mathcal{E}}_{\ell}(t)}{\lambda^{2(\ell-1)}(t)} &\lesssim \frac{b_1^{2\ell}|\log b_1|^2(t)}{\lambda^{2\ell-2}(t)} + \frac{\widehat{\mathcal{E}}_{\ell}(0) + b_1^{2\ell}(0)|\log b_1(0)|^2}{\lambda^{2(\ell-1)}(0)} \\
  & + \int_0^t  \frac{b_1^{\ell+1}|\log b_1|^{\delta}}{\lambda^{2\ell-1}} ( b_1^{\ell}|\log b_1|+\sqrt{\mathcal{E}_{\ell}}  ) d\tau  \\
  &\lesssim 1 +   \int_0^t  \frac{b_1^{\ell+1}|\log b_1|^{\delta'}}{\lambda^{\ell}}  d\tau \lesssim 1 + \int_{s_0}^s  \frac{1}{\sigma (\log \sigma)^{\frac{\ell}{\ell-1}-\delta'}}  d\sigma \lesssim \frac{K}{2}.
\end{align*}
The monotonicity formulae \eqref{eq:intermediate energy mono}, \eqref{eq:l energy mono} are proved in Appendix \ref{sec:intermediate monotonicity}.
\begin{remark}
  We remark that the exponent $1+2(k-1)c_1$ of $b_1$ in \eqref{eq:int intermediate monotonicity} can be replaced by $1+\delta+2(k-1)c_1$ for some small $\delta>0$ when $2\le k \le \ell-1$, so we can improve the bound \eqref{eq:int rough} to $b_1^{2(k-1)c_1 + \delta}|\log b_1|^C$. Hence for $2\le k \le \ell$, we get the uniform bounds
\begin{equation}\label{eq:scale invariance bound}
  \mathcal{E}_{k} \lesssim \lambda^{2k-2}.
\end{equation}
\end{remark}
  \textbf{Step 4:} \emph{Control of stable/unstable parameters}. We use the modified modulation parameters $\tilde{b}=({b}_1,\dots,{b}_{L-1},\tilde{b}_L)$ with $\tilde{b}_L$ given by \eqref{def:tilde b_L} and the corresponding fluctuation $\widetilde{V}=P_{\ell} \widetilde{U}$ where $\widetilde{U}=(\widetilde{U}_1,\dots,\widetilde{U}_{\ell})$ is defined by
  \[ \frac{\widetilde{U}_k}{s^k (\log s)^{\beta}} =  \tilde{b}_k-b_k^e ,\quad  1\le k \le \ell.\]
   We note that the existence of $V(s_0)$ in Proposition \ref{prop:bootstrap} is equivalent to the existence of $\widetilde{V}(s_0)$ from remark \ref{rem:improved b_L} and \eqref{eq:difference tilde b_L} in view of  
   \begin{equation}\label{eq:difference tilde V}
     |V-\widetilde{V}| \lesssim s^L|\log s|^{\beta}|b_L-\tilde{b}_L|\lesssim s^L|\log s|^{\beta} b_1^{L+1-C\delta} \lesssim \frac{1}{s^{1/2}}.
   \end{equation}
   Hence, we can replace $\widetilde{V}$ for all $V$ of the initial assumptions \eqref{eq:initial stable modes}, \eqref{eq:initial unstable modes} and bootstrap bounds \eqref{eq:bootstrap modes stables}, \eqref{eq:bootstrap modes unstables} in subsection \ref{subsec:initial and bootstrap}. In particular, we replace the assumption \eqref{eq:finiteness assumption of exit time} as
   \begin{equation}
    \tilde{s}^* <\infty \quad \textnormal{for all }(V_2(s_0),\dots,V_{\ell}(s_0))\in \mathcal{B}^{{\ell}-1} .\label{eq:modified finiteness assumption of exit time}
   \end{equation}
   where $\tilde{s}^*$ denotes the modified exit time to indicate that $V$ has been changed to $\widetilde{V}$. 
   
   We start by closing the bootstrap bounds for the stable parameters $b_L$ (for the case $\ell=L-1$) and $\widetilde{V}_1$, then we rule out the assumption of the unstable parameters $(\widetilde{V}_2(s),\dots,\widetilde{V}_{\ell}(s))$ via showing a contradiction by Brouwer's fixed point theorem. 

  (i) \emph{Stable parameter }$b_L$\emph{ when} $\ell=L-1$: Recall Lemma \ref{lem:improved b_L}, we have 
  \begin{equation}\label{eq:improve b_L control}
    |(\tilde{b}_L)_s + (L-1 + c_{b,L})b_1\tilde{b}_L| \lesssim \frac{\sqrt{\mathcal{E}_{L+1}}}{\sqrt{|\log b_1|}}.
  \end{equation}
Note that $c_1=(L-1)/(L-2)$ and $b_1\sim c_1/s+d_1/(s\log s)$. Then from \eqref{eq:bootstrap modes stables} and \eqref{eq:improve b_L control},
\begin{align*}
  \frac{d}{ds} \left( s^{(L-1)c_1}(\log s)^{\frac{3}{2}} \tilde{b}_L \right)  &= s^{(L-1)c_1-1}(\log s)^{\frac{3}{2}}\left((L-1)c_1 + \frac{3/2}{\log s} \right)\tilde{b}_L  \\
  &-s^{(L-1)c_1}(\log s)^{\frac{3}{2}} \left( (L-1 + c_{b,L})b_1 \tilde{b}_L + O\left( \frac{\sqrt{\mathcal{E}_{L+1}}}{\sqrt{|\log b_1|}} \right) \right) \\
  &=s^{(L-1)c_1-1}(\log s)^{\frac{3}{2}} O\left( \frac{1}{s^L (\log s)^{1+\beta}} + \frac{1}{s^L (\log s)^{3/2}} \right) \\
  &=O\left( s^{(L-1)c_1-L-1} \right).
\end{align*}
We integrate the above equation and estimate using the initial condition \eqref{eq:initial stable modes}
\[|b_L(s)| \lesssim b_1^{L+1-C\delta} + \frac{s_0^{(L-1)c_1 }(\log s_0)^{3/2}|\tilde{b}_L(s_0)|}{s^{(L-1)c_1 }(\log s)^{3/2}} + \frac{1+(s_0/s)^{(L-1)c_1-L}}{s^L (\log s)^{3/2}} \le \frac{1/2}{s^L(\log s)^{\beta}}\]
with the fact $(L-1)c_1 > L$. Here, we choose $\beta=5/4$.

To control the modes $\widetilde{V}$, we rewrite \eqref{eq:linearization of b} for our $\tilde{b}$ as follows:
\begin{align}\label{eq:derivative of tilde U}
  s(\widetilde{U})_s- A_{\ell}\widetilde{U} = O \left( \frac{1}{(\log s)^{3/2-\beta}}  \right) 
\end{align}
using \eqref{eq:mod linearization of b}, Lemma \ref{lem:mod eqn} and Lemma \ref{lem:improved b_L}. Here, the reduced exponent $3/2$ comes from \eqref{eq:improve b_L control}.
By the definition of $\widetilde{V}$, \eqref{eq:derivative of tilde U} is equivalent to
\begin{equation}\label{eq:derivative of tilde V}
  s(\widetilde{V})_s-D_{\ell} \widetilde{V} = O\left(  \frac{1}{(\log s)^{3/2-\beta}} \right)
\end{equation}
where $D_{\ell}$ is given by \eqref{eq:diagonalization}.

 (ii) \emph{Stable mode }$\widetilde{V}_1$: the first coordinate of \eqref{eq:derivative of tilde V} can be written as
\[s(\widetilde{V}_1)_s + \widetilde{V}_1=(s\widetilde{V}_1)_s =  O \left(\frac{1}{(\log s)^{3/2-\beta}}\right).\]
Hence, we improve the bound for $\widetilde{V}_1(s)$ from the initial assumption \eqref{eq:initial stable modes}:
\begin{equation*}
  |\widetilde{V}_1(s)| \lesssim \frac{s_0}{s}|\widetilde{V}_1(s_0)| + \frac{C}{s}\int_{s_0}^s \frac{d\tau}{(\log \tau)^{3/2-\beta}} \le \frac{1}{2}.
\end{equation*}
(iii) \emph{Unstable mode }$\widetilde{V}_k$, $2\le k \le \ell$: Our goal is to construct a continuous map $f:\mathcal{B}^{\ell-1} \to \mathcal{S}^{\ell-1}$ as 
\begin{align*}
  f(\widetilde{V}_2(s_0),\dots,\widetilde{V}_{\ell}(s_0)) = (\widetilde{V}_2(\tilde{s}^*),\dots,\widetilde{V}_{\ell}(\tilde{s}^*)).
\end{align*}
The assumption \eqref{eq:modified finiteness assumption of exit time} yields that $f$ can be well-defined on $\mathcal{B}^{\ell-1}$ and the improved bootstrap bounds give the exit condition $(\widetilde{V}_2(\tilde{s}^*),\dots,\widetilde{V}_{\ell}(\tilde{s}^*)) \in \mathcal{S}^{\ell-1}$. 

We obtain the outgoing behavior of the flow map $s\mapsto (\widetilde{V}_2,\dots,\widetilde{V}_{\ell})$ from \eqref{eq:derivative of tilde V}: for all time $s \in [s_0,\tilde{s}^*]$ such that $\sum_{i=2}^{\ell} \widetilde{V}_i^2 \ge 1/2$,
\begin{align}\label{eq:outgoing}
  \frac{d}{ds}\left(\sum_{i=2}^{\ell} \widetilde{V}_i^2 \right) &= 2 \sum_{i=2}^{\ell} (\widetilde{V}_i)_s  \widetilde{V}_i  = \frac{2}{s} \sum_{i=2}^{\ell} \left[ \frac{i}{\ell-1} \widetilde{V}_i^2 + O \left(  \frac{1}{(\log s)^{3/2-\beta}} \right) \right]   >0.
\end{align}
We note that \eqref{eq:outgoing} implies two key results. First, \eqref{eq:outgoing} allows us to prove the continuity of $f$ by showing the continuity of the map  $ (\widetilde{V}_2(s_0),\dots,\widetilde{V}_{\ell}(s_0)) \mapsto \tilde{s}^*$ with some standard arguments (see Lemma 6 in \cite{CoteMartelMerle2011RMI}).

Second, if we choose $s=s_0$ and $(\widetilde{V}_2(s_0),\dots,\widetilde{V}_{\ell}(s_0)) \in \mathcal{S}^{\ell-1}$, $\sum_{i=2}^{\ell} \widetilde{V}_i^2(s) >1$ for any $s>s_0$, so $\tilde{s}^*=s_0$. Hence, $f$ is an identity map on $\mathcal{S}^{\ell-1}$ itself, which contradicts to Brouwer's fixed point theorem. 
\end{proof}
\subsection{Proof of Theorem 1.1} Recall that there exists $c(u_0,\dot{u}_0)>0$ such that
\[\lambda(s)= \frac{c(u_0,\dot{u}_0)}{s^{c_1}(\log s)^{d_1}}\left[ 1+O\left( \frac{1}{(\log s_0)^{\beta-1}} \right)\right].\]
Using $T-t= \int_s^{\infty} \lambda(s) ds <\infty $, we have $T<\infty$ and
\begin{align*}
  (T-t)^{\ell-1}&=c'(u_0,\dot{u}_0) s^{-1} (\log s)^{\frac{\ell}{(\ell-1)}}\left[ 1+o_{t\to T}\left( 1\right)\right] \\
  &=c''(u_0,\dot{u}_0)\lambda(s)^{\frac{\ell-1}{\ell}}( \log s) \left[ 1+o_{t\to T}\left( 1\right)\right].
\end{align*}
Therefore, we obtain
\[\lambda(t)=c'''(u_0,\dot{u}_0)\frac{(T-t)^{\ell}}{|\log (T-t)|^{\ell/(\ell-1)}}[1+o_{t\to T}(1)].\]
The strong convergence \eqref{eq:strong convergence} follows as in \cite{RaphaelRodnianski2012IHES}. 
\begin{appendix}
    \section{Coercive properties}
    We recall that $\boldsymbol{\Phi}_M=(\Phi_M,0)^t$, the orthogonality conditions \eqref{eq:orthogonal conditions} are equivalent to
    \begin{equation}
      \langle \varepsilon, H^{i} \Phi_M \rangle =  \langle \dot{\varepsilon}, H^{i} \Phi_M \rangle = 0, \quad 0\le i \le \frac{L-1}{2}.
    \end{equation}
    In this section, we claim that the above equivalent orthogonality conditions yield the coercive property of the higher-order energy $\mathcal{E}_{k+1}$
    \begin{equation}
      \mathcal{E}_{k+1} = \langle \varepsilon_{k+1}, \varepsilon_{k+1} \rangle + \langle \dot{\varepsilon}_{k}, \dot{\varepsilon}_{k} \rangle,\quad 1\le k \le L.
    \end{equation}
    Our desired result is deduced from the coercivity of $ \{{\lVert v_m \rVert}_{L^2}^2 \}_{m=1}^{L+1}$
    under the following orthogonality conditions
    \begin{equation}\label{eq:orthogonality v}
      \langle v, H^i \Phi_M\rangle =0 ,\quad 0\le i \le \left\lfloor \frac{m-1}{2} \right\rfloor .
    \end{equation}
    First, we restate Lemma B.5 of \cite{RaphaelSchweyer2014Anal.PDE}, which established the coercivity of ${\lVert v_m \rVert}_{L^2}^2$ when $m$ is even.
    \begin{lemma}[coercivity of ${\lVert v_{2k+2} \rVert}_{L^2}^2$]  Let $0\le k \le \frac{L-1}{2}$ and $M=M(L)>0$ be a large constant. Then there exists $C(M)>0$ such that the following holds true. For all radially symmetric $v$ with (denote $v_{-1}=0$)
      \begin{align}
        &\int |v_{2k+2}|^2 + \int \frac{|v_{2k+1}|^2}{y^2(1+y^2)} \nonumber \\
        &+ \sum_{i=0}^{k} \int  \frac{|v_{2i-1}|^2}{y^6(1+|\log y|^2)(1+y^{4(k-i)})} + \frac{|v_{2i}|^2}{y^4(1+|\log y|^2)(1+y^{4(k-i)})}  < \infty \label{smooth assume 1}
      \end{align} 
      and \eqref{eq:orthogonality v} for $m=2k+2$, we have 
      \begin{align}
        &\int |v_{2k+2}|^2 \ge C(M) \bigg\{ \int \frac{|v_{2k+1}|^2}{y^2(1+|\log y|^2)} \nonumber \\
        & + \sum_{i=0}^{k} \int \left[  \frac{|v_{2i-1}|^2}{y^6(1+|\log y|^2)(1+y^{4(k-i)})} + \frac{|v_{2i}|^2}{y^4(1+|\log y|^2)(1+y^{4(k-i)})} \right] \bigg\}.
      \end{align}
    \end{lemma}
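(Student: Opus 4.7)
The plan is to prove this coercivity lemma by descending induction on the adapted derivative level, iteratively applying the sub-coercivity of $H$ and $\widetilde{H}$, and invoking the orthogonality conditions $\langle v, H^i \Phi_M \rangle = 0$ for $0 \le i \le k$ only at the steps where the kernel of $H$ creates an obstruction. The starting identity is
\[\int |v_{2k+2}|^2 \;=\; \int |A^* v_{2k+1}|^2 \;=\; \langle \widetilde{H}\, v_{2k+1},\, v_{2k+1}\rangle,\]
which follows from $\widetilde{H} = A A^*$ and the integrability assumption \eqref{smooth assume 1} that justifies integration by parts at every level. This reduces matters to a chain of one-step sub-coercivity estimates for the two Schr\"odinger operators.

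The first key ingredient is the sub-coercivity of $\widetilde{H}$: since the potential $\widetilde{V} = 4/(y^2+1)$ is positive and repulsive (\eqref{eq:repulsive property}) and $\widetilde{H}$ admits no resonance in the relevant class, a standard two-dimensional Hardy inequality yields
\[\langle \widetilde{H} u, u\rangle \;\gtrsim\; \int \frac{|u|^2}{y^2(1+|\log y|^2)}\]
with no orthogonality required. Iterating this at odd levels gives the weighted control of $v_{2k+1}$ by $\int |v_{2k+2}|^2$. The second key ingredient is the sub-coercivity of $H$, whose kernel is spanned by $\Lambda Q$ and $\Gamma$; here only $\Lambda Q$ is relevant thanks to \eqref{smooth assume 1}, and one has the conditional bound
\[\langle H u, u\rangle \;\gtrsim\; \int \frac{|u|^2}{y^4(1+|\log y|^2)} \;-\; C(M)\bigl|\langle u, \chi_M \Lambda Q\rangle\bigr|^2.\]
I would then plug $u = v_{2i}$ into this and write $\int |v_{2i+1}|^2 = \langle H v_{2i}, v_{2i}\rangle$, which passes the bound one level deeper and introduces scalar products with $\chi_M \Lambda Q$ that must be absorbed.

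The main obstacle is precisely handling those scalar products $\langle v_{2i}, \chi_M \Lambda Q\rangle$. The plan is to unfold them by successive integration by parts: using $A^*(\chi_M \Lambda Q) = (A^* \chi_M)\Lambda Q + \chi_M A^* \Lambda Q$ and the analogous expansion for $A$, and pushing the derivatives back onto $v_{2i}$, one rewrites each such scalar product as a linear combination of $\langle v, H^j \Phi_M\rangle$ for $j \le i$, up to error terms that are localized (supported in $y \lesssim M$) and can be absorbed on the right-hand side for $M$ sufficiently large. The orthogonality conditions \eqref{eq:orthogonality v} then annihilate the main linear combination. This is exactly the role for which $\Phi_M$ was designed in \eqref{def:PhiM}, where the coefficients $c_{p,M}$ were chosen so that $\langle \Phi_M, H^i T_j\rangle = (-1)^j\langle\chi_M \Lambda Q, \Lambda Q\rangle \delta_{ij}$. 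Assembling the $k+1$ descending steps, estimating the localized errors by absorption with a small constant $\varepsilon = \varepsilon(M)$, and collecting the resulting weighted norms at each intermediate level yields the claimed bound. A final Hardy-type step descends from the weighted $L^2$ control of $v_{2i}$ to that of $v_{2i-1}$ via $\int |v_{2i}|^2/y^4 \gtrsim \int |v_{2i-1}|^2/(y^6(1+|\log y|^2))$ after accounting for the cancellation $A(y) = O(y^2)$ near the origin, which is what produces the polynomial decay factors $(1+y^{4(k-i)})^{-1}$.
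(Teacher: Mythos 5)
The paper does not give its own proof of this lemma: it is stated as a direct restatement of Lemma~B.5 of Rapha\"el--Schweyer, and the appendix only proves the analogous odd-index case. That said, your proposal can be judged against the argument the paper does carry out for the odd case (and against the RS argument it defers to for the even case), and there the structure is an induction on $k$: a base step (coercivity of $H^2$) followed by repeated application of a precise \emph{weighted} coercivity bound for $H$, namely the statement that $\int \frac{|Hv|^2}{y^4(1+|\log y|^2)(1+y^{4k-2})}$ controls both $\int \frac{|v|^2}{y^4(1+|\log y|^2)(1+y^{4k+2})}$ and $\int \frac{|Av|^2}{y^6(1+|\log y|^2)(1+y^{4k-2})}$ under one orthogonality condition. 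That weighted lemma (Lemma~B.4 of RS, reproduced in this paper's appendix) is the real engine; it is proved by expanding the quadratic form, establishing a subcoercivity estimate with localized error terms, and then absorbing those errors via the orthogonality with $\chi_M\Lambda Q$.

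Your proposal has the right ingredients (alternating sub-coercivity of $H$ and $\widetilde H$, the triviality of $\mathrm{Ker}(A^*)$ in the relevant class, the kernel $\Lambda Q$ of $A$ handled by orthogonality against $\Phi_M$), but there is a genuine gap in the iteration scheme. After the single unweighted step $\int|v_{2k+2}|^2=\langle\widetilde H v_{2k+1},v_{2k+1}\rangle\gtrsim\int\frac{|v_{2k+1}|^2}{y^2(1+|\log y|^2)}$, what you hold is already a \emph{weighted} $L^2$ norm, so your next step cannot be the unweighted bound $\langle Hu,u\rangle\gtrsim\int\frac{|u|^2}{y^4(1+|\log y|^2)}-C|\langle u,\chi_M\Lambda Q\rangle|^2$ --- what is actually needed is a weighted Hardy-type estimate for $A$ (and then for $A^*$), with weights that degrade polynomially as the induction proceeds. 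This is exactly what the weighted coercivity lemma supplies, and it is not a triviality: its proof requires a subcoercivity expansion and an absorption argument. As written, your chain of estimates does not close, and it also does not explain how the polynomial factors $(1+y^{4(k-i)})^{-1}$ appear; those accumulate from the $y^{-2}$ loss at each weighted step, not from the near-origin cancellation $A(y)=O(y^2)$ (which controls parity and regularity near $y=0$, not the decay at infinity). Finally, a small but real algebraic error: $A^*(\chi_M\Lambda Q)\neq (A^*\chi_M)\Lambda Q+\chi_M A^*\Lambda Q$, since $A^*$ carries a zeroth-order term that does not split over products; the correct Leibniz rule is $A^*(\chi_M\Lambda Q)=(\partial_y\chi_M)\Lambda Q+\chi_M A^*\Lambda Q$. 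To turn your sketch into a proof you would essentially need to state and prove the weighted coercivity lemma, at which point you would be reproducing the RS induction.
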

    We additionally prove the coercivity of ${\lVert v_m \rVert}_{L^2}^2$ when $m$ is odd, which is an unnecessary step in \cite{RaphaelSchweyer2014Anal.PDE}.
  \begin{lemma}[coercivity of ${\lVert v_{2k+1} \rVert}_{L^2}^2$]\label{lem:odd coercivity} Let $1\le k \le \frac{L-1}{2}$ and $M=M(L)>0$ be a large constant. Then there exists $C(M)>0$ such that the following holds true. For all radially symmetric $v$ with (denote $v_{-1}=0$)
    \begin{align}
      &\int |v_{2k+1}|^2 + \int \frac{|v_{2k}|^2}{y^2} + \int   \frac{|v_{2k-1}|^2}{y^4(1+|\log y|^2)} \nonumber\\
      &+ \sum_{i=0}^{k-1} \int  \frac{|v_{2i-1}|^2}{y^6(1+|\log y|^2)(1+y^{4(k-i)-2})} + \frac{|v_{2i}|^2}{y^4(1+|\log y|^2)(1+y^{4(k-i)-2})}  < \infty \label{smooth assume 2}
    \end{align} 
    and \eqref{eq:orthogonality v} for $m=2k+1$, we have
    \begin{align}
      &\int |v_{2k+1}|^2 \ge C(M) \bigg\{ \int \frac{|v_{2k}|^2}{y^2} +  \frac{|v_{2k-1}|^2}{y^4(1+|\log y|^2)} \nonumber \\
      & + \sum_{i=0}^{k-1} \int \left[  \frac{|v_{2i-1}|^2}{y^6(1+|\log y|^2)(1+y^{4(k-i)-2})} + \frac{|v_{2i}|^2}{y^4(1+|\log y|^2)(1+y^{4(k-i)-2})} \right] \bigg\}.
    \end{align}
  \end{lemma}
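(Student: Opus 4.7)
The strategy parallels the even-index case (\cite[Lemma B.5]{RaphaelSchweyer2014Anal.PDE}), with the key modification that the opening step invokes the Hamiltonian $H$ rather than its super-symmetric conjugate $\widetilde H$. Since $v_{2k+1} = A v_{2k}$ by \eqref{def:adaptedderivative}, I would begin with the identity
\[ \int |v_{2k+1}|^2 \;=\; \int |Av_{2k}|^2 \;=\; \langle H v_{2k}, v_{2k}\rangle. \]
Using that $v_{2j} = H^j v$ and self-adjointness of $H$, the orthogonality conditions \eqref{eq:orthogonality v} propagate to $\langle v_{2j},\Phi_M\rangle = \langle v, H^j \Phi_M\rangle = 0$ for every $0\le j\le k$. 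In particular $v_{2k}\perp \Phi_M$, which cuts out the only element $\Lambda Q$ of the formal kernel of $H$ that $\Phi_M$ approximates (cf. \eqref{eq:PhiM properties}).

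Next I would invoke a sub-coercivity estimate for $H$ in the spirit of those used in \cite{RaphaelRodnianski2012IHES} and \cite{RaphaelSchweyer2014Anal.PDE}: for radial $w$ with $\langle w,\Phi_M\rangle = 0$ and sufficient decay,
\[ \langle Hw, w\rangle \;\ge\; C(M)\left(\int |\partial_y w|^2 + \int \frac{|w|^2}{y^2}\right). \]
The point is that $V\to 1$ at both zero and infinity and $V-1$ is a compactly-localised perturbation, so the term $\int V|w|^2/y^2$ splits cleanly into a positive principal part $\int |w|^2/y^2$ plus a lower-order correction absorbed by the kinetic energy; the orthogonality handles the region where $V$ is negative through a standard compactness argument. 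Specializing to $w = v_{2k}$ yields
\[ \int |v_{2k+1}|^2 \;\ge\; C(M) \int \frac{|v_{2k}|^2}{y^2}, \]
the top-weight term, and crucially it carries no logarithmic factor because it comes from $V/y^2$ rather than from Hardy.

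The remaining lower-index weighted norms are then controlled by an iterative bootstrap structurally identical to \cite[Lemma B.5]{RaphaelSchweyer2014Anal.PDE}. Each odd-to-even step $v_{2j+1} = Av_{2j}$ contributes a clean $1/y^2$ gain coming from $V/y^2 \to 1/y^2$ at infinity (combined with the orthogonality $\langle v_{2j},\Phi_M\rangle=0$ to suppress the resonance), while each even-to-odd step $v_{2j} = A^* v_{2j-1}$ produces the extra weight $(1+|\log y|^2)^{-1}$ via the sharp two-dimensional Hardy inequality applied to $A^*$. Iterating these two complementary gains reproduces exactly the hierarchy of weights $y^{-2}$ on $v_{2k}$, $y^{-4}(1+|\log y|^2)^{-1}$ on $v_{2k-1}$, $y^{-6}(1+|\log y|^2)^{-1}$ on $v_{2k-2}$, and so on. The main obstacle I anticipate is the delicate handling of the negative middle region of $V$ (around $y\sim 1$), where the single orthogonality to $\Phi_M$ must be upgraded to a quantitative $C(M)$ bound via a contradiction-compactness argument; this step is the direct analogue of the corresponding sub-coercivity bound in \cite{RaphaelSchweyer2014Anal.PDE} and should carry no substantially new difficulty.
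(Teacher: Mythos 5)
Your proposal is correct and follows essentially the same route as the paper's: open with $\int|v_{2k+1}|^2=\langle Hv_{2k},v_{2k}\rangle$, apply the coercivity of $H$ (Lemma~B.1 of \cite{RaphaelSchweyer2014Anal.PDE}) to the orthogonal function $v_{2k}$ to obtain the sharp $\int|v_{2k}|^2/y^2$ bound with no logarithmic loss, and then iterate downward via the weighted coercivity bounds of \cite[Lemma~B.4]{RaphaelSchweyer2014Anal.PDE}. The paper carries out the iteration as an explicit induction (Lemma~\ref{lem:3 coercivity} is the $k=1$ base case, the subsequent weighted coercivity lemma is the inductive step), and one small warning on your weight bookkeeping: near $y=0$ the hierarchy saturates at $y^{-4}(1+|\log y|^2)^{-1}$ for even indices and $y^{-6}(1+|\log y|^2)^{-1}$ for odd indices, so not every odd-to-even step delivers a ``clean $1/y^2$'' gain as you state, though this is already built into the weighted-coercivity lemma and does not alter the structure of the argument.
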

  \begin{remark}
    The case $k=0$ is nothing but the coercivity of $H$, described in Lemma B.1 of \cite{RaphaelSchweyer2014Anal.PDE}.
  \end{remark}
  Based on the induction on $k$ introduced in the proof of Lemma B.5 of \cite{RaphaelSchweyer2014Anal.PDE}, Lemma \ref{lem:odd coercivity} can be deduced from the following two lemmas, corresponding to the cases $k=1$ and $k \to k+1$. 
  \begin{lemma}[coercivity of ${\lVert v_{3} \rVert}_{L^2}^2$]\label{lem:3 coercivity} Let $M=M(L)>0$ be a large constant. Then there exists $C(M)>0$ such that the following holds true. For all radially symmetric $v$ with (denote $v_{-1}=0$)
    \begin{align}
      &\int |v_{3}|^2 + \int \frac{|v_{2}|^2}{y^2} + \int   \frac{|v_{1}|^2}{y^4(1+|\log y|^2)} +  \int   \frac{|v|^2}{y^4(1+|\log y|^2)(1+y^{2})}  < \infty  \nonumber
    \end{align} 
    and \eqref{eq:orthogonality v} for $m=3$, we have
    \begin{align}\label{eq:3 coercivity}
      \int |v_{3}|^2 \ge C(M) \bigg\{ \int \frac{|v_{2}|^2}{y^2} +  \frac{|v_{1}|^2}{y^4(1+|\log y|^2)}  +  \int  \frac{|v|^2}{y^4(1+|\log y|^2)(1+y^{2})} \bigg\}.
    \end{align}
  \end{lemma}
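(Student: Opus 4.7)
The plan is to reduce Lemma \ref{lem:3 coercivity} to the already-known coercivity of $H$ (the case $k=0$ of Lemma \ref{lem:odd coercivity}, i.e.\ Lemma~B.1 of \cite{RaphaelSchweyer2014Anal.PDE}) and then to upgrade the resulting sub-coercive estimate by a compactness argument.

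The first step is to transfer the orthogonality conditions from $v$ to $v_2 = Hv$: the finiteness assumption justifies integration by parts, and the self-adjointness of $H = A^*A$ gives
\[
\langle v_2, \Phi_M \rangle = \langle H v, \Phi_M \rangle = \langle v, H \Phi_M \rangle = 0.
\]
Since $v_2$ now satisfies the single orthogonality condition required by the case $k = 0$, applying that case to $v_2$ yields directly
\[
\int |v_3|^2 = \int |A v_2|^2 \ge C(M)\int \frac{|v_2|^2}{y^2},
\]
which accounts for the first term on the right-hand side of \eqref{eq:3 coercivity}. Analogously, running the $k=0$ estimate on $v$ itself (using $\langle v, \Phi_M \rangle = 0$) gives the sub-coercive bound $\int |v_1|^2 \ge C(M) \int |v|^2/y^2$, which already controls the desired weighted norms of $v_1$ and $v$ outside any fixed neighbourhood of the origin.

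To upgrade these sub-coercive bounds to the sharp weighted norms appearing in \eqref{eq:3 coercivity}, I would argue by contradiction. Suppose the inequality fails along a sequence $(v^{(n)})$ satisfying the orthogonality \eqref{eq:orthogonality v}$|_{m=3}$ and the finiteness assumption, normalized so that the right-hand side of \eqref{eq:3 coercivity} equals $1$ while $\int |v_3^{(n)}|^2 \to 0$. Extracting a weak limit $v^\infty$ in the Hilbert space defined by \eqref{smooth assume 2}, weak lower semicontinuity forces $v_3^\infty = A H v^\infty = 0$, hence $Hv^\infty \in \ker A = \mathrm{Span}(\Lambda Q)$ and $v^\infty = \alpha \Lambda Q + \beta \Gamma + \gamma T_2$ with $T_2 = -H^{-1}\Lambda Q$. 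The divergence of $\int |\Gamma|^2/[y^4(1+|\log y|^2)(1+y^2)]\,y\,dy$ near the origin (where $\Gamma \sim 1/y$) forces $\beta = 0$, while the two transferred orthogonality conditions $\langle v^\infty, \Phi_M \rangle = \langle v^\infty, H\Phi_M\rangle = 0$ form a $2 \times 2$ linear system for $(\alpha,\gamma)$ whose determinant has the same shape as in \eqref{eq:PhiM orthogonality} and is of order $\log^2 M \neq 0$; hence $v^\infty \equiv 0$. Combined with Rellich-type strong compactness on $\{y \le R\}$ and a tail control at $y \ge R$ coming from the $k=0$ sub-coercivity of step one, this contradicts the normalization of the right-hand side.

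The main obstacle will be the strong convergence of the right-hand side along the extracted subsequence: the weighted norms in \eqref{smooth assume 2} allow non-$L^2$ behavior both at the origin and at infinity, so uniform control of the tails is required before Rellich can be deployed. In particular, showing that the weighted norms of $v^{(n)}$ concentrate neither at $y=0$ (where $v^{(n)}$ may a priori behave like $T_2$, i.e.\ $\sim y^3$) nor at $y=\infty$ (where $\Lambda Q$ contributes a long tail), and that no exotic homogeneous direction beyond $\mathrm{Span}(\Lambda Q, \Gamma, T_2)$ survives the passage to the limit, is the step requiring the most care.
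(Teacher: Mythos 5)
Your first step matches the paper's: transfer the orthogonality, apply the $k=0$ coercivity to $v_2$, and obtain $\int|v_3|^2 \ge C(M)\int|v_2|^2/y^2$. After that the proposal diverges from the paper and, as written, has a genuine gap.

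There are two problems. First, the intermediate claim that applying the $k=0$ estimate to $v$ gives "$\int |v_1|^2 \ge C(M)\int|v|^2/y^2$, which already controls the desired weighted norms of $v_1$ and $v$ outside any fixed neighbourhood of the origin" does not help: you need to bound the weighted norms of $v$ and $v_1$ \emph{from above} by $\int |v_3|^2$, and $\int |v_3|^2$ does not control $\int|v_1|^2$ directly. The correct intermediate quantity is $\int |v_2|^2/y^2$, which is what you actually obtained in step one, and the missing inequality is precisely the paper's weighted bound \eqref{eq:weighted coercive bound 0} for $H$, i.e.\ $\int \frac{|Hv|^2}{y^2(1+|\log y|^2)} \ge C(M)\big\{\int\frac{|v|^2}{y^4(1+|\log y|^2)(1+y^2)} + \int\frac{|Av|^2}{y^4(1+|\log y|^2)}\big\}$.

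Second, and more importantly, your compactness argument as formulated cannot close because nothing prevents the right-hand-side mass from concentrating at $y=0$ or at $y=\infty$. You recognize this is the crux, but the remedy is not additional compactness considerations: it is a \emph{subcoercivity estimate} of the form \eqref{eq:subcoercivity estimate 0}, in which $\int \frac{|Hv|^2}{y^2(1+|\log y|^2)}$ bounds the full weighted norms of $v$, $\partial_y v$, and $\partial_y^2 v$ \emph{modulo a remainder that is genuinely lower-order and supported on a fixed compact set}. That remainder is the only place where compactness enters, and it is the reason concentration at the endpoints is impossible. Proving this subcoercivity estimate requires explicit integration by parts, the sharp logarithmic Hardy inequality, and using the asymptotics $V(y)=1+O(y^{-2})$ of the potential for large $y$ to produce the extra positive term in \eqref{eq:additional V}; none of these are present in your outline. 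Without the subcoercivity estimate, the contradiction argument — normalize, extract weak limit, show it is zero, derive contradiction — does not rule out a vanishing weak limit with nontrivial asymptotic mass at $y=0$ or $y=\infty$, so the proof does not close.
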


\begin{proof}
  From the coercivity of $H$, we have
  \begin{align}
    \int |v_{3}|^2 = \langle H v_2, v_2 \rangle \ge C(M) \int 
    \frac{|v_2|^2}{y^2}.
  \end{align}
  To prove the rest part of \eqref{eq:3 coercivity}, we claim the following weighted coercive bound
  \begin{equation}\label{eq:weighted coercive bound 0}
    \int \frac{|Hv|^2}{y^2(1+|\log y|^2)} 
      \ge C(M) \left\{ \int \frac{|v|^2}{y^4(1+|\log y|^2)(1+y^{2})} + \frac{|Av|^2}{y^4(1+|\log y|^2)} \right\}.
  \end{equation}
  By proving Lemma B.4 in \cite{RaphaelSchweyer2014Anal.PDE}, it is sufficient for \eqref{eq:weighted coercive bound 0} to prove only the following subcoercivity estimate:
  \begin{align}
    \int \frac{|Hv|^2}{y^2(1+|\log y|^2)} &\gtrsim \int \frac{|\partial_y^2 v|^2}{y^2(1+|\log y|^2)} + \int \frac{|\partial_y v|^2}{y^2(1+|\log y|^2)(1+y^2)} \nonumber  \\
    & +  \int \frac{|v|^2}{y^4(1+|\log y|^2)(1+y^{2})} - C \left[ \int \frac{|\partial_y v|^2}{1+y^{6}} +  \int \frac{|v|^2}{1+y^{8}}\right]. \label{eq:subcoercivity estimate 0}
  \end{align}
  Unlike the region $y \le 1$, which can be directly proved by borrowing the proof of Lemma B.4 in \cite{RaphaelSchweyer2014Anal.PDE}, we remark that \eqref{eq:subcoercivity estimate 0} required some cautious estimates in the region $y\ge 1$: we have
  \begin{align}
    \int_{y\ge 1} \frac{|Hv|^2}{y^2(1+|\log y|^2)}  & \ge \int_{y\ge 1} \frac{|\partial_y (y \partial_y v)|^2}{y^4(1+|\log y|^2)} - \int_{y\ge 1} |v|^2 \Delta \left( \frac{V}{y^4 (1+|\log y|^2)} \right)\nonumber \\
    &+ \int_{y\ge 1} \frac{V^2 |v|^2}{ y^{6} (1+|\log y|^2)} - C \int_{1\le y \le 2} [|\partial_y v|^2 + |v|^2]\label{eq:additional V}
  \end{align}
  where $V(y)=1-8y^2/(1+y^2)^2$ is the potential part of $H$. Using the sharp logarithmic Hardy inequality, employed in the proof of Lemma B.4 of \cite{RaphaelSchweyer2014Anal.PDE}, we obtain
  \begin{align*}
    \int_{y\ge 1} \frac{|\partial_y (y \partial_y v)|^2}{y^{4}(1+|\log y|^2)} - \int_{y\ge 1} |v|^2 \Delta \left( \frac{1}{y^{4} (1+|\log y|^2)} \right) \ge - C \int_{1\le y \le 2} [|\partial_y v|^2 + |v|^2].
  \end{align*}
  Now we employ the additional positive term in \eqref{eq:additional V} with the asymptotics of the potential $V(y)=1+O(y^{-2})$ for $y\ge 1$,
  \[\int_{y\ge 1} \frac{V^2 |v|^2}{y^{6}(1+|\log y|^2)} \ge 1^- \int_{y\ge 1} \frac{|v|^2}{y^{6}(1+|\log y|^2)} - C \int \frac{|v|^2}{1+y^{8}}. \qedhere\] 
\end{proof}

\begin{lemma}[weighted coercivity bound]
  For $k\ge 1$ and radially symmetric $v$ with 
  \begin{equation}
    \int \frac{|v|^2}{y^4(1+|\log y|^2)(1+y^{4k+2})} + \frac{|Av|^2}{y^6(1+|\log y|^2)(1+y^{4k-2})}  < \infty \label{smooth assume 3}
  \end{equation}
  and
  \[\langle v, \Phi_M \rangle=0,\]
  we have
  \begin{align}\label{eq:weighted coercive bound}
    &\int \frac{|Hv|^2}{y^4(1+|\log y|^2)(1+y^{4k-2})} \nonumber \\
    &\ge C(M) \left\{ \int \frac{|v|^2}{y^4(1+|\log y|^2)(1+y^{4k+2})} + \frac{|Av|^2}{y^6(1+|\log y|^2)(1+y^{4k-2})} \right\}.
  \end{align}

\end{lemma}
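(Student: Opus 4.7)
The plan is to mirror the two-step scheme used to prove Lemma B.4 of \cite{RaphaelSchweyer2014Anal.PDE}: first establish a \emph{subcoercivity} estimate valid for all $v$ satisfying \eqref{smooth assume 3}, and then use a soft compactness/contradiction argument exploiting the single orthogonality $\langle v,\Phi_M\rangle=0$ to absorb the error term.

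Denote the weight $\rho_k(y):=\left[y^4(1+|\log y|^2)(1+y^{4k-2})\right]^{-1}$. In the first step I would write $H=A^*A$ with $A^*=\partial_y+(1+Z)/y$, set $w:=Av$, and expand
\begin{equation*}
\int |A^*w|^2\rho_k\,y\,dy = \int |\partial_y w|^2\rho_k\,y\,dy + \int \frac{(1+Z)^2}{y^2}|w|^2\rho_k\,y\,dy + \int \frac{1+Z}{y}\,\rho_k\,\partial_y(w^2)\,y\,dy.
\end{equation*}
Integrating by parts in the cross term and using $Z=\sin Q$ with the asymptotics $Z=1+O(y^2)$ at $0$ and $Z=-1+O(y^{-2})$ at $\infty$, combined with the logarithmic Hardy inequality (as in \cite{RaphaelSchweyer2014Anal.PDE}, Appendix~B) applied with the weight $\rho_k$, I expect to obtain
\begin{equation*}
\int |Hv|^2\rho_k \;\gtrsim\; \int \frac{|Av|^2}{y^6(1+|\log y|^2)(1+y^{4k-2})} + \int \frac{|v|^2}{y^4(1+|\log y|^2)(1+y^{4k+2})} - C\!\int_{1\le y\le 2}\!\!(|v|^2+|\partial_y v|^2),
\end{equation*}
by applying the same argument (now with an extra power weight $1+y^{4k-2}$) a second time to go from $w=Av$ back to $v$, and using the potential $V=1-8y^2/(1+y^2)^2\to 1$ at both ends to absorb the outstanding $V^2/y^6$ contributions into $\int |v|^2/y^4(1+|\log y|^2)(1+y^{4k+2})$.

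In the second step, suppose the conclusion fails: there is a sequence $v_n$ with $\langle v_n,\Phi_M\rangle=0$, with RHS of \eqref{eq:weighted coercive bound} normalized to $1$, and with LHS going to $0$. By the subcoercivity, $v_n$ is bounded in $H^2_{\mathrm{loc}}(\mathbb{R}_+^*)$, so up to subsequence $v_n\rightharpoonup v_\infty$ weakly in the weighted space and strongly in $L^2_{\mathrm{loc}}$. The limit satisfies $Hv_\infty=0$ with the integrability of \eqref{smooth assume 3}, hence $v_\infty\in\mathrm{Ker}(H)=\mathrm{Span}(\Lambda Q,\Gamma)$. Since $\Gamma\sim y/4$ at infinity it fails $\int|v|^2/y^4(1+|\log y|^2)(1+y^{4k+2})<\infty$, forcing $v_\infty=c\Lambda Q$. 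The orthogonality passes to the limit and gives $c\langle \Lambda Q,\Phi_M\rangle=c\cdot\langle\chi_M\Lambda Q,\Lambda Q\rangle\sim 4c\log M=0$, so $c=0$ and $v_\infty\equiv 0$. The local strong convergence then says $v_n\to 0$ in $L^2(1\le y\le 2)$, and the subcoercivity combined with the vanishing of the LHS yields a contradiction with the RHS being $1$.

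The main technical obstacle is the first step: tracking the sharp constants in the logarithmic and polynomial Hardy inequalities under the weight $\rho_k$ so that the $V^2/y^6$ contribution is strictly dominant over all boundary/commutator terms produced by integration by parts. The added factor $(1+y^{4k-2})$ in the weight makes the $y\to\infty$ analysis delicate: one must use $(1+y^{4k-2})$-weighted Hardy with the correct constant to ensure the $|v|^2/y^{4k+6}$ term retains a positive coefficient, exactly as in the $k=0$ case of Lemma B.4 of \cite{RaphaelSchweyer2014Anal.PDE}; verifying this directly (rather than appealing to the unweighted Hardy estimate) is the computational heart of the argument.
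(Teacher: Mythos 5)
Your scheme -- a subcoercivity estimate followed by a compactness/contradiction argument that uses the single orthogonality $\langle v,\Phi_M\rangle=0$ -- is exactly what the paper's one-line proof refers to: the paper simply defers to Lemma B.4 of Rapha\"el--Schweyer with the exponent $4k$ replaced by $4k-2$, and that lemma is proved by precisely this two-step argument. So your reconstruction is the right one.

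There is, however, a genuine error in the step where you exclude the kernel element $\Gamma$ from the weak limit $v_\infty$. You claim that $\Gamma\sim y/4$ at infinity violates the finiteness of $\int |v|^2/[y^4(1+|\log y|^2)(1+y^{4k+2})]$. That is false for $k\ge 1$: substituting $|v|^2\sim y^2/16$ into this integral with the $y\,dy$ measure gives $\int^\infty dy/(y^{4k+3}|\log y|^2)$, which converges. What actually excludes $\Gamma$ is its singularity at the \emph{origin}, $\Gamma\sim 1/y$ as $y\to 0$ (see \eqref{def:ker H}): against the weight $1/[y^4(1+|\log y|^2)]$ this produces $\int_0 dy/(y^5|\log y|^2)$, which diverges badly. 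Your final conclusion -- that $v_\infty$ must be a multiple of $\Lambda Q$ (whose degeneration $\Lambda Q\sim 2y$ at $0$ makes it admissible) and hence vanishes by orthogonality against $\Phi_M$ -- is still correct, so the contradiction argument closes, but the reason for discarding $\Gamma$ lives at the opposite end of the half-line from where you placed it. Worth fixing, since the same distinction recurs throughout the coercivity appendix.
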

\begin{proof}
  We can prove \eqref{eq:weighted coercive bound} easily by replacing all $4k$ in the proof of Lemma B.4 of \cite{RaphaelSchweyer2014Anal.PDE} to $4k-2$, since the range of our $k$ is $k \ge 1$. 
\end{proof}
From the previous lemmas, we obtain the coercivity of $\mathcal{E}_{k+1}$.
\begin{lemma}[Coercivity of $\mathcal{E}_{k+1}$]
  Let $1\le k \le L$ and $M=M(L)>0$ be a large constant. Then there exists $C(M)>0$ such that 
  \begin{align}
    \mathcal{E}_{k+1}=&\langle \varepsilon_{k+1} , \varepsilon_{k+1}\rangle + \langle \dot{\varepsilon}_{k} , \dot{\varepsilon}_{k}\rangle \nonumber  \\
    \ge &{C(M)} \Bigg[  \sum_{i=0}^{k} \int \frac{|\varepsilon_i|^2}{y^2(1+y^{2(k-i)})(1+|\log y|^2)} \nonumber \\
     &\qquad \; + \sum_{i=0}^{k-1} \int \frac{|\dot{\varepsilon}_i|^2}{y^2(1+y^{2(k-1-i)})(1+|\log y|^2)} \Bigg].\label{eq:coercivity bound}
  \end{align}
\end{lemma}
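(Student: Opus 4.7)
The plan is to split the vector-valued orthogonality condition \eqref{eq:orthogonal conditions} into two independent scalar orthogonality families on $\varepsilon$ and $\dot\varepsilon$, and then apply the coercivity lemmas for $\|v_{2k'+2}\|_{L^2}^2$ and $\|v_{2k'+1}\|_{L^2}^2$ (Lemma \ref{lem:odd coercivity} and its even counterpart) separately to $v=\varepsilon$ and $v=\dot\varepsilon$, verifying that the resulting weighted lower bounds pointwise dominate the target weights in \eqref{eq:coercivity bound}.

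First I would unpack the orthogonality. Since $\boldsymbol{H}^{\ast}(u,\dot u)^t=(H\dot u,-u)^t$ and $\boldsymbol{\Phi}_M=(\Phi_M,0)^t$, iterating gives $\boldsymbol{H}^{\ast 2j}\boldsymbol{\Phi}_M=((-1)^jH^j\Phi_M,0)^t$ and $\boldsymbol{H}^{\ast 2j+1}\boldsymbol{\Phi}_M=(0,(-1)^{j+1}H^j\Phi_M)^t$. Since $L$ is odd, running $i=0,\dots,L$ produces exactly the pair of scalar orthogonalities
\[
\langle \varepsilon,H^j\Phi_M\rangle=\langle\dot\varepsilon,H^j\Phi_M\rangle=0,\qquad 0\le j\le (L-1)/2.
\]
For any $1\le k\le L$ we have $\lfloor (k+1)/2\rfloor\le (L-1)/2$ and $\lfloor k/2\rfloor\le (L-1)/2$, so $\varepsilon$ (resp.\ $\dot\varepsilon$) satisfies the hypothesis \eqref{eq:orthogonality v} with $m=k+1$ (resp.\ $m=k$) required by the even/odd coercivity lemmas.

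Next I would apply the appropriate coercivity lemma to $v=\varepsilon$ with $m=k+1$ and to $v=\dot\varepsilon$ with $m=k$, choosing parities accordingly. In each case the lemma delivers
\[
\int|v_m|^2\ge C(M)\sum_{j=0}^{m-1}\int |v_j|^2\,\omega_j^{\mathrm{lem}}(y)\,dy,
\]
where the lemma weights $\omega_j^{\mathrm{lem}}$ behave like $[y^{4}(1+|\log y|^2)(1+y^{2(m-1-j)})]^{-1}$ for $j$ of one parity and $[y^{6}(1+|\log y|^2)(1+y^{2(m-1-j)})]^{-1}$ for the other, with the top-order remainder weight $[y^{2}(1+|\log y|^2)]^{-1}$ (which is the relevant bound for $v_{m-1}$). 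The finiteness assumptions \eqref{smooth assume 1}, \eqref{smooth assume 2} are implied by our higher-regularity bootstrap setup on $\mathcal{E}_{L+1}$ and so impose no genuine restriction.

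Finally I would compare these weights to the target weights $\omega_j^{\mathrm{tar}}=[y^{2}(1+y^{2(k-j)})(1+|\log y|^2)]^{-1}$ for $\varepsilon_j$ and the analogous weight with $k\leadsto k-1$ for $\dot\varepsilon_j$. For $y\ge 1$ the polynomial tails $y^{-(4k-4j+c)}$ and $y^{-(2k-2j+2)}$ match after unifying the parities, since $m-1-j$ has the same parity as $m-1$ and the $y^{-4}$ (resp.\ $y^{-6}$) prefactor is absorbed into the $(1+y^{2(m-1-j)})^{-1}$ factor; for $y\le 1$ the lemma weights $y^{-4}, y^{-6}$ trivially dominate the target weight $y^{-2}$. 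Thus $\omega_j^{\mathrm{lem}}(y)\gtrsim \omega_j^{\mathrm{tar}}(y)$ pointwise, and summing the $\varepsilon$ and $\dot\varepsilon$ contributions yields \eqref{eq:coercivity bound}. The only substantive step is the parity bookkeeping to confirm the polynomial tails match at infinity; there is no real obstacle beyond this computation.
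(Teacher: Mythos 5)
Your approach is exactly the one the paper intends (and it spells out right before the lemma): split the vector orthogonality $\langle\boldsymbol{\varepsilon},\boldsymbol{H}^{*i}\boldsymbol{\Phi}_M\rangle=0$, $0\le i\le L$, into the two scalar families $\langle\varepsilon,H^j\Phi_M\rangle=\langle\dot\varepsilon,H^j\Phi_M\rangle=0$ for $0\le j\le(L-1)/2$, then apply the even/odd coercivity lemmas to $v=\varepsilon$ with $m=k+1$ and $v=\dot\varepsilon$ with $m=k$, and compare weights. This is correct.

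One bookkeeping slip: condition \eqref{eq:orthogonality v} for exponent $m$ requires $0\le i\le\lfloor(m-1)/2\rfloor$, so for $m=k+1$ you need $\lfloor k/2\rfloor\le(L-1)/2$, and for $m=k$ you need $\lfloor(k-1)/2\rfloor\le(L-1)/2$; both hold for all $1\le k\le L$. You instead wrote $\lfloor(k+1)/2\rfloor\le(L-1)/2$, which is false at $k=L$ (since $L$ is odd, $\lfloor(L+1)/2\rfloor=(L+1)/2$), so as stated the verification would appear to fail in precisely the case you need for $\mathcal{E}_{L+1}$. Correcting the off-by-one restores the claim. Similarly, the lemma weights are $1+y^{4(k'-i)}$ (even case) or $1+y^{4(k'-i)-2}$ (odd case), not $1+y^{2(m-1-j)}$, but once combined with the $y^{-4}$ or $y^{-6}$ prefactor the total decay at infinity is $y^{-(2m-2j)}$ on both sides, so your final comparison is right even though the intermediate parametrization of the weights is off.
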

\begin{remark}
  The finiteness assumptions \eqref{smooth assume 1}, \eqref{smooth assume 2} and \eqref{smooth assume 3} for \eqref{eq:coercivity bound} are satisfied from the well-localized smoothness of $1$-corotational map $(\Phi, \partial_t \Phi)$ (see Lemma A.1 in \cite{RaphaelSchweyer2014Anal.PDE}). 
\end{remark}
\section{Interpolation estimates}
 In this section, we provide some interpolation estimates for $\varepsilon$, i.e. the first coordinate part of $\boldsymbol{\varepsilon}$. We will employ these bounds to deal with $\boldsymbol{NL}(\boldsymbol{\varepsilon})$ and $\boldsymbol{L}(\boldsymbol{\varepsilon})$ terms in the evolution equation of $\boldsymbol{\varepsilon}$ \eqref{eq:evolution epsilon}.
\begin{lemma}[interpolation estimates]\label{lem:interpolation bound}
  (ii) For $y\le 1$, $\varepsilon$ has a Taylor-Lagrange expansion
\begin{equation}
  \varepsilon=\sum_{i=1}^{\frac{L+1}{2}} c_i T_{L+1-2i} + r_{\varepsilon} 
\end{equation}
where $T_{2i}$ is the first coordinate part of $\boldsymbol{T}_{2i}$ and  
\begin{align}
  |c_i| \lesssim C(M) \sqrt{\mathcal{E}_{L+1}},\quad |\partial_y^k r_{\varepsilon}| \lesssim C(M) y^{L-k}|\log y| \sqrt{\mathcal{E}_{L+1}}, \quad 0\le k \le L.
\end{align}
(iii) For $y\le 1$, $\varepsilon$ satisfies the following pointwise bounds
\begin{align}
  |\varepsilon_{k}| &\lesssim C(M) y^{1+\overline{k}}|\log y| \sqrt{\mathcal{E}_{L+1}},\quad 0\le k \le {L-1}, \\
  |\varepsilon_{L}| &\lesssim C(M) \sqrt{\mathcal{E}_{L+1}},\\
  |\partial_y^{k}\varepsilon| &\lesssim C(M) y^{\overline{k+1}}|\log y| \sqrt{\mathcal{E}_{L+1}},\quad 0\le k \le L. 
\end{align}
(iv) For $1\le k \le L$ and $0\le i \le k$,
\begin{align}
   \int \frac{1+|\log y|^C}{1+y^{2(k-i+1)}}(|\varepsilon_i|^2 + |\partial_y^i \varepsilon |^2) +  {\left\lVert \frac{\partial_y^i \varepsilon}{y^{k-i}} \right\rVert}^2_{L^{\infty}(y\ge 1)} &\lesssim |\log b_1|^C b_1^{2m_{k+1}}
\end{align}
where 
\begin{equation*}
  m_{k+1} = \begin{cases}
    kc_1 & \textnormal{if}\quad 1\le k \le L-2, \\
    L & \textnormal{if} \quad k=L-1, \\
    L+1 & \textnormal{if}\quad  k =L.
  \end{cases}
\end{equation*}
\end{lemma}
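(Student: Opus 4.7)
The plan is to derive (ii) first, then deduce (iii) pointwise, then obtain (iv) by splitting into $y \le 1$ (using (iii)) and $y \ge 1$ (using the bootstrap bounds and radial Sobolev interpolation).

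For part (ii), the starting point is that $\varepsilon$ satisfies the regularity structure \eqref{eq:odd near origin}, so $\varepsilon$ is an odd function with a Taylor expansion in odd powers of $y$. I would match the first $(L+1)/2$ Taylor coefficients of $\varepsilon$ against those of $T_{L-1}, T_{L-3}, \dots, T_0 = \Lambda Q$, which are precisely the first-coordinate elements of the generalized kernel with leading behaviors $y^{L-1}, y^{L-3}, \dots, y$ at the origin. The coefficients $c_i$ are then recovered as suitable integrals of $\varepsilon$ against localized weights near $y=0$, which are controlled via coercivity by $C(M)\sqrt{\mathcal{E}_{L+1}}$. For the remainder, note that $H^{(L+1)/2} r_\varepsilon = H^{(L+1)/2} \varepsilon$ on $\mathbb{R}_+^*$ since each $T_{L+1-2i}$ lies in the kernel of the corresponding power, so ${\lVert \mathcal{A}^{L+1} r_\varepsilon \rVert}_{L^2} \lesssim \sqrt{\mathcal{E}_{L+1}}$; inverting $H$ via \eqref{eq:inverse of H} with the boundary behavior $r_\varepsilon = O(y^{L+1})$ at the origin (a consequence of the matching) and applying the Hardy-weighted coercive estimates of Appendix A repeatedly gives $|\partial_y^k r_\varepsilon| \lesssim C(M) y^{L-k} |\log y| \sqrt{\mathcal{E}_{L+1}}$.

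For part (iii), the expansion of (ii) combined with the explicit origin behavior of the $T_{2j}$ gives an expansion of $\varepsilon$ in odd powers up to order $y^{L-1}$, plus the remainder. The bounds on $\varepsilon_k$ and $\partial_y^k \varepsilon$ follow by applying either $\mathcal{A}^k$ or $\partial_y^k$ term by term: the leading parity pattern $y^{1+\overline{k}}$ for $\varepsilon_k$ arises from the cancellation $A(y) = O(y^2)$ in \eqref{eq:cancellation A}, which kills one odd power at each application of $A$ but not of $A^*$, while the remainder contributes $y^{L-k}|\log y|\sqrt{\mathcal{E}_{L+1}}$. The endpoint case $k = L$ is handled separately since $\mathcal{A}^L$ eliminates the entire polynomial part and only the remainder estimate survives.

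For part (iv), the $y \le 1$ contribution is immediate from (iii) once one checks that $|\log y|^C$ against $y^{2\overline{k}+2}$ is integrable. The $y \ge 1$ contribution is the main obstacle and must be done by interpolation between the bootstrap bounds \eqref{eq:bootstrap epsilon}, \eqref{eq:bootstrap epsilon L}, \eqref{eq:bootstrap epsilon L-1}, \eqref{eq:bootstrap epsilon}. Specifically, I would bound
\[
\int_{y\ge 1} \frac{|\partial_y^i \varepsilon|^2}{1+y^{2(k-i+1)}}\,y\,dy
\]
by Hardy-Sobolev interpolation against $\mathcal{E}_{i+1}$ when $k+1 \le L$ (giving rate $b_1^{2ic_1}$) and against $\mathcal{E}_{L+1}$ when one derivative slot is spent (giving rate $b_1^{2(L+1)}/|\log b_1|^2$); interpolating these two endpoints in the standard logarithmic convexity way produces exactly the three regimes $m_{k+1} = kc_1, L, L+1$. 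The radial $L^\infty$ control follows from
\[
{\left\lVert \frac{\partial_y^i \varepsilon}{y^{k-i}} \right\rVert}^2_{L^\infty(y\ge 1)} \lesssim {\left\lVert \frac{\partial_y^i \varepsilon}{y^{k-i+1}} \right\rVert}_{L^2(y\ge 1)} {\left\lVert \frac{\partial_y^{i+1}\varepsilon}{y^{k-i-1}} \right\rVert}_{L^2(y\ge 1)} + \text{boundary terms at }y=1,
\]
where the boundary terms at $y=1$ are controlled by the $y \le 1$ estimates already proved. The delicate accounting is precisely at $k = L-1$ and $k = L$, where one must not overspend derivatives and must use the $|\log b_1|$ gain from \eqref{eq:bootstrap epsilon} to absorb the logarithmic weight $|\log b_1|^C$; this is where the exponents $m_L = L$ and $m_{L+1} = L+1$, which break the geometric pattern $kc_1$, enter.
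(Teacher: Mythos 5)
Your sketch follows the same route as the source the paper cites for this lemma (the appendix of Rapha\"el--Schweyer), namely: a Taylor match near the origin against the scalar kernel elements $T_{L-1},T_{L-3},\dots,T_0=\Lambda Q$ for (ii), parity plus the cancellation $Ay=O(y^2)$ for (iii), and the coercivity/bootstrap machinery for (iv). Two small points are worth flagging.

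First, the leading behaviors of the $T_{2j}$ at the origin are off by one: $\boldsymbol{T}_{j}$ has admissible degree $(j,j,\overline{j})$, so for even $j$ the scalar $T_j$ starts at $y^{j+1}$; thus the family you match against has leading powers $y^{L},y^{L-2},\dots,y$, not $y^{L-1},\dots,y$. Since $L$ is odd these are $(L+1)/2$ distinct odd powers, so the matching is still well posed and $r_\varepsilon=O(y^{L+2})$, which is consistent with (and in fact stronger than) the stated bound $|\partial_y^k r_\varepsilon|\lesssim y^{L-k}|\log y|\sqrt{\mathcal{E}_{L+1}}$, with the logarithm absorbing the endpoint coercivity loss.

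Second, the framing of (iv) as a logarithmic interpolation between two endpoint energies is not quite the mechanism. The weight $(1+y^{2(k-i+1)})^{-1}$ lifts the derivative order $i$ to the effective energy level $k+1$, and the coercivity estimate \eqref{eq:coercivity bound} then gives $\int |\varepsilon_i|^2/(y^2(1+y^{2(k-i)})(1+|\log y|^2))\lesssim C(M)\,\mathcal{E}_{k+1}$ directly; the three regimes of $m_{k+1}$ are simply a restatement of the three distinct bootstrap bounds \eqref{eq:bootstrap epsilon}--\eqref{eq:bootstrap epsilon L-1} for $\mathcal{E}_{k+1}$ (and, when $\ell=L$, $(L-1)c_1=L$ so the $k=L-1$ case is in fact consistent with the geometric pattern). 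In particular the rate in your first endpoint should read $b_1^{2kc_1}$, tied to $k$ rather than $i$; as written, $b_1^{2ic_1}$ would give a much weaker bound for small $i$. The remaining work for (iv) is the passage from $\varepsilon_i=\mathcal{A}^i\varepsilon$ to $\partial_y^i\varepsilon$ and the radial $L^\infty$ Sobolev embedding, both of which you indicate but leave as routine; that matches the level of detail in the cited source.
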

\begin{proof}
  It is provided from the proof of Lemma C.1 in \cite{RaphaelSchweyer2014Anal.PDE}.
\end{proof}

\section{Leibniz rule for $\mathcal{A}^k$}
Unlike \cite{RaphaelSchweyer2014Anal.PDE}, we encounter some terms in which $\partial_t$ is taken more than once to $\mathcal{A}_{\lambda}^k$, such as $\partial_{tt}(\mathcal{A}_{\lambda}^k)$, $\partial_t(\mathcal{A}_{\lambda}^{i})\partial_t(H_{\lambda}^j)$, etc. To control those terms, we recall the following asymptotics
\begin{equation}\label{eq:1-t}
  \partial_t(\mathcal{A}_{\lambda}^k)f_{\lambda}(r)= \frac{\lambda_t}{\lambda^{k+1}}  \sum_{i=0}^{k-1}  \Phi_{i,k}^{(1)}(y) f_i (y),\quad |\Phi_{i,k}^{(1)}(y)| \lesssim \frac{1}{1+y^{k+2-i}},
\end{equation}
which was introduced in Appendices D and E of \cite{RaphaelSchweyer2014Anal.PDE}. We note that near the origin, $\Phi_{i,k}^{(1)}$ satisfies 
\begin{equation}\label{eq:coeff near origin}
  \Phi_{i,k}^{(1)}(y) = \begin{cases}
    \sum_{p=0}^{N} c_{i,k,p} y^{2p} + O(y^{2N+2}) & k-i \textnormal{ is even} \\
    \sum_{p=0}^{N} c_{i,k,p} y^{2p+1} + O(y^{2N+3}) & k-i \textnormal{ is odd}.
  \end{cases}
\end{equation} 
Based on the above facts, we can obtain the following lemma.
\begin{lemma}\label{lem:Leibniz rule}
  Let $1\le k\le (L-1)/2$. Then
  \begin{align}
    \partial_{tt}(\mathcal{A}_{\lambda}^k)f_{\lambda}(r)&=\frac{\lambda_{tt}}{\lambda^{k+1}}  \sum_{i=0}^{k-1}  \Phi_{i,k}^{(1)}(y) f_i (y)+ \frac{O(b_1^2)}{\lambda^{k+2}}  \sum_{i=0}^{k-1}  \Phi_{i,k}^{(2)}(y) f_i (y), \label{eq:2-t}\\
    \partial_t (\mathcal{A}_{\lambda}^{L-2k})\partial_t ({H}_{\lambda}^{k})f_{\lambda}(r) &= \frac{O(b_1^2)}{\lambda^{L+2}} \sum_{i=0}^{L-1} \Phi_{i,L}^{(3)}(y) f_i(y) \label{eq:1,1-t}
  \end{align}
  where
  \[ |\Phi_{i,k}^{(2)}(y)| \lesssim \frac{1}{1+y^{k+2-i}},\quad  |\Phi_{i,L}^{(3)}(y)| \lesssim \frac{1}{1+y^{L+3-i}}. \]
\end{lemma}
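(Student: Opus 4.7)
The plan is to derive both identities by differentiating the first-order formula \eqref{eq:1-t} in time, treating $\partial_t(\mathcal{A}_{\lambda}^k)$ as a genuine differential operator with $t$-dependent coefficients. The key observation is that \eqref{eq:1-t} can be recast as the operator identity
\[
 \partial_t(\mathcal{A}_{\lambda}^{k})
 = \frac{\lambda_t}{\lambda}\sum_{i=0}^{k-1}\frac{(\Phi_{i,k}^{(1)})_{\lambda}(r)}{\lambda^{k-i}}\mathcal{A}_{\lambda}^{i},
\]
which is the right form for iterating $\partial_t$ and composing operators.

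For \eqref{eq:2-t}, I would differentiate this identity once more in $t$. Four kinds of contributions arise: (i) $\lambda_t$ differentiated gives $\lambda_{tt}$, producing the first sum in \eqref{eq:2-t} with $\Phi_{i,k}^{(1)}$ unchanged; (ii) $1/\lambda^{k+1}$ differentiated gives an extra factor $\lambda_t/\lambda$, hence $\lambda_t^2/\lambda^{k+2}=O(b_1^2)/\lambda^{k+2}$ via the modulation bound $\lambda_t=-b_1+O(b_1^{L+2})$; (iii) the profile $(\Phi_{i,k}^{(1)})_{\lambda}$ differentiated in $t$ produces $-(\lambda_t/\lambda)(\Lambda\Phi_{i,k}^{(1)})_{\lambda}$, again giving a second factor of $\lambda_t$; (iv) acting on $f_{\lambda}$ through the operator $\mathcal{A}_{\lambda}^i$ and expanding $\partial_t(\mathcal{A}_{\lambda}^i)$ via \eqref{eq:1-t} likewise outputs an extra $\lambda_t$. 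All contributions of types (ii)–(iv) are collected into the second sum of \eqref{eq:2-t}, and a direct count of $\lambda$-powers shows each term is of the form $O(b_1^2)/\lambda^{k+2}$ times a new profile $\Phi_{i,k}^{(2)}(y)$. The decay $|\Phi_{i,k}^{(2)}(y)|\lesssim 1/(1+y^{k+2-i})$ follows because $\Lambda$ and $\partial_y$ preserve the decay rate of $\Phi_{i,k}^{(1)}$, and composition with a lower-order $\mathcal{A}^j$ (via type~(iv)) only improves the decay.

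For \eqref{eq:1,1-t}, the plan is to compose. Since $H_{\lambda}^k=\mathcal{A}_{\lambda}^{2k}$, apply \eqref{eq:1-t} first to write
\[
\partial_t(H_{\lambda}^{k})f_{\lambda}(r)=\frac{\lambda_t}{\lambda^{2k+1}}\sum_{i=0}^{2k-1}\Phi_{i,2k}^{(1)}(y)f_i(y),
\]
which is a function of the form $h(r)=\lambda_t\lambda^{-2k-1}g(y)$ with $g=\sum_i\Phi_{i,2k}^{(1)}f_i$. Acting on $h$ by the operator expression above for $\partial_t(\mathcal{A}_{\lambda}^{L-2k})$ gives $\mathcal{A}_{\lambda}^{j}h(r)=\lambda_t\lambda^{-2k-1-j}(\mathcal{A}^{j}g)(y)$; summing the $\lambda$-powers produces exactly the prefactor $\lambda_t^2/\lambda^{L+2}=O(b_1^2)/\lambda^{L+2}$. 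Expanding $\mathcal{A}^j(\Phi_{i,2k}^{(1)}f_i)$ via the Leibniz rule for $\mathcal{A}$, each term has the schematic form $\partial_y^{p}\Phi_{i,2k}^{(1)}(y)\cdot f_{i+q}(y)$ with $p+q=j$, and the product $\Phi_{j,L-2k}^{(1)}(y)\partial_y^{p}\Phi_{i,2k}^{(1)}(y)$ enjoys decay $1/(1+y^{L+4-(i+q)})$. Setting $n=i+q\in\{0,\dots,L-2\}$ and collecting, one obtains the claimed expansion with $|\Phi_{n,L}^{(3)}(y)|\lesssim 1/(1+y^{L+3-n})$.

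The routine bookkeeping of $\lambda$-powers and decay exponents is straightforward. The main obstacle will be verifying that the new profiles $\Phi_{i,k}^{(2)}$ and $\Phi_{n,L}^{(3)}$ produced by these manipulations actually satisfy the Taylor expansion structure \eqref{eq:coeff near origin} near the origin, which is what makes further applications of $\mathcal{A}^{q}$ and of the alternating $A/A^*$ scheme legitimate without producing singularities; this is handled by checking that $\Lambda$, $\partial_y$, and Leibniz-rule products preserve the prescribed parity of $\Phi_{i,k}^{(1)}$ (even vs.\ odd part determined by the parity of $k-i$), exactly as in Appendix~D of \cite{RaphaelSchweyer2014Anal.PDE}.
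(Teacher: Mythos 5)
Your treatment of \eqref{eq:2-t} matches the paper's proof essentially line for line: recast \eqref{eq:1-t} as the operator identity $\partial_t(\mathcal{A}_\lambda^k)=\frac{\lambda_t}{\lambda^{k+1-i}}\sum_i(\Phi_{i,k}^{(1)})_\lambda\mathcal{A}_\lambda^i$, apply $\partial_t$ once more, and sort the four product-rule contributions into a $\lambda_{tt}$ piece plus an $O(b_1^2)/\lambda^{k+2}$ remainder, using the modulation bound to convert the extra $\lambda_t$ into $O(b_1)$. That part is correct and it is the same argument.

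For \eqref{eq:1,1-t} your overall scheme (apply \eqref{eq:1-t} to $H_\lambda^k=\mathcal{A}_\lambda^{2k}$, then hit the result with the operator form of $\partial_t(\mathcal{A}_\lambda^{L-2k})$, and count $\lambda$-powers to get $\lambda_t^2/\lambda^{L+2}$) is again the one the paper uses, and your decay bookkeeping is fine. The gap is in the sentence where you expand $\mathcal{A}^j(\Phi_{i,2k}^{(1)}f_i)$ ``via the Leibniz rule for $\mathcal{A}$'' into terms $\partial_y^p\Phi\cdot f_{i+q}$ and then defer the parity check to ``exactly as in Appendix~D of \cite{RaphaelSchweyer2014Anal.PDE}.'' The paper explicitly flags that Appendix~D of that reference only treats compositions of the form $\mathcal{A}^i\circ\Phi\,\mathcal{A}^j$ with $j$ \emph{even}; the new difficulty here is the odd case, because $\mathcal{A}^{j}$ begins by applying $A$, and when $i$ is odd $f_i$ ends in an $A$, so $A(\Phi f_i)$ is not $(-\partial_y\Phi)f_i+\Phi f_{i+1}$ but rather $\left(-\partial_y+\frac{1+2Z}{y}\right)\Phi\cdot f_i-\Phi f_{i+1}$. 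The apparently singular factor $\frac{1+2Z}{y}$ (note $Z(0)=1$) is harmless only because the relevant $\Phi$'s are odd near the origin, and the paper then iterates $A^*$, $A$, $\ldots$ explicitly to propagate the correct parity through the alternating chain. This is the bulk of the paper's proof of \eqref{eq:1,1-t} and is what your schematic $\partial_y^p\Phi\cdot f_{i+q}$ expansion suppresses; as written, your step would fail if one tried to apply the even-$j$ Leibniz rule verbatim to an odd-$j$ composition. You correctly identify parity preservation as the obstacle, but you should carry out the $A\circ\Phi$ calculation rather than attribute it to a lemma that does not cover this case.
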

\begin{proof}
  Recall $\partial_{tt}(\mathcal{A}_{\lambda}^k)f_{\lambda} = [\partial_t, \partial_t(\mathcal{A}_{\lambda}^k)]f_{\lambda}$ and
  \[\frac{\lambda_t}{\lambda^{k+1}} \Phi_{i,k}^{(1)}(y) f_i(y) = \frac{\lambda_t}{\lambda^{k+1-i}} (\Phi_{i,k}^{(1)})_{\lambda}(r) \mathcal{A}_{\lambda}^i f_{\lambda}(r),\quad  \partial_t \Phi_{\lambda} = -\frac{\lambda_t}{\lambda} (\Lambda \Phi)_{\lambda}, \]
  we get \eqref{eq:2-t} since
  \begin{align*}
    [\partial_t, \frac{\lambda_t}{\lambda^{k+1-i}} (\Phi_{i,k}^{(1)})_{\lambda} \mathcal{A}_{\lambda}^i ]f_{\lambda} &= \frac{\lambda_{tt}}{\lambda^{k+1-i}} (\Phi_{i,k}^{(1)})_{\lambda} \mathcal{A}_{\lambda}^i f_{\lambda}  \\
    &-\frac{(\lambda_t)^2}{\lambda^{k+2-i}} (\Lambda_{i-k} \Phi_{i,k}^{(1)})_{\lambda} \mathcal{A}_{\lambda}^i f_{\lambda} + \frac{\lambda_t}{\lambda^{k+1-i}} (\Phi_{i,k}^{(1)})_{\lambda} \partial_t(\mathcal{A}_{\lambda}^i )f_{\lambda} \\
    &= \frac{\lambda_{tt}}{\lambda^{k+1}}   \Phi_{i,k}^{(1)}(y) f_i (y) +\frac{O(b_1^2)}{\lambda^{k+2}} \sum_{j=0}^{i} \Phi_{i,j,k}(y) f_j(y)
  \end{align*}
  where
  \[|\Phi_{i,j,k}(y)| \lesssim \frac{1}{1+y^{k+2-j}}.\]
Moreover, we can easily check that $\Phi_{i,k}^{(2)}$ satisfies \eqref{eq:coeff near origin} because the scaling generator $\Lambda$ preserves the asymptotics near origin as well as infinity.   

To prove \eqref{eq:1,1-t}, we need to justify the terms of the form $\mathcal{A}^i \circ \Phi \mathcal{A}^j$. When $j$ is an even number, we can use the Leibniz rule from the Appendix D of \cite{RaphaelSchweyer2014Anal.PDE}. However, when $j$ is odd, terms such as $A\circ \Phi A$ appear, making the problem a bit more tricky. 

Fortunately, our $\Phi$ from the terms of the form $\mathcal{A}^i \circ \Phi\mathcal{A}^{2j+1}$ have an expansion
\[\Phi(y)=\sum_{p=0}^N c_{p} y^{2p+1} + O(y^{2N+3})\]
near the origin since each $\Phi \mathcal{A}^{2j+1}$ comes from $\partial_t(H_{\lambda}^k)$ or $\partial_{tt}(H_{\lambda}^k)$, satisfies \eqref{eq:coeff near origin}. Hence
\begin{align*}
  (A \circ \Phi \mathcal{A}^{2j+1})f &=  (A\Phi) f_{2j+1} - \Phi \partial_y f_{2j+1} \\
  &=  \left(-\partial_y + \frac{1+2Z}{y}\right)\Phi \cdot f_{2j+1} - \Phi  f_{2j+2} =: \Phi_1 f_{2j+1} - \Phi f_{2j+2}
\end{align*}  
where $\Phi_1$ satisfies 
\[\Phi_1(y)=\sum_{p=0}^N c_{p} y^{2p} + O(y^{2N+2})\]
near the origin. If we take $A^*$ here,
\begin{align*}
  (H\circ \Phi \mathcal{A}^{2j+1} )f & = A^* (\Phi_1 f_{2j+1} - \Phi f_{2j+2}) \\
  &= (\partial_y \Phi_1) f_{2j+1} + (\Phi_1 -A^* \Phi)f_{2j+2} - \Phi \partial_y f_{2j+2} \\
  &=(\partial_y \Phi_1) f_{2j+1} + \left(\Phi_1 -\partial_y \Phi - \frac{1+2Z}{y}\Phi\right) f_{2j+2} + \Phi f_{2j+3},
\end{align*}
we can justify $\mathcal{A}^i \circ \Phi \mathcal{A}^{2j+1}$ by iterating above calculation.
\end{proof}
\section{Monotonicity for the intermediate energy}\label{sec:intermediate monotonicity}
\begin{proposition}[Lyapunov monotonicity for $\mathcal{E}_k$]\label{prop:intermediate monotonicity} 
  Let $2\le k \le L$. We have
  \begin{align}\label{eq:intermediate monotonicity}
    \frac{d}{dt}  \left\{  \frac{\mathcal{E}_{k}}{\lambda^{2k-2}}  \right\}  \le  \frac{b_1|\log b_1|^{C(k)}}{\lambda^{2k-1}}      (\sqrt{\mathcal{E}_{k+1}}+b_1^{k}+b_1^{\delta(k)+(k-1)c_1})\sqrt{\mathcal{E}_k} 
  \end{align}
  where $C(k),\delta(k)>0$ are constants that depend only on $k,L$.
\end{proposition}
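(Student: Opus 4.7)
The plan is to adapt the argument of Proposition \ref{prop:monotonicity} to the intermediate energies, exploiting the fact that the logarithmic loss $|\log b_1|^{C(k)}$ allowed on the right-hand side lets us bypass the delicate repulsive-term extraction and iterated integration by parts used for $\mathcal{E}_{L+1}$. All error terms can be controlled directly by coercivity together with the bounds of Lemmas \ref{lem:mod eqn}, \ref{lem:admissible bounds}, \ref{lem:interpolation bound}, and Proposition \ref{prop:local approx}.

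Starting from $\mathcal{E}_k/\lambda^{2k-2} = \langle w_k,w_k\rangle + \langle \dot w_{k-1},\dot w_{k-1}\rangle$, I differentiate using the componentwise evolution \eqref{eq:evolution w_k}. The key algebraic observation is that the two leading linear inner products cancel: the factorization $H_\lambda = A_\lambda^*A_\lambda$ together with the definition of the adapted derivatives gives $\dot w_k = \mathcal{A}_\lambda \dot w_{k-1}$ and $w_{k+1} = \mathcal{A}_\lambda w_k$, where the two instances of $\mathcal{A}_\lambda$ are mutually adjoint (one is $A_\lambda$ and the other $A_\lambda^*$, depending on the parity of $k$). Hence $\langle w_k,\dot w_k\rangle = \langle w_{k+1},\dot w_{k-1}\rangle$, and these two contributions cancel without the need for further corrections. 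Four error terms remain: two commutator terms and two forcing terms.

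The commutator terms $\langle w_k,\partial_t(\mathcal{A}_\lambda^k)w\rangle$ and $\langle \dot w_{k-1},\partial_t(\mathcal{A}_\lambda^{k-1})\dot w\rangle$ are controlled using \eqref{eq:1-t}, which writes $\partial_t(\mathcal{A}_\lambda^k)w$ as $(\lambda_t/\lambda^{k+1})$ times a sum of rational weights $\Phi_{i,k}$ of decay $y^{-(k+2-i)}$ acting on $\varepsilon_i$. Combined with $|\lambda_t|\sim b_1$ from Lemma \ref{lem:mod eqn} and the coercivity bound \eqref{eq:coercivity bound} at level $k$, each inner product is bounded by $b_1 C(M)\mathcal{E}_k/\lambda^{2k-1}$, which is absorbed into the $|\log b_1|^{C(k)}$ prefactor. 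The two forcing terms, after rescaling, amount to pairings of $\sqrt{\mathcal{E}_k}$ with $\|\mathcal{A}^k\mathcal{F}\|_{L^2}$ and $\|\mathcal{A}^{k-1}\dot{\mathcal{F}}\|_{L^2}$. Decomposing $\boldsymbol{\mathcal{F}} = -\widetilde{\mathbf{Mod}} - \tilde{\boldsymbol{\psi}}_b - \boldsymbol{NL}(\boldsymbol{\varepsilon}) - \boldsymbol{L}(\boldsymbol{\varepsilon})$, the profile error $\tilde{\boldsymbol{\psi}}_b$ contributes $b_1^{k+1}|\log b_1|^C$ by Proposition \ref{prop:local approx}, which yields the $b_1^k$ term in the parenthetical; the modulation term, handled via Lemma \ref{lem:mod eqn} and the admissibility of $\boldsymbol{T}_i,\boldsymbol{S}_j$ (with the cancellation $A\Lambda Q = 0$ killing the $D_0$ contribution), produces, after invoking bootstrap, the $\sqrt{\mathcal{E}_{k+1}}$ slot; and the linear correction $\boldsymbol{L}(\boldsymbol{\varepsilon})$ carries an $O(b_1^2)$ factor and is of strictly lower order, absorbed into the other three contributions.

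The main technical obstacle is the nonlinear term $\boldsymbol{NL}(\boldsymbol{\varepsilon})$, whose estimate produces the exponent $\delta(k)+(k-1)c_1$. I would mimic Step 5(iii) of the proof of Proposition \ref{prop:monotonicity}, splitting at $y=1$: near the origin, the Taylor--Lagrange expansion of Lemma \ref{lem:interpolation bound}(ii) together with the cancellation $A(y) = O(y^2)$ produces a pointwise bound of order $C(M)|\log y|\mathcal{E}_{L+1}$, which is already $\lesssim b_1^{2L+2}|\log b_1|^C$; for $y\geq 1$, the factorization $NL(\varepsilon) = \zeta^2 N_1(\varepsilon)$ with $\zeta = \varepsilon/y$ and the sharp $L^\infty$ and $L^2$ interpolation bounds of Lemma \ref{lem:interpolation bound}(iv) show that each cross-product $\partial_y^i\zeta\cdot\partial_y^{k-1-i}\zeta$ gains a factor $b_1^{m_{I_1+J_1+1}+m_{I_2+J_2+1}}$ where $I_1+I_2+J_1+J_2 = k$; summing over the admissible index decompositions and taking the worst exponent strictly exceeds $(k-1)c_1$, producing a definite gain $\delta(k)>0$. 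Putting all four contributions together and absorbing the constants $C(M)$ and all polynomial log-factors into $|\log b_1|^{C(k)}$ yields \eqref{eq:intermediate monotonicity}.
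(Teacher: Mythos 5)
Your overall structure matches the paper's: the leading linear inner products $\langle \dot w_k, w_k\rangle$ and $\langle w_{k+1},\dot w_{k-1}\rangle$ cancel by adjointness of the $A_\lambda/A_\lambda^*$ pair, and what remains are the two commutator terms and the two forcing terms.

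There is, however, a genuine gap in your estimate of the commutator terms. You invoke coercivity ``at level $k$'' to bound $|\langle \partial_t(\mathcal{A}_\lambda^k) w, w_k\rangle|$ by $b_1 C(M)\mathcal{E}_k/\lambda^{2k-1}$ and then claim the factor can be ``absorbed into the $|\log b_1|^{C(k)}$ prefactor.'' Written out, this would require $\sqrt{\mathcal{E}_k} \lesssim |\log b_1|^{C(k)}(\sqrt{\mathcal{E}_{k+1}}+b_1^k+b_1^{\delta(k)+(k-1)c_1})$, which is false under the bootstrap: $\sqrt{\mathcal{E}_k} \sim b_1^{(k-1)c_1}|\log b_1|^{K/2}$, whereas $\sqrt{\mathcal{E}_{k+1}}\lesssim b_1^{kc_1}|\log b_1|^{K/2}$ and (for $k\le \ell-1$) both $b_1^k$ and $b_1^{\delta(k)+(k-1)c_1}$ carry strictly larger exponents than $(k-1)c_1$, so each term on the right is a strictly positive power of $b_1$ smaller than the left side, and no log factor can compensate. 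The fix is to apply Cauchy--Schwarz as $|\langle \Phi_{m,k}^{(1)}\varepsilon_m,\varepsilon_k\rangle|\le \|\varepsilon_m/(1+y^{k+2-m})\|_{L^2}\,\sqrt{\mathcal{E}_k}$ and then use the coercivity of $\mathcal{E}_{k+1}$ (not $\mathcal{E}_k$): the weight $y^{-2}(1+y^{2(k-m)})^{-1}(1+|\log y|^2)^{-1}$ in the level-$(k+1)$ coercivity dominates $(1+y^{k+2-m})^{-2}$, so the first factor is $\lesssim C(M)\sqrt{\mathcal{E}_{k+1}}$ and the commutator lands squarely in the $\sqrt{\mathcal{E}_{k+1}}\sqrt{\mathcal{E}_k}$ slot. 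This is exactly what the paper does and is the whole reason the $\sqrt{\mathcal{E}_{k+1}}$ term appears on the right-hand side of the statement; your $\sqrt{\mathcal{E}_{k+1}}$ slot is otherwise unaccounted for.

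A secondary misattribution: you place the modulation contribution in the $\sqrt{\mathcal{E}_{k+1}}$ slot. In fact, after Lemma \ref{lem:mod eqn} bounds $|D_i(t)|$ in terms of $\sqrt{\mathcal{E}_{L+1}}$ (not $\sqrt{\mathcal{E}_{k+1}}$) and the bootstrap converts this to a power of $b_1$, the admissibility bounds give $\|\mathcal{A}^k\widetilde{\mathbf{Mod}}\|_{L^2}+\|\mathcal{A}^{k-1}\dot{\widetilde{\mathbf{Mod}}}\|_{L^2}\lesssim b_1^{k+1}|\log b_1|^{\gamma(L-k)}$, which goes into the $b_1^k$ slot alongside $\tilde{\boldsymbol{\psi}}_b$. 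Your treatment of the $\boldsymbol{NL}$ term (near-origin Taylor--Lagrange giving a harmless $\mathcal{E}_{L+1}$-sized contribution, and for $y\ge 1$ the $\zeta^2 N_1$ factorization with interpolation giving $b_1^{kc_1}$ up to logs) is consistent with the paper, modulo the imprecise index bookkeeping ($I+J=k$ with $I,J\ge 1$, rather than a four-index sum).
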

\begin{proof}
We compute the energy identity:
\begin{align}
  \partial_t \left(\frac{\mathcal{E}_{k}}{2\lambda^{2(k-1)}} \right)&=  \langle  \partial_tw_{k}, w_{k} \rangle + \langle  \partial_t\dot{w}_{k-1},\dot{w}_{k-1} \rangle \nonumber \\
  &=   \langle \partial_t (\mathcal{A}^{k}_{\lambda}) w, w_k \rangle + \langle \partial_t (\mathcal{A}^{k-1}_{\lambda}) \dot{w} ,\dot{w}_{k-1}\rangle \label{eq:intermediate main}\\
  &+  \langle  \mathcal{A}^{k}_{\lambda} \mathcal{F}_1, w_k \rangle + \langle \mathcal{A}^{k-1}_{\lambda} \mathcal{F}_2,\dot{w}_{k-1}\rangle. \label{eq:intermediate F}
\end{align}
We can directly estimate \eqref{eq:intermediate main} by Lemma \ref{lem:Leibniz rule}
\begin{align}
  |\langle  \partial_{t}(\mathcal{A}_{\lambda}^{k})w , w_{k} \rangle| & \lesssim  \frac{b_1}{\lambda^{2k-1}}\sum_{m=0}^{k-1} |\langle \Phi_{m,k}^{(1)} \varepsilon_m, \varepsilon_k \rangle | \nonumber \\
  &\lesssim \frac{b_1}{\lambda^{2k-1}}\sum_{m=0}^{k-1}  {\left\lVert \frac{\varepsilon_m}{1+y^{k+2-m}}  \right\rVert}_{L^2} \sqrt{\mathcal{E}_{k}}\lesssim \frac{b_1 C(M)}{\lambda^{2k-1}} \sqrt{\mathcal{E}_{k+1}\mathcal{E}_{k}}, \label{eq:intermediate ww}\\
  |\langle  \partial_{t}(\mathcal{A}_{\lambda}^{k-1})\dot{w} , \dot{w}_{k-1} \rangle| & \lesssim  \frac{b_1}{\lambda^{2k-1}}\sum_{m=0}^{k-2} |\langle \Phi_{m,k-1}^{(1)} \dot{\varepsilon}_m, \dot{\varepsilon}_{k-1} \rangle | \lesssim \frac{b_1 C(M)}{\lambda^{2k-1}} \sqrt{\mathcal{E}_{k+1}\mathcal{E}_{k}}.\label{eq:intermediate dot ww}
\end{align}
Then we conclude \eqref{eq:intermediate monotonicity} from the following bounds:
\begin{align}\label{eq:intermediate F bound}
  {\left\lVert \mathcal{A}^{k} \mathcal{F} \right\rVert}_{L^2} +  {\left\lVert \mathcal{A}^{k-1} \dot{\mathcal{F}} \right\rVert}_{L^2} & \lesssim b_1 |\log b_1|^C \left[ b_1^k + b_1^{\delta(k) + (k-1)c_1} \right],
\end{align}
\eqref{eq:intermediate F} is bounded by
\[\frac{b_1|\log b_1|^C}{\lambda^{2k-1}}(b_1^{k}+b_1^{\delta(k)+(k-1)c_1})\sqrt{\mathcal{E}_k}.\]
Now, it remains to prove \eqref{eq:intermediate F bound} and we address it by separating $\boldsymbol{\mathcal{F}}=(\mathcal{F},\dot{\mathcal{F}})^t$ into four types, as we did for \textbf{Step 5} in the proof of Proposition \ref{prop:monotonicity}. 

(i) $\tilde{\boldsymbol{\psi}}_b$ \emph{terms}. The contribution of $\tilde{\boldsymbol{\psi}}_b$ terms to the above inequalities is estimated from the global weighted bounds of Proposition \ref{prop:local approx}.

(ii) ${\widetilde{\mathbf{Mod}}}(t)$ \emph{terms}. Similar to (ii) of \textbf{Step 5} in the proof of Proposition \ref{prop:monotonicity} with the cancellation $\mathcal{A}^{k}T_i=0$ for $1\le i \le k$ and Lemma \ref{lem:admissible bounds}, we obtain 
\begin{align*}
  \int  \left\lvert  \sum_{i=1}^L b_i\mathcal{A}^{k-\overline{i}}\left[ \Lambda_{1-\overline{i}}(\chi_{B_1}T_i)\right] +  \sum_{i=2}^{L+2}\mathcal{A}^{k-\overline{i}}\left[ \Lambda_{1-\overline{i}}(\chi_{B_1}S_i) \right] \right\rvert^2 \lesssim b_1^2\\
  \sum_{i=1}^L\int  \left\lvert \mathcal{A}^{k-\overline{i}} \left[ \chi_{B_1}T_i+ \chi_{B_1}\sum_{j=i+1}^{L+2} \frac{\partial {S}_j}{\partial b_i} \right] \right\rvert^2 \lesssim b_1^{2(k-L)}|\log b_1|^{2\gamma(L-k)+2}
\end{align*}
Hence, Lemma \ref{lem:mod eqn} and the bootstrap bound \eqref{eq:bootstrap epsilon} implies:
\begin{align*}
  {\left\lVert \mathcal{A}^k{\widetilde{\mathrm{Mod}}}(t) \right\rVert}_{L^2}+{\left\lVert \mathcal{A}^{k-1} \dot{\widetilde{\mathrm{Mod}}}(t) \right\rVert}_{L^2}&\lesssim  b_1^{k-L}|\log b_1|^{\gamma(L-k)+1} \frac{b_1^{L+1}}{|\log b_1|} \\
  &\lesssim  b_1^{k+1}|\log b_1|^{\gamma(L-k)}.
\end{align*}

(iii) $\boldsymbol{NL}(\boldsymbol{\varepsilon})$ \emph{term}: We can utilize the bound \eqref{eq:NL bound near zero} near origin. For $y\ge 1$, we recall the calculation and estimates from (iii) of \textbf{Step 5} in the proof of Proposition \ref{prop:monotonicity}, ${\left\lVert \mathcal{A}^{k-1}NL(\varepsilon) \right\rVert}_{L^2(y\ge 1)}$ is bounded by
\[|\log b_1|^C  b_1^{m_{I+1}}b_1^{m_{J+1}} + |\log b_1|^C b_1^{m_{X+1}}b_1^{m_{Y+1}}b_1^{m_{J+1}} \]
where $I,J,X,Y,Z \ge 1$, $I+J=k$ and $X+Y+Z=k$. From the bootstrap bounds \eqref{eq:bootstrap epsilon}, \eqref{eq:bootstrap epsilon L-1} and the fact that $c_1 >1$, we obtain
\[{\left\lVert \mathcal{A}^{k-1}NL(\varepsilon) \right\rVert}_{L^2(y\ge 1)} \lesssim |\log b_1|^{C(K)} b_1^{k c_1} \lesssim b_1^{1+\delta(k)+(k-1)c_1}.\]

(iv) $\boldsymbol{L}(\boldsymbol{\varepsilon})$ \emph{term}: With some modifications (replace $L$ to $k-1$, for instance), it is proved by \eqref{eq:L bound near zero} and \eqref{eq:L bound near infinity}.
\end{proof}
\begin{remark}
  In step (iii) when $k=L$, we can avoid the case that either $I=L-1$ or $J=L-1$ by estimating ${\left\lVert \partial_y^{L-1}N_1(\varepsilon) \right\rVert}_{L^2(y\ge 1)}$ instead of ${\left\lVert \partial_y^{L-1}N_1(\varepsilon) \right\rVert}_{L^{\infty}(y\ge 1)}$.
\end{remark}
Recall the modified higher order energies
\begin{equation*}
  \widehat{\mathcal{E}}_{\ell}:=\langle \hat{\varepsilon}_{\ell}, \hat{\varepsilon}_{\ell} \rangle + \langle \dot{\hat{\varepsilon}}_{\ell-1}, \dot{\hat{\varepsilon}}_{\ell-1} \rangle.
\end{equation*}
We rewrite the flow \eqref{eq:evolution hat W} component-wisely: for $1\le k \le \ell$,
\begin{equation}\label{eq:evolution hat w_k}
  \begin{cases}
    \partial_t \hat{w}_{k} -\dot{\hat{w}}_{k} = \partial_t(\mathcal{A}_{\lambda}^{k})\hat{w} +\mathcal{A}_{\lambda}^{k}\widehat{\mathcal{F}}_1 \\
    \partial_t\dot{\hat{w}}_{k} + \hat{w}_{k+2} = \partial_t(\mathcal{A}_{\lambda}^{k})\dot{\hat{w}} +\mathcal{A}_{\lambda}^{k}\widehat{\mathcal{F}}_2 
  \end{cases} , \quad \begin{pmatrix}
    \widehat{\mathcal{F}}_1  \\
    \widehat{\mathcal{F}}_2
  \end{pmatrix} := \frac{1}{\lambda} \widehat{\boldsymbol{\mathcal{F}}}_{\lambda}=\frac{1}{\lambda}\begin{pmatrix}
    \widehat{\mathcal{F}} \\
    \dot{\widehat{\mathcal{F}}}
  \end{pmatrix}_{\lambda} .
\end{equation}
\begin{proposition}[Lyapunov monotonicity for $\mathcal{E}_L$]\label{prop:L monotonicity} 
  Let $\ell=L$. Then we have
  \begin{align}\label{eq:L monotonicity}
    \frac{d}{dt}  \left\{  \frac{\widehat{\mathcal{E}}_{L}}{\lambda^{2L-2}} + O\left(\frac{b_1^{2L}|\log b_1|^2}{\lambda^{2L-2}}\right)  \right\}  \le  \frac{b_1^{L+1}|\log b_1|^{\delta}}{\lambda^{2L-1}} ( b_1^L|\log b_1|+\sqrt{\mathcal{E}_L}  ) 
  \end{align}
  where $0<\delta \ll 1$ is a sufficient small constant that depend only on $L$.
\end{proposition}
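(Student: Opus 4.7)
The plan is to adapt the intermediate monotonicity argument of Proposition~\ref{prop:intermediate monotonicity} to the modified energy $\widehat{\mathcal{E}}_L$, leveraging the sharper profile bounds~\eqref{eq:psih1 global L} and~\eqref{eq:dpsih1 global L} supplied by Proposition~\ref{prop:2nd local approx}. The corrector $O(b_1^{2L}|\log b_1|^2/\lambda^{2L-2})$ inside the $d/dt$ in~\eqref{eq:L monotonicity} has the same size as $\widehat{\mathcal{E}}_L-\mathcal{E}_L$, so extra terms produced by passing from the tilde-profile to the hat-profile may be absorbed within it.

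Using the evolution system~\eqref{eq:evolution hat w_k} at levels $k=L$ (first component) and $k=L-1$ (second component), I would begin from the energy identity
\begin{align*}
\partial_t\!\left(\frac{\widehat{\mathcal{E}}_L}{2\lambda^{2L-2}}\right)
&= \langle\partial_t(\mathcal{A}_\lambda^L)\hat{w},\,\hat{w}_L\rangle + \langle\partial_t(\mathcal{A}_\lambda^{L-1})\dot{\hat{w}},\,\dot{\hat{w}}_{L-1}\rangle\\
&\quad + \langle\mathcal{A}_\lambda^L\widehat{\mathcal{F}}_1,\,\hat{w}_L\rangle + \langle\mathcal{A}_\lambda^{L-1}\widehat{\mathcal{F}}_2,\,\dot{\hat{w}}_{L-1}\rangle.
\end{align*}
The quadratic-in-$\hat{w}$ terms on the first line are estimated along the lines of~\eqref{eq:intermediate ww}--\eqref{eq:intermediate dot ww}: combining Lemma~\ref{lem:Leibniz rule}, Cauchy--Schwarz and the coercivity~\eqref{eq:coercivity bound}, they are bounded by $C(M)b_1\lambda^{-(2L-1)}\sqrt{\mathcal{E}_{L+1}\mathcal{E}_L}$, which by~\eqref{eq:bootstrap epsilon} is $\lesssim b_1^{L+2}|\log b_1|^{-1}\lambda^{-(2L-1)}\sqrt{\mathcal{E}_L}$ and fits comfortably inside the right-hand side of~\eqref{eq:L monotonicity}.

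I would then decompose $\widehat{\boldsymbol{\mathcal{F}}}=-\widehat{\mathbf{Mod}}'(t)-\hat{\boldsymbol{\psi}}_b-\widehat{\boldsymbol{NL}}(\hat{\boldsymbol{\varepsilon}})-\widehat{\boldsymbol{L}}(\hat{\boldsymbol{\varepsilon}})$ as in the proof of Proposition~\ref{prop:intermediate monotonicity}. For the modulation part, Lemma~\ref{lem:mod eqn} together with Lemma~\ref{lem:admissible bounds} and the cancellation $\mathcal{A}^L T_i=0$ for $1\le i\le L-1$ yield $\|\mathcal{A}^L\widehat{\mathrm{Mod}}{}'\|_{L^2}+\|\mathcal{A}^{L-1}\dot{\widehat{\mathrm{Mod}}}{}'\|_{L^2}\lesssim b_1^{L+1}$; the essential simplification relative to the tilde-setting~\eqref{eq:psit global L} is that $b_L T_L$ has been absorbed into $\hat{\boldsymbol{Q}}_b$, eliminating the logarithmic loss associated with $\mathcal{A}^{L-\overline{L}}(\chi_{B_1}T_L)$ in~\eqref{eq:bound k T_i}. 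The nonlinear pieces $\widehat{\boldsymbol{NL}}$ and $\widehat{\boldsymbol{L}}$ are handled via the interpolation bounds of Lemma~\ref{lem:interpolation bound} exactly as in Proposition~\ref{prop:intermediate monotonicity}, producing acceptable contributions of order $b_1^{L+1+\delta(L)}|\log b_1|^C$.

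The main obstacle is the contribution of $\hat{\boldsymbol{\psi}}_b$, because~\eqref{eq:psih1 global L}--\eqref{eq:dpsih1 global L} deliver the desired $b_1^{L+1}$ bound only \emph{after} subtracting the explicit profile pieces $(\chi_{B_1}-\chi_{B_0})b_L T_{L-1}$ and $(\partial_s\chi_{B_0}+b_1(y\chi')_{B_0})b_L T_L$. A direct $L^2$ estimation of the pairing of these subtracted pieces with $\hat{w}_L$ and $\dot{\hat{w}}_{L-1}$ produces a forcing of size $b_1^{L+1}|\log b_1|\,\lambda^{-(2L-1)}$, just missing~\eqref{eq:L monotonicity}. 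To absorb them I would integrate by parts in time: rewriting $\partial_s\chi_{B_0}+b_1(y\chi')_{B_0}=\lambda\partial_t\chi_{B_0}+O(b_1^{L+2})(y\chi')_{B_0}$ via Lemma~\ref{lem:mod eqn}, and transferring the resulting time derivative onto $\dot{\hat{w}}_{L-1}$ (respectively onto $\hat{w}_L$ for the $T_{L-1}$-piece) by means of~\eqref{eq:evolution hat w_k}, I would generate a boundary term of magnitude $\lesssim b_1^{2L}|\log b_1|^2/\lambda^{2L-2}$ -- exactly matching the corrector appearing in~\eqref{eq:L monotonicity} -- together with new interior errors that gain either a factor $b_1$ from~\eqref{eq:mod bound1} or a factor $|\log b_1|^{-1}$ from the radiative cutoff structure, and can therefore be placed on the right-hand side of~\eqref{eq:L monotonicity}. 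Collecting all contributions yields the claimed monotonicity.
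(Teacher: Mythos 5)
Your proposal correctly sets up the energy identity with the hat-profile, identifies that~\eqref{eq:psih1 global L}--\eqref{eq:dpsih1 global L} yield the desired $b_1^{L+1}$ bound only after subtracting the explicit pieces $(\chi_{B_1}-\chi_{B_0})b_L T_{L-1}$ and $(\partial_s\chi_{B_0}+b_1(y\chi')_{B_0})b_L T_L$, and correctly diagnoses that a direct $L^2$ bound of the pairing of these pieces with $\hat{\varepsilon}_L$ and $\dot{\hat{\varepsilon}}_{L-1}$ misses the target by roughly a factor $|\log b_1|$. Up to there the structure matches the paper.

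The fix you propose, however, departs from the paper's mechanism and does not close the claimed estimate. You integrate by parts \emph{in time} the cross terms $\langle\mathcal{A}^{L-1}(\partial_s\chi_{B_0}+b_1(y\chi')_{B_0})T_L,\,\dot{\hat{\varepsilon}}_{L-1}\rangle$ and $\langle\mathcal{A}^L(\chi_{B_1}-\chi_{B_0})T_{L-1},\,\hat{\varepsilon}_L\rangle$, and assert that the boundary term has magnitude $b_1^{2L}|\log b_1|^2/\lambda^{2L-2}$. But these boundary terms are cross terms between the profile and $\hat{\varepsilon}$, so their size is $\sim b_L\,|\log b_1|\,\sqrt{\mathcal{E}_L}/\lambda^{2L-2}\lesssim\sqrt{K}\,b_1^L|\log b_1|/\lambda^{L-1}\sim\sqrt{K}|\log b_1|^{1-c_1}$, whereas the correction in the statement is $b_1^{2L}|\log b_1|^2/\lambda^{2L-2}\sim|\log b_1|^{2-2c_1}$ with $c_1=L/(L-1)>1$. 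Since $1-c_1>2-2c_1$, your boundary term is strictly larger (in $|\log b_1|$-powers) than what the proposition asserts, so the statement as written is not recovered. You also miss that the cross terms \emph{with the error} do not, in the paper, require any time integration by parts at all: they are controlled by a \emph{spatial} integration by parts (e.g.\ $\langle\mathcal{A}^{L-1}f,\dot{\varepsilon}_{L-1}\rangle=\langle\mathcal{A}^{L-2}f,\dot{\varepsilon}_L\rangle$, $\langle\mathcal{A}^L f,\hat{\varepsilon}_L\rangle=\langle\mathcal{A}^{L-1}f,\hat{\varepsilon}_{L+1}\rangle$), which brings in $\sqrt{\mathcal{E}_{L+1}}$; the sharp bootstrap bound $\mathcal{E}_{L+1}\lesssim K b_1^{2L+2}/|\log b_1|^2$ then supplies the extra $|\log b_1|^{-1}$ and no correction is needed for these terms.

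The term that genuinely requires a time correction is different, and your proposal never identifies it: since $\dot{\hat{\varepsilon}}=\dot{\varepsilon}+b_L(\chi_{B_1}-\chi_{B_0})T_L$, pairing the $\hat{\psi}_b$-piece $b_L(\partial_s\chi_{B_0}+b_1(y\chi')_{B_0})T_L$ with $\dot{\hat{\varepsilon}}_{L-1}$ produces a \emph{pure profile} cross term, quadratic in $b_L$ and independent of $\varepsilon$:
\begin{align*}
\frac{b_L^2}{\lambda^{2L-1}}\bigl\langle\mathcal{A}^{L-1}(\partial_s\chi_{B_0}+b_1(y\chi')_{B_0})T_L,\;\mathcal{A}^{L-1}(\chi_{B_1}-\chi_{B_0})T_L\bigr\rangle.
\end{align*}
Using $b_1(y\chi')_{B_0}=-c_1\partial_s\chi_{B_0}+O(b_1\mathbf{1}_{y\sim B_0}/|\log b_1|)$ and $\chi_{B_1}=1$ on $y\sim B_0$, its leading part is $\frac{b_L^2}{2(L-1)\lambda^{2L-2}}\partial_t\|\mathcal{A}^{L-1}(\chi_{B_0}T_L)\|_{L^2}^2$, and integrating this by parts in time produces the correction $\frac{b_L^2}{2(L-1)\lambda^{2L-2}}\|\mathcal{A}^{L-1}(\chi_{B_0}T_L)\|_{L^2}^2=O(b_1^{2L}|\log b_1|^2/\lambda^{2L-2})$ appearing in the proposition, together with interior terms of size $O(b_1^{2L+1}|\log b_1|/\lambda^{2L-1})$ controlled by Lemma~\ref{lem:mod eqn}. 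In short, your proposal misses the spatial-IBP step that dispenses with the error cross terms and misses that the $b_L^2$ profile-squared term (not the error cross term) is the source of the stated correction; the time IBP you describe produces a boundary object that is both of the wrong type and of the wrong magnitude.
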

\begin{proof}
We compute the energy identity:
\begin{align}
  \partial_t \left(\frac{\widehat{\mathcal{E}_{L}}}{2\lambda^{2(L-1)}} \right)&=   \langle \partial_t (\mathcal{A}^{L}_{\lambda}) \hat{w}, \hat{w}_L \rangle + \langle \partial_t (\mathcal{A}^{L-1}_{\lambda}) \dot{\hat{w}} ,\dot{\hat{w}}_{L-1}\rangle \label{eq:L main}\\
  &+  \langle  \mathcal{A}^{L}_{\lambda} \widehat{\mathcal{F}}_1, \hat{w}_L \rangle + \langle \mathcal{A}^{L-1}_{\lambda}  \widehat{\mathcal{F}}_2,\dot{\hat{w}}_{L-1}\rangle. \label{eq:L F}
\end{align}
We can directly estimate \eqref{eq:L main} from the bounds \eqref{eq:intermediate ww}, \eqref{eq:intermediate dot ww} and the fact $\boldsymbol{\varepsilon}-\hat{\boldsymbol{\varepsilon}}=\boldsymbol{\zeta}_b$, we obtain the bound
\begin{align*}
 | \eqref{eq:L main} | \lesssim \frac{b_1 C(M)}{\lambda^{2L-1}} \sqrt{\mathcal{E}_{L+1}\mathcal{E}_{L}} +  \frac{b_1^{L+3}|\log b_1|^C}{\lambda^{2L-1}} \sqrt{\mathcal{E}_{L}} + \frac{b_1^{2L+3}|\log b_1|^C}{\lambda^{2L-1}}. 
\end{align*}
We can borrow step (ii), (iii) and (iv) in the proof of Proposition \ref{prop:intermediate monotonicity} to estimate \eqref{eq:L F} except $\hat{\boldsymbol{\psi}}_b$ terms. Also by Proposition \ref{prop:2nd local approx}, all the inner products we have to deal with are:
\begin{equation}
  b_L\langle  \mathcal{A}^{L} (\chi_{B_1}-\chi_{B_0})T_{L-1} , \hat{\varepsilon}_L \rangle ,\quad b_L\langle  \mathcal{A}^{L-1} (\partial_s\chi_{B_0}+b_1(y\chi')_{B_0})T_{L} , \dot{\hat{\varepsilon}}_{L-1} \rangle .
\end{equation}
From the fact $\hat{\varepsilon} = \varepsilon$ and $\mathcal{A}^{L-1} T_{L-1}= (-1)^{\frac{L-1}{2}}\Lambda Q$, we obtain
\begin{align*}
  \mathcal{A}^{L-1} (\chi_{B_1}-\chi_{B_0})T_{L-1} = (-1)^{\frac{L-1}{2}}(\chi_{B_1}-\chi_{B_0})\Lambda Q + (\mathbf{1}_{y\sim B_1}+ \mathbf{1}_{y\sim B_0}) O(y^{-1}|\log y|).
\end{align*}
Hence, the bootstrap bound \eqref{eq:bootstrap epsilon} yields
\begin{align*}
  |\langle  \mathcal{A}^{L} (\chi_{B_1}-\chi_{B_0})T_{L-1} , \hat{\varepsilon}_L \rangle| & = |\langle  \mathcal{A}^{L-1} (\chi_{B_1}-\chi_{B_0})T_{L-1} , \hat{\varepsilon}_{L+1} \rangle| \\
  &\le |\langle  y^{-1}\mathbf{1}_{B_0 \le y \le 2B_1} + (\mathbf{1}_{y\sim B_1}+ \mathbf{1}_{y\sim B_0}) y^{-1}|\log y| ,\varepsilon_{L+1}\rangle |\\
  &\le (|\log b_1|^{1/2}+|\log b_1|)\sqrt{\mathcal{E}_{L+1}} \le b_1^{L+1} |\log b_1|^{\delta}.
\end{align*}
Note that $\dot{\hat{\varepsilon}}= \dot{\varepsilon} + b_L (\chi_{B_1}-\chi_{B_0})T_L$. The asymptotics \eqref{eq:derivative chi} implies
\begin{align*}
  |\langle  \mathcal{A}^{L-1} (\partial_s\chi_{B_0}+b_1(y\chi')_{B_0})T_{L} , \dot{{\varepsilon}}_{L-1} \rangle| & \le  b_1|\langle  \mathcal{A}^{L-2} (\mathbf{1}_{y\sim B_0}y^{L-2}|\log y| ), \dot{{\varepsilon}}_{L} \rangle|\\
  &\le |\log b_1|\sqrt{\mathcal{E}_{L+1}}\le b_1^{L+1} |\log b_1|^{\delta}.
\end{align*}
To estimate the last inner product, we employ the sharp asymptotics
\[b_1 (y\chi')_{B_0}= -c_1 \partial_s\chi_{B_0} + O\left( \frac{b_1 \mathbf{1}_{y\sim B_0}}{|\log b_1|}\right)\]
from the fact $(b_1)_s =b_2 + O(b_1^2/|\log b_1|)$. Using the cancellation $\mathcal{A}^{L} T_L =0$ and $\chi_{B_1}=1$ on $y \sim B_0$, the remaining inner product can be written as
\begin{equation}\label{eq:L F final} 
  \frac{1}{L-1} b_L^2 \langle  \mathcal{A}^{L-1} \partial_s(\chi_{B_0}T_{L}) ,  \mathcal{A}^{L-1}(\chi_{B_0}T_L) \rangle + O \left( \frac{b_1^{2L+1}}{|\log b_1|} {\left\lVert\mathcal{A}^{L-1} (\mathbf{1}_{y\sim B_0}T_{L}) \right\rVert}_{L^2}^2  \right). 
\end{equation}
We can easily check that the second term in \eqref{eq:L F final} is bounded by $b_1^{2L+1}|\log b_1|$. For the first term in \eqref{eq:L F final}, we use integration by parts in time to find out the correction for $\widehat{\mathcal{E}}_{L}$:
\begin{align*}
  \frac{b_L^2}{\lambda^{2L-1}} \langle  \mathcal{A}^{L-1} \partial_s(\chi_{B_0}T_{L}) ,  \mathcal{A}^{L-1}(\chi_{B_0}T_L) \rangle & = \frac{b_L^2}{2\lambda^{2L-1}} \partial_s \langle  \mathcal{A}^{L-1} (\chi_{B_0}T_{L}) ,  \mathcal{A}^{L-1}(\chi_{B_0}T_L) \rangle \\
  &=\frac{b_L^2}{2\lambda^{2L-2}} \partial_t{\left\lVert  \mathcal{A}^{L-1} (\chi_{B_0}T_{L})  \right\rVert}_{L^2}^2,
\end{align*}
by Lemma \eqref{lem:mod eqn}, we conclude \eqref{eq:L monotonicity}:
\begin{align*}
 & \frac{b_L^2}{2\lambda^{2L-2}} \partial_t{\left\lVert  \mathcal{A}^{L-1} (\chi_{B_0}T_{L})  \right\rVert}_{L^2}^2-\partial_t \left( \frac{b_L^2}{2\lambda^{2L-2}}{\left\lVert  \mathcal{A}^{L-1} (\chi_{B_0}T_{L})  \right\rVert}_{L^2}^2 \right)\\
  &= - \partial_t \left( \frac{b_L^2}{2\lambda^{2L-2}} \right){\left\lVert  \mathcal{A}^{L-1} (\chi_{B_0}T_{L})  \right\rVert}_{L^2}^2 \\
  &= \left( \frac{(L-1)b_L^2 \lambda_t}{\lambda^{2L-1}} -\frac{b_L (b_L)_t}{\lambda^{2L-2}} \right){\left\lVert  \mathcal{A}^{L-1} (\chi_{B_0}T_{L})  \right\rVert}_{L^2}^2 \\
  &=-\frac{b_L}{\lambda^{2L-1}} \left((b_L)_s + (L-1 )b_1b_L \right)  O\left(|\log b_1|^2\right) = O\left(\frac{b_1^{2L+1}}{\lambda^{2L-1}}|\log b_1|\right). \qedhere
\end{align*}
\end{proof}

\begin{proposition}[Lyapunov monotonicity for $\mathcal{E}_{L-1}$] 
  Let $\ell=L-1$. Then we have
  \begin{align}\label{eq:L-1 monotonicity}
    \frac{d}{dt}  \left\{  \frac{\widehat{\mathcal{E}}_{L-1}}{\lambda^{2L-4}} + O\left(\frac{b_1^{2L-2}|\log b_1|^2}{\lambda^{2L-4}}\right)  \right\}  \le  \frac{b_1^{L}|\log b_1|^{\delta}}{\lambda^{2L-3}} ( b_1^{L-1}|\log b_1|+\sqrt{\mathcal{E}_{L-1}}  ) 
  \end{align}
  where $0<\delta \ll 1$ is a sufficient small constant that depend only on $L$.
\end{proposition}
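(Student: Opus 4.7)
The plan is to imitate the proof of Proposition \ref{prop:L monotonicity} line by line, with the indices shifted from $(L, L-1)$ to $(L-1, L-2)$ and with the localization and boundary bookkeeping now furnished by Proposition \ref{prop:3rd local approx} instead of Proposition \ref{prop:2nd local approx}. First, since $\ell = L-1$ is even, I will write the basic energy identity
\begin{align*}
\partial_t\!\left(\frac{\widehat{\mathcal{E}}_{L-1}}{2\lambda^{2L-4}}\right)
&= \langle \partial_t(\mathcal{A}_\lambda^{L-1})\hat{w},\,\hat{w}_{L-1}\rangle + \langle \partial_t(\mathcal{A}_\lambda^{L-2})\dot{\hat{w}},\,\dot{\hat{w}}_{L-2}\rangle \\
&\quad + \langle \mathcal{A}_\lambda^{L-1}\widehat{\mathcal{F}}_1,\hat{w}_{L-1}\rangle + \langle \mathcal{A}_\lambda^{L-2}\widehat{\mathcal{F}}_2,\dot{\hat{w}}_{L-2}\rangle
\end{align*}
using the component form \eqref{eq:evolution hat w_k}. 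The two ``quadratic in $\hat{w}$'' inner products are handled exactly as in \eqref{eq:intermediate ww}--\eqref{eq:intermediate dot ww}: applying \eqref{eq:1-t}, the coercivity \eqref{eq:coercivity bound}, and the relation $\hat{\boldsymbol{\varepsilon}} - \boldsymbol{\varepsilon} = \boldsymbol{\zeta}_b$, they are controlled by $\frac{b_1 C(M)}{\lambda^{2L-3}}\sqrt{\mathcal{E}_L\,\mathcal{E}_{L-1}}$ plus a harmless $\boldsymbol{\zeta}_b$-contribution of order $b_1^{L+2}|\log b_1|^C/\lambda^{2L-3}$, both of which are absorbed into the RHS of \eqref{eq:L-1 monotonicity} via $\sqrt{\mathcal{E}_L}\lesssim b_1^{L-1}|\log b_1|^{K/2}$.

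Next I decompose $\widehat{\boldsymbol{\mathcal{F}}} = -\widehat{\mathbf{Mod}}'(t) - \hat{\boldsymbol{\psi}}_b - \widehat{\boldsymbol{NL}}(\hat{\boldsymbol{\varepsilon}}) - \widehat{\boldsymbol{L}}(\hat{\boldsymbol{\varepsilon}})$. The modulation, $\widehat{\boldsymbol{NL}}$, and $\widehat{\boldsymbol{L}}$ terms are treated in the same way as steps (ii), (iii), (iv) of Proposition \ref{prop:intermediate monotonicity} after verifying that the difference $\hat{\boldsymbol{\varepsilon}} - \boldsymbol{\varepsilon} = \boldsymbol{\zeta}_b$ only generates easily bounded contributions (using $|b_{L-1}|+|b_L|\lesssim b_1^{L-1}$ and the support property of $\chi_{B_0}-\chi_{B_1}$). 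The delicate part is the contribution of $\hat{\boldsymbol{\psi}}_b$: by \eqref{eq:psih2 global L-1}--\eqref{eq:dpsih2 global L-1}, everything in $\hat{\boldsymbol{\psi}}_b$ except the four explicit boundary terms is already of size $b_1^L$ in the right adapted norm and produces an acceptable $\frac{b_1^{L+1}}{\lambda^{2L-3}}\sqrt{\mathcal{E}_{L-1}}$-type contribution.

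The four unsubtracted boundary terms are
\begin{equation*}
(\partial_s\chi_{B_0}+b_1(y\chi')_{B_0})b_{L-1}T_{L-1},\quad (\chi_{B_1}-\chi_{B_0})b_LT_{L-1},
\end{equation*}
together with $(\partial_s\chi_{B_0}+b_1(y\chi')_{B_0})b_LT_L$ and $-b_{L-1}H(\chi_{B_1}-\chi_{B_0})T_L$ on the second coordinate. The two involving $(\chi_{B_1}-\chi_{B_0})b_L$ are handled by integration by parts of $\mathcal{A}^{L-1}$ using the kernel identity $\mathcal{A}^{L-2}T_{L-1} = \pm\Lambda Q$: the resulting inner products $\langle y^{-1}\mathbf{1}_{B_0\le y\le 2B_1}, \varepsilon_L\rangle$ are bounded by $|\log b_1|^{1/2}\sqrt{\mathcal{E}_L}\lesssim b_1^{L-1}|\log b_1|^{1/2+K/2}$, which is within the target bound. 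The $b_{L-1}H(\chi_{B_1}-\chi_{B_0})T_L$ contribution is analogous after using $\mathcal{A}^{L-2}\circ H = \mathcal{A}^L$ and the cancellation $\mathcal{A}^{L-1}T_L = \Lambda Q$ (up to sign).

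The main obstacle, and the only truly new step compared with Proposition \ref{prop:L monotonicity}, is the simultaneous treatment of the two $(\partial_s\chi_{B_0}+b_1(y\chi')_{B_0})$-terms carrying $b_{L-1}T_{L-1}$ and $b_LT_L$. I will use the sharp asymptotics $b_1(y\chi')_{B_0} = -c_1\partial_s\chi_{B_0} + O(b_1\mathbf{1}_{y\sim B_0}/|\log b_1|)$ (from $(b_1)_s = b_2 + O(b_1^2/|\log b_1|)$), together with the cancellations $\mathcal{A}^{L-1}(\chi_{B_0}T_{L-1})$ and $\mathcal{A}^{L-1}(\chi_{B_0}T_L)$ being localized near $y\sim B_0$, to reduce each of these terms to, respectively,
\begin{equation*}
\frac{c\,b_{L-1}^2}{\lambda^{2L-3}}\,\partial_s\|\mathcal{A}^{L-2}(\chi_{B_0}T_{L-1})\|_{L^2}^2 \;+\; \frac{c\,b_L^2}{\lambda^{2L-3}}\,\partial_s\|\mathcal{A}^{L-2}(\chi_{B_0}T_L)\|_{L^2}^2 + (\text{acceptable remainders}).
\end{equation*}
Integrating by parts in time exactly as in \eqref{eq:L F final} of Proposition \ref{prop:L monotonicity}, each of these is rewritten as $\partial_t$ of a correction of size
\begin{equation*}
\frac{b_{L-1}^2}{\lambda^{2L-4}}\|\mathcal{A}^{L-2}(\chi_{B_0}T_{L-1})\|_{L^2}^2 + \frac{b_L^2}{\lambda^{2L-4}}\|\mathcal{A}^{L-2}(\chi_{B_0}T_L)\|_{L^2}^2 \;\lesssim\; \frac{b_1^{2L-2}|\log b_1|^2}{\lambda^{2L-4}},
\end{equation*}
plus error terms in which either $\lambda_t/\lambda + b_1$ or $(b_{L-1})_s+(L-2+c_{b,L-1})b_1b_{L-1}$, $(b_L)_s+(L-1+c_{b,L})b_1b_L$ appear as factors. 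By Lemma \ref{lem:mod eqn} these factors are $O(b_1^L|\log b_1|^{\delta})$ or better, so the net contribution fits into $\frac{b_1^L|\log b_1|^\delta}{\lambda^{2L-3}}(b_1^{L-1}|\log b_1| + \sqrt{\mathcal{E}_{L-1}})$. Collecting everything yields \eqref{eq:L-1 monotonicity}; the only subtlety to verify carefully is that the cross term between the $T_{L-1}$ and $T_L$ corrections does not produce a logarithmically worse contribution, which is ensured by $|b_L|\lesssim b_1^L/|\log b_1|$ from \eqref{eq:bootstrap modes stables}.
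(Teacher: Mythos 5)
Your overall strategy matches the paper's: start from the same energy identity with indices shifted down by one, invoke Proposition \ref{prop:3rd local approx} for the localization errors, and absorb the dangerous boundary self-interaction by integrating by parts in time to generate a correction to the energy, as in Proposition \ref{prop:L monotonicity}. You also correctly identify that $\mathcal{A}^{L-2}\circ H=\mathcal{A}^L$ and the cancellation $\mathcal{A}^{L-1}T_{L-1}=\pm\Lambda Q$ are what make two of the boundary terms harmless, and you correctly appeal to the stable-mode bootstrap bound on $b_L$.

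However, there is a substantive index error in your proposed correction. The $b_{L-1}^2$ self-term arises from the first-coordinate inner product $\langle \mathcal{A}^{L-1}_\lambda \widehat{\mathcal{F}}_1 ,\hat{w}_{L-1}\rangle$ (because $\widehat{\mathcal{E}}_{L-1}=\|\hat\varepsilon_{L-1}\|^2+\|\dot{\hat\varepsilon}_{L-2}\|^2$), so the correction must carry $\mathcal{A}^{L-1}$, not $\mathcal{A}^{L-2}$. Since $T_{L-1}$ is admissible of degree $(L-1,L-1,0)$, one has $|\mathcal{A}^{L-1}T_{L-1}|\lesssim y^{-1}$, hence $\|\mathcal{A}^{L-1}(\chi_{B_0}T_{L-1})\|_{L^2}^2\sim |\log b_1|$, and the correction is $O(b_1^{2L-2}|\log b_1|/\lambda^{2L-4})$, consistent with the stated $O(b_1^{2L-2}|\log b_1|^2/\lambda^{2L-4})$. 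By contrast $|\mathcal{A}^{L-2}T_{L-1}|\lesssim |\log y|$, so $\|\mathcal{A}^{L-2}(\chi_{B_0}T_{L-1})\|_{L^2}^2\sim b_1^{-2}|\log b_1|^2$ and the quantity you display, $b_{L-1}^2\|\mathcal{A}^{L-2}(\chi_{B_0}T_{L-1})\|_{L^2}^2/\lambda^{2L-4}\sim b_1^{2L-4}|\log b_1|^2/\lambda^{2L-4}$, is larger by a factor $b_1^{-2}$ than what the proposition allows. The inequality you assert at that point is therefore false. (Your own sentence a few lines earlier refers to the cancellation of $\mathcal{A}^{L-1}(\chi_{B_0}T_{L-1})$, so the $\mathcal{A}^{L-2}$ in the displayed correction appears to be a mistake; fixing it to $\mathcal{A}^{L-1}$ restores the argument and recovers exactly the paper's correction $-\partial_t\bigl(\tfrac{b_{L-1}^2}{2(L-2)\lambda^{2L-4}}\|\mathcal{A}^{L-1}(\chi_{B_0}T_{L-1})\|_{L^2}^2\bigr)$.)

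A smaller remark: the paper introduces only this single correction. The $b_L^2$ self-term from the second-coordinate boundary term $(\partial_s\chi_{B_0}+b_1(y\chi')_{B_0})b_LT_L$ does not need a correction, because the stable-mode bound $|b_L|\lesssim s^{-L}(\log s)^{-\beta}$ with $\beta=5/4$ already provides a $|\log b_1|^{-2\beta}$ gain that beats the $|\log b_1|^2$ loss from $\|\mathcal{A}^{L-2}(\chi_{B_0}T_L)\|_{L^2}^2\sim b_1^{-2}|\log b_1|^2$, so this term can be absorbed directly into the right-hand side of \eqref{eq:L-1 monotonicity}. Adding a second $b_L^2$-correction as you propose is harmless (it is of acceptable size once the right operator $\mathcal{A}^{L-2}$ is used, as you do), but it is not what the paper does, and the key quantitative point — which you do mention at the end — is that the $b_L$ smallness renders this term subleading without any integration by parts.
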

\begin{proof}
  Based on the proof of Proposition \ref{prop:L monotonicity} with Proposition \ref{prop:3rd local approx}, all the inner products we have to deal with are:
  \begin{align}
    &b_L\langle  \mathcal{A}^{L-1} (\chi_{B_1}-\chi_{B_0})T_{L-1} , \hat{\varepsilon}_{L-1} \rangle ,\quad b_{L-1}\langle  \mathcal{A}^{L-1}(\partial_s \chi_{B_0} + b_1(y\chi')_{B_0}) T_{L-1} , \hat{\varepsilon}_{L-1} \rangle\nonumber \\
    &b_{L-1}\langle  \mathcal{A}^{L-2}H (\chi_{B_1}-\chi_{B_0})  T_{L} , \dot{\hat{\varepsilon}}_{L-2} .\rangle,\quad b_L\langle  \mathcal{A}^{L-2} (\partial_s\chi_{B_0}+b_1(y\chi')_{B_0})T_{L} , \dot{\hat{\varepsilon}}_{L-2} \rangle. \nonumber
  \end{align}
  By additionally considering ${\hat{\varepsilon}}= {\varepsilon} + b_{L-1} (\chi_{B_1}-\chi_{B_0})T_{L-1}$, we can estimate the above inner products similarly to \eqref{eq:L F final} due to the derivative gain $\mathcal{A}^{L-2}H = \mathcal{A}^L$ and the logarithmic gain $|\log b_1|^{-\beta}$ from the bootstrap bound \eqref{eq:bootstrap modes stables} for $b_L$ when $\ell=L-1$. The exact correction term is given by
  \[-\partial_t \left( \frac{b_{L-1}^2}{2(L-2)\lambda^{2L-4}}{\left\lVert  \mathcal{A}^{L-1} (\chi_{B_0}T_{L-1})  \right\rVert}_{L^2}^2 \right).\qedhere\]
\end{proof}
\end{appendix}

\bibliographystyle{abbrv}

\end{document}